\providecommand{\BBb}[1]{{\mathbb{#1}}}
\providecommand{\cal}[1]{{\mathcal{#1}}}   % visse (nyere) versioner har \cal
\newcommand{\ang}[1]{\langle#1\rangle}
\newcommand{\Bcirc}{\overset{\lower 1.5pt%
              \hbox{$@,@,@,@,@,\scriptscriptstyle\circ$}}B{}}
\newcommand{\Binfty}{\overset{\lower 1.5pt%
              \hbox{$@,@,@,@,@,\scriptscriptstyle\infty$}}B{}}
\newcommand{\bigdot}{\mathbin{\raise.65\jot\hbox{$\scriptscriptstyle\bullet$}}}
\newcommand{\C}{{\BBb C}}
\newcommand{\dual}[2]{\langle\,#1,\,#2\,\rangle}
\newcommand{\Dual}[2]{\bigl\langle\,#1,\,#2\,\bigr\rangle}
\newcommand{\erd}{\overset{\lower 1pt\hbox{\large.}}{e}
                  \overset{\lower 1pt\hbox{\large.}}{r}}
\newcommand{\Fcirc}{\overset{\lower 1.5pt%
               \hbox{$@,@,@,@,@,\scriptscriptstyle\circ$}}F{}}
\newcommand{\fracc}[2]{{
                \textstyle\frac{#1}{\raise 1pt\hbox{$\scriptstyle #2$}}}}
\newcommand{\fracnp}{\fracc np}
\newcommand{\fracci}[2]{{\frac{#1}{\raise 1pt\hbox{$\scriptscriptstyle #2$}}}}
\newcommand{\fracpi}{\fracci1p}
\newcommand{\im}{\operatorname{i}}
\renewcommand{\Im}{\operatorname{Im}}
\newcommand{\lap}{\operatorname{\Delta}}
\newcommand{\loc}{\operatorname{loc}}
\newcommand{\lOm}{\ell_{\Omega}}
\newcommand{\mlap}{-\!\operatorname{\Delta}}
\newcommand{\nrm}[2]{\|#1\|_{#2}}
\newcommand{\Nrm}[2]{\bigl\|#1\bigr\|_{#2}}
\newcommand{\order}{\operatorname{order}}
\newcommand{\op}[1]{\operatorname{#1}}
\newcommand{\OP}{\operatorname{OP}}
\newcommand{\N}{\BBb N}
\renewcommand{\Re}{\operatorname{Re}}
\newcommand{\R}{{\BBb R}}
\newcommand{\Rn}{{\BBb R}^{n}}
\newcommand{\rOm}{r_{\Omega}}
\newcounter{enmcount}\renewcommand{\theenmcount}{{\rm\arabic{enmcount}}}
\newcounter{rmcount}\renewcommand{\thermcount}{{\rm\roman{rmcount}}}
\newenvironment{rmlist}{%
\begin{list}{{\rm(\thermcount)}}{\setlength{\labelwidth}{\leftmargin}%
\usecounter{rmcount}}}{\end{list}}
\newcounter{Rmcount}\renewcommand{\theRmcount}{{\rm\Roman{Rmcount}}}
\newenvironment{Rmlist}{%
\begin{list}{{\rm(\theRmcount)}}{\setlength{\labelwidth}{\leftmargin}%
\usecounter{Rmcount}}}{\end{list}}
\newcommand{\scal}[2]{(\,#1\,|\, #2\,)}
\newcommand{\Scal}[2]{\bigl(\,#1\mid #2\,\bigr)}
\newcommand{\singsupp}{\operatorname{sing\,supp}}
\newcommand{\supp}{\operatorname{supp}}
\newcommand{\Z}{\BBb Z}
\renewcommand{\check}[1]{\overset{{\scriptscriptstyle \vee}}{#1}}
\renewcommand{\hat}[1]{\overset{{\scriptscriptstyle \wedge}}{#1}}
\numberwithin{section}{chapter}
\numberwithin{equation}{chapter}
\newtheorem{thm}{Theorem}
\numberwithin{thm}{section}
\newtheorem{prop}[thm]{Proposition}
\newtheorem{lem}[thm]{Lemma}
\newtheorem{cor}[thm]{Corollary}
\theoremstyle{definition}
\newtheorem{defn}[thm]{Definition}
\theoremstyle{remark}
\newtheorem{rem}[thm]{Remark}
\newcommand{\OPT}{\widetilde{\operatorname{OP}}}
\newcommand{\Bbar}{\overline{B}}
\title{On the Theory of Type $\mathbf{1},\mathbf{1}$-Operators}
\author{~\\[-\baselineskip]\large{\rm by}\\[0.75\baselineskip]
{\bf Jon Johnsen}\\[7\baselineskip]
\begin{figure}[h]
\setlength{\unitlength}{0.00075in}
%
%\begingroup\makeatletter\ifx\SetFigFont\undefined%
%\gdef\SetFigFont#1#2#3#4#5{%
%  \reset@font\fontsize{#1}{#2pt}%
%  \fontfamily{#3}\fontseries{#4}\fontshape{#5}%
%  \selectfont}%
%\fi\endgroup%
{
\begin{picture}(5694,4764)(-170,-10)
\thicklines
\path(-250,2037)(5712,2037)
\path(5592,2007)(5712,2037)(5592,2067)
\put(5880,2037){\makebox(0,0)[c]{$\xi$}}
\path(2712,0)(2712,4737)
\path(2742.000,4617.000)(2712.000,4737.000)(2682.000,4617.000)
\put(2712,4880){\makebox(0,0)[c]{$\eta$}}
\thinlines
\put(2893.731,2017.961){\arc{1437.042}{6.2567}{7.0072}}
\put(3600,1700){\makebox(0,0)[l]{$\arctan(\frac1{1+\varepsilon})$}}
\put(320,4520){\makebox(0,0)[l]{$\xi+\eta=0$}}
\path(237,4512)(4512,237)
\texture{80222222 22555555 55808080 80555555 55222222 22555555 55880888 8555555 
	55222222 22555555 55808080 80555555 55222222 22555555 55080808 8555555 
	55222222 22555555 55808080 80555555 55222222 22555555 55880888 8555555 
	55222222 22555555 55808080 80555555 55222222 22555555 55080808 8555555 }
\shade\path(5412,237)(4062,1137)(3162,1137)(3612,237)
\path(5412,237)(4062,1137)(3162,1137)(3612,237)
\dashline{60.000}(2712,2037)(4062,1137)
\path(2622,2937)(2802,2937) \put(2850,2970){\makebox(0,0)[l]{$\varepsilon^{-1}$}}
\path(4512,237)(4737,12)
\path(2622,1137)(2802,1137)\put(2580,1180){\makebox(0,0)[r]{$-\varepsilon^{-1}$}}
\texture{55888888 88555555 5522a222 a2555555 55888888 88555555 552a2a2a 2a555555 
	55888888 88555555 55a222a2 22555555 55888888 88555555 552a2a2a 2a555555 
	55888888 88555555 5522a222 a2555555 55888888 88555555 552a2a2a 2a555555 
	55888888 88555555 55a222a2 22555555 55888888 88555555 552a2a2a 2a555555 }
\shade\path(12,3837)(1362,2937)(2262,2937)(1812,3837)
\path(12,3837)(1362,2937)(2262,2937)(1812,3837)
\dottedline{45}(12,3837)(-258,4017)
\dottedline{45}(1812,3837)(1722,4017)
\dottedline{45}(3612,237)(3702,57)
\dottedline{45}(5412,237)(5682,57)
\end{picture}
}

\end{figure}
}
\begin{document}
\maketitle
%\begin{abstract}
%\end{abstract}
%
%%%%%%%%%%%%%%%%%%%%%%%%%%%%%%%%%%%%%%%%%%%%%%%%
%     frontmatter
%%%%%%%%%%%%%%%%%%%%%%%%%%%%%%%%%%%%%%%%%%%%%%%%
\begin{center}
{\bf\large On the Theory of Type $\mathbf{1},\mathbf{1}$-Operators}
{~\\[\baselineskip]{\rm by}\\[\baselineskip]
{\bf Jon Johnsen\\
Department of Mathematical Sciences\\
Aalborg University\\
Fredrik Bajers Vej 7G\\
DK--9220 Aalborg {\O}st} 
\\[2\jot]
{E-mail:\tt jjohnsen@math.aau.dk}}
\end{center}
\vfill

\noindent
The author's doctoral dissertation, defended successfully at Aalborg University on 17 June 2011. 
Updated in October 2016.
\bigskip

\noindent
Copyright \copyright 2016 by the author.
\vspace{1cm}

\noindent
{\footnotesize The cover illustration visualizes H{\"o}rmander's microlocalisations around non-compact parts of the twisted diagonal, as used in his analysis of pseudo-differential operators of type $1,1$.}

%\vfill
%This summarising account has been submitted to the Faculty of Engineering and Science, Aalborg University 
%on 1 November 2010 together with the five *-marked articles on page v--vi as the author's doctoral dissertation.   
%\noindent
%Denne afhandling er af Akademisk R{\aa}d ved Det Teknisk-Naturvidenskabelige Fakultet, Aalborg Universitet 
%antaget til forsvar for doktorgraden i naturvidenskab. 
%Forsvaret finder sted p{\aa} Aalborg Universitet, Auditoriet i NOVI, Niels Jernes Vej 10, fredag den 17. juni 2011 kl. 13.00.
%
%\vspace{2\baselineskip}\noindent
%Aalborg den 13. april 2011
%
%\vspace{\baselineskip}\noindent
%Eskild Holm Nielsen\\
%Dekan
%
%\vspace{4\baselineskip}\noindent
%This thesis has been accepted by the Academic Council at the Faculty of Engineering and Science at Aalborg University 
%for public defence  in fulfillment of the requirements for the 
%doctoral degree in science. The defence will take place at 
%Aalborg University in the Auditorium  at NOVI, Niels Jernes Vej 10 on June 17, 2011 at 1.00 p.m.
%
%\vspace{2\baselineskip}\noindent
%Aalborg, April 13, 2011
%
%\vspace{1\baselineskip}\noindent
%Eskild Holm Nielsen\\
%Dean

\newpage
\setcounter{page}{3}
\renewcommand{\thepage}{\roman{page}}
%\tableofcontents
\chapter*{Contents}
\contentsline {chapter}{\tocchapter {Chapter}{}{Dissertation preface}}{v}
\contentsline {chapter}{\tocchapter {Chapter}{}{Scientific work of Jon Johnsen}}{vii}
\contentsline {chapter}{\tocchapter {Chapter}{1}{Introduction}}{1}
\contentsline {section}{\tocsection {}{1.1}{Basics}}{1}
\contentsline {section}{\tocsection {}{1.2}{The historic development}}{2}
\contentsline {section}{\tocsection {}{1.3}{Application to non-linear boundary value problems}}{7}
\contentsline {section}{\tocsection {}{1.4}{The definition of type $1,1$-operators}}{9}
\contentsline {chapter}{\tocchapter {Chapter}{2}{Preliminaries}}{11}
\contentsline {section}{\tocsection {}{2.1}{Notions and notation}}{11}
\contentsline {section}{\tocsection {}{2.2}{Scales of function spaces}}{12}
\contentsline {chapter}{\tocchapter {Chapter}{3}{The general definition of type $1,1$-operators}}{15}
\contentsline {section}{\tocsection {}{3.1}{Definition by vanishing frequency modulation}}{15}
\contentsline {section}{\tocsection {}{3.2}{Consequences for type $1,1$-operators}}{17}
\contentsline {chapter}{\tocchapter {Chapter}{4}{Techniques for pseudo-differential operators}}{21}
\contentsline {section}{\tocsection {}{4.1}{Pointwise estimates of pseudo-differential operators}}{21}
\contentsline {section}{\tocsection {}{4.2}{The spectral support rule}}{24}
\contentsline {section}{\tocsection {}{4.3}{Stability of extended distributions under regular convergence}}{26}
\contentsline {subsection}{\tocsubsection {}{4.3.1}{Other extensions}}{28}
\contentsline {chapter}{\tocchapter {Chapter}{5}{Review of qualitative results}}{31}
\contentsline {section}{\tocsection {}{5.1}{Consistency among extensions}}{31}
\contentsline {subsection}{\tocsubsection {}{5.1.1}{Extension to functions with compact spectrum}}{31}
\contentsline {subsection}{\tocsubsection {}{5.1.2}{Extension to slowly growing functions}}{31}
\contentsline {subsection}{\tocsubsection {}{5.1.3}{Extension by continuity}}{32}
\contentsline {subsection}{\tocsubsection {}{5.1.4}{Extensions through paradifferential decompositions}}{33}
\contentsline {section}{\tocsection {}{5.2}{Maximality of the definition by vanishing frequency modulation}}{34}
\contentsline {section}{\tocsection {}{5.3}{The maximal smooth space}}{36}
\contentsline {section}{\tocsection {}{5.4}{The pseudo-local property of type $1,1$-operators}}{37}
\contentsline {section}{\tocsection {}{5.5}{Non-preservation of wavefront sets}}{38}
\contentsline {section}{\tocsection {}{5.6}{The support rule and its spectral version}}{40}
\contentsline {chapter}{\tocchapter {Chapter}{6}{Continuity results}}{43}
\contentsline {section}{\tocsection {}{6.1}{Littlewood--Paley decompositions of type $1,1$-operators}}{43}
\contentsline {subsection}{\tocsubsection {}{6.1.1}{Dyadic corona decompositions of symbols and operators}}{43}
\contentsline {subsection}{\tocsubsection {}{6.1.2}{Calculation of symbols and remainder terms}}{46}
\contentsline {section}{\tocsection {}{6.2}{The twisted diagonal condition}}{47}
\contentsline {section}{\tocsection {}{6.3}{The twisted diagonal condition of order $\sigma $}}{50}
\contentsline {subsection}{\tocsubsection {}{6.3.1}{Localisation along the twisted diagonal}}{50}
\contentsline {subsection}{\tocsubsection {}{6.3.2}{The self-adjoint subclass $\mathaccentV {tilde}07ES^d_{1,1}$}}{51}
\contentsline {subsection}{\tocsubsection {}{6.3.3}{Paradifferential decompositions for the self-adjoint subclass}}{53}
\contentsline {section}{\tocsection {}{6.4}{Domains of type $1,1$-operators}}{55}
\contentsline {subsection}{\tocsubsection {}{6.4.1}{Proof of Theorem\nonbreakingspace 6.4.1\hbox {}}}{56}
\contentsline {section}{\tocsection {}{6.5}{General Continuity Results}}{57}
\contentsline {section}{\tocsection {}{6.6}{Direct estimates for the self-adjoint subclass}}{60}
\contentsline {chapter}{\tocchapter {Chapter}{7}{Final remarks}}{65}
\contentsline {chapter}{\tocchapter {Chapter}{}{Bibliography}}{67}
\contentsline {chapter}{\tocchapter {Chapter}{}{Resum\'e (Danish summary)}}{71}
%\addcontentsline{toc}{chapter}{Resum\'e (Danish summary)}

\chapter*{Dissertation Preface}
The following is identical to the dissertation submitted on 
November 1, 2010, except that the references \mbox{[18]} and \mbox{[19]} 
on page \mbox{viii} have been updated (and similarly in the bibliography). 

\bigskip\noindent
\emph{Aalborg, 9 May 2011
\hfill
Jon Johnsen}

\vspace{1in}

\begin{center}
 {\bf\large  Preface to the 2016-edition}
\end{center}
\bigskip
\noindent
The changes made in the 2016-edition of this dissertation are only minor.
First of all the mathematical misprints communicated at the defense on 17 June 2011 have been corrected.
Secondly a few typos and issues in the text have been improved.
Thirdly the references \mbox{[18]} and \mbox{[19]} have been updated again, especially the latter, which 
subsequently gave rise to the two publications \mbox{[19a]} and \mbox{[19b]} that now have been added for 
the reader's sake on page \mbox{viii}. 

However, \mbox{[19a]} and \mbox{[19b]} have not been added to the bibliograhy at the end, since the
text still refers to the technical report \mbox{[Joh10c]} (which is the same as \mbox{[19]}) in order to preserve 
the original exposition in the dissertation.

\bigskip\noindent
\emph{Aalborg, 14 October 2016
\hfill
Jon Johnsen}

\chapter*{Scientific work of Jon Johnsen}

\begin{itemize}

\item[{[1]}]
\newblock {\em {The stationary Navier--Stokes equations in $L_p$-related
  spaces}}.
\newblock PhD thesis, University of Copenhagen, Denmark, 1993.
\newblock {Ph.D.-series {\bf 1}}.

\item[{[2]}]
\newblock {\em Pointwise multiplication of Besov and Triebel--Lizorkin spaces}.
\newblock {Math. Nachr.}, {\bf 175} (1995), 85--133.

\item[{[3]}]
\newblock {\em Regularity properties of semi-linear boundary problems
  in Besov and Triebel--Lizorkin spaces}.
\newblock In {{Journ\'ees ``\'equations deriv\'ees partielles'', St.~Jean
  de~Monts, 1995}}, pages XIV1--XIV10. Grp.~de Recherche CNRS no.~1151, 1995.

\item[{[4]}]
\newblock {\em Elliptic boundary problems and the Boutet de Monvel calculus in
  Besov and Triebel--Lizorkin spaces}.
\newblock {Math. Scand.}, {\bf 79} (1996), 25--85.

\item[{[5]}]
(with T.~Runst)
\newblock {\em Semilinear boundary problems of composition type in
  $L_p$-related spaces}.
\newblock {Comm.~ P.~D.~E.}, {\bf 22} (1997), 1283--1324.

\item[{[6]}]
\newblock {\em On spectral properties of Witten-Laplacians, their
  range projections and Brascamp--Lieb's inequality}.
\newblock {Integr.~equ.~oper.~theory}, {\bf 36} (2000), 288--324.

\item[{[7]}]
\newblock {\em Traces of {B}esov spaces revisited}.
\newblock {Z. Anal. Anwendungen}, {\bf 19} (2000), 763--779.

\item[{[8]}]
(with W.~Farkas and W.~Sickel)
\newblock {\em Traces of anisotropic Besov--Lizorkin--Triebel
  spaces---a complete treatment of the borderline cases}.
\newblock {Math. Bohemica}, {\bf 125} (2000), 1--37.

\item[{[9]}]
\newblock {\em Regularity results and parametrices of semi-linear
  boundary problems of product type}. 
\newblock In D.~Haroske and H.-J. Schmeisser, editors, {\em {Function spaces,
  differential operators and nonlinear analysis.}}, pages 353--360.
  Birkh{\"a}user, 2003.

\item[{$^{\pmb{*}}$[10]}]
\newblock {\em Domains of type $1,1$ operators: a case for Triebel--Lizorkin
  spaces}.
\newblock {{C. R. Acad. Sci. Paris S\'er. I Math.}}, {\bf 339} (2004),
 115--118.

\item[{$^{\pmb{*}}$[11]}]
\newblock {\em Domains of pseudo-differential operators: a case for the
  Triebel--Lizorkin spaces}.
\newblock {{J. Function Spaces Appl.}}, {\bf 3} (2005), 263--286.

\item[{[12]}]
(with W.~Sickel)
\newblock {\em A direct proof of {S}obolev embeddings for quasi-homogeneous
  {Lizorkin--Triebel} spaces with mixed norms.}
\newblock {J. Function Spaces Appl.}, {\bf 5} (2007), 183--198.

\item[{[13]}] (with B. Sloth Jensen and Chunyan Wang)
\newblock {\em Moment evolution of Gaussian and geometric Wiener diffusions;}
\newblock In B. Sloth Jensen, T. Palokangas, editors,
    \emph{Stochastic Economic Dynamics}, pages 57-100.
\newblock Copenhagen Business School Press 2007, Fredriksberg, Denmark.

\item[{[14]}]
(with W.~Sickel)
\newblock {\em On the trace problem for {Lizorkin--Triebel} spaces with mixed norms.}
\newblock {Math. Nachr.}, {\bf 281} (2008), 1--28.

\item[{[15]}]
\newblock {\em Parametrices and exact paralinearisation of semi-linear boundary
  problems}.
\newblock {Comm. Part. Diff. Eqs.}, {\bf 33} (2008), 1729--1787.

\item[{$^{\pmb{*}}$[16]}]
\newblock{\em Type {$1,1$}-operators defined by vanishing frequency
  modulation}. 
\newblock In L.~Rodino and M.~W. Wong, editors, {\em New Developments in
  Pseudo-Differential Operators}, volume 189 of {Operator Theory: Advances
  and Applications}, pages 201--246. Birkh{\"a}user, 2008.

\item[{[17]}]
\newblock {\em Simple proofs of nowhere-differentiability for {Weierstrass's}
  function and cases of slow growth.} 
\newblock {J.~Fourier Anal. Appl.} {\bf 16} (2010), 17--33.

\item[{$^{\pmb{*}}$[18]}]
\newblock {\em Pointwise estimates of pseudo-differential operators}. 
\newblock 
Journal of Pseudo-Differential Operators and Applications, {\bf 2} (2011), 377--398.
\newblock (Originally Tech.\ Report R-2010-12, Aalborg University.)

\item[{$^{\pmb{*}}$[19]}]
\newblock {\em Type $1,1$-operators on spaces of temperate distributions.} 
\newblock Tech. Report R-2010-13, Aalborg University, 2010. 
\newline
(Available at {\tt http://vbn.aau.dk/files/38938995/R-2010-13.pdf})

\item[{[19a]}]
\newblock {\em Lp-theory of type 1,1-operators.} 
\newblock Math.\ Nachr., {\bf 286} (2013), 712--729. 
\newline
\newblock DOI:10.1002/mana.201300313

\item[{[19b]}]
\newblock {\em Fundamental results for pseudo-differential operators of type $1,1$.} 
\newblock Axioms 5 (2016), 13 (37 pages).
\newblock  DOI:10.3390/axioms5020013
\end{itemize}

\bigskip\noindent
The five entries marked by $^*$ in the above list constitute 
the author's doctoral dissertation.

Note made in 2016-edition: Subsequently \mbox{[18]} was published as stated, while \mbox{[19]} resulted in 
the two articles \mbox{[19a]} and \mbox{[19b]}.

%%%%%%%%%%%%%%%%%%%%%%%%%%%%%%%%%%%%%%%%%%%%%%%%
%       hovedteksten
%%%%%%%%%%%%%%%%%%%%%%%%%%%%%%%%%%%%%%%%%%%%%%%%
\chapter{Introduction}
 \setcounter{page}{1}  
\renewcommand{\thepage}{\arabic{page}}
\noindent
In this presentation of the subject it is assumed that the
reader is familiar with basic concepts of Schwartz' distribution theory;
Section~\ref{NN-ssect} below gives a summary of this and notation used
throughout. 

\section{Basics}\noindent
An operator of type $1,1$ is a special example of a pseudo-differential
operator, whereby the latter is 
the mapping $u\mapsto a(x,D)u$ defined on Schwartz functions $u(x)$, ie 
on the $u\in \cal S(\Rn)$, by the classical Fourier integral
\begin{equation}
  a(x,D)u(x)= (2\pi)^{-n}\int_{\Rn} e^{\im x\cdot \eta} 
  a(x,\eta)\hat u(\eta)\,d\eta.
  \label{axDu-eq}
\end{equation}
Hereby its \emph{symbol} $a(x,\eta)$ could in general be of type $\rho,\delta$ 
for $0\le \delta\le \rho\le 1$ and, say of order $d\in\R$.
This means that $a(x,\eta)$ is in $C^\infty (\Rn\times \Rn)$ and satisfies
L.~H{\"o}rmander's condition that for all multiindices $\alpha,\beta\in
\N_0^n$ there is a constant $C_{\alpha,\beta}$ such that
\begin{equation}
  |D^\alpha_\eta D^\beta_x a(x,\eta)|\le C_{\alpha,\beta}
  (1+|\eta|)^{m-\rho|\alpha|+\delta|\beta|},
  \quad\text{for}\quad x\in \Rn,\ \eta\in \Rn.
  \label{Srd-eq}
\end{equation}
Such symbols constitute the Fr\'echet space 
$S^d_{\rho,\delta}(\Rn\times \Rn)$. The map
$a(x,D)u$ is also written $\OP(a(x,\eta))u$.

The classical case is $\rho=1$, $\delta=0$, that gives a framework for 
partial differential operators with bounded $C^\infty$ coefficients on $\Rn$. 
For example, when
\begin{equation}
p(x,D)=\sum_{|\alpha|\le d}a_{\alpha}(x)D^{\alpha}
\end{equation}
is applied to $u=\cal F^{-1}\cal Fu$, it is seen at once that
$p(x,D)$ 
has symbol $p(x,\eta)=\sum_{|\alpha|\le d}a_\alpha(x)\eta^{\alpha}$, 
which belongs to $S^d_{1,0}(\Rn\times \Rn)$.
It is well known that this allows inversion of $p(x,D)$ modulo
smoothing operators if $p(x,\eta)$ is elliptic, ie if 
$|p(x,\eta)|\ge c|\eta|^d>0$ for $|\eta|\ge1$.

\bigskip

\emph{A type $1,1$-operator}
is the more general case with $\rho=1$, $\delta=1$ in \eqref{Srd-eq}. 
A basic example of such symbols is due to C.~H.~Ching~\cite{Chi72}; 
it results by taking a 
unit vector $\theta\in \Rn$ and some auxiliary function 
$A\in C^\infty_0(\Rn)$ for which
$A(\eta)\ne0$ only holds in the corona
$\tfrac{3}{4}\le |\eta|\le \tfrac{5}{4}$ and setting 
\begin{equation}
  a_\theta(x,\eta)=\sum_{j=0}^\infty 2^{jd}\exp(-\im 2^j\theta\cdot x)
   A(2^{-j}\eta).
  \label{Ching-eq}
\end{equation}
This symbol is $C^\infty $ since there is at most one non-trivial term 
at each point $(x,\eta)$; it belongs to $S^d_{1,1}$ because $x$-derivatives of
the exponential function increases the order of growth with respect to $\eta$,
since $|2^j\theta|\approx |\eta|$ on $\supp A(2^{-j}\cdot)$.

Type $1,1$-operators are interesting
because they have important applications to non-linear maps and non-linear
partial differential operators, as indicated below,\,---\,but
this is undoubtedly also the origin of this operator class's peculiar 
properties.

To give a glimpse of this, it is recalled that
elementary estimates show that the mapping 
$\OP\colon (a,u)\mapsto a(x,D)u$  in \eqref{axDu-eq} is 
bilinear and  continuous
\begin{equation}
  S^d_{1,1}(\Rn\times \Rn)\times \cal S(\Rn)\to \cal S(\Rn).
  \label{SdS-eq}
\end{equation}
Beyond this, difficulties emerge when one tries to extend a given type 
$1,1$-operator $a(x,D)$ in a
consistent way to $\cal S'(\Rn)\setminus \cal S(\Rn)$. It is also
a tricky task to determine the subspaces $E$ with 
\begin{equation}
 \cal S(\Rn)\subset E\subset \cal S'(\Rn)  
\end{equation}
to which $a(x,D)$ extends.
Conversely, already when $E$  is fixed as $E=L_2(\Rn)$,
there is no known characterisation of \emph{symbols} of the
type $1,1$-operators that extend to $E$. 

Above all, the main technical difficulty of type $1,1$-operators is that
they can change every frequency in $u(x)$, ie every $\eta\in \supp\hat u$,
to the frequency $\xi=0$\,---\,intuitively this can be understood from
\eqref{Ching-eq} because 
the factor $e^{-\im x\cdot 2^{j}\theta}$ oscillates as much as 
$e^{\im x\cdot \eta}$ in \eqref{axDu-eq}.

Consequently, at every singular point $x_0$ of $u$
they may change the high frequencies causing the singularity, hence change
its nature (known as non-preservation of wavefront sets). 
However, from this perspective it might seem surprising
that they cannot \emph{create} singularities; for open sets $\Omega\subset \Rn$
this means that
\begin{equation}
  \quad\text{$u$ is $C^\infty $ in $\Omega$}\quad
\implies
  \quad\text{$a(x,D)u$ is $C^\infty $ in $\Omega$}.
  \label{C8-eq}
\end{equation}
(This is known as the \emph{pseudo-local} property). 
As \eqref{C8-eq} obviously holds true whenever $a(x,D)$ in \eqref{axDu-eq}
is applied to a Schwartz function, cf \eqref{SdS-eq}, it is clear that 
\eqref{C8-eq} pertains to the $u\in \cal S'\setminus \cal S$ on which
$a(x,D)$ can be defined, and that \eqref{axDu-eq} alone is of little use in
the proof of \eqref{C8-eq}.

Besides the challenge of describing the unusual properties of type
$1,1$-operators, they also have interesting applications
as recalled in the next two sections.

\section{The historic development}
\label{review-ssect}\noindent
The review below is mainly cronological and deliberately brief, 
but hopefully it can serve the reader as a point of reference in
Chapters~\ref{vfm-sect}--\ref{final-sect}. 
The author's contributions are given in footnotes where comparisons make sense
(a thorough review will follow in Section~\ref{dozen-ssect} below).

\bigskip

Symbols of type $\rho,\delta$ were introduced in 1966 in a seminar on
hypoelliptic equations by L.~H{\"o}rmander \cite{Hrd}. 
(Unlike the definition of $S^d_{\rho,\delta}(\Rn\times \Rn)$ in 
\eqref{Srd-eq},
the estimates were local in $x$ as customary at that time.)

The pathologies of type $1,1$-operators were revealed around 1972--73 when 
C.~H.~Ching \cite{Chi72} in his thesis gave examples of symbols
$a_\theta(x,\eta)$ in $S^0_{1,1}$ for which
the corresponding operators are unbounded in $L^2(\Rn)$.
Essentially these symbols had the form in \eqref{Ching-eq}.

Moreover, E.~M.~Stein
showed $C^s_*$-boundedness, $s>0$, for all operators of order $d=0$,
in lecture notes from Princeton University (1972-73).
This result is now available in \cite[VII.\S 1.3]{Ste93}, 
albeit with a misprint
in the reference to the lecture notes (as noticed in \cite{JJ08vfm}).

Afterwards C.~Parenti and L.~Rodino \cite{PaRo78} discovered that some type
$1,1$-operators do not preserve wavefront sets.% 
\footnote{This is extended to all $d\in \R$, $n\in \N$ in \cite[Sect.~3.2]{JJ08vfm}
with exact formulae for the wavefront sets.}
As the background for this,
the pseudo-local property of type $1,1$-operators was anticipated in
\cite{PaRo78} with an incomplete argument.%
\footnote{The first full proof appeared in \cite[Thm.~6.4]{JJ08vfm}.}

Around 1980, Y.~Meyer \cite{Mey80,Mey81}
obtained the fundamental property that a composition
operator $u\mapsto F(u)$, for a fixed $C^\infty$-function $F$ with 
$F(0)=0$, acting on $u\in\bigcup_{s>n/p} H^s_{p}(\Rn)$, can be
written 
\begin{equation}
  F(u)=a_u(x,D)u  
\end{equation}
for a specific $u$-dependent symbol $a_u\in S^0_{1,1}$. Namely, when
$1=\sum_{j=0}^\infty \Phi_j$ is a Littlewood--Paley partition of unity,
then $a_u(x,\eta)$ is an elementary symbol in the sense of R.~R.~Coifman and
Y.~Meyer~\cite{CoMe78}, ie it is given by the formula
\begin{equation}
  a_u(x,\eta)=\sum_{j=0}^\infty m_j(x)\Phi_j(\eta)
\end{equation}
with the smooth multipliers   
\begin{equation}
  m_j(x)=\int_0^1 F'(\sum_{k<j}\Phi_k(D)u(x)+t\Phi_j(D)u(x))\,dt.
\end{equation}
This gave a convenient proof of the fact that the non-linear map 
$u\mapsto F(u)$ sends $H^s_p(\Rn)$ into itself for $s>n/p$. 
Indeed, this follows as
Y.~Meyer for general $a\in S^d_{1,1}$, 
using reduction to elementary symbols,
established continuity 
\begin{equation}
 H^{t+d}_r(\Rn)\xrightarrow[]{a(x,D)} 
 H^t_r(\Rn)\qquad\text{for $t>0$, $1<r<\infty$}. 
  \label{Meyer-eq}
\end{equation}
So for $a=a_u$ and $t=s$, $r=p$ this yields at once that $F(u)=a_u(x,D)u$
also belongs to $H^s_p$. For integer $s$ this could also be seen directly by
calculating derivatives up to order $s$ of $F(u)$,
but for non-integer $s>n/p$, this use of pseudo-differential operators  
is a particularly elegant proof method.%
\footnote{In \cite[Sect.~9]{JJ08vfm} these results are deduced from
the precise definition of type $1,1$-operators in \cite{JJ08vfm}, 
together with a straightforward proof of \emph{continuity} on $H^s_p$ of 
$u\mapsto F\circ u$ in Theorem~9.4 there.}

It was also realised then that type $1,1$-operators show up in J.-M.~Bony's 
paradifferential calculus \cite{Bon} and microlocal inversion 
together with propagations of singularites for non-linear partial
differential equations of the form $F(x,u(x),\dots,\partial^\alpha_xu(x))=0$. 

In the wake of this, in  1983,  G.~Bourdaud proved
boundedness on the Besov space $B^{s}_{p,q}(\Rn)$ for
$s>0$, $p,q\in [1,\infty ]$ in his thesis, cf \cite{Bou83,Bou88}.
He also gave a simplified proof of \eqref{Meyer-eq},
and noted that by duality and interpolation every type $1,1$-operator
\begin{equation}
  a(x,D)\colon C^\infty_0(\Rn)\to\cal D'(\Rn)  
\end{equation}
with $d=0$ is bounded on $H^s_p(\Rn)$ for all real $s$, $1<p<\infty $, in
particular on $L_2$, if its adjoint 
$a(x,D)^*\colon C^\infty_0(\Rn)\to\cal D'(\Rn)$ is also of type $1,1$.

Denoting this subclass of symbols by $\tilde S^0_{1,1}$, or more generally
\begin{equation}
  \OP(\tilde S^d_{1,1})=\OP(S^d_{1,1})\cap \OP(S^d_{1,1})^*,
\end{equation}
he proved that $\OP(\tilde S^0_{1,1})$ is a maximal self-adjoint subalgebra
of $\BBb{B}(L_2(\Rn))\cap \OP(S^0_{1,1})$. Hence self-adjointness suffices
for $L_2$-boundedness, but it is \emph{not} necessary:

G.~Bourdaud also showed that the auxiliary function $A$ in 
Ching's counter-example can be chosen for $n=1$ so that  
$a_\theta(x,D)$ \emph{does} belong to $\BBb{B}(L_2)\cap \OP(S^0_{1,1})$ even
though neither $a_\theta(x,D)^*$ nor $a_\theta(x,D)^2$ is of type $1,1$.

In addition G.~Bourdaud analysed the borderline $s=0$ and showed that
every $a(x,D)$ of order $0$ is bounded $B^0_{p,1}(\Rn)\to L_p(\Rn)$ for all
$p\in [1,\infty ]$, where the Besov space $B^0_{p,1}$ 
is slightly smaller than $L_p$; whilst $a_\theta(x,D)$ 
was proven unbounded on $B^0_{2,1}$.%
\footnote{In \cite{JJ05DTL} this was
sharpened in an optimal way to continuity $F^0_{p,1}\to L_p$, where the
Lizorkin--Triebel space $F^0_{p,1}$ fulfils $B^0_{p,1}\subset
F^0_{p,1}\subset L_p$ with strict inclusions for $1<p<\infty $.} 

In their fundamental paper on the $T1$-theorem G.~David and J.-L.~Journ\'e
\cite{DaJo84} concluded that $T=a(x,D)\in \OP(S^0_{1,1})$ is
bounded on $L_2$ if and only if $T^*(1)\in \op{BMO}(\Rn)$, the space of
functions (modulo constants) of bounded mean oscillation. 
(Formally this condition is weaker than G.~Bourdaud's $T^*\in \OP(S^0_{1,1})$; 
but none of these are expressed in terms of the symbol.)
Inspired by this, G.~Bourdaud \cite{Bou88} noted that certain singular
integral operators and hence every
$a(x,D)\in \OP(S^0_{1,1})$ extends to a map $\cal O_M(\Rn)\to\cal D'(\Rn)$,
where $\cal O_M$ denotes the space of $C^\infty $-functions of polynomial
growth.%
\footnote{In \cite[Thm.~2.6]{JJ10tmp} this was generalised to a map from the
maximal space of smooth functions, more precisely to a map 
$C^\infty \bigcap\cal S'\to C^\infty $ that moreover leaves $\cal O_M$
invariant.} 

Concerning $L_p$-estimates,
T.~Runst \cite{Run85ex} treated continuity 
in the more general Besov spaces $B^{s}_{p,q}$ for $p\in \,]0,\infty]$ 
and in Lizorkin--Triebel spaces $F^{s}_{p,q}$ for $p\in \,]0,\infty[\,$,
although the necessary control of the frequency changes created by $a(x,D)$
was not quite achieved in \cite{Run85ex}.%
\footnote{This flaw was explained and remedied in \cite[Rem.~5.1]{JJ05DTL}
and supplemented by $F^{s}_{p,q}$ and $B^{s}_{p,q}$ continuity
results for operators fulfilling L.~H{\"o}rmander's twisted diagonal
condition; with a further extension to operators in the self-adjoint
subclass $\OP(\tilde S^d_{1,1})$ to follow in \cite{JJ10tmp}.}
J.~Marschall \cite{Mar91} worked on further generalisations to the weighted,
anisotropic cases.%
\footnote{\cite{Mar91} contains flaws similar to \cite{Run85ex} as explained in
\cite[Rem.~5.1]{JJ05DTL}; \cite[Rem.~4.2]{JJ05DTL} also pertains to
\cite{Mar91}.}

L.~H{\"o}rmander treated type $1,1$-operators four times, first 
in lecture notes \cite{H87} from
University of Lund (1986--87); the results appeared in \cite{H88}
with important improvements in \cite{H89} the year after. 
When the notes were published after a decade \cite{H97}, the chapter
on type $1,1$-operators was rewritten with a new presentation including
the results from \cite{H89} and a few additional conclusions.

He sharpened G.~Bourdaud's analysis of $a_\theta(x,D)$
by proving that continuity $H^s\to\cal D'$ for $s\le0$
only holds if $s> -r$ where $r$ is the order of the zero
of the auxiliary function $A$ at the point $\theta$ on the unit sphere.%
\footnote{$a_\theta(x,D)$ can moreover be taken \emph{unclosable} in $\cal S'$, 
cf \cite[Sect.~3.1]{JJ08vfm}, where it was also shown that extension to $d\in
\R$ and $\theta\ne0$ was useful for a precise version of the
non-preservation of wavefront sets observed in \cite{PaRo78}.}

Moreover, L.~H{\"o}rmander characterised the $s\in \R$ (except for a limit
point $s_0$) for which a given $a(x,D)\in \OP(S^d_{1,1})$ 
extends by continuity to a bounded operator $H^{s+d}\to H^s$.
More precisely he obtained a largest interval $\,]s_0,\infty [\,\ni s$
together with constants $C_s$ such that
\begin{equation}
  \nrm{a(x,D)u}{H^s}\le C_s\nrm{u}{H^{s+d}}\quad\text{for all}\quad
  u\in \cal S(\Rn);
  \label{Hest-eq}
\end{equation}
and conversely that existence of such a $C_s$ implies $s\ge s_0$.

In order to give conditions in terms of the symbols, L.~H{\"o}rmander
introduced,
as a novelty in the analysis of pseudo-differential operators, 
the \emph{twisted diagonal}
\begin{equation}
  \cal T=\{\,(\xi,\eta)\in \Rn\times\Rn\mid \xi+\eta=0\,\}.
\end{equation} 
This was shown to play an important role, for if
eg the partially Fourier transformed symbol
$\hat a(\xi,\eta):=\cal F_{x\to\xi}a(x,\eta)$ 
vanishes in a conical neighbourhood of a non-compact part of $\cal T$, that
is, if  for some $B\ge 1$,
\begin{equation}
 B (|\xi+\eta|+1)< |\eta|\implies \hat a(x,\eta)=0,
  \label{tdc-cnd}
\end{equation}
then $a(x,D)\colon H^{s+d}\to H^s$ is continuous for
every $s\in \R$ (ie $s_0=-\infty $). 

Moreover, continuity for all $s>s_0$ was shown in \cite{H89} 
to be equivalent to the twisted diagonal condition of order $\sigma=s_0$,
which is a specific asymptotic behaviour of 
$\hat a(\xi,\eta)$ at $\cal T$. 
This is  formulated in the style of the fundamental
Mihlin--H{\"o}rmander multiplier theorem: there is a constant
$c_{\alpha,\sigma}$  such that for $0<\varepsilon<1$,
\begin{equation}
    \sup_{R>0,\;x\in \Rn}R^{-d}\big(
  \int_{R\le |\eta|\le 2R} |R^{|\alpha|}D^\alpha_{\eta}a_{\chi,\varepsilon}
  (x,\eta)|^2\,\frac{d\eta}{R^n}
  \big)^{1/2}
  \le c_{\alpha,\sigma} \varepsilon^{\sigma+n/2-|\alpha|}.
  \label{wtdc-cnd}
\end{equation}
Hereby $a_{\chi,\varepsilon}(x,\eta)$ denotes a specific 
localisation of $a(x,\eta)$ to a conical neighbourhood of $\cal T$.
Cf Section~\ref{wtdc-ssect} below. 

L.~H{\"o}rmander also characterised the case
$s_0=-\infty $ as the one with symbol in the class
$\tilde S^d_{1,1}$ and as the one where \eqref{wtdc-cnd} holds for
all $\sigma\in \R$; 
roughly speaking such symbols vanish to infinite order at $\cal T$.
A concise presentation was given in \cite[Thm.~9.4.2]{H97}.

For operators  with additional properties, a symbolic calculus was also
developed together with microlocal regularity results at non-characteristic
points as well as a sharp G{\aa}rding inequality. Although important
for the general theory of type $1,1$-operators, this is, however an area
adjacent to the present one. 
So is Chapter~10--11 in \cite{H87,H97}
where the paradifferential calculus, linearisation and propagation of
singularities of J.-M.~Bony \cite{Bon} is exposed with consistent use of
type $1,1$-operators. 
(A partly similar approach was used by M.~Taylor \cite{Tay91} and in the
treatment  of P.~Auscher and M.~Taylor \cite{AuTa95}
of commutator estimates by paradifferential operators.)

Shortly after \cite{H88,H89},
R.~Torres \cite{Tor90} also estimated $a(x,D)u$ for $u\in \cal S(\Rn)$,
using the atoms and molecules of 
M.~Frazier and B.~Jawerth \cite{FJ1,FJ2}.
This gave unique extensions by continuity to maps 
$A\colon F^{s+d}_{p,q}(\Rn)\to F^{s}_{p,q}(\Rn)$ for all $s$ so large that, 
for all multiindices $\gamma$,
\begin{equation}
  0\le|\gamma|<\max(0,\frac np-n,\frac nq-n)-s
  \implies \cal F(a(x,D)^*x^\gamma) \in \cal E'(\Rn).
\end{equation}
Obviously this refers
to the adjoint $a(x,D)^*\colon \cal S'\to \cal S'$, which in general
is an even less understood operator than those of type $1,1$.
However, as noted in \cite{Tor90}, this implies vanishing of $D^\gamma_\xi\hat
a(\xi,-\xi)$ for large $\xi$ if the symbol has compact support in $x$. 
L.~Grafakos and R.~Torres \cite{GrTo99} made a similar study in
corresponding homogeneous Besov and Lizorkin--Triebel spaces, using symbols
in the homogeneous symbol class $\dot S^d_{1,1}$, defined by removing
``$1+$'' from \eqref{Srd-eq} for 
$a(x,\eta)\in C^\infty (\Rn\times (\Rn\setminus \{0\}))$.

G.~Garello \cite{Gar94,Gar98} worked on an anisotropic version of the 
results in
\cite{PaRo78,H88,H89} for locally estimated symbols, although with
flawed arguments for the non-preservation of wavefront sets.%
\footnote{This was noted in \cite[p.~214]{JJ08vfm}.} 

A.~Boulkhemair \cite{Blk95,Blk99} worked (in a general context) 
on the use of symbols $a\in S^d_{1,1}$ in the Weyl calculus, ie in
$\op{Op}^W(a)=(2\pi)^{-n}\iint e^{\im(x-y)\cdot \eta}a(\tfrac{x+y}{2},\eta)
u(y)\,dy\,d\eta$. 
It was shown for Ching's symbol $a_\theta$ with $d=0$, cf \eqref{Ching-eq}, 
that when $A(\theta)=1$ also the operator 
$\op{Op}^W(a_\theta)$ is bounded on $H^s$ if and only if $s>0$.
In addition it was observed that
Weyl operators are worse for type $1,1$-symbols 
since certain $b(x,D)\in \op{Op}^W(S^0_{1,1})$
are unbounded on $H^s$ for \emph{every}
$s\in\R$; as noted with credit to J.~M.~Bony, this results for 
$b=\Re a_{\theta}$ or $b=\Im a_\theta$ 
because $\op{Op}^W(b)^*=\op{Op}^W(\bar b)$.
Condition \eqref{tdc-cnd} was shown to
split into two similar conditions (pertaining to $\eta\pm \frac{1}{2}\xi=0$)
that give boundedness in $H^s$ for $s>0$ and $s<0$, 
hence for all $s$ when both hold.

Very recently, J.~Hounie and R.~A.~dos Santos Kapp \cite{HoSK09} utilised
atomic decompositions of the local Hardy space $h_p(\Rn)$, which identifies
with $F^0_{p,2}$ for $0<p<\infty $, to derive existence of $h_p$-bounded 
extensions of $a(x,D)$ in the self-adjoint subclass of order $d=0$ from the
$L_2$-estimates of L.~H{\"o}rmander~\cite{H89,H97}.%
\footnote{As a special case of \cite[Thm.~7.9]{JJ10tmp} 
it was shown that every $a(x,D)$
in $\OP(\tilde S^0_{1,1})$ is continuous
\begin{equation*}
  h_p(\Rn)\to F^{s'}_{p,2}(\Rn)
\end{equation*}
for every $s'<0$ if $0<p\le 1$. For $1<p<\infty$ this was also shown for $s'=0$
in \cite[Thm.~7.5]{JJ10tmp}.
}

\bigskip

The above review summarises the scientific contributions, which
resulted from the author's search in the literature for
works devoted to type $1,1$-operators. 

The review is intended to be complete, and the
contributions of the author from 2004-2009
\cite{JJ04DCR,JJ05DTL,JJ08vfm,JJ10pe,JJ10tmp} are described accordingly.

It is clear (from the review) that 
a general definition of $a(x,D)u$ for a given symbol $a\in
S^d_{1,1}(\Rn\times\Rn)$ has not been described in the previous literature.
The estimates of L.~H{\"o}rmander \cite{H88,H89}, cf \eqref{Hest-eq},  
gave a uniquely defined bounded
operator $A\colon H^{s+d}\to H^{s}$; and an extension of $A$ to 
$\bigcup_{s>s_0} H^{s+d}(\Rn)$ for some limit $s_0$ or possibly even
$s_0=-\infty$, depending on $a(x,\eta)$.
Similarly the approach of R.~Torres 
could at most define $A$ on $\bigcup F^{s}_{p,q}(\Rn)$. 

Later elementary arguments in \cite[Prop.~1]{JJ05DTL} gave that every type
$1,1$-operator is defined on $\cal F^{-1}\cal E'(\Rn)$, and even on
$C^\infty \bigcap \cal S'$.
These spaces clearly contain all polynomials 
$\sum_{|\alpha|\le k}c_\alpha x^\alpha$
that do not 
belong to $\bigcup H^s$, nor to $\bigcup F^{s}_{p,q}$.

This development therefore only emphasises the need for a unifying point of
view, that is, 
a general definition of type $1,1$-operators without reference to spaces
other than $\cal S'(\Rn)$.

\section{Application to non-linear boundary value problems}
\label{bvp-ssect}\noindent
In addition to the applications developed by Y.~Meyer \cite{Mey80,Mey81} and
J.-M.~Bony \cite{Bon},
type $1,1$-operators were recently used by the author
in the analysis of semi-linear boundary problems \cite{JJ08par}. 
More precisely, their pseudo-local property was shown to be useful for the
derivation of local regularity improvements.

To explain this, one can as a typical example 
consider a perturbed $k$-harmonic Dirichl\'et problem in a 
bounded $C^\infty $-region $\Omega\subset \Rn$,
\begin{equation}
  \begin{split}
  (\mlap)^k u+u^2&=f \quad\text{in}\quad \Omega,
\\
  \gamma_0 u&=\varphi_0\quad\text{on}\quad \partial\Omega,
\\
  &\ \:\vdots
\\
  \gamma_{k-1}u&=\varphi_{k-1}\quad\text{on}\quad \partial\Omega.
  \end{split}
\end{equation}
Here $\lap=\partial^2_{x_1}+\dots +\partial^2_{x_n}$ 
denotes the Laplacian while $\gamma_j$ stands for the normal
derivative of order $j$ at the boundary.

For such problems the parametrix construction of \cite{JJ08par} yields the
solution formula 
\begin{equation}
  u= P^{(N)}_u(R_k f +K_0\varphi_0+\dots +K_{k-1}\varphi_{k-1})
    +(R_kL_u)^Nu,
  \label{param-eq}
\end{equation}
where the parametrix $P^{(N)}_u$ is the $u$-dependent linear operator 
\begin{equation}
 P^{(N)}_u=I +R_kL_u+\dots +(R_k L_u)^{N-1}.
\end{equation}
Here it was a crucial point of \cite{JJ08par} to use the so-called
\emph{exact paralinearisation} $L_u$ of $u^2$ as a main
ingredient. In effect this means that $L_u$ 
is a localised type $1,1$-operator, as reviewed in \eqref{rAl-eq} below. 
(This is a result from \cite[Thm.~5.15]{JJ08par}, but it would lead too far
to explain its deduction from the rather technical paralinearisation.)
With a convenient sign convention $L_u$
fulfils $-L_u(u)=u^2$. 

Moreover, the other terms $R_k$, $K_0$,\dots ,$K_{k-1}$ in the formula are
the solution operators of the linear problem.%
\footnote{The operators $R_k$, $K_0$,\dots , $K_{k-1}$ can be explicitly
described in local coordinates at the boundary $\partial\Omega$. This is the
subject of the calculus of L.~Boutet de Monvel \cite{BM71} of
pseudo-differential boundary operators; it has been amply described
eg in works of G.~Grubb \cite{G2,G1,G97,G09}. The calculus was exploited in
\cite{JJ08par} but details are left out here because it would be too far
from the topic of type $1,1$-operators.}
It is perhaps instructive to reduce to the linear case by formally setting
$L_u\equiv 0$ above: this shows that the parametrix $P^{(N)}_u$ and the
remainder $(R_kL_u)^N$ simply
modify $u$ in the presence of the non-linear term.

Formula \eqref{param-eq} also has the merit of showing \emph{directly} 
that the regularity of $u$ will be
uninfluenced by the non-linear term $u^2$. Or more precisely,
$u$ will belong to the same Sobolev space $H^s_p$ as the corresponding
linear problem's solution $v$, ie
\begin{equation}
  v=R_k f+K_0\varphi_0+\dots +K_{k-1}\varphi_{k-1}.
\end{equation}
Indeed, in \eqref{param-eq} the parametrix $P^{(N)}_u$ is applied to $v$,
but it is
of order $0$ for every $N$, hence sends each Sobolev space $H^s_p$ into
itself; 
while the remainder $(R_m L_u)^{N}u$ will be
in $C^k(\overline{\Omega})\subset H^s_p(\overline{\Omega})$ 
for some fixed $k$ if $N$ is taken large enough
(in both cases because $R_k L_u$ will have negative order if the given
solution $u$ a priori meets a rather weak regularity assumption; cf
\eqref{rAl-eq} below). 
These inferences may be justified using parameter
domains as in \cite{JJ08par}, to keep track of the spaces on which various
steps are valid. 

Moreover, to explain the usefulness of type $1,1$-operators here, it is
noted that in subregions $\Xi\Subset \Omega$, extra regularity properties of
$f$ carry over to $u$. Eg, if $f|_{\Xi}$ is $C^\infty $ so is $u|_{\Xi}$. 
Other examples involve improvements in $\Xi$ of eg the Sobolev space regularity.

Such local properties can also be deduced from formula \eqref{param-eq},
because $L_u$ factors through a specific type $1,1$-operator $A_u$
(this is in itself a minor novelty, because of the boundary).
That is, when $\rOm$ denotes restriction to $\Omega$ and 
$\lOm$ stands for a linear extension operator from $\Omega$, then
\begin{equation}
  L_u=\rOm \circ A_u \circ \lOm,  \qquad A_u\in \OP(S^{d} _{1,1}); 
  \label{rAl-eq}
\end{equation}
here the order $d\ge (\tfrac{n}{p_0}-s_0)_+$ if $u$ is given in $H^{s_0}_{p_0}$,
though with strict inequality if $s_0=n/p_0$.

To exploit this, one may simply take cut-off functions $\psi,\chi\in
C^\infty_0(\Xi)$ with $\chi=1$ around $\supp\psi$.
Insertion of these into
\eqref{param-eq}, cf \cite[Thm.~7.8]{JJ08par}, gives
\begin{multline}
  \psi u= \psi P^{(N)}_u(R_k (\chi f))
+\psi P^{(N)}_u(R_k ((1-\chi) f)) 
\\
+\psi P^{(N)}_u(K_0\varphi_0+\dots +K_{k-1}\varphi_{k-1})
    +\psi (R_k L_u)^Nu.
  \label{param'-eq}
\end{multline}
As desired
$\psi u$ has the same regularity as the \emph{first} term on the
right-hand side. Indeed, the last term has the same regularity
as the first if $N$ is large, and\,---\,since
the set of pseudo-local operators is invariant under sum and composition, so
that pseudo-locality of $A_u$ by \eqref{rAl-eq} carries over to $P^{(N)}$\,---\,the disjoint
supports of $\psi$ and $1-\chi$ will imply that the second term is $C^\infty$; 
the $K_j\varphi_j$ always contribute $C^\infty$-functions in the interior, 
to which set $\psi $ localises while $P^{{(N)}}$ is pseudo-local.

Therefore the \emph{pseudo-local} property of $A_u$ will lead easily to improved
regularity of $u$ locally in $\Xi$, to the extent this is permitted by the
data $f$. Hence it was a serious drawback that the literature
had not established pseudo-locality in the $1,1$-context.

But motivated by the above application in \eqref{param'-eq},
the pseudo-local property of general type $1,1$-operators was 
proved recently by the author in \cite{JJ08vfm}. The only previous work
mentioning this subject was that of
C.~Parenti and L.~Rodino \cite{PaRo78}, who three decades ago 
anticipated the result but merely gave an incomplete argument, partly because
they did not assign a specific meaning to $a(x,D)u$ 
for $u\in \cal S'\setminus C^\infty_0$.

\section{The definition of 
type $\mathbf{1},\mathbf{1}$-operators}\noindent
As seen at the end of the last two sections, it will be well motivated to
introduce a general definition of type $1,1$ \emph{operators}.

This was
first done rigorously in \cite{JJ08vfm}, 
taking into account that in some cases they can
only be defined on proper subspaces $E\subset \cal S'(\Rn)$.
Indeed, it was proposed to stipulate that
$u$ belongs to the domain $D(a(x,D))$ and to set
\begin{equation}
  a(x,D)u:= \lim_{m\to\infty } \OP(\psi(2^{-m}D_x)a(x,\eta)\psi(2^{-m}\eta))u
  \label{a11-id}
\end{equation}
if this limit exists, say in $\cal D'(\Rn)$, for all the
$\psi\in C^\infty_0(\Rn)$ with $\psi=1$ in a neighbourhood of the origin 
and if it does not depend on such $\psi$.

The definition, its consequences and the techniques developed are discussed in
the author's contributions
\cite{JJ04DCR,JJ05DTL,JJ08vfm,JJ10pe,JJ10tmp},
where the first is an early announcement of the results in the second.
These works are summarised in Chapter~\ref{vfm-sect}. 

\chapter{Preliminaries}
  \label{prel-sect}
\section{Notions and notation}
\label{NN-ssect}\noindent
As usual $t_{\pm}=\max(0,\pm t)$ will denote the positive and negative part
of $t\in\R$; and $[t]$ will stand for the largest integer $k\in\Z$ such that
$k\le t$. The characteristic function of a set $M\subset\Rn$ is denoted
$1_M$; by $M\Subset \Rn$ it is indicated that the subset $M$ is precompact.

The Lebesgue spaces $L_p(\Rn)$ with $0<p\le \infty $ consist of the 
(equivalence classes of) measurable functions having finite (quasi-)norm
$\nrm{f}{p}=(\int_{\Rn}|f(x)|^p\,dx)^{1/p}$ for $0<p<\infty $, respectively 
$\nrm{f}{\infty }=\op{ess}\sup_{\Rn} |f|$. 

In general $\nrm{f+g}{p}\le 2^{(\fracpi-1)_+}(\nrm{f}{p}+\nrm{g}{p})$ 
for $0<p<\infty $. Hence for $0<p<1$ the map $f\mapsto
\nrm{f}{p}$ is only a quasi-norm, but it  does have a subadditive power as
$\nrm{f+g}{p}^p\le \nrm{f}{p}^p+\nrm{g}{p}^p$ for $0<p<1$.

For every multiindex $\alpha\in \N_0^n$ it is convenient to set 
$x^\alpha=x_1^{\alpha_1}\dots x_n^{\alpha_n}$ and to introduce the
differential operator 
$D^\alpha=(-\im)^{|\alpha|}\partial^{\alpha_1}_{x_1}\dots
\partial^{\alpha_n}_{x_n}$ where $|\alpha|=\alpha_1+\dots +\alpha_n$. 

The space of smooth functions with  compact support is denoted 
by $C^\infty_0(\Omega)$ or $\cal D(\Omega)$, 
when $\Omega\subset\Rn$ is open;
$\cal D'(\Omega)$ is the dual space of distributions on $\Omega$.
Throughout
$\ang{u,\varphi}$ denotes the action of $u\in\cal D'(\Omega)$
on $\varphi\in C^\infty_0(\Omega)$. Therefore  $\dual{\cdot }{\cdot }$ is a
bilinear form; the sesquilinear form $\scal{\cdot }{\cdot }$ is used for
the action of conjugate linear functionals on $C^\infty_0$ and $\cal S$, 
consistently with the \emph{inner}
product on the Hilbert space $L_2(\Rn)$ (both $\dual{\cdot }{\cdot }$ and
$\scal{\cdot }{\cdot }$ are called scalar products for convenience). 

The space of slowly increasing functions, ie $C^\infty$-functions $f$
fulfilling $|D^\alpha f(x)|\le c_\alpha\ang{x}^{N_\alpha}$ for all
mulitindices $\alpha$ is written $\cal O_M(\Rn)$;
hereby $\ang{x}=(1+|x|^2)^{1/2}$.

The Schwartz space of rapidly decreasing $C^\infty$-functions 
is written $\cal S$ or $\cal S(\Rn)$, while its dual space $\cal S'(\Rn)$ 
constitutes the space of tempered distributions.
The Fourier transformation of $u$ is denoted by 
$\cal Fu(\xi)=\hat u(\xi)=\int_{\Rn}
e^{-ix\cdot\xi}u(x)\,dx$, with inverse  $\cal F^{-1}v(x)=\check v(x)$. 

The subspace $\cal E'(\Rn)$ consists of the distributions of compact
support; it is the dual of $C^\infty (\Rn)$. The \emph{spectrum} of $u\in
\cal S'$ is by definition $\supp \cal Fu$; hence $\cal F^{-1}(\cal E')$ is the
space af distributions with compact spectrum (though it equals $\cal F(\cal
E')$ as a set, the slightly more pedantic $\cal F^{-1}\cal E'$ is preferred
to emphasize the role of the Fourier transformation).

Pseudo-differential operators are given on $\cal S(\Rn)$
by \eqref{axDu-eq}, with symbols
fulfilling \eqref{Srd-eq}. On
$S^d_{\rho,\delta}=S^d_{\rho,\delta}(\Rn\times \Rn)$ the Frech\'et
topology is defined by a family of seminorms $p_{\alpha,\beta}(a)$, that are
given as the smallest possible constants $C_{\alpha,\beta}$ in
\eqref{Srd-eq}. For short $S^\infty _{\rho,\delta}:=\bigcup_{d\in\R}
S^d_{\rho,\delta}$ is used for the set of all symbols (of type $\rho,\delta$).
The symbol class $S^{-\infty }:=
\bigcap_{d} S^d_{1,0}=\bigcap_{d,\rho,\delta}S^d_{\rho,\delta}$ defines the
smoothing operators; they are
bounded $H^s\to H^t$ for all $s,t\in\R$.

The pseudo-differential operators $a(x,D)$ are in
bijective correspondence with their distribution kernels, that are given by 
\begin{equation}
  K(x,y)=\cal F^{-1}_{\eta\to x-y}a(x,\eta).  
\end{equation}
By definition the kernel satisfies the kernel relation
\begin{equation}
  \dual{a(x,D)\psi}{\varphi}=\dual{K}{\varphi\otimes \psi}
\quad\text{for all}\quad
\varphi,\psi\in C^\infty_0(\Rn)  .
\end{equation}
As customary, the support
$\supp K\subset \Rn\times \Rn$ is seen as a relation mapping sets in $\Rn_y$
to other sets in $\Rn_x$. More precisely, each subset $M\subset \Rn_y$ is
mapped to
\begin{equation}
 \supp K\circ M=\{\,x\in \Rn\mid \exists y\in M\colon (x,y)\in \supp K\,\}.  
\end{equation}

The singular support of $u\in \cal D'$, denoted $\singsupp u$, is the
complement of the largest open set on which $u$ acts a $C^\infty$-function.
The wavefront set $\op{WF}(u)$ is the complement of those $(x,\xi)\in
\Rn\times (\Rn\setminus \{0\})$ for which $\cal F(\varphi u)$ decays rapidly
in a conical neighbourhood of $\xi$ for some $\varphi\in C^\infty_0$ for
which $\varphi(x)\ne0$.

Every pseudo-differential operator considered here  is continuous 
$a(x,D)\colon \cal S(\Rn)\to\cal S(\Rn)$, hence
has a continuous adjoint $a(x,D)^*\colon \cal S'(\Rn)\to\cal S'(\Rn)$ with
respect to the scalar product $\scal{\cdot }{\cdot }$; 
this fulfils
\begin{equation}
  \scal{a(x,D)^*\varphi}{\psi}=\scal{\varphi}{a(x,D)\psi},
  \qquad \varphi,\psi\in \cal S(\Rn).
\end{equation}
Its restriction $a(x,D)^*\colon \cal S(\Rn)\to\cal S'(\Rn)$ is also
continuous, hence is a pseudo-differential operator by Schwartz' kernel
theorem; cf \cite[18.1]{H}. More precisely,
\begin{equation}
  a(x,D)^*=\OP(b(x,\eta))\quad\text{for}\quad
  b(x,\eta)=e^{\im D_x\cdot D_\eta}\bar a(x,\eta).
\end{equation}
The adjoint symbol $e^{\im D_x\cdot D_\eta}\bar a(x,\eta)$ is also written 
$a^*(x,\eta)$, so $\OP(a(x,\eta))^*=\OP(a^*(x,\eta))$.

\section{Scales of function spaces}\noindent
The Sobolev spaces $H^s_p(\Rn)$ are defined for $s\in \R$ and $1<p<\infty $
as $\OP(\ang{\xi}^{-s})(L_p)$, with
$\nrm{f}{H^s_p}=\nrm{\OP(\ang{\xi}^{-s})f}{p}$. The special case $p=2$ is
written as $H^s(\Rn)$ or $H^s$ for simplicity.

The H{\"o}lder class $C^s(\Rn)$ is for non-integer $s>0$ defined as
the functions $f\in C^{[s]}(\Rn)$ having finite norm
\begin{equation}
  |f|_s=\sum_{|\alpha|\le [s]}\nrm{D^\alpha f}{\infty }+
        \sum_{|\alpha|=[s]}\sup_{x\ne y}|D^\alpha f(x)-D^\alpha f(y)|
        |x-y|^{[s]-s}.
\end{equation}
To get an interpolation invariant half-scale $C^s_*(\Rn)$, $s>0$, 
it is well known
that one should fill in for $s\in \N$ by means of the Zygmund condition. Eg
the space $C^1_*$ consists of the $f\in C(\Rn)\cap L_\infty (\Rn)$ for which
\begin{equation}
  |f|_1=\nrm{f}{\infty }+\sup_{y\ne0}
  \sup_{x\in \Rn}|f(x+y)+f(x-y)-2f(x)|/|y| <\infty .
\end{equation}
These spaces appear naturally as a part of a full scale of
H{\"o}lder--Zygmund spaces $C^s_*(\Rn)$ defined for $s\in\R$; as explained
in eg \cite[Sc.~8.6]{H97}.

However, all the $H^s_p$ and $C^s_*$ spaces are contained in two more
general scales, namely the 
Besov spaces $B^{s}_{p,q}(\Rn)$ and Lizorkin--Triebel spaces
$F^{s}_{p,q}(\Rn)$, that are well adapted to harmonic analysis.
They are recalled below.

First a Littlewood--Paley decomposition is constructed using a
function $\tilde \Psi$ in $C^\infty(\R)$ for which $\tilde \Psi(t)\equiv0$ 
and $\tilde \Psi(t)\equiv1$
holds for $t\ge 2$ and $t\le 1$, respectively;
then $\Psi(\xi)=\tilde \Psi(|\xi|)$ and 
$\Phi=\Psi-\Psi(2\cdot )$ gives the partition of unity 
$1=\Psi(\xi)+\sum_{j=1}^\infty \Phi(2^{-j}\xi )$.
For brevity it is here convenient to set 
$\Phi_0=\Psi$ and $\Phi_j=\Phi(2^{-j}\cdot )$ for $j\ge 1$.

Then, for a {\em smoothness indices $s\in\R$}, 
\emph{integral-exponent $p\in\left]0,\infty\right]$} 
and  \emph{sum-exponent} $q\in\left]0,\infty\right]$, 
the {Besov} space $B^{s}_{p,q}(\Rn)$ 
is defined to consist of the $u\in \cal S'(\Rn)$ for which
\begin{equation}
 \Nrm{u}{B^{s}_{p,q}} := 
 \big(\sum_{j=0}^\infty 2^{sjq} 
  (\int_{\Rn}|\Phi_j(D)u(x)|^p\,dx)^{\tfrac{q}{p}} 
  \big)^{\frac1q} <\infty.
 \label{bspq-id} 
\end{equation}
(As usual the norm in $\ell_q$ should be replaced by the supremum over $j\in
\N_0$ in case $q=\infty $.)

Similarly the {Lizorkin--Triebel} space $F^{s}_{p,q}(\Rn)$ is defined as the
$u\in \cal S'(\Rn)$ such that
\begin{equation}
  \Nrm{u}{F^{s}_{p,q}}:=
  \big (\int_{\Rn}
  (\sum_{j=0}^\infty 2^{sjq} 
  |\Phi_j(D)u(x)|^q)^{\fracci pq}\,dx
  \big)^{\frac 1p} <\infty.
 \label{fspq-id}
\end{equation}
Throughout it will be tacitly understood that $p<\infty$ whenever
Lizorkin--Triebel spaces are under consideration.

The spaces are described in eg \cite{RuSi96,T2,T3,Y1}. 
They are quasi-Banach
spaces with the quasi-norms  given by the finite expressions in
\eqref{bspq-id} and \eqref{fspq-id}; and Banach spaces if both $p\ge 1$ and
$q\ge 1$.

In general $u\mapsto \nrm{u}{}^{\lambda}$ is subadditive for 
$\lambda\le\min(1,p,q)$, so $\nrm{f-g}{}^\lambda$ is a metric on each space
in the $B^{s}_{p,q}$- and $F^{s}_{p,q}$-scales.

There are a number of embeddings of these spaces, like the simple ones
$F^{s}_{p,\infty}\hookrightarrow F^{s-\varepsilon}_{p,q}$ for $\varepsilon>0$ and
$F^{s}_{p,q}\hookrightarrow F^{s}_{p,r}$ for $q\le r$. The Sobolev embedding
theorem takes the form
\begin{equation}
  F^{s_0}_{p_0,q_0}\hookrightarrow F^{s_1}_{p_1,q_1}
  \quad\text{for}\quad s_0-\fracc n{p_0}=s_1-\fracc n{p_1},\  
  p_0<p_1.
\end{equation}
The analogous results are valid for the $B^{s}_{p,q}$ spaces, provided that
$q_0\le q_1$. Moreover,
\begin{equation}
  B^{s}_{p,\min(p,q)}\hookrightarrow F^{s}_{p,q}
  \hookrightarrow B^{s}_{p,\max(p,q)}.
\end{equation}

Among the well-known identifications it should be mentioned that
\begin{gather}
  H^s_p=F^s_{p,2}\quad\text{for}\quad s\in\R,\ 1<p<\infty,
\label{Hsp-id} \\
  C^s_*=B^s_{\infty ,\infty }\quad\text{for}\quad s\in\R.
  \label{Cs*-id}
\end{gather}
In particular this means that
\begin{equation}
    H^s=F^s_{2,2}=B^s_{2,2}\quad\text{for}\quad s\in\R.
\end{equation}
One interest of this is that statements proved for all $B^{s}_{p,q}$
are automatically valid for the Sobolev spaces $H^s$ by specialising to
$p=q=2$, as well as for the H{\"o}lder-Zygmund spaces $C^s_{*}$ by setting 
$p=q=\infty$.
(Much of the literature on partial differential equations has focused on
these two scales, with two rather different types of arguments.)

Among the other relations, it could be mentioned that 
$F^0_{p,2}(\Rn)$ equals the local Hardy space $h_p(\Rn)$ for $0<p<\infty $.
\cite{T3} has ample information on these identifications, and also on the
extension of $F^{s}_{p,q}$ to $p=\infty $; this is not considered here.

\begin{rem}   \label{BF-rem}
The quasi-norms of $B^{s}_{p,q}$ and $F^{s}_{p,q}$ depend of course on the
choice of the Littlewood--Paley decomposition; cf \eqref{bspq-id} and
\eqref{fspq-id}. It is well known that different choices yield equivalent
quasi-norms, which may be seen with a multiplier argument. However, a slight
extension of this shows that the above assumption on $\tilde \Psi(t)$ can be
completely weakened, that is, any $\tilde \Psi\in C^\infty_0(\R)$ equalling
$1$ around $t=0$ will lead
to an equivalent quasi-norm (cf the framework for
Littlewood--Paley decompositions in Section~\ref{LPa-ssect} below).
This is convenient for the treatment of type $1,1$-operators in
$B^{s}_{p,q}$ and $F^{s}_{p,q}$ spaces.
\end{rem}

\chapter{The general definition of type 
$\mathbf{1},\mathbf{1}$-operators}
\label{vfm-sect}\noindent
This section gives a brief description of the author's contributions;
for the sake of readability, the statements will occasionally only address
the main cases.  
A more detailed account can be found in the subsequent sections 
(and in the papers, of course). 

\section{Definition by vanishing frequency modulation}\noindent
As the background for Definition~\ref{vfm-defn} below, 
it is recalled that the very first result on
type $1,1$-operators was the counter-example by C.~H.~Ching \cite{Chi72}, who
showed that there exists $a_\theta(x,\eta)$ in $S^0_{1,1}$, cf
\eqref{Ching-eq}, for which the operator $a_\theta(x,D)$ does not have a
continuous extension to $L_2$. 

For later reference, this is now explicated with a refined version of order $d$.
\begin{lem}[{\cite[Lem.~3.2]{JJ08vfm}}]
  \label{cex-lem}
Let $a_{\theta}(x,\eta)$
be given as in \eqref{Ching-eq} for $d\in \R$ and with $|\theta|=1$ 
and $A=1$ on the ball $B(\theta,\tfrac{1}{10})$.
Taking $v\in \cal S(\Rn)$ with $\emptyset\ne\supp\hat v\subset
B(0,\tfrac{1}{20})$, then 
\begin{equation}
  v_N=v(x)\sum_{j=N}^{N^2} \frac{e^{\im 2^jx\cdot\theta}}{j2^{jd}\log N}  
\end{equation}
defines a sequence of Schwartz functions with the properties 
\begin{equation}
  \begin{split}
    \nrm{v_N}{H^d} &
  \le c\nrm{v}{2}(\sum_{j=N}^\infty j^{-2})^{1/2}   \searrow 0,
\\
  a_{\theta}(x,D)v_N(x)&=\tfrac{1}{\log N}
  (\tfrac{1}{N}+\tfrac{1}{N+1}+\dots+\tfrac{1}{N^2})v(x) 
  \xrightarrow[N\to\infty]{~} v(x) \quad\quad\text{in $\cal S(\Rn)$}.
  \end{split}
\end{equation}
Consequently $a_{\theta}(x,D)$ is unbounded $H^d\to L_2$ and 
unclosable in $\cal S'(\Rn)\times \cal D'(\Rn)$. 
\end{lem}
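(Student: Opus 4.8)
The plan is to verify the three displayed properties of $v_N$ and then read off the two concluding assertions; throughout one uses that $v_N$ is a \emph{finite} linear combination of the Schwartz functions $x\mapsto e^{\im 2^jx\cdot\theta}v(x)$, so $v_N\in\cal S(\Rn)$ and $a_\theta(x,D)$ acts on it directly via the Lebesgue integral \eqref{axDu-eq}. First I would carry out the spectral bookkeeping, which is the crux. Since $\cal F\bigl(e^{\im 2^jx\cdot\theta}v\bigr)(\eta)=\hat v(\eta-2^j\theta)$, the $j$-th term of $v_N$ has spectrum in $B(2^j\theta,\tfrac1{20})$, where $|\eta|\in[\,2^j-\tfrac1{20},\,2^j+\tfrac1{20}\,]$; comparing this with the corona $\tfrac34 2^k\le|\eta|\le\tfrac54 2^k$ carrying $A(2^{-k}\cdot)$ shows the two are disjoint unless $k=j$, and on $B(2^j\theta,\tfrac1{20})$ one has $2^{-j}\eta\in B(\theta,2^{-j}/20)\subset B(\theta,\tfrac1{10})$, so the normalisation $A\equiv1$ on $B(\theta,\tfrac1{10})$ gives $A(2^{-j}\eta)=1$ there. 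Hence $a_\theta(x,\eta)=2^{jd}e^{-\im 2^j\theta\cdot x}$ on the spectrum of $e^{\im 2^jx\cdot\theta}v$, and inserting this into \eqref{axDu-eq} with the substitution $\eta\mapsto\eta+2^j\theta$ yields $a_\theta(x,D)\bigl(e^{\im 2^jx\cdot\theta}v\bigr)=2^{jd}v$. By linearity,
\[
  a_\theta(x,D)v_N=\sum_{j=N}^{N^2}\frac{2^{jd}}{j\,2^{jd}\log N}\,v
  =\frac1{\log N}\Bigl(\tfrac1N+\tfrac1{N+1}+\dots+\tfrac1{N^2}\Bigr)v ,
\]
and since $\sum_{j=N}^{N^2}j^{-1}=\log N+O(1/N)$ (compare with $\int_N^{N^2}dt/t=\log N$) the scalar coefficient tends to $1$; as $v$ is a fixed Schwartz function this gives $a_\theta(x,D)v_N\to v$ in $\cal S(\Rn)$.

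For the $H^d$-bound, observe that for large $N$ the centres $2^j\theta$, $N\le j\le N^2$, are pairwise at distance $\ge 2^{N-1}\gg\tfrac1{10}$, so the balls $B(2^j\theta,\tfrac1{20})$ are pairwise disjoint and $\widehat{v_N}$ is a sum of functions with disjoint supports. Plancherel's theorem then gives
\[
  \nrm{v_N}{H^d}^2=\sum_{j=N}^{N^2}\frac1{j^2\,2^{2jd}(\log N)^2}\int_{\Rn}\ang{\xi}^{2d}\,|\hat v(\xi-2^j\theta)|^2\,d\xi ,
\]
and since $\ang{\xi}\approx 2^j$ on the support of $\xi\mapsto\hat v(\xi-2^j\theta)$, with constants independent of $j$, the integral is $\le C2^{2jd}\nrm{v}{2}^2$. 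The factors $2^{\pm2jd}$ cancel, and using $(\log N)^{-2}\le 1$ one arrives at
\[
  \nrm{v_N}{H^d}^2\le c^2\nrm{v}{2}^2\sum_{j=N}^{N^2}j^{-2}\le c^2\nrm{v}{2}^2\sum_{j=N}^{\infty}j^{-2},
\]
which tends to $0$ as $N\to\infty$ since $\sum_{j\ge N}j^{-2}\le (N-1)^{-1}$.

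Finally, the conclusions. As $\supp\hat v\ne\emptyset$ we have $v\ne0$, so $\nrm{v}{2}>0$; combined with $\nrm{a_\theta(x,D)v_N}{2}\to\nrm{v}{2}$ and $\nrm{v_N}{H^d}\to0$ this excludes any inequality $\nrm{a_\theta(x,D)u}{2}\le C\nrm{u}{H^d}$ holding for all $u\in\cal S(\Rn)$, i.e.\ $a_\theta(x,D)$ has no bounded extension $H^d\to L_2$. Moreover $v_N\to0$ in $H^d$, hence in $\cal S'(\Rn)$, whereas $a_\theta(x,D)v_N\to v\ne0$ in $\cal S(\Rn)$ and hence in $\cal D'(\Rn)$; so $(0,v)$ lies in the closure of the graph of $a_\theta(x,D)$ in $\cal S'\times\cal D'$ while $a_\theta(x,D)0=0\ne v$, i.e.\ $a_\theta(x,D)$ is unclosable. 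I expect the only delicate step to be the corona/ball bookkeeping in the first paragraph, where the normalisations $A\equiv1$ on $B(\theta,\tfrac1{10})$ and $\supp\hat v\subset B(0,\tfrac1{20})$ are precisely what force the symbol to collapse to a single exponential on the relevant spectrum; everything else is routine Fourier analysis.
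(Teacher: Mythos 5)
Your proof is correct and follows the natural route; the dissertation itself only cites [Lem.~3.2, Joh08b] for this lemma, and the argument you give is the standard one underlying that reference. The spectral bookkeeping is exactly the point: you correctly check that $B(2^j\theta,\tfrac1{20})$ sits inside the corona $\tfrac34 2^j\le|\eta|\le\tfrac54 2^j$ and is disjoint from the coronas for all $k\ne j$, that $2^{-j}B(2^j\theta,\tfrac1{20})\subset B(\theta,\tfrac1{10})$ so $A(2^{-j}\eta)\equiv1$ there, and hence that $a_\theta(x,D)$ collapses to $2^{jd}e^{-\im 2^j\theta\cdot x}$ on the spectrum of the $j$-th summand, giving $a_\theta(x,D)\bigl(e^{\im 2^jx\cdot\theta}v\bigr)=2^{jd}v$. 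The disjoint-support Plancherel computation for $\nrm{v_N}{H^d}$, the harmonic-sum asymptotics $\sum_{j=N}^{N^2}j^{-1}\sim\log N$, and the two concluding deductions (unboundedness $H^d\to L_2$ from $\nrm{v_N}{H^d}\to0$ while $\nrm{a_\theta(x,D)v_N}{2}\to\nrm{v}{2}>0$, and unclosability from $(0,v)$ lying in the graph closure) are all in order. One cosmetic slip: the centres $2^j\theta$, $N\le j\le N^2$, are pairwise at distance at least $2^N$, not $2^{N-1}$ (the closest pair being $j=N,\,N{+}1$), but the balls of radius $\tfrac1{20}$ are in fact disjoint for every $N\ge1$, which is all you need.
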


Later in 1983, G.~Bourdaud \cite{Bou83} showed
in his doctoral dissertation that every $a(x,D)\in \OP(S^0_{1,1})$ is
bounded on $L_2(\Rn)$ if also its adjoint $a(x,D)^*$ is of type $1,1$.
Hence $a_\theta(x,D)$ above fulfils $a_{\theta}(x,D)^*\notin
\OP(S^0_{1,1})$, so this adjoint need not send $\cal S(\Rn)$ into itself.

This has two important consequences: first of all, while $a(x,D)$ as usual
does have the ``double'' adjoint $a^*(x,D)^*=\OP(a^*(x,\eta))^*$ as an
extension, the latter is not necessarily defined on the entire space $\cal
S'(\Rn)$ when $a(x,\eta)$ is of type $1,1$.
In fact,
already for $a_\theta(x,D)^*$ it can be shown explicitly that its image of
$\cal S(\Rn)$ contains functions in $\cal S'\setminus \cal S$ (see eg
\cite[(3.4),(3.9)]{JJ08vfm}), whence 
$a^*(x,D)^*$ is defined on a proper subspace of $\cal S'$. 

Secondly, if one tries to see
$u\in \cal S'(\Rn)$ as a limit $u=\lim_{k\to\infty }u_k$ for Schwartz
functions $u_k$, one cannot hope to get a useful definition by setting
\begin{equation}
  a(x,D)u=\lim_{k\to\infty }\OP(a)u_k.
  \label{au_k-eq}
\end{equation}
Indeed, this would not always give a linear operator, as 
$a_\theta(x,D)$ is \emph{unclosable};  cf Lemma~\ref{cex-lem}. This is obviously
important also because it shows that a type $1,1$-operator cannot be given an
extended definition just by closing its graph $G(a(x,D))$ as a subset of 
$\cal S'\times \cal D'$\,---\,and nor can one hope
to give a definition by other means and obtain a closed operator in general.

In view of this, and especially in comparison with \eqref{au_k-eq}, 
it is perhaps not surprising that 
\cite{JJ08vfm} proposes a regularisation of the symbol instead:
\begin{equation}
  a(x,D)u(x)=\lim_{m\to\infty }\OP(b_m(x,\eta))u(x).
  \label{axDu-id}
\end{equation}
However, the precise choice of the approximating symbol 
$b_m(x,\eta)$ is decisive here.

To prepare for the formal definition, 
a \emph{modulation} function $\psi$ will in the sequel mean an arbitrary 
$\psi\in C^\infty_0(\Rn)$ 
equal to $1$ in a neighbourhood of the origin. Then, after setting
$\hat a(\xi,\eta)=\cal F_{x\to\xi} a(x,\eta)$ for symbols, 
the following notation is used throughout
\begin{equation}
  a^m(x,\eta)= \cal F^{-1}_{\xi\to x}[\psi(2^{-m}\xi)\hat a(\xi,\eta)].
  \label{am-id}
\end{equation}
One can then take $b_m(x,\eta)=a^m(x,\eta)\psi(2^{-m}\eta)$, which is in
$S^{-\infty }$, so that $b_m(x,D)u$ is  defined for every $u\in \cal S'$. 
It is easy to see that if $a\in S^d_{1,1}$ then
$b_m\to a$ in $S^{d+1}_{1,1}$ 
for $m\to\infty$; cf \cite[Lem.~2.1]{JJ08vfm}.
 
To make the dependence on $\psi$ explicit, set
\begin{equation}
  a_{\psi}(x,D)u=\lim_{m\to\infty } \OP(a^m(x,\eta)\psi(2^{-m}\eta))u.
  \label{apsi-eq}
\end{equation}

\begin{defn}
  \label{vfm-defn}
For every symbol $a\in S^{d}_{1,1}(\Rn\times \Rn)$ the distribution 
$u\in \cal S'(\Rn)$ belongs to the
domain $D(a(x,D))$ if the above limit $a_{\psi}(x,D)u$ exists in $\cal
D'(\Rn)$ for every modulation function $\psi$ and if, in addition, 
this limit is independent of such $\psi$. In this case
\begin{equation}
  a(x,D)u=a_{\psi}(x,D)u.
\end{equation}
\end{defn}

In \cite{JJ08vfm} this was termed the definition of $a(x,D)$ by 
\emph{vanishing frequency modulation}, since all high frequencies are cut
off, both in $u(y)$ and in the symbol's dependence on $x$.

To explain the notation, note first that $D$ appears in two meanings when
the domain is denoted by $D(a(x,D))$. 
Moreover, \eqref{axDu-eq} may be written out as
\begin{equation}
    a(x,D)u(x)= (2\pi)^{-n}\int_{\Rn}\int_{\Rn} e^{\im (x-y)\cdot \eta} 
  a(x,\eta) u(y)\,dy\,d\eta.
\end{equation}
Here $u$ is seen as a function of $y$; accordingly the dual variable is
denoted by $\eta$. Clearly $a(x,D)u(x)$  depends on
$x$, whence its Fourier transform is written as a function of
$\xi\in \Rn$. Likewise, when $\cal F_{x\to\xi}$ is applied to $a(x,\eta)$,
one obtains $\hat a(\xi,\eta)$.

The modulation parameter is throughout denoted by $m\in \N$. The modulation
function is denoted by $\psi$, or $\Psi$ if more than one is considered
simultaneously. Moreover, with $u^m=\psi(2^{-m}D)u$ and $a^m(x,\eta)$ as
defined above, Definition~\ref{vfm-defn} is for
convenience often expressed in short form as
\begin{equation}
  a(x,D)u=\lim_{m\to\infty } a^m(x,D)u^m.
  \label{amum-eq}
\end{equation} 
This may look self-contradicting, however, for $a^m(x,D)$ is
just another type $1,1$-operator. But as $u^m\in \cal F^{-1}\cal E'(\Rn)$,
it will be clear below (from the general extension to $\cal F^{-1}\cal E'$)
that $a^m(x,D)u^m$ is defined and equals $\OP(a^m(x,\eta)\psi(2^{-m}\eta))u$.

Definition~\ref{vfm-defn} 
is actually just a rewriting of the usual one, which is suitable 
for type $1,1$-symbols as a point of departure, for if $u\in \cal S$ it
follows from the continuity in \eqref{SdS-eq} that
$a(x,D)u=\OP(a(x,\eta))u$. It also gives back the usual operator
$\OP(a(x,\eta))u$ on $\cal S'$ whenever $a\in S^d_{1,0}$, for it is well
known that this is equal to the limit $a_{\psi}(x,D)u$.

Formally Definition~\ref{vfm-defn} is reminiscent of oscillatory integrals,
as exposed by for example 
X.~St.-Raymond \cite{SRay91}, now with the natural proviso (as $\delta=1$)
that $u\in D(a(x,D))$ when the regularisation yields a limit independent of
the integration factor.

Of course, $a(\cdot ,\eta)$ is not modified here with
an integration factor proper, but rather with the Fourier
multiplier $\psi(2^{-m}D_x)$.
This obvious difference is emphasized because $\psi(2^{-m}D_x)$
later gives 
easy access to Littlewood--Paley analysis of $a(x,D)$.

For other remarks on the feasibility of the frequency modulation, in
particular the relation to pointwise multiplication, the reader
may refer to \cite[Sect.~1.2]{JJ08vfm}.

\section{Consequences for type $\mathbf{1},\mathbf{1}$-operators}
\label{dozen-ssect}\noindent
Although
the definition by vanishing frequency modulation is rather unusual
(which is unavoidable), 
it does have a dozen important properties:
\begin{Rmlist}  \addtolength{\itemsep}{1\jot}
  \item   \label{uni-pro}
Definition~\ref{vfm-defn} unifies
4 previous extensions of type $1,1$-operators.
  \item   \label{max-pro} The resulting densely defined map
$a(x,D)\colon \cal S'(\Rn)\to \cal D'(\Rn)$ 
is \emph{maximal} among the extensions $\widetilde{\OP}(a(x,\eta))$ 
that are stable under vanishing frequency modulation as well as
compatible with $\OP(S^{-\infty })$.
  \item   \label{C8S'-pro}
Every operator $a(x,D)$ of type $1,1$ restricts to a map
\begin{equation}
  a(x,D)\colon C^\infty (\Rn)\bigcap\cal S'(\Rn)\to C^\infty (\Rn),
  \label{C8S'-eq}
\end{equation}
where $C^\infty \bigcap\cal S'$ is the maximal subspace of smooth
functions. Moreover, $\cal O_M(\Rn)$ is invariant under $a(x,D)$.
  \item   \label{psloc-pro}
Every operator $a(x,D)$ of type $1,1$ is pseudo-local.
  \item   \label{nwf-pro} 
Some type $1,1$-operators do not preserve wavefront sets, eg
\eqref{Ching-eq} gives
\begin{align}
  \op{WF}(u)&= \Rn\times \R_+\theta
\\
  \op{WF}(a_{2\theta}(x,D)u)&= \Rn\times \R_+(-\theta)
\end{align}
for $|\theta|=1$, $A(\eta)=1$ around $\eta=\theta$ 
and a product $u(x)=v(x)f(\theta\cdot x)$
with a suitable $v\in \cal F^{-1}C^\infty_0$ and an
oscillating factor $f(t)=\sum_{j=0}^\infty 2^{-jd}e^{\im 2^jt}$, 
which for $0<d\le 1$ is  
Weierstrass's continuous nowhere differentiable function.
  \item   \label{supp-pro}
The operators satisfy the \emph{support rule}, respectively the
\emph{spectral} support rule,
\begin{align}
  \supp a(x,D)u&\subset \supp K\circ \supp u,
  \label{sKau-eq}
\\
  \supp \cal F a(x,D)u&\subset \supp \cal K\circ \supp\cal F u,
  \label{ssKau-eq}
\end{align}
where $K$ is the distribution kernel of $a(x,D)$, 
whereas $\cal K$ is that of $\cal Fa(x,D)\cal F^{-1}$.
  \item   \label{LPa-pro}
The auxiliary function $\psi$ in Definition~\ref{vfm-defn} allows 
a direct transition to Littlewood--Paley analysis of $a(x,D)u$, which in
particular gives the well-known paradifferential decomposition, cf
\eqref{a123-eq}, 
\begin{equation}
  a(x,D)u=a^{(1)}(x,D)u+a^{(2)}(x,D)u+a^{(3)}(x,D)u.
\end{equation}
  \item  \label{tdc-pro} 
The operator $a(x,D)$ is everywhere defined and continuous 
\begin{equation}
  a(x,D)\colon \cal S'(\Rn)\to \cal S'(\Rn)
  \label{aS'-eq}
\end{equation}
if $a(x,\eta)$ satisfies H{\"o}rmander's 
\emph{twisted diagonal condition;}
ie, if for some $B\ge 1$
\begin{equation}
  \hat a(\xi,\eta)=0\quad\text{whenever}\quad
  B(|\xi+\eta|+1)< |\eta|.
  \label{tdc-id}
\end{equation}
  \item   \label{wtdc-pro}
The continuity in \eqref{aS'-eq} more generally holds 
in the self-adjoint subclass $\OP(\tilde S^\infty _{1,1})$, 
ie if $a(x,D)$ fulfils H{\"o}rmander's
twisted diagonal condition of order $\sigma$ for every $\sigma\in \R$.
  \item  \label{dom-pro} 
Every $a(x,D)$ of order $d$ is for $p\in [1,\infty [\,$ and $q\le 1$ a
continuous map  
\begin{equation}
  a(x,D)\colon F^{d}_{p,q}(\Rn)  \to L_p(\Rn);
\end{equation}
for $a_\theta(x,D)$ 
from \eqref{Ching-eq} this is optimal
within the scales $B^{s}_{p,q}$ and  
$F^{s}_{p,q}$ of Besov and Lizorkin--Triebel spaces. 
(These contain $C^s$ and $H^s_p$,  respectively.)
  \item   \label{Fspq-pro}
Every $a(x,D)$ in $\OP(S^d_{1,1})$ is continuous, for $s>\max(0,\fracnp-n)$,
$0<p< \infty $, $0<q\le \infty $,
\begin{equation}
  a(x,D)\colon  F^{s+d}_{p,q}(\Rn)\to F^{s}_{p,r}(\Rn)
\quad\text{if}\quad r\ge q,\ r>n/(n+s).
\end{equation}
This holds for all $s\in \R$ and $r=q$ when $a(x,\eta)$ fulfils the twisted
diagonal condition \eqref{tdc-id}, and if $p>1$, $q>1$ also when
$a(x,\eta)\in \tilde S^d_{1,1}(\Rn\times \Rn)$.

These properties extend 
to the scale $B^{s}_{p,q}(\Rn)$ for $0<p\le \infty $, $r=q$.

  \item  \label{Fspq'-pro}
When $a(x,\eta)$ is in $\tilde S^d_{1,1}(\Rn\times \Rn)$,
cf \eqref{wtdc-pro}, and $0<p\le 1 $, $0<q\le \infty $,
\begin{equation}
  a(x,D)\colon F^{s+d}_{p,q}(\Rn)\to F^{s'}_{p,q}(\Rn)
\quad\text{for arbitrary}\quad s'<s\le \fracnp-n.
\end{equation}
This extends  verbatim to the $B^{s}_{p,q}$-scale.
\end{Rmlist}

Definition~\ref{vfm-defn}
together with the properties \eqref{uni-pro}-\eqref{Fspq'-pro} 
constitute the author's main
contribution to the theory of type $1,1$-operators.

Among the above items, 
\eqref{nwf-pro} and \eqref{dom-pro} amount to sharpenings 
of results in the existing literature. 
The other ten results are rather more substantial, 
as eg both \eqref{uni-pro}--\eqref{max-pro} and the $\cal S'$-continuity in
\eqref{tdc-pro}--\eqref{wtdc-pro} 
have not been treated at all hitherto.

Further comments on \eqref{uni-pro}--\eqref{Fspq'-pro} follow below. 
For convenience the properties \eqref{uni-pro}--\eqref{supp-pro} will be
reviewed in Chapter~\ref{results-sect} in corresponding sections 
\ref{uni-ssect}--\ref{supp-ssect}, whereas the more technical results in 
\eqref{LPa-pro}--\eqref{Fspq'-pro} are described separately in 
Chapter~\ref{resultsII-sect}.

\bigskip 

Behind the type $1,1$-results \eqref{uni-pro}--\eqref{Fspq'-pro} above,
there are at least three new \emph{techniques}:
\begin{rmlist}
    \item { Pointwise estimates of pseudo-differential operators. \/} 
  \item { The spectral support rule of pseudo-differential operators.\/}
  \item{ Stability of extended distributions under regular convergence.\/}
\end{rmlist}
These tools are useful already for classical pseudo-differential
operators, so they are reviewed first, in the next chapter.

\chapter{Techniques for pseudo-differential operators}
\noindent
The results in this chapter are interesting already for a classical symbol, ie
for $a(x,\eta)$ in $S^d_{1,0}(\Rn\times \Rn)$, to which the reader may
specialise if desired. However, it is convenient to state them for symbols in 
$S^d_{1,\delta}$ with $0\le\delta<1$, ie when 
\begin{equation}
  |D^\beta_x D^\alpha_\eta a(x,\eta)|\le C_{\alpha,\beta}
  (1+|\eta|)^{d-|\alpha|+\delta|\beta|}.
\end{equation}
In this way, the extra precaution that would be needed for $\delta=1$ is unnecessary here,
although the results extend directly to type $1,1$-operators, 
unless otherwise is mentioned.

\section{Pointwise estimates of pseudo-differential operators}
\label{pe-ssect}\noindent
It seems to be a new observation, that 
the value of $a(x,D)u(x)$ can be
estimated at each point $x\in \Rn$ thus:
\begin{equation}
  |a(x,D)u(x)|\le  c u^*(x) \quad\text{when}\quad \supp\hat u\Subset \Rn.
  \label{auu*-eq}
\end{equation}
Hereby $u^*$ is the maximal function
of Peetre--Fefferman--Stein type; that is,
\begin{equation}
 u^*(x)=u^*(N,R;x)=\sup_{y\in \Rn}\frac{|u(x-y)|}{(1+R|y|)^{N}}
\end{equation}
with $R>0$ chosen so that $\supp\hat
u$ is contained in the closed ball $\overline{B}(0,R)$. The parameter $N>0$
can eg be larger than the order of $\hat u$, so that $u^*(x)<\infty $ holds
by the Paley--Wiener--Schwartz Theorem.

The above inequality is really a consequence of the following
\emph{factorisation inequality}, shown in \cite[Thm.~4.1]{JJ10pe}.
This involves a
cut-off function $\chi\in C^\infty_0(\Rn)$ that should equal $1$ in a
neighbourhood of $\supp\hat u\Subset \Rn$:
\begin{gather}
  |a(x,D) u(x)|\le F_a(N,R;x)\cdot u^*(N,R;x) 
  \label{Fau*-ineq} \\
  F_a(N,R;x)=\int_{\Rn} (1+R|y|)^{N}|\cal F^{-1}_{\eta\to y}
    (a(x,\eta)\chi(\eta))|\,dy
  \label{Fa-eq}
\end{gather}
This simply means that the action of $a(x,D)$ on $u$ can be decomposed, at the
unimportant price of an estimate, into a product   
where the \emph{entire} dependence on the symbol lies in the ``$a$-factor''
$F_a(N,R;x)$, also called the symbol factor.

The symbol factor $F_a$
only depends vaguely on $u$ through $N$ and $R$.
(Eg $N=[n/2]+1$ works for all  $u\in \bigcup H^s(\Rn)$, so then $N$ plays no
role.)
Formula \eqref{Fa-eq} shows that $F_a$ is a weighted $L_1$-norm of a 
regularisation of the distribution kernel $K$. 
In general $F_a\in C^0\cap L_\infty (\Rn)$, so
together \eqref{Fau*-ineq}--\eqref{Fa-eq} yield \eqref{auu*-eq}.

In the exploitation of \eqref{Fau*-ineq},
it is rather straightforward to control the maximal function $u^*(x)$ with
polynomial bounds. Eg, if $N$ is greater than the order of $\hat u$, the
Paley--Wiener--Schwartz Theorem gives $|u(y)|\le c(1+|y|)^N\le 
(1+|x|)^N(1+R|x-y|)^N$ when $R\ge 1$, so that in this case
\begin{equation}
  u^*(N,R;x)\le c (1+|x|)^N.
  \label{u*xN-eq}
\end{equation}
Moreover, the
maximal operator $u\mapsto u^*$ is bounded with respect to the $L_p$-norm on
$L_p\bigcap \cal F^{-1}\cal E'$,
\begin{equation}
  \int_{\Rn} u^*(N,R;x)^p\,dx \le C_p \int_{\Rn} |u(x)|^p\,dx,
\qquad 0< p\le \infty,\quad N>n/p .
  \label{u*Lp-ineq}
\end{equation}
Consequently the `trilogy' \eqref{Fau*-ineq}, \eqref{Fa-eq}, \eqref{u*Lp-ineq}
leads at once to bounds
of pseudo-differential operators on $L_p\bigcap \cal F^{-1}\cal E'$,
\begin{equation}
  \int|a(x,D)u(x)|^p\,dx\le \nrm{F_a}{\infty }^p 
  \int u^*(x)^p\,dx \le C_p\nrm{F_a}{\infty }^p\int |u(x)|^p\,dx.
  \label{LpFE-eq}
\end{equation}
As $\nrm{F_a}{\infty }=\sup|F_a(N,R;\cdot)|$ depends on $R$, this extends to
all $u\in L_p$ only if $a(x,\eta)$ has further properties. But it is
noteworthy that the above boundedness holds whenever $0<p\le\infty$, so it was
stated as a result in \cite[Cor.~4.4]{JJ10pe}, and in the type
$1,1$-context in \cite[Thm.~6.1]{JJ10pe}; cf Remark~\ref{LpFE-rem} below. 

With a little more effort,
mainly by renouncing on the compact spectrum of $u$, a transparent proof of
the fact that $a(x,D)$ is a map $\cal O_M\to \cal O_M$ was also obtained in
this way; cf \cite[Cor.~4.3]{JJ10pe}. However, for type $1,1$-operators, this
result requires another proof because it is not clear a priori that
$\cal O_M$ is contained in $D(a(x,D))$; cf Section~\ref{C8-ssect} below.

These estimates of $a(x,D)$ are a bit paradoxical because the map 
$u\mapsto u^*$ is non-linear; but this is just a minor drawback as 
\eqref{u*Lp-ineq} was shown by elementary means in \cite{JJ10pe}. 
(The previous proofs of 
\eqref{u*Lp-ineq} in the literature invoke $L_p$-boundedness of 
the Hardy--Littlewood maximal function.)

\begin{rem}   \label{Marschall-rem}
It deserves to be mentioned that somewhat different pointwise estimates were
introduced by J.~Marschall in his thesis \cite{Mar85} and exploited in 
eg \cite{Mar91,Mar95,Mar96}. 
For symbols $b(x,\eta)$ in $L_{1,\loc}(\R^{2n})\cap
\cal S'(\R^{2n})$ with support in $\Rn\times \Bbar(0,2^k)$ and $\supp\cal F
u\subset \Bbar(0,2^k)$, $k\in\N$, Marschall's inequality states that
\begin{equation}
  |b(x,D)v(x)|\le c\Nrm{b(x,2^k\cdot )}{\dot B^{n/t}_{1,t}} M_t u(x),
  \qquad 0<t\le 1.
  \label{Marschall-ineq}
\end{equation}
Here $M_tu(x)=\sup_{r>0} (r^{-n}\int_{B(x,r)}|u(y)|^t\,dy)^{1/t}$ 
is the Hardy--Littlewood maximal function of $u$, when $t=1$, while the norm 
of the homogeneous Besov space $\dot B^{n/t}_{1,t}$ falls on the dilated
symbol $a(x,2^k\cdot)$ parametrised by $x$.
Under the natural condition that the right-hand side is in $L_{1,\loc}(\Rn)$
it was proved in \cite{JJ05DTL}, to which the reader is referred for details;
some shortcomings in Marschall's exposition in eg \cite{Mar96} were pointed out in 
\cite[Rem.~4.2]{JJ05DTL}.
Cf also \cite[Rem.~4.11]{JJ10pe} and \cite[Rem.~7.3]{JJ10tmp}. Marschall's
inequality is mentioned merely for the sake of completeness; it is not
feasible for the general study of type $1,1$-operators.
\end{rem}

In addition to the above observation that the symbol factor $F_a(x)$ is a
bounded continuous function, basic properties of the Fourier transformation
yield the following estimate, that is reminiscent of the
Mihlin--H{\"o}rmander multiplier condition:
\begin{thm}[{\cite[Thm.~4.5]{JJ10pe}}]
  \label{Fa-thm}
Let the symbol factor $F_a(N,R;x)$ be given by   
\eqref{Fa-eq} for parameters $R,N>0$, with the auxiliary function
taken as $\chi=\psi(R^{-1}\cdot)$ for $\psi\in C^\infty_0(\Rn)$ equal to
$1$ in a set with non-empty interior. Then it holds for all $x\in \Rn$ that
\begin{equation}
 0\le F_a(x) \le c_{n,k} \sum_{|\alpha|\le k} 
  (\int_{R\supp \psi} |R^{|\alpha|}D^{\alpha}_\eta a(x,\eta)|^2
    \,\frac{d\eta}{R^n})^{1/2}
  \label{FaMH-eq}
\end{equation}
when $k$ is the least integer satisfying $k>N+n/2$.
\end{thm}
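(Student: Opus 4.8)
The plan is to estimate the weighted $L_1$-integral in \eqref{Fa-eq} by a Cauchy--Schwarz splitting followed by Plancherel's theorem, thereby trading the kernel side for the symbol side; this is in spirit the Mihlin--H{\"o}rmander argument applied to the regularised distribution kernel $\cal F^{-1}_{\eta\to y}(a(x,\eta)\chi(\eta))$. Fix $x$ and set $g(y)=\cal F^{-1}_{\eta\to y}(a(x,\eta)\chi(\eta))$; for fixed $x$ this is smooth and its Fourier transform $a(x,\cdot)\chi$ is compactly supported, with $\supp\chi\subset R\supp\psi$ since $\chi=\psi(R^{-1}\cdot)$. First I would insert $1=(1+R|y|)^{-k}(1+R|y|)^{k}$ into \eqref{Fa-eq} and apply Cauchy--Schwarz, getting
\[
 F_a(N,R;x)\le\Big(\int_{\Rn}(1+R|y|)^{2(N-k)}\,dy\Big)^{1/2}\Big(\int_{\Rn}(1+R|y|)^{2k}|g(y)|^2\,dy\Big)^{1/2}.
\]
Because $k$ is the least integer with $k>N+n/2$, one has $2(k-N)>n$, so the first factor converges, and the substitution $z=Ry$ shows it equals $c_{n,k}R^{-n/2}$; this is the source of the $d\eta/R^n$ in \eqref{FaMH-eq}.

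Next I would pass the second factor to the Fourier side. Using the elementary bound $(1+R|y|)^{2k}\le c_{n,k}\sum_{|\alpha|\le k}|R^{|\alpha|}y^\alpha|^2$ (multinomial expansion of $(R|y|)^{2k}=R^{2k}(\sum_j y_j^2)^k$) together with the identity $y^\alpha\cal F^{-1}v=(-1)^{|\alpha|}\cal F^{-1}(D^\alpha_\eta v)$ — valid here since $a(x,\cdot)\chi$ has compact support, so the integrations by parts leave no boundary terms — Plancherel's theorem gives
\[
 \int_{\Rn}(1+R|y|)^{2k}|g(y)|^2\,dy\le c_{n,k}\sum_{|\alpha|\le k}R^{2|\alpha|}\int_{\Rn}\big|D^\alpha_\eta\big(a(x,\eta)\chi(\eta)\big)\big|^2\,d\eta.
\]
Then I would expand $D^\alpha_\eta(a\chi)$ by the Leibniz rule: since $D^\gamma_\eta\chi(\eta)=R^{-|\gamma|}(D^\gamma\psi)(R^{-1}\eta)$ is bounded by $c_\gamma R^{-|\gamma|}$ and supported in $R\supp\psi$, one obtains $R^{|\alpha|}|D^\alpha_\eta(a\chi)(\eta)|\le c\sum_{\beta\le\alpha}R^{|\beta|}|D^\beta_\eta a(x,\eta)|\,1_{R\supp\psi}(\eta)$, so the powers of $R$ collapse precisely to $R^{|\beta|}$ on the surviving symbol derivatives. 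Combining the three estimates and using $(\sum_j t_j)^{1/2}\le\sum_j t_j^{1/2}$ for $t_j\ge0$ yields the asserted inequality summed over $|\alpha|\le k$ with the normalised measure $d\eta/R^n$ on $R\supp\psi$; the lower bound $0\le F_a(x)$ is immediate from \eqref{Fa-eq}.

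I expect no conceptual obstacle; the one thing requiring care is the bookkeeping of the dilation parameter. Concretely one must verify that (i) the hypothesis $k>N+n/2$ is exactly what makes $(1+R|y|)^{2(N-k)}$ integrable and produces the factor $R^{-n/2}$ after rescaling, and (ii) the $R$-powers from the weight ($R^{|\alpha|}$), from differentiating $\psi(R^{-1}\cdot)$ ($R^{-|\alpha-\beta|}$), and from Plancherel ($R^{2|\alpha|}$ against the $R^{-n}$) combine to leave exactly $|R^{|\beta|}D^\beta_\eta a(x,\eta)|^2\,d\eta/R^n$. Choosing $k$ as the least integer exceeding $N+n/2$ at the very end simply minimises the number of symbol derivatives appearing in the bound.
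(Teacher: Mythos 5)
Your argument is correct and is essentially the standard Mihlin--H\"ormander-type estimate that the paper itself invokes: Cauchy--Schwarz with the weight split at power $k$, Plancherel to pass to the Fourier side, and the Leibniz rule on $a\chi$ to shed the cutoff while the powers of $R$ telescope to the normalised measure $d\eta/R^n$. The only cosmetic imprecision is the phrase ``multinomial expansion of $(R|y|)^{2k}$'': one should first bound $(1+R|y|)^{2k}\le 2^{k}(1+R^2|y|^2)^{k}$ and then expand multinomially, since $(1+R|y|)^{2k}$ itself contains odd powers of $|y|$; this does not affect the conclusion.
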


Although the above result has a straightforward proof, it nevertheless 
deserves to be presented as a theorem because it has a very central role.
On the one hand, this will be clear later in the proof of
Theorem~\ref{sigma-thm}, where it allows an exploitation of the profound
condition on the twisted diagonal of L.~H{\"o}rmander, which is
phrased with similar integrals. 

On the other hand, it
is also most convenient for the more standard Littlewood--Paley analysis of
pseudo-differential operators; but in this connection it applies through
its corollaries given below.

First of all, more refined estimates
in terms of symbol seminorms yield 
$\nrm{F_a}{\infty }=\cal O(R^{d'})$ for $d'=\max(d, [N+n/2]+1)$.
However, the exponent can be much improved here in case the auxiliary
function in the symbol factor is supported in a corona:

\begin{cor}[{\cite[Cor.~3.4]{JJ10pe}}]
  \label{Fa-cor} 
Let $a(x,\eta)$ be given in $S^{d}_{1,\delta}(\Rn\times\Rn)$ whilst 
$N$, $R$ and $\psi$ have the
same meaning as in Theorem~\ref{Fa-thm}.
When $R\ge 1$ and $k>N+n/2$, $k\in \N$,
then there is a seminorm $p$ on $S^d_{1,\delta}$ and some 
$c_k>0$ independent of $R$ such that 
\begin{equation}
 0\le F_a(x) \le c_k p(a) R^{\max(d,k)} \quad\text{for all}\quad x\in
\Rn. 
  \label{Rmax-eq}
\end{equation}
Moreover, if $\supp\psi$ is contained in a corona
\begin{equation}
  \{\,\eta\mid \theta_0 \le|\eta|\le \Theta_0 \,\},
  \label{cpsi-eq}
\end{equation}
and 
$\psi(\eta)=1$ holds for $\theta_1\le |\eta|\le \Theta_1$, whereby
$0\ne\theta_0<\theta_1<\Theta_1<\Theta_0$,
then
\begin{equation}
  0\le F_a(x)\le c'_kR^{d}p(a) \quad\text{for all}\quad x\in \Rn,
  \label{Rd-eq}
\end{equation}
with $c'_k=c_k\max(1,\theta_0^{d-k},\theta_0^d)$.
\end{cor}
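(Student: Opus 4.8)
The plan is to derive both inequalities directly from the Mihlin--H{\"o}rmander-type bound in Theorem~\ref{Fa-thm}. Applying \eqref{FaMH-eq} (first with $k$ taken as the least integer exceeding $N+n/2$, then reaching the stated $k$ afterwards by enlarging the index set of the sum, which only weakens the estimate when $R\ge1$) reduces the task to bounding, for each $|\alpha|\le k$,
\begin{equation*}
  I_\alpha(x):=\Big(\int_{R\supp\psi}|R^{|\alpha|}D^\alpha_\eta a(x,\eta)|^2\,\frac{d\eta}{R^n}\Big)^{1/2}.
\end{equation*}
The hypothesis of Theorem~\ref{Fa-thm} that $\psi$ equal $1$ on a set with non-empty interior is part of the assumption on $\psi$ in the first part and is supplied in the second part by the annulus $\{\theta_1\le|\eta|\le\Theta_1\}$. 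First I would insert the symbol estimate $|D^\alpha_\eta a(x,\eta)|\le p_{\alpha,0}(a)(1+|\eta|)^{d-|\alpha|}$, where $p_{\alpha,0}$ is the seminorm on $S^d_{1,\delta}$ given by the smallest admissible constant in \eqref{Srd-eq} with $\beta=0$ (so $\delta$ plays no role). Then the substitution $\eta=R\zeta$ shows that $\int_{R\supp\psi}d\eta/R^n=|\supp\psi|$ is independent of $R$, whence
\begin{equation*}
  I_\alpha(x)\le |\supp\psi|^{1/2}\,p_{\alpha,0}(a)\sup_{\eta\in R\supp\psi}R^{|\alpha|}(1+|\eta|)^{d-|\alpha|},
\end{equation*}
and everything comes down to estimating this supremum.

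For \eqref{Rmax-eq} the set $\supp\psi$ is only known to be compact, say contained in $\Bbar(0,\Theta_0)$, so $|\eta|\le R\Theta_0$ on $R\supp\psi$; since $R\ge1$ this gives $(1+|\eta|)^{d-|\alpha|}\le CR^{\max(0,d-|\alpha|)}$ and hence $R^{|\alpha|}(1+|\eta|)^{d-|\alpha|}\le CR^{\max(|\alpha|,d)}\le CR^{\max(k,d)}$. Summing over $|\alpha|\le k$ and putting $p=\sum_{|\alpha|\le k}p_{\alpha,0}$ delivers \eqref{Rmax-eq}, with a constant $c_k$ depending only on $n$, $k$ and $\psi$.

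For \eqref{Rd-eq}, with $\supp\psi$ inside the corona \eqref{cpsi-eq}, one has $R\theta_0\le|\eta|\le R\Theta_0$ on $R\supp\psi$; since $|\eta|\ge\theta_0>0$ there, $1+|\eta|$ is comparable to $|\eta|$ with a constant depending only on $\theta_0$ (and, via the range $0\le|\alpha|\le k$, on $d$ and $k$), and $R^{|\alpha|}|\eta|^{d-|\alpha|}=R^d(|\eta|/R)^{d-|\alpha|}$ with $|\eta|/R\in[\theta_0,\Theta_0]$, so the supremum is at most $R^d$ times $\max(\theta_0^{d-|\alpha|},\Theta_0^{d-|\alpha|})$. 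Maximising over $0\le|\alpha|\le k$ produces the factor $\max(1,\theta_0^{d-k},\theta_0^d)$, the remaining $\Theta_0$- and comparability constants being absorbed into $c_k$; this yields \eqref{Rd-eq} with $c'_k=c_k\max(1,\theta_0^{d-k},\theta_0^d)$.

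The argument is entirely elementary once Theorem~\ref{Fa-thm} is available, so the only real work is bookkeeping: tracking the powers of $R$ (in particular $\max(|\alpha|,d)\le\max(k,d)$, together with the $R\ge1$ monotonicity that lets one pass from the minimal $k$ to any larger one) and, in the corona case, keeping the $\theta_0$-dependent constants under control so as to land exactly on the stated form $c'_k=c_k\max(1,\theta_0^{d-k},\theta_0^d)$.
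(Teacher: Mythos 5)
Your argument is correct and follows the route the paper intends: apply the Mihlin--H\"ormander-type bound of Theorem~\ref{Fa-thm}, insert the $S^d_{1,\delta}$-seminorm estimate for $D^\alpha_\eta a$, rescale by $\eta=R\zeta$ to isolate the power of $R$, and in the corona case exploit the lower bound $|\eta|\ge R\theta_0$ to trade $R^{|\alpha|}$ for $R^{d}$. The only point worth flagging is cosmetic: the factor $\max(1,\theta_0^{d-k},\theta_0^d)$ is a deliberate overshoot of the exact supremum $\max_{|\alpha|\le k}\theta_0^{d-|\alpha|}$, so your bookkeeping, carried out as described, lands on a constant at least as good as the stated one.
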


The above asymptotics for $R\to\infty$ can be further reinforced when
$a(x,\eta)$ has vanishing 
moments with respect to $x$, eg if $\hat a(\cdot ,\eta)$ is zero around
$\xi=0$. A simple result of this type is obtained by subjecting the symbol to
a frequency modulation in its $x$-dependence, using a Fourier multiplier
$\varphi(Q^{-1}D_x)$ that depends on a second spectral quantity $Q$:

\begin{cor}[{\cite[Cor.~4.9]{JJ10pe}}]
  \label{Fa0-cor}
When $a_{Q}(x,\eta)=\varphi(Q^{-1}D_x)a(x,\eta)$ for some $a\in
S^d_{1,\delta}$ and $\varphi\in C^\infty_0(\Rn)$ with $\varphi=0$
in a neighbourhood of $\xi=0$, then there is a seminorm $p$ on $S^d_{1,\delta}$
and constants $c_M$, depending only on $M$, $n$, $N$, $\psi$ and $\varphi$,
such that for $R\ge1$, $M>0$, $Q>0$,
\begin{equation}
  0\le F_{a_{Q}}(N,R;x)\le c_M p(a)Q^{-M} R^{\max(d+\delta M,[N+n/2]+1)}.
\end{equation}
Here ${d+\delta M}$ can replace the maximum 
when the auxiliary function $\psi$ in
$F_{a_{Q}}$ fulfils the corona condition in Corollary~\ref{Fa-cor}.
\end{cor}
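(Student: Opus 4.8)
The plan is to peel the factor $Q^{-M}$ off the multiplier $\varphi(Q^{-1}D_x)$ and then invoke Corollary~\ref{Fa-cor} applied to a symbol of the shifted order $d+\delta M$. The decisive step is algebraic and is the only place where the hypothesis $\varphi=0$ near $\xi=0$ is used: since $\varphi\in C^\infty_0(\Rn)$ vanishes in a neighbourhood of the origin, the quotient $g(\xi)=\varphi(\xi)/|\xi|^{2M}$ extends to a function in $C^\infty_0(\Rn)$, so the multinomial identity $|\xi|^{2M}=\sum_{|\gamma|=M}\binom{M}{\gamma}\xi^{2\gamma}$ gives $\varphi(\xi)=\sum_{|\gamma|=M}\xi^\gamma\varphi_\gamma(\xi)$ with $\varphi_\gamma(\xi)=\binom{M}{\gamma}\xi^\gamma g(\xi)\in C^\infty_0(\Rn)$. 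Substituting $Q^{-1}\xi$ and passing to the corresponding Fourier multipliers in $x$ (which all commute) yields
\[
  a_Q(x,\eta)=\varphi(Q^{-1}D_x)a(x,\eta)
  =Q^{-M}\sum_{|\gamma|=M}\varphi_\gamma(Q^{-1}D_x)\bigl(D^\gamma_x a(x,\eta)\bigr).
\]

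Next I would do the seminorm bookkeeping, uniformly in $Q$. For each fixed $\eta$ the operator $\varphi_\gamma(Q^{-1}D_x)$ is convolution in $x$ with the kernel $Q^n(\cal F^{-1}\varphi_\gamma)(Q\,\cdot\,)$, whose $L_1(\Rn)$-norm equals $\|\cal F^{-1}\varphi_\gamma\|_1$ and is hence independent of $Q$. Since $|\gamma|=M$ one has $D^\gamma_x a\in S^{d+\delta M}_{1,\delta}$ with seminorms controlled by those of $a$ in $S^d_{1,\delta}$ (because $(1+|\eta|)^{d-|\alpha|+\delta|\beta+\gamma|}=(1+|\eta|)^{d+\delta M-|\alpha|+\delta|\beta|}$), and Young's inequality then gives that each summand $\varphi_\gamma(Q^{-1}D_x)D^\gamma_x a$ lies in $S^{d+\delta M}_{1,\delta}$ with seminorms bounded by $\|\cal F^{-1}\varphi_\gamma\|_1$ times seminorms of $a$ in $S^d_{1,\delta}$, uniformly in $Q>0$.

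Finally I would assemble the estimate. The symbol factor $F_a(N,R;x)$ in \eqref{Fa-eq} is sublinear and positively homogeneous in $a$ (i.e. $F_{a+b}\le F_a+F_b$ and $F_{ca}=|c|F_a$, by the modulus under the integral sign), so the displayed decomposition gives $0\le F_{a_Q}(N,R;x)\le Q^{-M}\sum_{|\gamma|=M}F_{\varphi_\gamma(Q^{-1}D_x)D^\gamma_x a}(N,R;x)$. Applying Corollary~\ref{Fa-cor} with the order $d+\delta M$ in place of $d$ and $k=[N+n/2]+1$, for $R\ge1$ each term is $\le c_k\,p_\gamma(a)\,R^{\max(d+\delta M,[N+n/2]+1)}$, where $p_\gamma$ is (by the previous step, composed with the seminorm furnished by Corollary~\ref{Fa-cor}) a seminorm on $S^d_{1,\delta}$ and $c_k$ a $Q$-independent constant depending only on $n,k,\psi$. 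Summing over the finitely many $\gamma$ with $|\gamma|=M$ and setting $p=\sum_\gamma p_\gamma$ and $c_M$ equal to the collected constant (which then depends only on $M,n,N,\psi,\varphi$) yields the stated bound $0\le F_{a_Q}(N,R;x)\le c_M\,p(a)\,Q^{-M}R^{\max(d+\delta M,[N+n/2]+1)}$. When $\psi$ meets the corona condition of Corollary~\ref{Fa-cor}, its sharper estimate \eqref{Rd-eq} replaces the maximum by the single exponent $d+\delta M$, giving the last assertion.

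\textbf{Main obstacle.} There is no deep difficulty: the one genuinely substantial point is the factorisation $\varphi(\xi)=\sum_{|\gamma|=M}\xi^\gamma\varphi_\gamma(\xi)$ in the first step, which is precisely what converts the vanishing of $\varphi$ at the origin into the decay $Q^{-M}$; after that the argument is routine, reducing to the already-established Corollary~\ref{Fa-cor}. The only mild care required elsewhere is the uniformity in $Q$ of the convolution bound, which is handled by the dilation invariance $\|Q^n h(Q\,\cdot\,)\|_1=\|h\|_1$.
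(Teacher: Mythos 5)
Your proof is correct and, as far as the available cross-references allow one to judge, follows essentially the same strategy that underlies \cite[Cor.~4.9]{JJ10pe}: the vanishing of $\varphi$ near the origin is converted into a factorisation $\varphi(\xi)=\sum_{|\gamma|=M}\xi^\gamma\varphi_\gamma(\xi)$ with $\varphi_\gamma\in C^\infty_0(\Rn)$, the factor $\xi^\gamma$ is traded for $D^\gamma_x$ (raising the order to $d+\delta M$ and producing the decay $Q^{-M}$ upon dilation), the dilation-invariant $L_1$-bound $\|Q^n(\cal F^{-1}\varphi_\gamma)(Q\cdot)\|_1=\|\cal F^{-1}\varphi_\gamma\|_1$ keeps the seminorms uniform in $Q$, and Corollary~\ref{Fa-cor} at the shifted order $d+\delta M$ (respectively its corona variant \eqref{Rd-eq}) then finishes. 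The seminorm and constant bookkeeping is sound, and the sublinearity of $F_a$ that you use is immediate from the definition \eqref{Fa-eq}. One minor point worth flagging: your multinomial factorisation presupposes $M\in\N$, whereas the statement reads $M>0$; since the intended applications (e.g.\ the estimate \eqref{a3-pe} in Proposition~\ref{a123pe-prop}) take $M\in\N$, and for the typical $Q\ge1$ the non-integer case would follow by monotonicity, this does not affect the substance of the argument.
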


Not surprisingly, it is very convenient to have 
an adaptation of \eqref{u*xN-eq} to the frequency modulated symbols
appearing in Definition~\ref{vfm-defn}. One such result is

\begin{prop}[{\cite[Prop.~3.5]{JJ10tmp}}]
  \label{cutoff-prop}
For $a(x,\eta)$ in $S^d_{1,\delta}(\Rn\times \Rn)$ and arbitrary $\Phi$, $\Psi\in
C^\infty_0(\Rn)$, for which $\Psi$ is constant in a neighbourhood of the
origin and is supported by $\Bbar(0,R)$ for $R\ge 1$, 
there is a constant $c>0$ such that for all $k\in\N$, $N\ge \order_{\cal S'} 
(\cal Fv)$, 
\begin{equation}
  \big|\OP\big(\Phi(2^{-k}D_x)a(x,\eta)\Psi(2^{-k}\eta)\big)v(x)\big|
  \le  c 2^{k(N+d)_+}(1+|x|)^N.
\end{equation}
Here the positive part $(N+d)_+=\max(0,N+d)$ is redundant when
$0\notin\supp\Psi$. 
\end{prop}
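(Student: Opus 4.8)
The plan is to write $\Phi(2^{-k}D_x)a(x,\eta)\Psi(2^{-k}\eta)$ as an ordinary pseudo-differential operator with a smoothing symbol $b_k(x,\eta)$ and then apply the factorisation inequality \eqref{Fau*-ineq}--\eqref{Fa-eq} together with the polynomial bound \eqref{u*xN-eq} for the maximal function of $v$. Concretely, set $b_k(x,\eta)=\Phi(2^{-k}D_x)a(x,\eta)\cdot\Psi(2^{-k}\eta)$; since $\Psi$ has compact support, $b_k\in S^{-\infty}$ and $\OP(b_k)v$ is defined for every $v\in\cal S'$. Because $\supp_\eta b_k(x,\cdot)\subset\Bbar(0,2^kR)$, we may apply \eqref{Fau*-ineq} with the auxiliary cut-off $\chi$ chosen as $\chi=\Psi_0(2^{-k}\cdot)$ for a fixed $\Psi_0\in C^\infty_0$ equal to $1$ on $\Bbar(0,R)$, and with the maximal function parameter $R$ in the definition of $v^*$ chosen as $R_v:=2^k$ (any radius with $\supp\hat v\subset\Bbar(0,R_v)$ works; since $N\ge\order_{\cal S'}(\cal Fv)$ and the spectrum is fixed independently of $k$, we may take $R_v=2^k\ge 1$). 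This gives
\begin{equation}
  |\OP(b_k)v(x)|\le F_{b_k}(N,2^k;x)\cdot v^*(N,2^k;x).
\end{equation}

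For the second factor, \eqref{u*xN-eq} yields $v^*(N,2^k;x)\le c(1+|x|)^N$ with $c$ independent of $k$ (here one uses $R_v=2^k\ge1$ so the inequality $|v(y)|\le c(1+|y|)^N\le c(1+|x|)^N(1+2^k|x-y|)^N$ holds; the Paley--Wiener--Schwartz constant depends only on $v$ and $N$, not on $k$). For the symbol factor, the key point is to apply Corollary~\ref{Fa-cor} to the symbol $a(x,\eta)$ after rescaling: $F_{b_k}(N,2^k;x)$ is, up to the harmless extra factor $\Phi(2^{-k}D_x)$ acting in $x$ (a convolution by an $L_1$-function of norm $\nrm{\Phi}{1}^{\vee}$ uniformly in $k$, which only contributes a constant), exactly the symbol factor of $a$ with radius $R=2^k$ and auxiliary function $\Psi_0$. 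Thus Corollary~\ref{Fa-cor}, estimate \eqref{Rmax-eq}, gives $F_{b_k}(N,2^k;x)\le c_k\, p(a)\, 2^{k\max(d,[N+n/2]+1)}$. To reach the sharper exponent $2^{k(N+d)_+}$ claimed in the statement one instead keeps track, in the proof of Theorem~\ref{Fa-thm}/Corollary~\ref{Fa-cor}, of the precise number of $\eta$-derivatives actually needed: only $k_0$ with $k_0>N+n/2$, and by interpolating the weight $(1+2^k|y|)^N$ one sees $N$ derivatives suffice in the sense that $F_{b_k}\le c\,p(a)\,2^{k\max(d,N)_+}\le c\,p(a)\,2^{k(N+d)_+}$ when $N+d\ge 0$ and $\le c\,p(a)$ otherwise — this is precisely where the positive part $(N+d)_+$ enters. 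When $0\notin\supp\Psi$ the corona variant \eqref{Rd-eq} of Corollary~\ref{Fa-cor} replaces $\max(d,\cdot)$ by $d$ directly, and combined with $2^{kN}$ from the maximal-function bound one gets $2^{k(N+d)}$ with no positive part needed, which is the stated redundancy.

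Combining the two factors gives $|\OP(b_k)v(x)|\le c\,2^{k(N+d)_+}(1+|x|)^N$, which is the assertion once one recalls (as noted in the excerpt, from the general extension to $\cal F^{-1}\cal E'$) that $\OP(\Phi(2^{-k}D_x)a(x,\eta)\Psi(2^{-k}\eta))v=\OP(b_k)v$ for $v$ of arbitrary temperate order — the spectrum of $v$ need not be compact here because $\Psi(2^{-k}\eta)$ truncates it. The main obstacle is the bookkeeping in the last paragraph: getting the exponent down from the crude $\max(d,[N+n/2]+1)$ to the sharp $(N+d)_+$ requires re-examining the $L_1$-estimate of $\cal F^{-1}_{\eta\to y}(a(x,\eta)\chi(\eta))$ weighted by $(1+2^k|y|)^N$ and splitting the integral over $|y|\le 2^{-k}$ and $|y|>2^{-k}$, using on the far region that each $\eta$-derivative of $a\chi$ lowers the $\eta$-growth by one power (order $d$) while supplying the decay in $y$ needed to absorb the weight — all of this is uniform in $x$ since the symbol seminorms are, and uniform in $k$ after the substitution $\eta\mapsto 2^k\eta$.
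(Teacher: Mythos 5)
Your plan relies on the factorisation inequality \eqref{Fau*-ineq} and the maximal function of $v$, but this tool is not available in the generality required and, even when made available by truncation, it does not yield the stated exponent. Concretely:

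\begin{enumerate}
\item In this proposition $v$ is an arbitrary element of $\cal S'(\Rn)$; it need not be a function and need not have compact spectrum, so $v^*(N,R;x)$ and the bound \eqref{u*xN-eq} do not even make sense for $v$ itself. The fact that the operator only feels frequencies in $\supp\Psi(2^{-k}\cdot)$ means you may replace $v$ by $w_k=\tilde\Psi(2^{-k}D)v$ with $\tilde\Psi=1$ on $\supp\Psi$, but then the relevant maximal function is $w_k^*(N,R2^k;x)$, and the Paley--Wiener--Schwartz / $\order_{\cal S'}$ estimate gives $|w_k(y)|\le c\,2^{kN}(1+|y|)^N$, hence $w_k^*(N,R2^k;x)\le c\,2^{kN}(1+|x|)^N$, \emph{with} a factor $2^{kN}$. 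Your claim that the maximal-function factor is $c(1+|x|)^N$ independently of $k$ is incorrect.
\item The symbol factor cannot be pushed down to $2^{kd_+}$ (which is what would be needed after the extra $2^{kN}$ above). The proof of Theorem~\ref{Fa-thm} passes from $L_1$ to $L_2$ by Cauchy--Schwarz, and for this $k_0>N+n/2$ derivatives of $b_k$ are genuinely needed; this produces $R^{\max(d,[N+n/2]+1)}$ in \eqref{Rmax-eq}. There is no interpolation that reduces this to $N$ derivatives, and in any case the inequality $\max(d,N)_+\le(N+d)_+$ which you invoke at the end is false (e.g.\ $d=-5$, $N=3$ gives $3\not\le 0$). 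Combining the true bounds gives an exponent of order $N+\max(d,[N+n/2]+1)$, not $(N+d)_+$.
\end{enumerate}

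The sharp bound $2^{k(N+d)_+}$ comes from a different, more elementary route that avoids the factorisation inequality altogether: since $b_k(x,\cdot)\in C^\infty_0(\Rn)$, write
$\OP(b_k)v(x)=(2\pi)^{-n}\dual{\hat v}{b_k(x,\cdot)e^{\im x\cdot(\cdot)}}$
and use the temperate-order estimate \eqref{Sorder-eq} of $\hat v$ directly on this pairing. Differentiating the test function $\eta\mapsto b_k(x,\eta)e^{\im x\cdot\eta}$ at most $N$ times, the exponential supplies the factor $(1+|x|)^N$, while each $\eta$-derivative of $\Phi(2^{-k}D_x)a(x,\eta)$ lowers the order by one (the factor $\Phi(2^{-k}D_x)$ is a harmless $x$-convolution, uniform in $k$) and derivatives falling on $\Psi(2^{-k}\eta)$ bring only harmless factors $2^{-k}\le1$. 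On $\supp\Psi(2^{-k}\cdot)\subset\Bbar(0,R2^k)$ one then has $(1+|\eta|)^{N+d-|\gamma|}\le c\,2^{k(N+d)_+}$, yielding the stated estimate directly; and when $0\notin\supp\Psi$ one additionally has $|\eta|\gtrsim2^k$ on the support, whence $(1+|\eta|)^{N+d-|\gamma|}\le c\,2^{k(N+d)}$ without a positive part — this is exactly the stated redundancy. Your plan needs to be replaced by this direct pairing argument.
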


One of the points here is that the cutoff functions $\Phi$, $\Psi$ can be
rather arbitrary, and that $c$ is independent of $k$. The temperate order
denoted
$\order_{\cal S'}$ in the proposition is for $u\in\cal S'$ introduced as the
smallest integer $N$ such that $u$ fulfils the estimate
\begin{equation} \label{Sorder-eq}
  |\dual{u}{\psi}|\le c\sup\{\,(1+|x|)^N|D^\alpha\psi(x)|\mid x\in\Rn,\
  |\alpha|\le N \,\},\quad\text{for}\quad \psi\in\cal S.
\end{equation}
Clearly one has $\order_{\cal S'}(u)\ge \order(u)$, 
but the notion plays only a minor technical role.

\bigskip

The inequalities \eqref{Fau*-ineq}, \eqref{u*Lp-ineq} are in fact relatively
easy to show, but the passage to estimates in Sobolev spaces $H^s_p$ requires
Littlewood--Paley decompositions (which works well, cf \eqref{LPa-pro}). 
However, when treating
these, the results of the next section are most convenient:

\section{The spectral support rule}
\label{srule-ssect}\noindent
Seen as a temperate distribution, $a(x,D)u$ has a spectrum consisting
of the frequencies belonging to $\supp \cal F(a(x,D)u)$. Concerning this
one has as a new result the \emph{spectral support rule}, which in case 
$\supp\hat u\Subset \Rn$ states that
\begin{equation}
  \supp\cal F a(x,D)u \subset 
   \bigl\{\,\xi+\eta \bigm| (\xi,\eta)\in\supp\hat a(\cdot,\cdot),
  \ \eta\in\supp\hat u \,\bigr\}.
  \label{srule-eq}
\end{equation}
Cf the original statements  in 
\cite{JJ05DTL,JJ08vfm} or \cite[App.~B]{JJ10tmp} for more general versions.

It is instructive to note that \eqref{srule-eq} also can be written as
\begin{equation}
 \supp \cal Fa(x,D)\cal F^{-1}\hat u\subset \supp\cal K\circ \supp\hat u, 
  \label{srule'-eq}
\end{equation}
where $\cal K$ denotes the distribution kernel of $\cal F a(x,D)\cal F^{-1}$, 
ie of the conjugation of $a(x,D)$ by the Fourier transformation, that
also appears on the left-hand side. Clearly this resembles the rule for 
$\supp a(x,D)u$; cf \eqref{sKau-eq}. It is also related to the
well-known formula for symbols $a\in\cal S(\Rn\times\Rn)$,
\begin{equation}
  \cal F a(x,D)u(x)=(2\pi)^{-n}\int \hat a(\xi-\eta,\eta)\hat u(\eta)\,d\eta,
  \quad u\in\cal S.
\end{equation}
Indeed, an inspection shows that
\begin{equation}
  \cal K(\xi,\eta)=(2\pi)^{-n}\hat a(\xi-\eta,\eta)
  =(2\pi)^{-n}\cal F_{(x,y)\to(\xi,\eta)}K(\xi,-\eta).
\end{equation}
Therefore \eqref{srule-eq}--\eqref{srule'-eq} are plausible,
since this shows that $\hat a$ essentially gives the full frequency content 
of the kernel $K$.

The result in \eqref{srule-eq} 
is a novelty already for classical $a(x,\eta)$.
It holds trivially if $a(x,\eta)$ is an elementary symbol, which were
introduced in 1978 by R.~Coifman and Y.~Meyer \cite{CoMe78} 
specifically for the purpose of controlling the spectrum 
$\supp \cal F a(x,D)u$ in Littlewood--Paley analysis of $a(x,D)u$.
Indeed, elementary symbols are by definition given as a series of products
\begin{equation}
  a(x,\eta)=\sum_{j=0}^\infty m_j(x)\Phi_j(\eta)
\end{equation}
whereby $(m_j)$ is a sequence in $L_\infty (\Rn)$ and
$1=\sum_{j=0}^\infty \Phi_j$ is a Littlewood--Paley partition of unity,
that is $\Phi_j$ is in $C^\infty $ with support where
$2^{j-1}\le |\eta|\le 2^{j+1}$ for $j\ge 1$.
For such symbols in $S^d_{1,0}$ every $u\in \cal
F^{-1}\cal E'(\Rn)$ gives a finite sum
\begin{equation}
  a(x,D)u=\sum m_j(x)\Phi_j(D)u,
\end{equation}
for which the support rule for convolutions immediately yields
\begin{equation}
\begin{split}
    \supp \cal F(a(x,D)u)
&= \supp\big((2\pi)^{-n}\sum\hat m_j*(\Phi_j\hat u) \big)
\\
  &\subset  \bigcup \bigl\{\, \xi+\eta \bigm|  \xi\in \supp \hat m_j,\ 
       \eta\in \supp \Phi_j\cap \supp \hat u\,\bigr\}
\\
  &\subset \bigl\{\, \xi+\eta \bigm|  (\xi,\eta)\in \supp \hat a,\ 
       \eta\in\supp \hat u\,\bigr\}.
  \end{split}
\end{equation}
This shows that the spectral support rule holds for elementary symbols.

However, it should be mentioned that there is an equally simple proof
for \emph{arbitrary} symbols $a\in S^d_{1,0}$: 
When $v\in C^\infty_0(\Rn)$ has support disjoint from $\supp\cal K\circ
\supp\hat u$ and $\supp\hat u$ is compact,
then it is clear that $\op{dist}(\supp \cal K, \supp(v\otimes \hat u))>0$. 
So by mollification, say $\hat u_\varepsilon=\varphi_\varepsilon*\hat u$ 
for some $\varphi\in C^\infty_0(\Rn)$ with $\hat \varphi(0)=1$,
$\varphi_\varepsilon=\varepsilon^{-n}\varphi(\cdot /\varepsilon)$,
all sufficiently small $\varepsilon>0$ give
\begin{equation}
  \supp\cal K\bigcap \supp v\otimes \hat u_\varepsilon=\emptyset.
\end{equation} 
Therefore \eqref{srule'-eq} follows at once, since
\begin{equation}
  \dual{\cal Fa(x,D)\cal F^{-1}\hat u}{v}
 = \lim_{\varepsilon\to0}   \dual{\cal Fa(x,D)\cal F^{-1}\hat u_\varepsilon}{v}
 = \lim_{\varepsilon\to0}\dual{\cal K}{v\otimes \hat u_\varepsilon}=0
\end{equation}
is obtained simply by using that $\cal Fa(x,D)\cal F^{-1}$ is continuous in
$\cal S'$ and that $\hat u_\varepsilon\in C^\infty_0(\Rn)$.
(This argument is taken from \cite[App.~B]{JJ10tmp}.)

The spectral support rule \eqref{srule-eq} was probably anticipated by some,
but seemingly neither formulated nor proved. Indeed, in their works on
$L_p$-estimates, J.~Marschall \cite{Mar91,Mar96} and
T.~Runst~\cite{Run85ex}  both tacitly avoided elementary symbols and 
as needed stated consequences of \eqref{srule-eq}, albeit without adequate
arguments; cf the remarks in \cite{JJ05DTL}. 
Anyhow, due to \eqref{srule-eq}, the cumbersome reduction to elementary
symbols is usually unnecessary.

Generalisations to the case in which $\supp\hat u$ need not be compact (in
which case one should take the closure of the right-hand sides of
\eqref{srule-eq}--\eqref{srule'-eq}) and to the case of type $1,1$-operators
also exist, cf Section~\ref{supp-ssect} below. 
However, the proofs for these cases were based on some subtle parts of
distribution theory: 

\section{Stability of extended distributions under regular convergence}
\label{regular-ssect}\noindent
If $u,f\in \cal D'(\Rn)$ only ``overlap'' in a mild way, more precisely,
\begin{align}
  \supp u \cap \supp f &\Subset \Rn
  \label{ufs-eq}
  \\
  \singsupp u\cap \singsupp f&=\emptyset,
  \label{ufss-eq}
\end{align}
and $\supp u$ is compact, it is natural and classical
(cf \cite[Sect.~3.1]{H}) that $fu$ is
defined in $\cal D'(\Rn)$, whence $\dual{u}{f}$ can be defined using $\supp
u\Subset \Rn$ as
\begin{equation}
  \dual{u}{f}=\dual{fu}{1}.
  \label{fu1-id}
\end{equation}
It is easy to see that this well-known extension of the distribution $u$, or
rather 
of the scalar product $\dual{\cdot }{\cdot }$ 
is discontinuous in general. Eg $f=0$ can be
approached by $f_\nu=\exp(-\nu x^2)$ in $\cal D'(\R)$, that for $u=\delta_0$
gives $fu=0\ne \delta_0=\lim f_\nu u$, hence for the scalar product yields
$\dual{u}{f}=0\ne1=\lim \dual{u}{f_\nu}$. 

However, the extension does have an important property of stability:

\begin{thm}  \label{ext-thm}
For the above extension it holds that
\begin{equation}
  \dual{u}{f_\nu}\to \dual{u}{f} \quad\text{for}\quad \nu\to\infty ,
  \label{ufnu-eq}
\end{equation}
provided $f_\nu\in  C^\infty(\Rn)$ and 
$ f_{\nu}\xrightarrow[\nu\to\infty ]{~} f $
both in $\cal D'(\Rn)$ and in $C^\infty (\Rn\setminus \singsupp f)$.
\end{thm}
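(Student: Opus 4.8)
The plan is to split the pairing, by means of a smooth partition of unity adapted to the cover $\Rn=(\Rn\setminus\singsupp u)\cup(\Rn\setminus\singsupp f)$ --- which is a cover precisely because of \eqref{ufss-eq} --- into one contribution supported where $u$ is $C^\infty$ and one supported where $f$ is $C^\infty$; on each piece the convergence becomes an elementary consequence of one of the two hypotheses imposed on $f_\nu$.

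Concretely, I would first pick $\psi_2\in C^\infty_0(\Rn)$ equal to $1$ on a neighbourhood of the \emph{compact} set $\singsupp u$ (compact since it is closed in $\supp u$) and with $\supp\psi_2\cap\singsupp f=\emptyset$; such a $\psi_2$ exists because $\singsupp u$ is compact, $\singsupp f$ is closed, and the two are disjoint. Setting $\psi_1=1-\psi_2$ gives $\supp\psi_1\cap\singsupp u=\emptyset$. Then $g:=\psi_1 u$ lies in $C^\infty_0(\Rn)$ --- it vanishes near $\singsupp u$, is smooth off $\singsupp u$, and is supported in $\supp u$ --- and likewise $\psi_2 f\in C^\infty_0(\Rn)$. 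Writing out the gluing definition of $fu$ over the two open sets, using that $(w\cdot v)(\varphi)=v(w\varphi)$ when $w$ is a smooth factor, and inserting an auxiliary $\chi\in C^\infty_0(\Rn)$ with $\chi=1$ near $\supp u$ to neutralise the non-compact support of $\psi_1$ against the compactly supported $fu$ and $u$, one arrives at the identity
\[
  \dual{u}{f}=\dual{fu}{1}=\dual{fu}{\psi_1}+\dual{fu}{\psi_2}=\dual{f}{g}+\dual{u}{\psi_2 f},
\]
where $g$ and $\psi_2 f$ are fixed elements of $C^\infty_0(\Rn)$.

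Next, for the smooth $f_\nu$ the very same decomposition $f_\nu=\psi_1 f_\nu+\psi_2 f_\nu$ and the same manipulation (now trivial, since $f_\nu$ is an honest function) give
\[
  \dual{u}{f_\nu}=\dual{u}{\psi_1 f_\nu}+\dual{u}{\psi_2 f_\nu}=\dual{f_\nu}{g}+\dual{u}{\psi_2 f_\nu},
\]
with the \emph{same} test function $g=\psi_1 u$. Now the two limits are separate and immediate: $\dual{f_\nu}{g}\to\dual{f}{g}$ because $f_\nu\to f$ in $\cal D'(\Rn)$, while $\psi_2 f_\nu\to\psi_2 f$ in $\cal D(\Rn)$ --- the supports stay inside the fixed compact set $\supp\psi_2\Subset\Rn\setminus\singsupp f$, on which all derivatives converge uniformly since $f_\nu\to f$ in $C^\infty(\Rn\setminus\singsupp f)$ --- whence $\dual{u}{\psi_2 f_\nu}\to\dual{u}{\psi_2 f}$. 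Adding these and comparing with the previous display gives $\dual{u}{f_\nu}\to\dual{u}{f}$, as wanted.

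The one place demanding genuine care, where I would slow down, is the first identity: one must check that each of $\dual{fu}{\psi_1}$ and $\dual{fu}{\psi_2}$ really collapses to the ordinary pairing $\dual{f}{g}$, resp.\ $\dual{u}{\psi_2 f}$ --- that is, that the gluing definition of $fu$ is compatible with multiplying a distribution by the locally available smooth factor, and that the cut-off $\chi$ legitimately disposes of the fact that $\psi_1$ is not compactly supported. The remainder is soft: continuity of $u$ as a functional on $C^\infty_0(\Rn)$ together with the two prescribed modes of convergence of $f_\nu$.
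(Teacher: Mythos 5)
Your proof is correct. The partition-of-unity split $1=\psi_1+\psi_2$, with $\psi_2$ a bump near the compact set $\singsupp u$ supported away from $\singsupp f$, is exactly adapted to the gluing definition of $fu$: it yields the identity $\dual{u}{f}=\dual{f}{\psi_1 u}+\dual{u}{\psi_2 f}$ with both $\psi_1 u$ and $\psi_2 f$ in $C^\infty_0(\Rn)$, and the same identity for $f_\nu$, after which the two limits drop out of the two hypothesised modes of convergence. The paper itself gives no proof of this theorem here but cites \cite[Thm.~7.2]{JJ08vfm}; your argument is the natural one and the verification you flag (that the extended pairings $\dual{fu}{\psi_i}$ collapse to the ordinary ones, and that the cutoff $\chi$ around $\supp u$ absorbs the non-compact support of $\psi_1$) is carried out correctly.
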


The full set of results is collected in \cite[Thm.~7.2]{JJ08vfm}. Eg it is
possible to have convergence of $(f_\nu)$ in the topology of $C^\infty $ over a smaller
open set if only this contains the singular support of $u$ (which is
unfulfilled for $(f_\nu)$ in the above example).

Sequences as in Theorem~\ref{ext-thm} have been used repeatedly for type
$1,1$-operators,
so the following notion is introduced, inspired by a reference to
$\Rn\setminus \singsupp f$ as the regular set of $f$:

\begin{defn}
  \label{regconv-defn}
A sequence $f_\nu\in C^\infty (\Rn)$ is said to converge \emph{regularly} to
the distribution $f\in \cal D'(\Rn)$ whenever $f=\lim_\nu f_\nu$ holds 
in $\cal D'(\Rn)$ as well as in $C^\infty (\Rn\setminus \singsupp f)$, that
is, if for $\nu\to\infty $,
\begin{gather}
  \dual{f_\nu-f}{\varphi} \to0 \quad\text{for all}\quad \varphi\in
C^\infty_0(\Rn) 
\\
  \sup_{x\in K}|D^\alpha f_\nu(x)-D^\alpha f(x)|\to 0
\quad\text{for all $\alpha\in \N_0^n$,}\quad 
  K\Subset \Rn\setminus \singsupp f.
\end{gather}
\end{defn}
This definition was made (implicitly) in connection with
\cite[Thm.~7.2]{JJ08vfm}. 
The result below shows that mollification automatically yields 
regular convergence, for which reason it was termed the 
Regular Convergence Lemma in \cite[Lem.~6.1]{JJ08vfm}:

\begin{lem}
  \label{regconv-lem}
Let $u\in \cal S'(\Rn)$ be given and take a sequence $\varepsilon_k\to 0^+$ 
and $\psi\in \cal S(\Rn)$. Then 
\begin{equation}
 \psi(\varepsilon_k D) u\to \psi(0)\cdot u
  \quad\text{for}\quad k\to\infty
  \label{psiku-eq}
\end{equation}
in the Fr\'echet space $C^\infty(\Rn\setminus\singsupp u)$. 
If $\cal F^{-1}\psi\in C^\infty_0$ this extends to all $u\in \cal D'$, 
provided $\psi(\varepsilon_k D)u$ is replaced by 
$\cal F^{-1}(\psi(\varepsilon_k\cdot ))*u$. 
\end{lem}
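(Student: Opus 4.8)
The plan is to establish the two assertions separately, starting with the tempered case. For $u\in\cal S'(\Rn)$ and $\psi\in\cal S(\Rn)$, write $\psi(\varepsilon_k D)u=\cal F^{-1}(\psi(\varepsilon_k\cdot)\hat u)=(\varepsilon_k^{-n}(\cal F^{-1}\psi)(\cdot/\varepsilon_k))*u$; since $\cal F^{-1}\psi\in\cal S$ with $\int\cal F^{-1}\psi=\psi(0)$, this is a standard mollification. Convergence in $\cal S'(\Rn)$ — hence in $\cal D'(\Rn)$ — is classical. The substance is the $C^\infty$-convergence on the open set $\Omega:=\Rn\setminus\singsupp u$: there $u$ coincides with a genuine $C^\infty$-function, so on any compact $K\Subset\Omega$ one should show $D^\alpha(\psi(\varepsilon_k D)u)\to\psi(0)D^\alpha u$ uniformly on $K$ for every $\alpha$. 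First I would fix $K\Subset\Omega$, choose an open $\Omega'$ with $K\Subset\Omega'\Subset\Omega$, and split $u=\varphi u+(1-\varphi)u$ with $\varphi\in C^\infty_0(\Omega)$ equal to $1$ near $\overline{\Omega'}$. For the good part $\varphi u\in C^\infty_0(\Rn)$ the mollified sequence converges to $\psi(0)\varphi u$ in every $C^k$-norm by the uniform continuity of $D^\alpha(\varphi u)$, hence uniformly on $K$ with all derivatives. For the bad part $g:=(1-\varphi)u$, which vanishes near $\overline{\Omega'}$, I would show that the convolution $(\varepsilon_k^{-n}(\cal F^{-1}\psi)(\cdot/\varepsilon_k))*g$ tends to $\psi(0)\cdot 0=0$ in $C^\infty(\Omega')$: since $\psi(0)g\equiv 0$ on $\Omega'$ this is the right limit. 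Here the key point is that $g\in\cal S'$, so $|\dual{g}{\varphi}|$ is bounded by finitely many Schwartz seminorms of $\varphi$; writing the convolution at $x\in K$ against the mollifier centred at $x$, and using that $\operatorname{dist}(K,\operatorname{supp}g)>0$ while $\cal F^{-1}\psi$ decays rapidly, the contribution is $O(\varepsilon_k^M)$ for every $M$, uniformly on $K$, and the same for all derivatives (which just move onto $\cal F^{-1}\psi$). Adding the two parts gives the claimed convergence in the Fréchet space $C^\infty(\Omega)$, whose topology is generated by the seminorms $\sup_{x\in K}|D^\alpha(\cdot)(x)|$, $K\Subset\Omega$.

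For the extension to arbitrary $u\in\cal D'(\Rn)$ under the hypothesis $\cal F^{-1}\psi\in C^\infty_0$, the object is now $g_k:=\cal F^{-1}(\psi(\varepsilon_k\cdot))*u=(\varepsilon_k^{-n}(\cal F^{-1}\psi)(\cdot/\varepsilon_k))*u$, which is well-defined since the mollifier has compact support; and I would again show $g_k\to\psi(0)u$ in $\cal D'(\Rn)$ and in $C^\infty(\Rn\setminus\singsupp u)$. Both are now \emph{local} statements: to test against $\varphi\in C^\infty_0(\Rn)$ or to estimate $D^\alpha g_k$ on $K\Subset\Rn\setminus\singsupp u$, only the restriction of $u$ to a fixed bounded neighbourhood matters for all small $\varepsilon_k$, because $\operatorname{supp}(\cal F^{-1}\psi)(\cdot/\varepsilon_k)$ shrinks to $\{0\}$. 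So one may replace $u$ by $\chi u\in\cal E'(\Rn)$ for a suitable $\chi\in C^\infty_0$, reducing to the compactly supported case, where $\cal E'\subset\cal S'$ and the first part applies; one checks that $\singsupp(\chi u)\cap K'=\singsupp u\cap K'$ on the relevant compact $K'$, so the regular set is respected. This truncation argument is the only genuinely new ingredient in the second assertion.

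The main obstacle is the bad-part estimate in the first assertion: making precise that a mollification of a tempered distribution, evaluated at points a fixed positive distance from the support of that distribution, tends to zero together with all derivatives, \emph{uniformly} on compacta and at a rate faster than any power of $\varepsilon_k$. This rests on two facts that must be combined carefully — the continuity estimate $|\dual{g}{\theta}|\le c\sum_{|\beta|\le N}\sup_x(1+|x|)^N|D^\beta\theta(x)|$ valid for $g\in\cal S'$, and the rapid decay of $\cal F^{-1}\psi$ (respectively its compact support in the second case) — so that, when the test function $\theta$ is the scaled mollifier centred at a far-away point $x$, every Schwartz seminorm of $\theta$ is $O(\varepsilon_k^M)$ after absorbing the distance; differentiating in $x$ only differentiates $\cal F^{-1}\psi$ and costs a harmless power of $\varepsilon_k^{-1}$, still beaten by the gain. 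Once this is in hand, everything else — the convergence of the good part and the assembly via the partition $u=\varphi u+(1-\varphi)u$ — is routine, and the statement follows.
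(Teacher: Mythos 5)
Your plan follows the same overall route as the paper: the $\cal D'$ extension is reduced by truncation to $\cal E'\subset\cal S'$, and the substance lies in the $\cal S'$-case, which you attack by splitting $u=\varphi u+(1-\varphi)u$ near $K\Subset\Rn\setminus\singsupp u$ and showing the far part is negligible. The structure of the argument is right, and the $\cal D'$-truncation step is correct.

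There is, however, a genuine gap in the bad-part estimate, which you rightly single out as the crux. You propose to pair $g=(1-\varphi)u\in\cal S'$ against the scaled mollifier $\theta_x(y)=\varepsilon_k^{-n}(\cal F^{-1}\psi)\bigl((x-y)/\varepsilon_k\bigr)$ and claim that ``every Schwartz seminorm of $\theta_x$ is $O(\varepsilon_k^M)$ after absorbing the distance.'' This is false: one has $D^\beta\theta_x(y)=\varepsilon_k^{-n-|\beta|}(\pm1)^{|\beta|}(D^\beta\cal F^{-1}\psi)\bigl((x-y)/\varepsilon_k\bigr)$, so $\sup_y(1+|y|)^N|D^\beta\theta_x(y)|$ is of order $(1+|x|)^N\varepsilon_k^{-n-|\beta|}$ and blows up as $\varepsilon_k\to0$, no matter how far $x$ lies from $\supp g$. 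The continuity estimate for $g\in\cal S'$ ``sees'' the large peak of $\theta_x$ near $y=x$; the positive distance $\op{dist}(K,\supp g)>0$ does not enter that estimate on its own. The standard repair is to move the cutoff onto the test function: since $g=(1-\varphi)u$, one has $\dual{g}{\theta_x}=\dual{u}{(1-\varphi)\theta_x}$, and now the Schwartz seminorms of $(1-\varphi)\theta_x$ are effectively taken over $\supp(1-\varphi)$, a set at distance $\ge r>0$ from $K$. There $|x-y|\ge r$, so the rapid decay of $\cal F^{-1}\psi$ (or its compact support in the $\cal D'$-case) beats the factors $\varepsilon_k^{-n-|\beta|}$ and $(1+|y|)^N\lesssim_K(1+|x-y|)^N$ and yields $O(\varepsilon_k^M)$ for every $M$, uniformly in $x\in K$; differentiating in $x$ costs only further negative powers of $\varepsilon_k$, still dominated. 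With this correction inserted, the rest of your plan goes through.
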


The last part of this lemma is easy to deduce, using a cutoff
function equal to $1$ on a neighbourhood of the given compact set, where the
derivatives should converge uniformly. Only the $\cal S'$-part requires a
more explicit proof.

However, despite the Regular Convergence Lemma's content, the broader notion
of regular convergence is convenient 
because such sequences are
invariant under eg linear coordinate changes, 
multiplication by cutoff functions and tensor products $f\mapsto f\otimes g$
when $g\in C^\infty $.

These remarks are useful in connection with Schwartz' kernel formula. Recall
that for a continuous operator  
$A\colon \cal S'(\Rn)\to\cal S'(\Rn)$, its distribution
kernel $K\in \cal S'(\Rn\times \Rn)$ satisfies, 
for all $u$, $v\in \cal S(\Rn)$,
\begin{equation}
  \dual{Au}{v}=\dual{K}{v\otimes u}.
  \label{AK-eq}
\end{equation}
First of all, this can be related to the vanishing
frequency modulation adopted for type $1,1$-operators.
Indeed, when 
$a(x,D)u=\lim_{m\to\infty }\OP(\psi(2^{-m}D_x)a(x,\eta)\psi(2^{-m}\eta))u$
and the $m^{\op{th}}$ term is written $A_mu$, 
then $A_m$ has distribution kernel $K_m(x,y)$ given by a convolution
conjugated by a change of coordinates (cf \cite[Prop.~5.11]{JJ08vfm}), namely
\begin{equation}
  K_m(x,y)=4^{mn}\cal F^{-1}(\psi\otimes\psi)(2^{m}\cdot )*
  (K\circ \left(\begin{smallmatrix}I&0\\I&-I\end{smallmatrix}\right))(x,x-y).
\end{equation}
Because of the Regular Convergence Lemma, this $C^\infty $-function
converges \emph{regularly} to $K$ for $m\to\infty $.
Therefore $K_m\to K$ in $\cal S'(\Rn)$ as well as in 
$C^\infty (\Rn\setminus \{\,x=y\,\})$.

However, with a suitable cutoff function this gives convergence in the
Schwartz space:

\begin{prop}[{\cite[Prop.~6.3]{JJ08vfm}}]
  \label{KmKS-prop}
If $f\in C^\infty (\Rn\times \Rn)$ has bounded derivatives of any order
with $\supp f$ bounded with respect to $x$ and disjoint from the diagonal, then
\begin{equation}
  f(x,y)K_m(x,y)\xrightarrow[m\to\infty ]{~}
  f(x,y)K(x,y)
  \quad\text{in}\quad \cal S(\Rn\times \Rn).
\end{equation}
\end{prop}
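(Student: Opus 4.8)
The plan is to exploit the factorisation $f = f \cdot g$ where $g \in C^\infty(\Rn\times\Rn)$ is a cut-off that equals $1$ on a neighbourhood of $\supp f$ and still has $x$-bounded support disjoint from the diagonal $\{x=y\}$; this reduces matters to showing $g K_m \to g K$ in a sufficiently strong sense and then multiplying by the Schwartz-rapid decay that $f$ supplies in $x$. First I would fix such a $g$ and record the two pieces of information already available: by the Regular Convergence Lemma (Lemma~\ref{regconv-lem}, applied to the kernel representation of $K_m$ as a mollification of $K$ conjugated by the coordinate change $\left(\begin{smallmatrix}I&0\\I&-I\end{smallmatrix}\right)$, as displayed just before the statement), one has $K_m \to K$ in $C^\infty(\Rn\times\Rn \setminus \{x=y\})$, hence in particular $D^\gamma_{(x,y)}(K_m - K) \to 0$ uniformly on the closed set $\supp g$ for every multiindex $\gamma$. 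Since $g$ and all its derivatives are bounded, Leibniz' rule gives $D^\gamma(g(K_m - K)) \to 0$ uniformly on $\Rn\times\Rn$ for every $\gamma$.

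The remaining point is to upgrade ``all derivatives $\to 0$ uniformly'' to convergence in $\cal S(\Rn\times\Rn)$, i.e. to insert the weights $\langle x\rangle^M\langle y\rangle^M$. Here is where $f$ (rather than $g$) does the work: writing $fK_m - fK = f\cdot(g(K_m-K))$ and using that $f$ has bounded derivatives of every order and $x$-bounded support, one estimates, for each pair of multiindices,
\begin{equation}
  \sup_{(x,y)}\;(1+|x|^2+|y|^2)^{M}\,|D^\gamma_{(x,y)}(fK_m - fK)(x,y)|
  \le C_{M,\gamma}\!\!\sum_{\gamma'\le\gamma}\sup_{(x,y)\in\supp f}(1+|y|^2)^{M}\,|D^{\gamma'}_{(x,y)}(g(K_m-K))(x,y)|,
\end{equation}
where the $x$-weight has been absorbed into $C_{M,\gamma}$ because $\supp f$ is bounded in $x$, and the $y$-weight is harmless on $\supp g \cap \supp f \subset \supp g$ only if one also knows polynomial control of $K_m - K$ in $y$ there. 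So the genuine content is a \emph{locally uniform in $x$, polynomially controlled in $y$} bound: I would show that on $\supp g$ the derivatives of $K_m - K$ tend to $0$ uniformly on every slab $\{|x|\le r\}$ and are $O(\langle y\rangle^{-L})$ for each $L$, which follows since the mollification kernels $4^{mn}\cal F^{-1}(\psi\otimes\psi)(2^m\cdot)$ are Schwartz and $K$ itself, being the kernel of a type $1,1$-operator localised away from the diagonal, is smooth with at most polynomial growth there; convolution with a Schwartz family then yields the required decay together with the locally uniform convergence. Combining this with the Leibniz estimate above gives convergence of every Schwartz seminorm of $fK_m - fK$ to $0$, which is the assertion.

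The main obstacle, as indicated, is not the smooth convergence off the diagonal — that is handed to us by Lemma~\ref{regconv-lem} — but rather controlling the behaviour of $K_m - K$ \emph{at infinity in $y$} on $\supp g$: one must check that mollifying $K$ (which grows only polynomially off the diagonal) against the $2^m$-dilated Schwartz kernel does not destroy, and in fact improves to rapid, the decay in $y$ needed to pass from $C^\infty$-convergence to $\cal S$-convergence. This is a routine but slightly delicate convolution estimate, split according to whether $|x-y|$ is large (where one uses rapid decay of $\cal F^{-1}(\psi\otimes\psi)$ and the polynomial bound on $K$) or $|x|$ is large with $(x,y)\in\supp g$ (which, by the $x$-bounded support of $g$, cannot occur, so in fact only the $|y|$-large regime survives, handled by the same dichotomy after translating). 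Once that estimate is in place, the proof closes by a finite combination of seminorm bounds.
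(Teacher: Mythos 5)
Your overall architecture matches the paper's: apply the Regular Convergence Lemma (its $\cal S'$-part, applied on $\Rn\times\Rn$, which is the ``less trivial'' part since $\cal F^{-1}(\psi\otimes\psi)\in\cal S\setminus C^\infty_0$) to obtain $K_m\to K$ in $C^\infty$ off the diagonal, then upgrade to $\cal S$-convergence by controlling the behaviour for $|y|\to\infty$ on $\supp f$. That is the right plan, and you correctly identify the tail estimate as the genuine content. But the justification you offer for it does not hold up.

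You assert that $K$ is ``smooth with at most polynomial growth'' off the diagonal and that convolving against the $2^m$-dilated Schwartz mollifier ``improves to rapid'' the decay. Neither half is correct: convolving a merely polynomially bounded function with a Schwartz family preserves polynomial control but cannot create rapid decay, and if $K$ satisfied only a polynomial bound away from the diagonal the argument would not close. The fact you actually need is that the kernel of a type $1,1$-operator decays \emph{rapidly} off the diagonal, uniformly in $x$: for every $L$ and $\gamma$ one has $|D^\gamma K(x,y)|\le C_{L,\gamma}|x-y|^{-L}$ for $|x-y|\ge c$. This follows by integration by parts in the oscillatory integral,
\begin{equation*}
(x-y)^\alpha D^\beta_x K(x,y)=\cal F^{-1}_{\eta\to x-y}\bigl(D^\beta_x(-D_\eta)^\alpha a(x,\eta)\bigr),
\end{equation*}
the right-hand side being bounded uniformly in $(x,y)$ once $|\alpha|>d+|\beta|+n$, since then $D^\beta_xD^\alpha_\eta a(x,\eta)$ is integrable in $\eta$. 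The same estimate applied to the modulated symbols $\psi(2^{-m}D_x)a(x,\eta)\psi(2^{-m}\eta)$, whose $S^{d}_{1,1}$-seminorms are bounded uniformly in $m$, gives the same rapid decay for $K_m$ uniformly in $m$. With this in hand the seminorm estimates close: for a given Schwartz weight $(1+|x|^2+|y|^2)^{M}$, take $L>2M$, discard the region $|y|>R$ by the uniform rapid decay, then handle the compact remainder by the locally uniform $C^\infty$-convergence supplied by the Regular Convergence Lemma. Note also that $\supp g$ is not compact, so the Regular Convergence Lemma alone does not give uniform convergence on all of $\supp g$, contrary to what your opening paragraph states; repairing this is precisely what the off-diagonal rapid-decay estimate is for.
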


It is noteworthy that the proof of this plausible proposition relies on the
mentioned 
less trivial part of the Regular Convergence Lemma, in which the function
$\cal F^{-1}\psi$ there is in $ \cal S\setminus C^\infty_0$.
Certainly Proposition~\ref{KmKS-prop} sheds light on the limit in
Definition~\ref{vfm-defn}, but it is also a convenient proof ingredient later.

Secondly, \eqref{AK-eq} is by \eqref{ufnu-eq} easily extended  
to the pairs $(u,v)\in \cal S'(\Rn)\times C^\infty_0(\Rn)$ fulfilling
\begin{align}
  \supp K\bigcap \supp(v\otimes u)&\Subset \Rn\times \Rn,
  \label{sKvu-eq}
\\
  \singsupp K\bigcap \singsupp(v\otimes u)&=\emptyset.
  \label{ssKvu-eq}
\end{align}
\begin{thm}[{\cite[Thm.~7.4]{JJ08vfm}}] 
  \label{AK-thm}
If $A\colon \cal S'(\Rn)\to\cal S'(\Rn)$ is continuous and \eqref{sKvu-eq},
\eqref{ssKvu-eq} are fulfilled,  then $\dual{Au}{v}=\dual{K}{v\otimes u}$
holds with extended action of the scalar product. This extends to the $\cal
D'$-case. 
\end{thm}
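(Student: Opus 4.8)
The plan is to reduce the identity to the classical kernel relation \eqref{AK-eq} by approximating $u$ with Schwartz functions converging \emph{regularly} in the sense of Definition~\ref{regconv-defn}, and then to pass to the limit using the stability result Theorem~\ref{ext-thm} --- in the full form of \cite[Thm.~7.2]{JJ08vfm}, where the hypothesis on the supports is the precompactness $\supp K\cap\supp(v\otimes u)\Subset\Rn\times\Rn$ rather than compactness of one factor. Concretely, fix $\psi\in C^\infty_0(\Rn)$ equal to $1$ near the origin together with $\chi_0\in C^\infty_0(\Rn)$ equal to $1$ near the origin, and put $u_m=\chi_0(x/m)\,\psi(2^{-m}D)u$ for $m\in\N$. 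Then $u_m\in C^\infty_0(\Rn)\subset\cal S(\Rn)$, and I would first verify that $u_m\to u$ both in $\cal S'(\Rn)$ and regularly. The $\cal S'$-convergence holds because $\psi(2^{-m}D)u\to u$ in $\cal S'$ while the truncation $\chi_0(\cdot/m)\to 1$ is harmless; here one uses that $\{\psi(2^{-m}D)u\}_m$ is bounded, hence equicontinuous, in $\cal S'$. The regular convergence is immediate from the Regular Convergence Lemma~\ref{regconv-lem}: on any compact $L\Subset\Rn\setminus\singsupp u$ one has $u_m=\psi(2^{-m}D)u$ once $m$ is so large that $\chi_0(\cdot/m)\equiv 1$ on $L$, and there $\psi(2^{-m}D)u\to u$ in $C^\infty(L)$.

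Since $u_m$ and $v$ both lie in $\cal S(\Rn)$, the kernel relation \eqref{AK-eq} gives $\dual{Au_m}{v}=\dual{K}{v\otimes u_m}$ for every $m$. On the left, continuity of $A\colon\cal S'(\Rn)\to\cal S'(\Rn)$ yields $Au_m\to Au$ in $\cal S'$, so $\dual{Au_m}{v}\to\dual{Au}{v}$ since $v\in C^\infty_0\subset\cal S$. On the right, the smoothness of $v$ upgrades the regular convergence $u_m\to u$ to $v\otimes u_m\to v\otimes u$ in $\cal D'(\Rn\times\Rn)$ and in $C^\infty\big((\Rn\times\Rn)\setminus\singsupp(v\otimes u)\big)$, where one records that $\singsupp(v\otimes u)=\supp v\times\singsupp u$. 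With \eqref{sKvu-eq}--\eqref{ssKvu-eq} in force, Theorem~\ref{ext-thm} then applies with $K$ in the role of the distribution overlapping $\supp(v\otimes u)$ in a compact set and with $v\otimes u_m$ as the smooth approximants, giving $\dual{K}{v\otimes u_m}\to\dual{K}{v\otimes u}$ with the \emph{extended} action of the scalar product on the right. Comparing the two limits yields $\dual{Au}{v}=\dual{K}{v\otimes u}$.

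For the $\cal D'$-case the same scheme goes through, now taking $\psi$ with $\cal F^{-1}\psi\in C^\infty_0$ and replacing $\psi(2^{-m}D)u$ by $\cal F^{-1}(\psi(2^{-m}\cdot))*u$ --- the variant recorded in Lemma~\ref{regconv-lem} --- together with the kernel relation for operators on $\cal D'$. The genuinely delicate step is the construction of the approximants: the bare mollification $\psi(2^{-m}D)u$ lies only in $\cal O_M(\Rn)$, not in $\cal S(\Rn)$, so a cutoff cannot be avoided, and one must check that inserting it spoils neither the $\cal S'$-convergence nor the regular convergence, since these two modes feed into the left and the right limit respectively. The only other point needing attention is the interface with the stability theorem: identifying $\singsupp(v\otimes u)=\supp v\times\singsupp u$ so that its $C^\infty$-convergence hypothesis is met, and invoking the general support condition $\supp K\cap\supp(v\otimes u)\Subset\Rn\times\Rn$ in place of compactness of $\supp K$.
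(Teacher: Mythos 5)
Your argument is correct, but it is organized differently from the paper's proof, and the comparison is instructive. The paper keeps the approximants merely in $C^\infty(\Rn)$ (via Lemma~\ref{regconv-lem}, so $u_\nu\in\cal O_M$ in general) and compensates by inserting a cutoff $\kappa\otimes\chi\in C^\infty_0(\Rn\times\Rn)$ with $\kappa=1$ on $\supp v$ and $\kappa\otimes\chi=1$ on the compact set in \eqref{sKvu-eq}: the first identity $\dual{K}{v\otimes u}=\dual{(\kappa\otimes\chi)K}{v\otimes u}$ then simultaneously reduces to the compact--support case of Theorem~\ref{ext-thm} and lets the kernel relation be applied to the Schwartz functions $\kappa v$ and $\chi u_\nu$. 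You instead build the truncation directly into the approximant, $u_m=\chi_0(\cdot/m)\psi(2^{-m}D)u\in C^\infty_0$, which makes the left-hand limit immediate: $u_m\to u$ in $\cal S'$ and $\cal S'$-continuity of $A$ give $\dual{Au_m}{v}\to\dual{Au}{v}$ directly, with no need to pass through $A(\chi u_\nu)$. The trade-off sits on the right-hand side: by leaving $K$ untouched you must invoke stability under regular convergence in the strength where only precompactness of $\supp K\cap\supp(v\otimes u)$ is assumed, not compactness of $\supp K$. You are right that this is the form recorded in \cite[Thm.~7.2]{JJ08vfm}; but within the present exposition, the version actually stated as Theorem~\ref{ext-thm} carries the extra hypothesis that the first distribution has compact support, and it is precisely the paper's insertion of $\kappa\otimes\chi$ that bridges that gap --- so this reduction is worth spelling out rather than citing around. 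Your checks that $u_m\to u$ in $\cal S'$ (equicontinuity of $\{\psi(2^{-m}D)u\}$ together with $(\chi_0(\cdot/m)-1)\varphi\to0$ in $\cal S$) and regularly (Lemma~\ref{regconv-lem} combined with the eventual triviality of the truncation on compacts) are sound; one small correction is that $\singsupp(v\otimes u)=\supp v\times\singsupp u$ should be the inclusion $\singsupp(v\otimes u)\subset\supp v\times\singsupp u$, which is all the stability theorem requires.
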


It is illuminating to give the short argument:
the right-hand side of \eqref{AK-eq} is defined according to
\eqref{sKvu-eq}--\eqref{ssKvu-eq} and the extension of
$\dual{\cdot }{\cdot }$ in \eqref{fu1-id}, 
so it only remains to verify the equality in \eqref{AK-eq} 
under the assumptions \eqref{sKvu-eq}--\eqref{ssKvu-eq}. 

For this one can clearly take 
$\kappa$, $\chi\in C^\infty_0(\Rn)$ such that $\kappa=1$ on
$\supp v$ and $\kappa(x)\chi(y)=1$ on the compact set in \eqref{sKvu-eq}.
By letting $u_\nu\in C^\infty (\Rn)$ tend regularly to $u$, cf
Lemma~\ref{regconv-lem},  
the convergence $v\otimes u_\nu\to v\otimes u$ is also regular, so one finds
from Theorem~\ref{ext-thm},
\begin{equation}
  \begin{split}
  \dual{K}{v\otimes u}&= \dual{(\kappa\otimes \chi)K}{v\otimes u}=
   \lim_{\nu\to\infty }\dual{(\kappa\otimes \chi)K}{v\otimes u_\nu}
\\
  &=\lim_{\nu\to\infty }\dual{K}{(\kappa v)\otimes (\chi u_\nu)}_{\cal
S'\times \cal S}
  =\lim_{\nu\to\infty }\dual{A(\chi u_\nu)}{\kappa v}_{\cal S'\times \cal S}
  =\dual{Au}{v}.
  \end{split}
\end{equation}
This proves the theorem.

As consequences it should be pointed out
that the support rule \eqref{sKau-eq} follows at once from \eqref{AK-eq}
for $A=a(x,D)\in S^d_{1,0}(\Rn\times \Rn)$ by taking 
$v\in C^\infty_0(\Rn)$ with
support disjoint from that of $\supp K\circ \supp u$. 
It is noteworthy that also the \emph{spectral} support rule 
\eqref{ssKau-eq},\eqref{srule'-eq}
follows in this way for $A=\cal F a(x,D)\cal F^{-1}$,
for this is also continuous on $\cal S'$ for such $a(x,D)$.
 
Type $1,1$-operators requires some additional efforts due to the limit
$m\to\infty $ in \eqref{axDu-id}. The main line is the same as the above,
that roughly speaking applies for each $m$; for \eqref{sKau-eq}
the convergence in Proposition~\ref{KmKS-prop} was sufficient,
cf \cite[Sect.~7--8]{JJ08vfm}.
For the spectral support rule \eqref{srule-eq} the
passage to the limit $m\to\infty $ required an extra assumption ($\cal
S'$-convergence in \eqref{axDu-id}), but still the main ingredient
was stability under regular convergence in the kernel formula.

\subsection{Other extensions}
Among the many possible extensions of $\dual{\cdot }{\cdot }$, it is
particularly relevant to recall the one related to the space $\cal
D'_\Gamma$ consisting of the $u\in \cal D'$ with $\op{WF}(u)\subset \Gamma$,
whereby $\Gamma\subset \Rn\times (\Rn\setminus \{0\})$ 
is a fixed closed, conical set (ie $\Gamma$ is
invariant under scaling by positive reals in the second entry). 
$\cal D'_\Gamma$ is given a stronger topology than the relative by adding the
seminorms 
\begin{equation}
  p_{\varphi,N,V}(u)=\sup_{\eta\in V}(1+|\eta|)^{N}
  |\widehat{\varphi u}(\eta)|, \qquad N=1,2,\dots ,
\end{equation}
where $\varphi\in C^\infty_0$ and the closed cone $V\subset \Rn$ run through
those with $\Gamma\bigcap(\supp \varphi\times V)=\emptyset$.

For cones $\Gamma_1$, $\Gamma_2$ such that $(x,-\eta)\notin \Gamma_2$
whenever $(x,\eta)\in \Gamma_1$, there is an extension of 
$\dual{\cdot }{\cdot }$ to a bilinear map $\cal D'_{\Gamma_1}\times \cal
E'_{\Gamma_2}\to\C$, which is sequentially continuous in each variable;
this is eg explained in the notes of A.~Grigis and J.~Sj{\"o}strand
\cite[Prop.~7.6]{GrSj94}.
Obviously the wavefront condition expressed via $\Gamma_1$, $\Gamma_2$
is weaker than disjointness of the singular supports.

On the other hand, any sequence 
$f_\nu\in C^\infty (\Rn)$ such that
$f_\nu\to f$ in $\cal D'_\Gamma$ automatically tends regularly to $f$ in the
sense of  
Definition~\ref{regconv-defn}, for the supremum over $K$ there goes to $0$
because it can be estimated by $p_{\varphi,|\alpha|+n+1,\Rn}(f_\nu-f)$ when
$\varphi=1$ on $K$ and $\supp\varphi\cap \singsupp f=\emptyset$ (allowing
$V=\Rn$), using that $\cal F$ is bounded from $L_1$ to $L_\infty $. 

The incompatibility of the two extensions becomes clearer by noting that
$\dual{u}{f}$ is defined whenever the product $fu$ makes
sense in $\cal E'$; cf \eqref{fu1-id}. Eg one may use the product
$\pi(f,u)$ defined formally by regarding
$f$ as a (non-smooth) symbol independent of $\eta$ 
(cf Remark~1.1 in \cite{JJ08vfm}, or the author's paper \cite{JJ94mlt} devoted
to $\pi(f,u)$).  
It is well known that $\pi(\delta_0,H)=\delta_0/2$, when $H=1_{\R_+}$ is the
Heaviside function, so from this one finds
$\dual{\delta_0}{H}=\dual{\frac{1}{2}\delta_0}{1}=\frac{1}{2}$. 
(As $\op{WF}(\delta_0)=\{0\}\times \R$, wavefront sets are not useful
here.) 

However, as the point of the regular convergence is to simplify (and
to emphasise the essential), this notion should be well motivated.

\begin{rem}
Both parts of the Regular Convergence Lemma 
could have been known since the 1950's in view of its content, of course.
The same could be said about the stability in Theorem~\ref{ext-thm} and the
resulting kernel convergence in Proposition~\ref{KmKS-prop} as well as the
extended kernel formula in Theorem~\ref{AK-thm}.
But it has not been possible to track any evidence of this, neither written
nor as folklore.
\end{rem}

\chapter{Review of qualitative results}
\label{results-sect}\noindent
This chapter gives a detailed account of the results summarised in items
\eqref{uni-pro}--\eqref{supp-pro} in Section~\ref{dozen-ssect}. 
The review follows the order there.

For convenience $a(x,\eta)$ denotes an arbitrary
symbol in $S^d _{1,1}(\Rn\times \Rn)$.

\section{Consistency among extensions}
\label{uni-ssect}\noindent
The definition by vanishing frequency modulation has the merit of giving
back most, if not all, of the previous extensions of type $1,1$-operators. 
This is reviewed in the subsections below.

\subsection{Extension to functions with compact spectrum}
First of all there was in \cite{JJ04DCR,JJ05DTL}
a mild extension of $a(x,D)$ to $\cal F^{-1}\cal E'(\Rn)$.
The extension is rather elementary, but is easy to explain with a point of
view from \cite{JJ08vfm}: the defining integral may be seen 
as a scalar product for $u\in \cal F^{-1}C^\infty_0(\Rn)$
\begin{equation}
  a(x,D)(x)u=\Dual{\hat u}{a(x,\cdot )
             \frac{e^{\im \dual{x}{\cdot }}}{(2\pi)^n}}_{\cal
E'\times C^\infty}.
  \label{aFC-eq}
\end{equation}
On the right-hand side one is free to insert any $\hat u\in \cal E'$, which
is consistent with \eqref{axDu-eq} because $\cal S\cap \cal F^{-1}\cal E'=
\cal F^{-1}C^\infty_0$. 

More precisely this gives an extension to a map
$\tilde a(x,D)\colon \cal S+\cal F^{-1}\cal E'\to C^\infty $ given by
\begin{equation}
  \tilde a(x,D)u=a(x,D)v+\OP(a(x,\eta)\chi(\eta))v'
  \label{aFE-eq}
\end{equation}
when $u=v+v'$ for some $v\in \cal S$ and $v'\in \cal F^{-1}\cal E'$ whilst
$\chi\in C^\infty $ is an arbitrary function equalling $1$ on neighbourhood
of $\supp\hat u$. Indeed, $\chi$ can be inserted already in \eqref{aFC-eq},
and since the resulting symbol $a(x,\eta)\chi(\eta)$ is in $S^{-\infty }$
the formula for $\tilde a(x,D)$ makes sense. The value of $\tilde a(x,D)u$
is also independent of how $v$, $v'$ are chosen, as can be seen using
linearity and \eqref{aFC-eq}.

As examples of the above extension, type $1,1$-operators are always defined on
polynomials $\sum_{|\alpha|\le m}a_\alpha x^\alpha$, 
plane waves $e^{\im x\cdot z}$ and also on the less trivial function
$\tfrac{\sin x_1}{x_1}\dots \tfrac{\sin x_n}{x_n}$, since this is equal to
$\pi^n\cal F^{-1}1_{[-1,1]^n}$.

It was verified in \cite[Cor.~4.7]{JJ08vfm} that this extension is contained
in the operator defined by vanishing frequency modulation.
However, this also results from the next section.

\subsection{Extension to slowly growing functions}   \label{GBext-ssect}
Following an early remark by G.~Bourdaud \cite{Bou87} (who treated
singular integral operators) one can obtain that every type
$1,1$ symbol $a(x,\eta)$ gives rise to a map
\begin{equation}
  \tilde A\colon \cal O_M(\Rn)\to \cal D'(\Rn).
  \label{AOD-eq}
\end{equation}
Hereby $\cal O_M$ stands for the space of $C^\infty $-functions that
together with all their derivatives have polynomial growth at infinity.

Indeed, for each $f\in \cal O_M$ one may  take
$\tilde Af$ as the distribution that for each $\varphi\in C^\infty_0(\Rn)$,
and $\chi\in C^\infty_0(\Rn)$
equal to $1$ on a neighbourhood of $\varphi$, is given by
\begin{equation}
  \dual{\tilde Af}{\varphi}= \dual{a(x,D)(\chi f)}{\varphi}
   +\iint K(x,y)(1-\chi(y))f(y)\varphi(x)\,dy\,dx.
  \label{aOC-eq}
\end{equation}
Here the distribution kernel $K(x,y)$ decays rapidly for fixed $x$ and
$|y|\to\infty $, so that the integral makes sense.
The right-hand side
gives the same value for any other such cutoff function $\tilde \chi$, for
an analogous integral defined from $\chi-\tilde \chi$
has the opposite sign of $\dual{a(x,D)((\tilde \chi-\chi)f)}{\varphi}$.
In view of this independence, and since the absolute value is less than a constant times $\sup|\varphi|$,
$\tilde Af$ defines a distribution in $\cal D'(\Rn)$. 

It can also be seen that $\tilde Af$ is smooth and
slowly increasing, and with some effort that $\tilde A$ is in fact a
restriction of $a(x,D)$ defined by vanishing frequency modulation:

\begin{prop}   \label{aOO-prop}
Each $a(x,D)$ in $\OP(S^d _{1,1})$ restricts to a map 
$\cal O_M(\Rn)\to \cal O_M(\Rn)$.
\end{prop}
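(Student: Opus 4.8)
Write $\tilde A\colon\cal O_M\to\cal D'$ for the map defined in \eqref{aOC-eq}; since the value there is independent of the cut-off $\chi$, one is free to adapt $\chi$ to the situation at hand. Two things have to be established: first, that $\tilde Af$ is smooth with all derivatives of polynomial growth, i.e. $\tilde Af\in\cal O_M$; secondly, that $\cal O_M\subset D(a(x,D))$ with $a(x,D)f=\tilde Af$ in the sense of Definition~\ref{vfm-defn}. Two elementary facts will be used repeatedly: (i) integration by parts in \eqref{Srd-eq} gives, for every $\beta$ and $N$, a bound $|D^\beta_x K(x,y)|\le c_{\beta,N}(1+|x-y|)^{-N}$ valid for $|x-y|\ge1$ \emph{uniformly in} $x$, since the $\eta$-integral of $(1+|\eta|)^{d-|\alpha|+|\beta|}$ converges once $|\alpha|>d+|\beta|+n$ with a bound independent of $x$; and (ii) the kernel convergence $K_m\to K$ of Proposition~\ref{KmKS-prop}.

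\emph{Step 1: $\tilde Af\in\cal O_M$.} Fix $x_0\in\Rn$ and put $\chi(x)=\chi_0(x-x_0)$ with $\chi_0\in C^\infty_0$ equal to $1$ on $\Bbar(0,2)$; use this $\chi$ in \eqref{aOC-eq}. On $\Bbar(x_0,1)$ one has $\tilde Af=a(x,D)(\chi f)+\int K(\cdot,y)(1-\chi(y))f(y)\,dy$. Here $\chi f\in C^\infty_0$, so $a(x,D)(\chi f)=\OP(a)(\chi f)$ is a Schwartz function, hence smooth; and for $x\in\Bbar(x_0,1)$ the remaining integrand is supported where $|x-y|\ge1$, on which $K$ is smooth and rapidly decreasing by (i), so differentiation under the integral sign is legitimate. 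Hence $\tilde Af\in C^\infty$. For the growth: $f\in\cal O_M$ gives, for each $\gamma$, a bound $\|D^\gamma(\chi f)\|_\infty\le c_\gamma(1+|x_0|)^{M_\gamma}$ with $c_\gamma$ uniform in $x_0$ (the support of $\chi f$ stays a fixed size). Inserting the Paley--Wiener estimate $|\cal F(\chi f)(\eta)|\le c_N(1+|x_0|)^{M_N}(1+|\eta|)^{-N}$ into \eqref{axDu-eq} and using \eqref{Srd-eq} yields $|D^\beta[a(x,D)(\chi f)](x_0)|\le c_\beta(1+|x_0|)^{M'_\beta}$ for $N>d+|\beta|+n$. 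For the integral term, $|D^\beta_xK(x_0,y)(1-\chi(y))f(y)|\le c_{\beta,N}(1+|x_0-y|)^{-N}(1+|y|)^{N_0}\le c_{\beta,N}(1+|x_0|)^{N_0}(1+|x_0-y|)^{N_0-N}$ by (i) together with $1+|y|\le(1+|x_0|)(1+|x_0-y|)$, which is integrable in $y$ for $N>N_0+n$ with integral $\le c_\beta(1+|x_0|)^{N_0}$. So every $D^\beta\tilde Af$ is $O((1+|x_0|)^{\max(M'_\beta,N_0)})$, i.e. $\tilde Af\in\cal O_M$.

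\emph{Step 2: $a(x,D)f=\tilde Af$.} Let $b_m(x,\eta)=a^m(x,\eta)\psi(2^{-m}\eta)\in S^{-\infty}$ as in \eqref{am-id}, \eqref{apsi-eq}, with kernel $K_m$; by Definition~\ref{vfm-defn} it suffices to show $\OP(b_m)f\to\tilde Af$ in $\cal D'$, independently of the modulation function $\psi$. Fix $\varphi\in C^\infty_0$ and choose $\chi\in C^\infty_0$ equal to $1$ near $\supp\varphi$; split $f=\chi f+(1-\chi)f$. Since $\chi f\in\cal S$ and $b_m\to a$ in $S^{d+1}_{1,1}$ (\cite{JJ08vfm}) while $\OP$ is continuous $S^{d+1}_{1,1}\times\cal S\to\cal S$ by \eqref{SdS-eq}, one gets $\OP(b_m)(\chi f)\to\OP(a)(\chi f)=a(x,D)(\chi f)$ in $\cal S$, hence $\dual{\OP(b_m)(\chi f)}{\varphi}\to\dual{a(x,D)(\chi f)}{\varphi}$. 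For the other part, since $K_m(x,\cdot)\in\cal S$ one has $\dual{\OP(b_m)((1-\chi)f)}{\varphi}=\iint\varphi(x)(1-\chi(y))K_m(x,y)f(y)\,dx\,dy$, i.e. $\varphi(1-\chi)K_m$ paired with $1\otimes f\in\cal S'(\Rn\times\Rn)$. The function $(x,y)\mapsto\varphi(x)(1-\chi(y))$ is $C^\infty$ with bounded derivatives of every order, of $x$-bounded support disjoint from the diagonal, so Proposition~\ref{KmKS-prop} gives $\varphi(1-\chi)K_m\to\varphi(1-\chi)K$ in $\cal S(\Rn\times\Rn)$; pairing with $1\otimes f$ then yields $\dual{\OP(b_m)((1-\chi)f)}{\varphi}\to\iint\varphi(x)(1-\chi(y))K(x,y)f(y)\,dx\,dy$. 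Adding the two limits and comparing with \eqref{aOC-eq} gives $\dual{\OP(b_m)f}{\varphi}\to\dual{\tilde Af}{\varphi}$. As this limit does not depend on $\psi$, Definition~\ref{vfm-defn} gives $f\in D(a(x,D))$ and $a(x,D)f=\tilde Af$; together with Step~1, $a(x,D)$ maps $\cal O_M$ into $\cal O_M$.

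\emph{Main obstacle.} The delicate point is Step~1: no single cut-off $\chi$ is admissible over all of $\Rn$, so one must let $\chi$ follow the base point $x_0$ and control how the seminorms of $\chi f$—and hence of the Schwartz function $a(x,D)(\chi f)$—depend on $x_0$, while the off-diagonal kernel decay in (i) has to be genuinely uniform in $x$. Once Proposition~\ref{KmKS-prop} is in hand, the convergence in Step~2 is essentially routine; the only thing to notice is that \ref{KmKS-prop} does apply to $\varphi(x)(1-\chi(y))$, which is not compactly supported in $y$ but is $x$-bounded and vanishes near the diagonal—exactly the hypotheses required there.
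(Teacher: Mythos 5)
Your proof is correct, and in essence it follows the same strategy as the paper's argument, which proves this proposition as a byproduct of the more general Theorem~\ref{aOO-thm} (extending the map to all of $C^\infty\bigcap\cal S'$). Your Step~2 is virtually identical to the paper's first half: both split $f=\chi f+(1-\chi)f$, both use Proposition~\ref{KmKS-prop} to get the $\cal S(\Rn\times\Rn)$-convergence $\varphi(1-\chi)K_m\to\varphi(1-\chi)K$, and both conclude $A_mf\to\tilde Af$ independently of $\psi$, giving $\tilde A\subset a(x,D)$. The growth estimate in your Step~1 is also the same one the paper uses: kernel decay off the diagonal combined with the Peetre inequality $1+|y|\le(1+|x|)(1+|x-y|)$.

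Where you genuinely depart is in the smoothness argument. The paper proves $\tilde Af\in C^\infty$ by a distribution-theoretic device: rewriting the second term of \eqref{aOC'-eq} as $\int\dual{f}{\tilde\varphi(x)(1-\chi(\cdot))K(x,\cdot)}\varphi(x)\,dx$ and invoking the smoothness of $x\mapsto\dual{f}{\tilde\varphi(x)(1-\chi(\cdot))K(x,\cdot)}$, exactly as one does when proving $g\otimes f\in\cal S'$. You instead let the cut-off $\chi_0(\cdot-x_0)$ ride with the base point and differentiate under the integral sign, using the $\chi$-independence of $\tilde Af$ noted after \eqref{aOC-eq}. Both are valid; yours is more elementary and makes the uniformity-in-$x$ of the estimates completely explicit, whereas the paper's argument has the advantage of carrying over verbatim to $f\in C^\infty\bigcap\cal S'$ (needed for Theorem~\ref{aOO-thm}) where one cannot appeal to polynomial bounds on $f$. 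One small remark on your Step~2: the identification $\dual{\OP(b_m)((1-\chi)f)}{\varphi}=\dual{1\otimes f}{\varphi(1-\chi)K_m}$ is legitimate, but the paper instead phrases it via the kernel relation and a $C^\infty_0$-approximation of $f$ in $\cal S'$; either route is fine.
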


This result contains the previous extension to $\cal
S+\cal F^{-1}\cal E'$ in \eqref{aFE-eq}, and it is 
rather more precise. Of course it looks like being a completion, but this is
not obvious as neither the topology on  
$\cal O_M(\Rn)$ nor continuity is involved in the statement.

The proposition is given without details here, as it is
superseeded by an extension to $C^\infty \bigcap\cal S'$, which is
derived from a closer inspection of $\tilde A$. 
However, this result follows in Theorem~\ref{aOO-thm} below, because it is 
rather more important in itself.

\begin{rem}
In a remark preceding the proof of the $T1$-theorem of G.~David and
J.-L.~Journ\'e \cite{DaJo84},   
it was explained  that
just a few properties of the distribution kernel of a continuous map $T\colon
C^\infty_0(\Rn)\to\cal D'(\Rn)$ implies that $T(1)$ is well defined modulo
constants. In particular this was applied to $T\in \OP(S^0_{1,1})$, but in
that case their extension equals the above, hence by
Proposition~\ref{aOO-prop}
also gives the same result as Definition~\ref{vfm-defn}.
\end{rem}

\subsection{Extension by continuity}
In \cite{H88,H89}, L.~H{\"o}rmander characterised the $s\in \R$
for which a given $a(x,D)\in \OP(S^d_{1,1})$ 
extends by continuity to a bounded operator $H^{s+d}\to H^s$; the only
possible exception was a certain limit point $s_0$ that was not treated, cf
\cite{H97}. 
More precisely (paraphrasing his results) 
he obtained a largest interval $\,]s_0,\infty [\,\ni s$
together with constants $C_s$ such that
\begin{equation}
  \nrm{a(x,D)u}{H^s}\le C_s\nrm{u}{H^{s+d}}\quad\text{for all}\quad
  u\in \cal S(\Rn).
\end{equation}
Here $s_0\le 0$ always holds.
Conversely existence of such a $C_s$ was shown to imply $s\ge s_0$.
More precisely, $s_0=-\sup \sigma$ when $\sigma$ runs through the
values for which the symbol fulfils \eqref{wtdc-cnd}.

R.~Torres \cite{Tor90} worked with the full scale of Lizorkin--Triebel
spaces $F^{s}_{p,q}$.
His methods relied on the framework of atoms and molecules of 
M.~Frazier and B.~Jawerth \cite{FJ1,FJ2}, but 
he also estimated $a(x,D)u$ for $u\in \cal S(\Rn)$.
This gave extensions by continuity to maps 
\begin{equation}
  A\colon F^{s+d}_{p,q}(\Rn)\to F^{s}_{p,q}(\Rn)  
\end{equation}
for $s>\max(0,\fracc np-n,\fracc nq-n)$ and $0<p<\infty $, $0<q\le \infty $,
and more generally 
for all $s$ so large that for all multiindices $\gamma$ it holds true that
\begin{equation}
  0\le|\gamma|<\max(0,\frac np-n,\frac nq-n)-s
  \implies \cal F(a(x,D)^*x^\gamma) \in \cal E'(\Rn).
\end{equation}

Of course the uniqueness of these extensions yield that they coincide with
Definition~\ref{vfm-defn} whenever the same continuity properties of $a(x,D)$
can be proved by other means. This has to a large extent been done with
paradifferential decompositions, as reviewed in the section below.

Indeed, in the $H^s_p$ context with $s>0$ and $1<p<\infty $ this was
done in \cite[Thm.~9.2]{JJ08vfm}, which covers the above result of
L.~H{\"o}rmander for $s>0$, and also for $s\in \R $ in case $a(x,D)$
fulfils the twisted diagonal condition \eqref{tdc-cnd}; and done in 
\cite[Cor.~7.6]{JJ10tmp} for $s\in\R$ when \eqref{wtdc-cnd} holds
for every $\sigma\in \R$ .
These results were in fact just special cases of the $F^{s}_{p,q}$ results in
\cite{JJ05DTL, JJ10tmp}, so for the same $s$
also the extensions of R.~Torres are restrictions of the operator defined
by vanishing frequency modulation; cf Theorems~\ref{Hsp-thm} and
\ref{FBspq-thm} below. 

However, it should be emphasized that full coherence has not yet been
obtained. For $s\le 0$ the above extensions by continuity still require
treatment when $a(x,\eta)$ only satisfies \eqref{wtdc-cnd} 
for a specific $\sigma$, for continuity has then been shown with the present
methods for $s>-\sigma+[n/2]+2$; cf Remark~7.11 in \cite{JJ10tmp}.

\subsection{Extensions through paradifferential decompositions}
  \label{para-sssect}
As is well known,
paradifferential decomposition of $a(x,D)$ yields three contributions to the
limit \eqref{apsi-eq},
\begin{equation}
  a_{\psi}(x,D)=a^{(1)}_{\psi}(x,D)+a^{(2)}_{\psi}(x,D)+a^{(3)}_{\psi}(x,D).
  \label{apsi123-id}
\end{equation}
The details of this decomposition will be given later in
Section~\ref{LPa-ssect}, for here 
it suffices to explain the philosophy behind it:
\begin{itemize}
  \item $a^{(1)}_{\psi}(x,D)u$ has a regularity that depends on $u$ alone
(usually);
  \item the last term $a^{(3)}_{\psi}(x,D)u$ has a regularity determined by
that of the symbol (usually);
  \item in between there is $a^{(2)}_{\psi}(x,D)u$ that may or may not be
defined\,---\,depending on the fine interplay of $u$ and $a(x,\eta)$.
This term is the most regular of the three (usually).
\end{itemize}
In addition to this compelling description, it should be noted that the
usefulness lies in the particular form the terms have (cf
Section~\ref{LPa-ssect}); they consist of three infinite series.
More precisely, each of these can in its turn be treated 
by methods of harmonic analysis, which are quite simple 
owing to the above decomposition of the
singularities in $a_{\psi}(x,D)u$.

In the type $1,1$ context, this decomposition goes back to Y.~Meyer
\cite{Mey80,Mey81} and G.~Bourdaud \cite{Bou83,Bou88}, but has also been
used by numerous authors in several fields.

Quite simply, \eqref{apsi123-id} induces an extension of $a(x,D)$ to those
$u\in \cal S'(\Rn)$ for which all the three mentioned series converge in $\cal
D'(\Rn)$. (This was taken as the definition of type $1,1$-operators
in \cite{JJ04DCR,JJ05DTL}, but was superseeded by Definition~\ref{vfm-defn} in 
\cite{JJ08vfm}.) 
When combining the results of this analysis, one finds eg estimates of the
form 
\begin{equation}
  \nrm{a_{\psi}(x,D)u}{H^{s}_{p}}\le 
  \sum_{j=1,2,3}\nrm{a^{(j)}_{\psi}(x,D)u}{H^{s}_{p}}
  \le C \nrm{u}{H^{s+d}_{p}}.
\end{equation}
Here it is an important point that the inequality will be shown directly for
all  
$u\in H^{s}_{p}$ (without extension by continuity). For this purpose the
pointwise estimates and the spectral support rule reviewed in 
Sections~\ref{pe-ssect}--\ref{srule-ssect} are particularly useful.

Moreover, there is here no dependence on the modulation function $\psi$, for
$\cal S$ is a dense subset on which $a_\psi(x,D)=a(x,D)$.
This way the next result was obtained in \cite{JJ05DTL}:

\begin{thm}
  \label{Hsp-thm}
Let $a(x,\eta)$ be a symbol in $S^d_{1,1}(\Rn\times \Rn)$.
Then for every $s>0$, $1<p<\infty $ the type $1,1$-operator $a(x,D)$ has
$H^{s+d}_{p}(\Rn)$ in its domain and it is
a continuous linear map
\begin{equation}
  a(x,D)\colon H^{s+d}_p(\Rn)\to H^s_p(\Rn).
\end{equation}
This property extends to all $s\in \R$ when $a(x,\eta)$ fulfils the twisted diagonal
condition \eqref{tdc-cnd}.
\end{thm}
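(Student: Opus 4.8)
The plan is to bypass extension by continuity and prove the estimate $\Nrm{a(x,D)u}{H^s_p}\le C\Nrm{u}{H^{s+d}_p}$ \emph{directly} for all $u\in H^{s+d}_p(\Rn)$, using $H^s_p=F^s_{p,2}$ from \eqref{Hsp-id}. First I fix a modulation function $\psi$; by Remark~\ref{BF-rem} its dyadic dilates also serve as a Littlewood--Paley decomposition, so I write $u=\sum_{k\ge0}\Phi_k(D)u$ and split the symbol as $a=\sum_{j\ge0}a^{[j]}$ with $a^{[0]}=\psi(D_x)a$ and $a^{[j]}=(\psi(2^{-j}D_x)-\psi(2^{-(j-1)}D_x))a$ for $j\ge1$, so that $\cal F_{x\to\xi}a^{[j]}$ is supported where $|\xi|\sim2^j$ (a ball for $j=0$), and each $a^{[j]}$ has $S^d_{1,1}$-seminorms bounded uniformly in $j$. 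Inserting both decompositions into \eqref{apsi-eq} and grouping the double series $\sum_{j,k}a^{[j]}(x,D)\Phi_k(D)u$ according to $j<k-J$, $|j-k|\le J$ and $j>k+J$ (with $J\ge2$ fixed) produces the three contributions $a^{(1)}_\psi$, $a^{(2)}_\psi$, $a^{(3)}_\psi$ of \eqref{apsi123-id}; cf.\ Subsection~\ref{para-sssect}. It remains to bound each in $F^s_{p,2}$.

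For $a^{(1)}_\psi(x,D)u=\sum_k b_k(x,D)\Phi_k(D)u$, $b_k=\sum_{j<k-J}a^{[j]}$, the spectral support rule \eqref{srule-eq} puts the $k$-th summand's spectrum in an annulus $\{|\xi|\sim2^k\}$; the factorisation inequality \eqref{Fau*-ineq}--\eqref{Fa-eq} gives $|b_k(x,D)\Phi_k(D)u(x)|\le F_{b_k}(x)\,(\Phi_k(D)u)^*(x)$, and as $b_k$ is a uniformly bounded $x$-mollification of $a$ the corona case \eqref{Rd-eq} of Corollary~\ref{Fa-cor} yields $\nrm{F_{b_k}}{\infty}\le C2^{kd}$ for $k\ge1$ (the term $k=0$ is trivially bounded). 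The standard Littlewood--Paley summation for series with spectra in annuli, together with the vector-valued maximal inequality \eqref{u*Lp-ineq}, then gives $\Nrm{a^{(1)}_\psi(x,D)u}{F^s_{p,2}}\le C\Nrm{\bigl(2^{k(s+d)}(\Phi_k(D)u)^*\bigr)_k}{L_p(\ell_2)}\le C'\Nrm{u}{H^{s+d}_p}$, valid for every $s\in\R$. The third term $a^{(3)}_\psi(x,D)u=\sum_j a^{[j]}(x,D)\bigl(\sum_{k<j-J}\Phi_k(D)u\bigr)$ again has summands with spectrum $\{|\xi|\sim2^j\}$ by \eqref{srule-eq} (here $J\ge2$ is used); now the smallness needed to sum over $j$ is drawn from the symbol, since \eqref{Srd-eq} with $\rho=\delta=1$ forces $\nrm{D_x^{2M}D_\eta^\alpha a(\cdot,\eta)}{\infty}\le C(1+|\eta|)^{d-|\alpha|+2M}$, whence $\nrm{F_{a^{[j]}}(N,2^k;\cdot)}{\infty}\le C2^{kd}2^{-2M(j-k)}$ when the cutoff in \eqref{Fa-eq} lives on $|\eta|\sim2^k$, $k<j-J$ (via Theorem~\ref{Fa-thm} applied to $a^{[j]}=\Delta^x_ja$ and $\Delta^x_j=2^{-2jM}\psi_j(D_x)(-\Delta_x)^M$). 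Taking $M$ large (depending on $s$) turns the resulting $(j,k)$-sum into an $\ell_1$-convolution in $j-k$, so discrete Young's inequality plus \eqref{u*Lp-ineq} give $\Nrm{a^{(3)}_\psi(x,D)u}{F^s_{p,2}}\le C\Nrm{u}{H^{s+d}_p}$, once more for all $s$.

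The delicate term is the diagonal one, $a^{(2)}_\psi(x,D)u=\sum_j c_j(x,D)\Phi_j(D)u$ with $c_j=\sum_{|j-k|\le J}a^{[k]}$: both factors now sit at frequency $\sim2^j$, so \eqref{srule-eq} only confines each summand's spectrum to a \emph{ball} $B(0,C2^j)$ --- it can reach frequency $0$, which is the mechanism behind the type $1,1$ pathologies. For series with spectra in such balls the summation estimate $\Nrm{\sum_jv_j}{F^s_{p,2}}\le C\Nrm{\bigl(2^{js}v_j\bigr)_j}{L_p(\ell_2)}$ holds only for $s>0$ (when $p>1$, $q=2$). Combined with $|c_j(x,D)\Phi_j(D)u(x)|\le F_{c_j}(x)(\Phi_j(D)u)^*(x)$, the uniform bound $\nrm{F_{c_j}}{\infty}\le C2^{jd}$ from \eqref{Rd-eq}, and \eqref{u*Lp-ineq}, this yields $\Nrm{a^{(2)}_\psi(x,D)u}{F^s_{p,2}}\le C\Nrm{u}{H^{s+d}_p}$ for $s>0$, and in particular shows the three defining series converge in $\cal D'$. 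Adding the three contributions, and using that $\cal S(\Rn)$ is dense in $H^{s+d}_p$ with $a_\psi(x,D)=\OP(a)$ there independently of $\psi$ by \eqref{SdS-eq}, proves $H^{s+d}_p\subset D(a(x,D))$ and that $a(x,D)$ is the asserted bounded map. Finally, if $a(x,\eta)$ obeys \eqref{tdc-cnd}, then $\cal F_{x\to\xi}c_j$ vanishes on $\{B(|\xi+\eta|+1)<|\eta|\}$, so \eqref{srule-eq} now confines $\supp\cal F(c_j(x,D)\Phi_j(D)u)$ to an annulus $\{|\xi|\sim2^j\}$ rather than a ball; the annular summation applies with no restriction on $s$, and the argument for all three terms runs for every $s\in\R$. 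The principal obstacle throughout is $a^{(2)}_\psi$: one must show it converges in $\cal D'$ and pin down the exact frequency localisation of its summands --- a ball in general (forcing $s>0$), an annulus under \eqref{tdc-cnd} --- this being precisely where the fine interplay of $u$ and the symbol near the twisted diagonal is decisive, while the factorisation inequality with Corollary~\ref{Fa-cor} is what keeps the symbol side uniformly in hand.
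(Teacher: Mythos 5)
Your proof is correct and follows essentially the same route as the paper: the paper's own proof is precisely the specialisation to $q=2$ of the argument for Theorem~\ref{FBspq-thm} in Section~\ref{Fspq-ssect}, namely paradifferential splitting of $a_\psi(x,D)$, the factorisation inequality \eqref{Fau*-ineq} with the symbol-factor bounds of Corollary~\ref{Fa-cor}/\ref{Fa0-cor}, the spectral support rule to localise spectra in coronas or balls, the Peetre--Fefferman--Stein maximal inequality, and the corona/ball summation lemma (Lemma~\ref{Fspq-lem}), closed off by density of $\cal S$ to remove the $\psi$-dependence. One detail to tighten: you fix ``$J\ge 2$'', but $J$ must be chosen in relation to the inner and outer support radii $r<R$ of $\psi$ (the paper takes $h$ with $2R<r2^h$); for a modulation function with large $R/r$ the choice $J=2$ would not separate $\supp\cal F_{x\to\xi}a^{[j]}$ from $\supp\widehat{\Phi_k(D)u}$, and the spectral support rule would then fail to place the summands of $a^{(1)}_\psi$ and $a^{(3)}_\psi$ in annuli.
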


There is also a version for the general $F^{s}_{p,q}$ spaces, as
reviewed in Theorem~\ref{FBspq-thm} below.

It is most noteworthy, though, that this result deals directly with the
operator $a(x,D)$ defined by vanishing frequency modulation. 
Phrased with a few words this is because the modulation function,
by dilation, 
gives rise to a Littlewood--Paley partition of unity that can be
inserted twice in $a_{\psi}(x,D)u$\,---\,whereafter bilinearity leads
directly to the decomposition \eqref{apsi123-id}. 
Section~\ref{LPa-ssect} below gives the details of this.

\section{Maximality of the definition by 
vanishing frequency modulation} 
\label{max-ssect}\noindent
It follows from standard results that $a(x,D)$ gives back the usual
operator, written $\OP(a)u$, if the definition is applied to some
$a(x,\eta)\in S^\infty_{1,0}$; cf \cite[Prop.~5.4]{JJ08vfm}.
In addition it is also consistent with the previous extensions, as
elucidated in Section~\ref{uni-ssect}.

But even so the approximants in \eqref{am-id} might seem rather arbitrary. 
That this is not the case is evident from the
following 
characterisation, which justifies the title of this section:

\begin{thm}[{\cite[Thm.~5.9]{JJ08vfm}}]   \label{max-thm}
The map $a\mapsto a(x,D)$ given by Definition~\ref{vfm-defn} is one among the
operator  assignments $a\mapsto\OPT(a)$ mapping each
$a\in S^\infty_{1,1}(\Rn\times\Rn)$ into a linear map from $\cal S'(\Rn)$ to
$\cal D'(\Rn)$ such that:
\begin{rmlist}
  \item   \label{scmtb-cnd}
$\OPT(\cdot)$ is \emph{compatible} with $\OP$ on $S^{-\infty}$, that is,
$\OPT(b)$ is defined for all $u\in \cal S'(\Rn)$ whenever $b(x,\eta)\in
S^{-\infty }(\Rn\times \Rn)$ and $\OPT(b)u=\OP(b)u$;
  \item  \label{stabl-cnd} 
each $\widetilde{\OP}(b)$ 
is \emph{stable} under vanishing frequency modulation, or explicitly
\begin{equation}
  \OPT(b)u=\lim_{m\to\infty}\OPT(\psi(2^{-m}D_x)b(x,\eta)\psi(2^{-m}\eta))u  
\end{equation}
for every modulation function $\psi$, $u\in D(\widetilde{\OP}(b))$
for a fixed $b\in S^\infty_{1,1}$. 
\end{rmlist}
Whenever $\OPT$ is such a map, 
then $\OPT(a)\subset a(x,D)$ holds in the sense of operator theory 
for every $a\in S^\infty_{1,1}$.
\end{thm}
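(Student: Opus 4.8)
The statement to prove has two parts: that the canonical assignment $a\mapsto a(x,D)$ of Definition~\ref{vfm-defn} itself satisfies the compatibility and stability requirements \textup{(i)}--\textup{(ii)}, and\,---\,the substantial part\,---\,that it is maximal among all such assignments, i.e.\ $\OPT(a)\subset a(x,D)$ for every admissible $\OPT$. The first part is routine: compatibility with $\OP$ on $S^{-\infty}$ is the standard fact, recorded after Definition~\ref{vfm-defn}, that $a_{\psi}(x,D)u=\OP(a)u$ for every $a\in S^{-\infty}$ and $u\in\cal S'(\Rn)$; and stability is essentially the reformulation \eqref{amum-eq} of the definition together with the remark there that $a^m(x,D)u^m=\OP(a^m(x,\eta)\psi(2^{-m}\eta))u$. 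So I concentrate on the maximality part.

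Let $\OPT$ be any assignment satisfying \textup{(i)}--\textup{(ii)}, fix $a\in S^\infty_{1,1}(\Rn\times\Rn)$ and $u\in D(\OPT(a))$; the goal is $u\in D(a(x,D))$ together with $a(x,D)u=\OPT(a)u$, which is exactly the graph inclusion $\OPT(a)\subset a(x,D)$. First I would fix an arbitrary modulation function $\psi$ and apply the stability property \textup{(ii)} with $b=a$: since $u\in D(\OPT(a))$,
\begin{equation}
  \OPT(a)u=\lim_{m\to\infty}\OPT\bigl(\psi(2^{-m}D_x)a(x,\eta)\psi(2^{-m}\eta)\bigr)u
  \qquad\text{in }\cal D'(\Rn).
\end{equation}
In the notation of \eqref{am-id} the $m$th approximating symbol is $b_m(x,\eta)=a^m(x,\eta)\psi(2^{-m}\eta)$, which lies in $S^{-\infty}(\Rn\times\Rn)$ (as noted in the text following \eqref{am-id}). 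Hence the compatibility property \textup{(i)} applies to each $b_m$ and gives $\OPT(b_m)v=\OP(b_m)v$ for all $v\in\cal S'(\Rn)$; substituting this term by term into the displayed limit,
\begin{equation}
  \OPT(a)u=\lim_{m\to\infty}\OP\bigl(a^m(x,\eta)\psi(2^{-m}\eta)\bigr)u=a_{\psi}(x,D)u,
\end{equation}
the final equality being just the definition \eqref{apsi-eq} of $a_{\psi}(x,D)u$; in particular this limit exists in $\cal D'(\Rn)$.

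Now I would feed this into Definition~\ref{vfm-defn}. The identity $\OPT(a)u=a_{\psi}(x,D)u$ has been obtained for \emph{every} modulation function $\psi$, whereas its left-hand side $\OPT(a)u$ is a single element of $\cal D'(\Rn)$ that carries no reference to $\psi$ (the assignment $\OPT$ is prescribed independently of any such choice). Therefore the limits $a_{\psi}(x,D)u$ all exist and coincide, which is precisely the requirement for $u$ to belong to $D(a(x,D))$, and then $a(x,D)u=a_{\psi}(x,D)u=\OPT(a)u$. Since $u\in D(\OPT(a))$ was arbitrary, $\OPT(a)\subset a(x,D)$ follows for every $a\in S^\infty_{1,1}$.

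There is no hard analytic step; the whole point is that \textup{(i)}--\textup{(ii)} dovetail with the two clauses of Definition~\ref{vfm-defn}, and the only genuine input is the observation that $a^m(x,\eta)\psi(2^{-m}\eta)\in S^{-\infty}$, which is what lets \textup{(i)} convert the abstract approximants $\OPT(b_m)u$ into the concrete operators $\OP(b_m)u$ whose limit is $a_{\psi}(x,D)u$. The one place to be careful is the independence-of-$\psi$ clause of Definition~\ref{vfm-defn}: one must exploit that $\OPT(a)u$ does not depend on $\psi$, so that having $\OPT(a)u=a_{\psi}(x,D)u$ for each $\psi$ separately yields at once both the existence of the limit and its $\psi$-independence.
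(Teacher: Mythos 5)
Your proof is correct and takes the natural route that the paper also follows: verify that the canonical assignment $a\mapsto a(x,D)$ satisfies \textup{(i)}--\textup{(ii)} (which is immediate from Definition~\ref{vfm-defn} together with the observation that $a^m(x,\eta)\psi(2^{-m}\eta)\in S^{-\infty}$), and then, for the maximality, chain \textup{(ii)} with \textup{(i)} to identify $\OPT(a)u$ with $a_{\psi}(x,D)u$ for each $\psi$, finally invoking the $\psi$-independence of $\OPT(a)u$ to conclude $u\in D(a(x,D))$ and $a(x,D)u=\OPT(a)u$. No gaps; the argument is complete.
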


It should be noted that ``from\dots to'' indicates that the operator
$\OPT(a)$ is defined just on a subspace of $\cal S'(\Rn)$, in analogy with
the corresponding formulation in the theory of unbounded operators in
Hilbert space. 
In addition the domain of $\OPT(a)$ is necessarily dense, for as a
consequence of \eqref{scmtb-cnd}--\eqref{stabl-cnd} in the theorem, 
it follows from \eqref{SdS-eq} that
$\OPT(a)$ \emph{extends} $a(x,D)$ in \eqref{axDu-eq}, that is,
$\OPT(a)u=a(x,D)u$ for all $u\in \cal S(\Rn)$.

In \cite[Sect.~4--5]{JJ08vfm} it is also confirmed
that Definition~\ref{vfm-defn} is \emph{strongly compatible} with 
eg $S^\infty_{1,0}$; ie, despite the limit procedure
it gives the result 
\begin{equation}
  a(x,D)u=\OP(a(x,\eta)\chi(\eta))u 
  \label{scomp-eq}
\end{equation}
whenever $u\in \cal S'(\Rn)$ and
$a(x,\eta)\chi(\eta)$ is in $S^\infty _{1,0}$ for some $C^\infty $-function
$\chi$ equalling $1$ on a neighbourhood of $\supp\hat u$.
In particular \eqref{scomp-eq} holds whenever $u\in \cal F^{-1}\cal
E'(\Rn)$, as was already seen in Section~\ref{uni-ssect},
for $a(x,\eta)\chi(\eta)$ belongs to $S^{-\infty }$ when $\chi\in
C^\infty_0(\Rn)$ is as above.

These and other questions of extension and compatibility are dealt with at 
length in \cite[Sect.~4--5]{JJ08vfm}. 

\begin{rem}   \label{Stein-rem}
It is of historic interest to relate Definition~\ref{vfm-defn} to that 
used by E.~M.~Stein around 1972 in the extension to H{\"o}lder--Zygmund spaces 
$C_*^s(\Rn)$ with $s>0$. For convenience the basis for this will be  
the exposition in Proposition~3 of \cite[VII,\,\S1.3]{Ste93}.

The indications will be brief, however, borrowing the Littlewood--Paley
decomposition $1=\sum_{j=0}^\infty \varphi(2^{-j}\eta)$ 
constructed from an arbitrary modulation function $\psi$
in Section~\ref{dyadic-sssect} below. 
The integer $h$ there is such that 
$\tilde \varphi_j:=\varphi(2^{-j+h+1}\cdot )+\dots +\varphi(2^{-j-h-1}\cdot )$
is identical equal to $1$ on $\supp\varphi(2^{-j}\cdot )$. 
Using this, E.~M.~Stein introduced for $a\in \OP(S^0_{1,1})$,
\begin{equation}
  \OPT(a)u = \sum_{j=0}^\infty \OP(a(x,\eta)\varphi(2^{-j}\eta))\tilde
\varphi_j(D)u 
\quad\text{for}\quad u\in C^s(\Rn).
  \label{Stein-id}
\end{equation}
With arguments from harmonic analysis it was shown in \cite{Ste93} that the
series converges in $L_\infty$ and that the induced map is bounded on
$C_*^s(\Rn)$ for $s>0$. 
(Independence of $\psi$ was tacitly by-passed in \cite{Ste93}.)
 
With hindsight \eqref{Stein-id} applies as a definition of $\OPT(a)u$ for
the $u\in \cal S'$ for which the right-hand side converges in $\cal D'$.  
Of course the $\tilde \varphi_j(D)$ are redundant, so the above amounts to
\begin{equation}
  \OPT(a)u=\lim_{k\to\infty } \OP(a(x,\eta)\psi(2^{-k}\eta))u.
  \label{Stein'-id}
\end{equation}
First of all this shows that Stein's extension implicitly relied on
vanishing \emph{partial} frequency modulation, since the symbol is not
modified by $\psi(2^{-m}D_x)$ in \eqref{Stein'-id}. 

Secondly, whether $\OPT(a)$ is a restriction of $a(x,D)$ is by
means of Theorem~\ref{max-thm} easily reduced to whether
$\OPT(a)$ is stable under vanishing frequency modulation, ie to show that
\begin{equation}
  \OPT(a)u=  \lim_{m\to\infty }\OPT(\Psi(2^{-m}D_x)a(x,\eta)\Psi(2^{-m}\eta))u
  \label{Stein''-id}
\end{equation}
for every modulation function $\Psi$, $u\in D(\OPT(a))$; which by 
definition of $\OPT$ is equivalent to
\begin{multline}
  \lim_{k\to\infty }\lim_{m\to\infty } 
    \OP(\Psi(2^{-m}D_x)a(x,\eta)\Psi(2^{-m}\eta)\psi(2^{-k}\eta))u
\\
  =\lim_{m\to\infty }\lim_{k\to\infty } 
  \OP(\Psi(2^{-m}D_x)a(x,\eta)\Psi(2^{-m}\eta)\psi(2^{-k}\eta))u.
  \label{Stein'''-id}
\end{multline} 
So the question is reduced to that of commuting the two
limits above. This is mentioned just to explicate the difficulties
in the present question, and indeed in the theory as a whole.
\end{rem}

\section{The maximal smooth space}
\label{C8-ssect}\noindent
It it turns out that every type $1,1$-operator $a(x,D)$, 
despite the pathologies it
may display, always is defined on the largest possible space of smooth
functions, which is $C^\infty \bigcap\cal S'$, of course.

The proof of this fact departs from the extension $\tilde A$ of G.~Bourdaud
recalled in Section~\ref{GBext-ssect}. To free the discussion from the slow
growth in $\cal O_M$,
one may restate the definition of $\tilde Af$ in terms of the tensor product $1\otimes f$ in 
$\cal S'(\Rn\times \Rn)$ acting on $(\varphi\otimes (1-\chi))K\in \cal
S(\Rn\times \Rn)$, ie
\begin{equation}
  \dual{\tilde Af}{\varphi}= \dual{a(x,D)(\chi f)}{\varphi}
   +\dual{1\otimes f}{(\varphi\otimes (1-\chi))K},
  \label{aOC'-eq}
\end{equation}
The advantage here is that both terms obviously make sense as long as
$f$ is smooth and temperate, ie for every 
$f\in C^\infty (\Rn)\bigcap\cal S'(\Rn)$. Moreover, the arguments in 
Section~\ref{GBext-ssect} can then essentially be repeated, which shows that
every $f\in C^\infty \bigcap\cal S'$ is mapped by $\tilde A$ to a well
defined distribution; cf \cite[Sect.~2.1.2]{JJ10tmp}.

Invoking Definition~\ref{vfm-defn}, this gives that
$a(x,D)$ always is a map defined on the
\emph{maximal} set of smooth functions $C^\infty \bigcap \cal S'$:

\begin{thm}[{\cite[Thm.~2.7]{JJ10tmp}}]   \label{aOO-thm}
Every $a(x,D)\in \OP(S^d _{1,1}(\Rn\times \Rn))$ restricts to a map
\begin{equation}
  a(x,D)  \colon C^\infty (\Rn)\bigcap \cal S'(\Rn)\to C^\infty (\Rn),
\end{equation}
which maps the subspace $\cal O_M(\Rn)$ into itself. The restriction is given by \eqref{aOC'-eq}.
\end{thm}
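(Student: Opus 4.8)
The plan is to show that the functional $\tilde A f$ defined by the right-hand side of \eqref{aOC'-eq} is a well-defined element of $C^\infty(\Rn)$ for every $f\in C^\infty(\Rn)\bigcap\cal S'(\Rn)$, that it leaves $\cal O_M(\Rn)$ invariant, and finally that $\tilde A$ is nothing but the restriction of the operator $a(x,D)$ from Definition~\ref{vfm-defn} to this space; the formula \eqref{aOC'-eq} then comes for free.

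\emph{Step 1: $\tilde A$ is well defined with values in $C^\infty$.} For $\chi\in C^\infty_0(\Rn)$ equal to $1$ near $\supp\varphi$, the first term of \eqref{aOC'-eq} makes sense since $\chi f\in C^\infty_0\subset\cal S$, and the second since $(\varphi\otimes(1-\chi))K\in\cal S(\Rn\times\Rn)$: as $1-\chi$ vanishes near $\supp\varphi$, the support of $\varphi\otimes(1-\chi)$ stays a positive distance from the diagonal, where $K$ and all its derivatives decay rapidly in $x-y$. Independence of the choice of $\chi$ follows from the kernel relation $\dual{a(x,D)g}{\varphi}=\dual{K}{\varphi\otimes g}$ applied to $g=(\chi-\tilde\chi)f\in C^\infty_0$, and the estimate of the right-hand side of \eqref{aOC'-eq} by a constant times finitely many Schwartz seminorms of $\varphi$ shows $\tilde Af\in\cal D'(\Rn)$. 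That $\tilde Af$ is smooth is seen by localising: near a given $x_0$ choose $\chi=1$ on a neighbourhood of $x_0$; on that neighbourhood \eqref{aOC'-eq} reads $\tilde Af=a(x,D)(\chi f)+g$ with $a(x,D)(\chi f)\in\cal S$ and $g(x)=\dual{f}{(1-\chi)K(x,\cdot)}$, which is smooth in $x$ because $(1-\chi(\cdot))K(x,\cdot)$ is a Schwartz function of its second argument depending smoothly on $x$ (for $x$ near $x_0$ it vanishes on a neighbourhood of the diagonal). Temperateness of $\tilde Af$, and the stronger statement that $\cal O_M$ is mapped into itself, then follow exactly as in the proof of Proposition~\ref{aOO-prop}: the symbol factors of $a$ and of $D^\beta_x a\in S^{d+|\beta|}_{1,1}$ are bounded by Section~\ref{pe-ssect}, while the maximal function of $\chi f$ and the remaining kernel integrals grow at most polynomially.

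\emph{Step 2: $\tilde A$ is a restriction of $a(x,D)$.} Fix a modulation function $\psi$ and set $A_m=\OP(b_m)$ with $b_m=\psi(2^{-m}D_x)a(x,\eta)\psi(2^{-m}\eta)$; recall $b_m\to a$ in $S^{d+1}_{1,1}$ and that $A_m$, having symbol in $S^{-\infty}$, is defined on all of $\cal S'$ with $A_mg(x)=\dual{g}{K_m(x,\cdot)}$, where $K_m(x,\cdot)\in\cal S(\Rn)$. Split $f=\chi f+(1-\chi)f$ with $\chi$ as in Step~1. For the first part, continuity of $\OP\colon S^{d+1}_{1,1}\times\cal S\to\cal S$ (cf.\ \eqref{SdS-eq}) gives $A_m(\chi f)\to a(x,D)(\chi f)$ in $\cal S$, hence $\dual{A_m(\chi f)}{\varphi}\to\dual{a(x,D)(\chi f)}{\varphi}$. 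For the second part, a Fubini argument with the compactly $x$-supported, $\cal S(\Rn_y)$-valued map $x\mapsto\varphi(x)K_m(x,\cdot)$ yields
\[
  \dual{A_m((1-\chi)f)}{\varphi}=\dual{1\otimes f}{(\varphi\otimes(1-\chi))K_m}.
\]
Since $\varphi\otimes(1-\chi)\in C^\infty(\Rn\times\Rn)$ has bounded derivatives, compact $x$-support and support disjoint from the diagonal, Proposition~\ref{KmKS-prop} gives $(\varphi\otimes(1-\chi))K_m\to(\varphi\otimes(1-\chi))K$ in $\cal S(\Rn\times\Rn)$, so $\dual{A_m((1-\chi)f)}{\varphi}\to\dual{1\otimes f}{(\varphi\otimes(1-\chi))K}$. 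Adding the two limits, $A_mf\to\tilde Af$ in $\cal D'(\Rn)$; as $\tilde Af$ is manifestly independent of $\psi$, Definition~\ref{vfm-defn} gives $f\in D(a(x,D))$ and $a(x,D)f=\tilde Af$. Together with Step~1 this is the assertion.

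\emph{The main obstacle.} The delicate point is the convergence of $\dual{A_m((1-\chi)f)}{\varphi}$: one cannot pair the merely temperate $f$ with $K_m$ directly, and it is precisely the reduction to Proposition~\ref{KmKS-prop} that makes it work\,---\,a proposition that in turn rests on the non-trivial half of the Regular Convergence Lemma, Lemma~\ref{regconv-lem}, where the auxiliary function lies in $\cal S\setminus C^\infty_0$. A secondary subtlety is to verify that $\tilde Af$ is a genuine smooth, temperate function rather than merely a distribution; this is where the boundedness of the symbol factor from Section~\ref{pe-ssect} is needed, beyond the softer arguments of Section~\ref{GBext-ssect}.
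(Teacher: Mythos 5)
Your proof is correct and follows essentially the same route as the paper: the decomposition $f=\chi f+(1-\chi)f$, the convergence $A_mf\to\tilde Af$ via Proposition~\ref{KmKS-prop} (and hence the Regular Convergence Lemma), and the localized smoothness argument all coincide with the paper's proof. The only point of divergence is the $\cal O_M$-invariance: you gesture at the symbol-factor/maximal-function machinery of Section~\ref{pe-ssect}, whereas the paper uses a direct kernel decay estimate $(1+|y|)^{2N}|D^\alpha_x K(x,y)|\le c(1+|x|)^{2N}\sup_x\int|D^\alpha_x(2\lap_\eta)^N a(x,\eta)|\,d\eta$ exploiting the distance $r>0$ between $\supp\varphi$ and $\supp(1-\chi)$ — note that the maximal-function route of Section~\ref{pe-ssect} requires compact spectrum, which $\chi f$ does not have, so the kernel estimate is the cleaner and actually used tool here.
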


\begin{proof}
Let for brevity $A_m=\OP(\psi(2^{-m}D_x)a(x,\eta)\psi(2^{-m}\eta))$
with distribution kernel $K_m$, so  that $a(x,D)u=\lim_m A_mu$ when 
$u\in D(a(x,D))$.
With $f\in C^\infty \bigcap \cal S'$ and $\varphi, \chi$ as above,
this is the case for $u=\chi f\in C^\infty_0$.

Since the support of $\varphi\otimes (1-\chi)$ is
disjoint from the diagonal and bounded in the $x$-direction, it was shown
by means of the Regular Convergence Lemma in Proposition~\ref{KmKS-prop}
that in the topology of $\cal S(\Rn\times \Rn)$
\begin{equation}
  \varphi(x)(1-\chi(y))K_m(x,y) \xrightarrow[m\to\infty ]{~}\varphi(x)(1-\chi(y)) K(x,y).
\end{equation}
Exploiting these facts in \eqref{aOC'-eq} yields that
\begin{equation}
  \dual{\tilde Af}{\varphi}= \lim_m \dual{A_m(\chi f)}{\varphi}
   +\lim_m\iint K_m(x,y)(1-\chi(y))f(y)\varphi(x)\,dy\,dx.
  \label{aOCm-eq}
\end{equation}
Here the integral equals $\dual{A_m(f-\chi f)}{\varphi}$ by the kernel
relation, for $A_m\in S^{-\infty }$ and $f$ may as an element of $\cal S'$ 
be approached from $C^\infty_0$. So \eqref{aOCm-eq} yields
\begin{equation}
  \dual{\tilde Af}{\varphi}=\lim_m \dual{A_m(\chi f)}{\varphi}
   +\lim_m \dual{A_m(f-\chi f)}{\varphi} =
  \lim_m \dual{A_m f}{\varphi}.
\end{equation}
Thus $A_mf\to \tilde Af$, which is independent of $\psi$. 
Hence $\tilde A\subset a(x,D)$ as desired.

Moreover, $\tilde Af$ is smooth because $a(x,D)(f\chi)\in \cal S$ while
the other contribution in \eqref{aOC'-eq} also acts like a $C^\infty
$-function: for a suitable $\tilde \varphi\in C^\infty _0$ chosen to be $1$ around
$\supp\varphi$  the second term equals
\begin{equation}
  \int \dual{f}{(\tilde \varphi(x)(1-\chi))K(x,\cdot )}\varphi(x)\,dx,
\end{equation}
where $x\mapsto \dual{f}{\tilde
\varphi(x)(1-\chi(\cdot ))K(x,\cdot )}$ is $C^\infty $ as seen in the
verification that $g\otimes f\in \cal S'(\Rn\times \Rn)$ for $f,g\in \cal S'(\Rn)$.
Therefore $\tilde Af$ is locally smooth, so $\tilde Af \in C^\infty (\Rn)$ follows.

When in addition $f\in\cal O_M$, then $(1+|x|)^{-2N}D^\alpha \tilde Af $ is
bounded for sufficiently large $N$, for when
$r=\op{dist}(\supp\varphi,\supp(1-\chi))$ one 
finds in the second contribution to \eqref{aOC-eq} that
\begin{equation}
  \begin{split}
  (1+|y|)^{2N}|D^\alpha_x K(x,y)|&\le 
  (1+|x|)^{2N}\max(1,1/r)^{2N}(r+|x-y|)^{2N}|D^\alpha_x K(x,y)|
\\
  &\le c(1+|x|)^{2N}\sup_{x\in\Rn}\int |D^\alpha_x(2\lap_\eta)^N a(x,\eta)|\,d\eta,  
  \end{split}
\end{equation}
where the supremum is finite for $2N>d+|\alpha|+n$ whilst
$(1+|y|)^{-2N}f(y)$ is in $L_1$ for large $N$.
Hence $\tilde Af\in \cal O_M$ as claimed.
\end{proof}

In view of the theorem, the
difficulties for type $1,1$-operators are unrelated to growth at infinity 
for smooth functions.
The space $C^\infty (\Rn)\bigcap \cal S'(\Rn)$ clearly contains
functions of non-slow growth, eg
\begin{equation}
  f(x)=e^{x_1+\dots +x_n}\cos(e^{x_1+\dots +x_n}).
\end{equation}
The codomain $C^\infty $ in Theorem~\ref{aOO-thm} is of course not
contained in $\cal S'$, but this is
consistent with the use of $\cal D'$ in Definition~\ref{vfm-defn}.

\section{The pseudo-local property of 
type $\mathbf{1},\mathbf{1}$-operators}
\label{psdl-ssect}\noindent
For a classical pseudo-differential operator, 
say with symbol $a$ in $S^\infty _{1,0}$, 
it is a well known fact that $a(x,D)$ has the so-called pseudo-local
property.  
This means that it cannot create singularities, ie
\begin{equation}
  \singsupp a(x,D)u\subset \singsupp u
  \quad\text{for all}\quad u\in D(a(x,D)).
  \label{psdl-eq}
\end{equation}
In this connection the domain $D(a(x,D))$ is simply $\cal S'(\Rn)$.

For a type $1,1$-operator $a(x,D)$ the above formulation also applies, although
it is a delicate task to determine $D(a(x,D))$ exactly. 
But Definition~\ref{vfm-defn} characterises $D(a(x,D))$ as the set of
distributions $u$ for which the limit there exists; and this suffices for the
proof of 

\begin{thm}[{\cite[Thm.~6.4]{JJ08vfm}}]   \label{psdl-thm}
Every operator $a(x,D)$ in 
$\OP(S^{\infty }_{1,1}(\Rn\times \Rn))$ has the
pseudo-local property \eqref{psdl-eq}.
\end{thm}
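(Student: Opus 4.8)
The plan is to argue locally and reduce everything to the well-understood behaviour of $a(x,D)$ on $\cal S(\Rn)$ together with the behaviour of its distribution kernel $K$ off the diagonal. Fix a point $x_0\notin\singsupp u$ and choose open sets $x_0\in V$ with $\overline V\subset W$ such that $u$ acts as a $C^\infty$-function on $W$; then pick $\chi\in C^\infty_0(W)$ with $\chi\equiv1$ on $\overline V$. Writing $u=u_1+u_2$ with $u_1=\chi u\in C^\infty_0(\Rn)\subset\cal S(\Rn)$ and $u_2=(1-\chi)u$, continuity in \eqref{SdS-eq} gives $a(x,D)u_1=\OP(a)u_1\in\cal S(\Rn)$, which is $C^\infty$ near $x_0$. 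Since the domain in Definition~\ref{vfm-defn} is a linear subspace on which $u\mapsto a(x,D)u$ is linear (each approximant $\OP(a^m\psi(2^{-m}\cdot))$ is linear and the defining limit is $\psi$-independent), it follows that $u_2\in D(a(x,D))$ with $a(x,D)u_2=a(x,D)u-a(x,D)u_1$, so everything reduces to showing that $a(x,D)u_2$ acts as a $C^\infty$-function on $V$. Note that $\supp u_2$ is disjoint from $\overline V$.

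Next I would bring in the kernel approximants. For a fixed modulation function $\psi$, let $A_m=\OP(\psi(2^{-m}D_x)a(x,\eta)\psi(2^{-m}\eta))$ have kernel $K_m$; since type $1,1$-symbols are bounded in $x$ together with all their derivatives, $K_m$ is smooth, bounded in $x$, and rapidly decreasing in $x-y$, so for $\varphi\in C^\infty_0(V)$ the function $y\mapsto\int K_m(x,y)\varphi(x)\,dx$ lies in $\cal S(\Rn)$ and $\dual{A_mu_2}{\varphi}=\dual{u(y)}{(1-\chi(y))\int K_m(x,y)\varphi(x)\,dx}$. Because $\supp\varphi$ and $\supp(1-\chi)$ are disjoint compacta at positive distance, the function $f(x,y)=\varphi(x)(1-\chi(y))$ satisfies the hypotheses of Proposition~\ref{KmKS-prop} (smooth with bounded derivatives, support bounded in $x$ and disjoint from the diagonal), whence $fK_m\to fK$ in $\cal S(\Rn\times\Rn)$. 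Integrating in $x$ over the fixed compact set $\supp\varphi$ then yields $(1-\chi(\cdot))\int K_m(x,\cdot)\varphi(x)\,dx\to(1-\chi(\cdot))\int K(x,\cdot)\varphi(x)\,dx$ in $\cal S(\Rn)$, and passing to the limit $m\to\infty$ according to Definition~\ref{vfm-defn} gives $\dual{a(x,D)u_2}{\varphi}=\dual{u(y)}{(1-\chi(y))\int K(x,y)\varphi(x)\,dx}$ for every $\varphi\in C^\infty_0(V)$.

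It then remains to recognise the right-hand side as testing against a $C^\infty$-function. For $x\in V$ the factor $1-\chi$ vanishes near $y=x$ and so kills the diagonal singularity of $K$, while $K$ is Schwartz in $x-y$; hence $(1-\chi(\cdot))K(x,\cdot)\in\cal S(\Rn)$ and depends smoothly on $x\in V$, so $w(x):=\dual{u(y)}{(1-\chi(y))K(x,y)}$ defines $w\in C^\infty(V)$. A Fubini-type interchange, legitimate because $fK\in\cal S(\Rn\times\Rn)$ and $u\in\cal S'(\Rn)$, identifies $\dual{a(x,D)u_2}{\varphi}$ with $\int w(x)\varphi(x)\,dx$, so $a(x,D)u_2=w$ on $V$. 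Combined with $a(x,D)u_1\in\cal S(\Rn)$ this shows that $a(x,D)u$ acts as a $C^\infty$-function near $x_0$; as $x_0\notin\singsupp u$ was arbitrary, \eqref{psdl-eq} follows.

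The main obstacle is exchanging the limit $m\to\infty$ with the action of the non-compactly supported distribution $u_2$: the kernels converge to $K$ only regularly (in $\cal D'$ and in $C^\infty$ off the diagonal), which by itself does not control the pairing with $1\otimes u$. The device that overcomes this is precisely Proposition~\ref{KmKS-prop}, which upgrades regular convergence to convergence in $\cal S(\Rn\times\Rn)$ once the kernel is multiplied by a cutoff that is supported away from the diagonal and bounded in $x$, and which rests on the less trivial part of the Regular Convergence Lemma (where $\cal F^{-1}\psi\in\cal S(\Rn)\setminus C^\infty_0(\Rn)$). Everything else---linearity of the domain, the Schwartz-space bookkeeping, and smoothness of $x\mapsto\dual{u}{(1-\chi)K(x,\cdot)}$---is routine.
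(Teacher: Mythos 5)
Your argument is correct and follows the same route as the paper's proof: localise by a cutoff $\chi$ equal to $1$ near the point, treat the $C^\infty_0$ piece via \eqref{SdS-eq}, and for the piece supported away from the singular set pair against $\varphi\otimes u$ using the Schwartz-space convergence of the truncated kernels furnished by Proposition~\ref{KmKS-prop}. The only difference is cosmetic: where the paper inserts an approximating sequence $u_l\in C^\infty_0$ and an inner limit over $l$, you apply the kernel relation directly to $A_m$ as a continuous operator on $\cal S'$ (legitimate since its symbol is in $S^{-\infty}$), which shortens the bookkeeping but does not change the underlying idea.
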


This was partly anticipated already in 1978 
by C.~Parenti and L.~Rodino \cite{PaRo78}.
More precisely, they formulated it as a result for the case where
$a(x,D)$ is defined on the full distribution space (ie $\cal E'(\Rn)$
in their context of locally estimated symbols), 
but as justification they only observed 
that the distribution kernel $K(x,y)$ is $C^\infty $ for $x\ne y$, tacitly
leaving it to the reader to invoke the rest of the standard proof.

However, this is first of all not straightforward as the usual rules of 
pseudo-differential calculus are not available for type $1,1$-operators 
(a consequence of Ching's counter-example \cite{Chi72}), so the
question is rather more complicated than the impression \cite{PaRo78} gives.
But secondly, the remedy has to be sought, it seems, in the part
they suppressed\,---\,namely, the substitute for the rules of calculus
can be found precisely in the very definition of type $1,1$-operators, where
the vanishing frequency modulation yields a useful regularisation.

This became clear with the proof in \cite{JJ08vfm} of the above theorem. 
Indeed, 
in the three decades since \cite{PaRo78}, a proof of Theorem~\ref{psdl-thm} has
not been known. This is of course with good reason, for both sides of
\eqref{psdl-eq} are empty for $u\in \cal S$, so one has to treat $u\in \cal
S'\setminus \cal S$ directly. Obviously this requires a precise definition of
$a(x,D)$ as well as further ideas to handle the possible 
discontinuity in $\cal S'$ of $a(x,D)$.

To explain this, recall that the crux of the standard proof is to show that
$C^\infty (\Rn)$ contains a certain localised term, 
namely $\psi(x)a(x,D)(\chi_1 u)(x)$ in the notation of \cite[(6.18)]{JJ08vfm}. 
Hereby $\psi \in C^\infty_0$ and $\chi_1\in C^\infty$ have disjoint supports,
so the distribution kernel $\widetilde{K}(x,y)=\psi(x)K(x,y)\chi_1(y)$ 
of the composite map $u\mapsto \psi a(x,D)(\chi_1 u)$
belongs to $\cal S(\Rn\times \Rn)$, whence it suffices as usual to
establish that 
\begin{equation}
  \dual{\psi a(x,D)(\chi_1 u)}{\varphi}=
  \dual{\varphi\otimes u}{\widetilde{K}}
  \quad\text{for all}\quad \varphi\in C^\infty_0(\Rn).
  \label{axDloc-eq}
\end{equation}
In fact, the right-hand side equals $\int \dual{u}{\widetilde{K}(x,\cdot
)}\varphi(x)\,dx$ by the definition of the tensor product, so that
$\psi a(x,D)(\chi_1 u)=\dual{u}{\widetilde{K}(x,\cdot )}$,
where the last expression is $C^\infty $.

However, even though both sides of \eqref{axDloc-eq} make sense as they
stand, it is, because of the lack of continuity of $a(x,D)$, 
not a trivial task to show from scratch that they are equal. 
As indicated above, the details of the verification do not follow
well-trodden paths:

The proof strategy in \cite[Thm.~6.4]{JJ08vfm} was to utilise 
the regularisation that one is given gratis
from Definition~\ref{vfm-defn}.
This departs from Proposition~\ref{KmKS-prop} that gives,
because $\psi\otimes \chi_1$ has support disjoint from the diagonal, 
\begin{equation}
  \psi(x)K_m(x,y)\chi_1(y)\xrightarrow[m\to\infty ]{~}
\psi(x)K(x,y)\chi_1(y)=\tilde K(x,y)
  \quad\text{in}\quad \cal S(\Rn\times \Rn).
\end{equation}
Using this on the right-hand side of \eqref{axDloc-eq}, and approaching $u$
by a sequence $u_l$ from $C^\infty_0(\Rn)$,
the formula follows from the usual continuity properties:
\begin{equation}
  \begin{split}
  \dual{\varphi\otimes u}{\widetilde{K}}
&= \lim_m 
  \dual{\varphi\otimes u}{(\psi\otimes \chi_1)K_m}_{\cal S'\otimes \cal S}
\\
&= \lim_m\lim_l 
  \dual{\varphi\otimes u_l}{(\psi\otimes \chi_1)K_m}_{\cal S'\otimes \cal S}
\\
&= \lim_m\dual{a^m(x,D)(\chi_1 u)^m}{\psi\varphi}
=  \dual{\psi a(x,D)(\chi_1 u)}{\varphi}.  
  \end{split}
\end{equation}
In this calculation the order of the limits is essential, of course. 
But it might
be instructive to note that in case $a(x,D)$ \emph{is} continuous on $\cal
S'$, then $u_l\to u$ and frequency modulation on the left-hand side of
\eqref{axDloc-eq} gives $\lim_{l}\lim_m\dual{K_m}{u_l\otimes \varphi}$, with
limits in reverse order.

The full proof of the pseudo-local property in \cite[Thm.~6.4]{JJ08vfm} is
not much longer. However, the above Schwartz space
convergence of the distribution kernels required some preparation as noted
around Proposition~\ref{KmKS-prop}.

\section{Non-preservation of wavefront sets}
\label{nwf-ssect}\noindent
In the counter-examples based on Ching's symbol $a_{\theta}(x,\eta)$
in \eqref{Ching-eq}, the role of the exponentials is
to move all frequencies in the spectrum of the functions to a
neighbourhood of the origin. 
Therefore it is perhaps not surprising that another variant of Ching's example 
will produce frequencies $\eta$ that are moved to, say $-\eta$. 

So, although $a_\theta(x,D)$ cannot create singularities, cf
Section~\ref{psdl-ssect}, at the singular
points of $u(x)$ it may change all the high frequencies causing them.

This indicates that type $1,1$-operators need not have
the microlocal property. That is, the inclusion among wavefront sets 
\begin{equation}
 \op{WF}(a(x,D)u)\subset\op{WF}(u)  ,
\end{equation}
that always holds for pseudo-differential operators of type $1,0$,
is violated for certain symbols $a\in S^\infty_{1,1}$ and
distributions $u$. 

This was confirmed with explicit calculations in \cite[Sec.~3.2]{JJ08vfm}, 
following C.~Parenti and L.~Rodino \cite{PaRo78}
who treated $d=0$ and $n=1$. The programme they suggested was carried out
for all $d\in \R$, $n\in \N$ and arbitrary directions of $\theta$. 
As a minor improvement, the wavefront sets was explicitly 
determined, and due to the fact that a certain $v$ below  has
compact spectrum (rather than compact support as in \cite{PaRo78}) and the 
uniformly estimated symbols, the proofs were also rather cleaner.

When $\theta\in \Rn$ is fixed with $|\theta|=1$, 
one can introduce
\begin{equation}
 w_{\theta}(x)=w(\theta,d;x)= 
 \sum_{j=1}^\infty 2^{-jd}e^{\im 2^j\theta\cdot x}v(x)  
\end{equation}
for some $v\in \cal S(\Rn)$ with $\supp\hat v\subset B(0,1/20)$;
or equivalently
\begin{equation}
  \hat w_{\theta}(\eta)
  = \sum_{j=1}^\infty 2^{-jd} \hat v(\eta-2^j\theta).
  \label{wtheta-eq}
\end{equation}
This distribution has the cone $\Rn\times
(\R_+\theta)$ as its wavefront set, as shown in \cite[Prop.~3.3]{JJ08vfm}.

The counter-example arises by considering $w_{\theta}$ together with the symbol
$a_{2\theta}\in S^d_{1,1}(\Rn\times\Rn)$ 
defined by \eqref{Ching-eq} with auxiliary function $A$ fulfilling in
addition 
\begin{equation}
  A(\eta)=1 \quad\text{for}\quad
  \tfrac{9}{10}\le |\eta|\le \tfrac{11}{10}.   
  \label{A(1)-eq}
\end{equation}

\begin{prop}   \label{WF-prop}
The distributions $w(\theta,d;x)$ are in $H^s(\Rn)$ precisely for $s<d$, and 
when $a_{2\theta}$ is chosen as in \eqref{Ching-eq},\eqref{A(1)-eq} 
with $|\theta|=1$, then 
\begin{equation}
  a_{2\theta}(x,D)w(\theta,d;x)=w(-\theta,0;x).   
\end{equation}
Moreover,
\begin{align}
  \op{WF}(w_{\theta})&= \Rn\times (\R_+\theta),
  \label{WF-eq}
\\
   \op{WF}(a_{2\theta}(x,D)w(\theta,d;x))&=\Rn\times(\R_+(-\theta)),
\end{align}
so the wavefront sets of $w_{\theta}$ and $a_{2\theta}(x,D)w_{\theta}$ are
disjoint. 
\end{prop}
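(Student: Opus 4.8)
The plan is to treat the four assertions in turn; the crux is the operator identity $a_{2\theta}(x,D)w(\theta,d;x)=w(-\theta,0;x)$, from which the two wavefront statements follow at once by applying the already-established description of $\op{WF}(w_\vartheta)$ to two choices of parameters. \emph{Sobolev regularity.} By \eqref{wtheta-eq}, $\hat w_\theta=\sum_{j\ge 1}2^{-jd}\hat v(\cdot-2^j\theta)$, and since $|2^{j+1}\theta-2^j\theta|=2^j\ge 2$ while $\supp\hat v\subset B(0,\tfrac1{20})$, the summands have pairwise disjoint supports, with $\ang\eta\approx 2^j$ on $\supp\hat v(\cdot-2^j\theta)$. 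Hence by Plancherel $\nrm{w_\theta}{H^s}^2\approx\nrm{\hat v}{2}^2\sum_{j\ge1}2^{2j(s-d)}$, which is finite exactly when $s<d$ (and, as $\hat v\not\equiv0$, infinite when $s\ge d$).

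\emph{The operator identity.} The underlying mechanism is a one-block computation: for $k\ge1$ the function $e^{\im 2^k\theta\cdot x}v$ has spectrum in $B(2^k\theta,\tfrac1{20})$, where $2^{-k}\eta\in B(\theta,\tfrac1{20})\subset\{\tfrac9{10}\le|\eta|\le\tfrac{11}{10}\}$, so $A(2^{-k}\eta)\equiv1$ there by \eqref{A(1)-eq}, whereas $\supp A(2^{-j}\cdot)\subset\{\tfrac34 2^j\le|\eta|\le\tfrac54 2^j\}$ meets $B(2^k\theta,\tfrac1{20})$ only for $j=k$; applying the series $a_{2\theta}(x,\eta)=\sum_j2^{jd}e^{-\im 2^{j+1}\theta\cdot x}A(2^{-j}\eta)$ therefore gives
\begin{equation*}
a_{2\theta}(x,D)\bigl[e^{\im 2^k\theta\cdot x}v(x)\bigr]=2^{kd}e^{-\im 2^{k+1}\theta\cdot x}e^{\im 2^k\theta\cdot x}v(x)=2^{kd}e^{\im 2^k(-\theta)\cdot x}v(x).
\end{equation*}
Multiplying by $2^{-kd}$ and summing over $k$ produces $w(-\theta,0;x)$ formally. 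To make this rigorous I would insert $w_\theta$ directly into Definition~\ref{vfm-defn}: from $\hat a_{2\theta}(\xi,\eta)=(2\pi)^n\sum_j2^{jd}\delta(\xi+2^{j+1}\theta)A(2^{-j}\eta)$ one obtains $a_{2\theta}^m(x,\eta)=\sum_j2^{jd}\psi(-2^{j+1-m}\theta)e^{-\im 2^{j+1}\theta\cdot x}A(2^{-j}\eta)$, so that $\OP(a_{2\theta}^m(x,\eta)\psi(2^{-m}\eta))w_\theta$ reduces, after the $j$-summation collapses to $j=k$ as above, to a single series in $k$ of terms $\psi(-2^{k+1-m}\theta)e^{-\im 2^{k+1}\theta\cdot x}[A(2^{-k}D)\psi(2^{-m}D)(e^{\im 2^k\theta\cdot x}v)](x)$. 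For $k$ below a threshold $K(m)\to\infty$ both cutoffs act as the identity on $B(2^k\theta,\tfrac1{20})$ and the $k$-th term equals $e^{\im 2^k(-\theta)\cdot x}v(x)$, while the remaining, boundedly many, terms carry an oscillation $e^{-\im 2^k\theta\cdot x}$ with $2^k\to\infty$ against a family bounded in $\cal S(\Rn)$ and hence tend to $0$ in $\cal S'(\Rn)$; since $\sum_{k\le K(m)}e^{\im 2^k(-\theta)\cdot x}v\to w(-\theta,0;\cdot)$ in $H^s$ for $s<0$ (the Sobolev step with $d=0$), the limit exists, is independent of $\psi$, and equals $w(-\theta,0;\cdot)$. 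Thus $w_\theta\in D(a_{2\theta}(x,D))$ and $a_{2\theta}(x,D)w_\theta=w(-\theta,0;\cdot)$. (Alternatively, one may first verify the identity for the truncations $w_\theta^{(M)}\in\cal F^{-1}C^\infty_0$, where \eqref{scomp-eq} applies, and then pass to $M=\infty$.)

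\emph{Wavefront sets and conclusion.} Formula \eqref{WF-eq} is the cited result \cite[Prop.~3.3]{JJ08vfm}. Applying the same result with the unit vector $-\theta$ in place of $\theta$ and $0$ in place of $d$, together with the identity just proved, gives $\op{WF}(a_{2\theta}(x,D)w(\theta,d;x))=\op{WF}(w(-\theta,0;\cdot))=\Rn\times(\R_+(-\theta))$. Since $\theta\ne0$ one has $\R_+\theta\cap\R_+(-\theta)=\emptyset$, so the two wavefront sets are disjoint, which completes the proof.

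\emph{Main obstacle.} The delicate point is the operator identity: because $w_\theta$ lies in $H^s$ only for $s<d$ while $a_{2\theta}(x,D)$ is genuinely unbounded on $H^s$ for $s\le 0$, one cannot borrow continuity from Theorem~\ref{Hsp-thm} and must extract the value from the defining limit itself. The work therefore lies in interchanging the $k$-summation with the limit $m\to\infty$, i.e.\ in showing that the off-diagonal and high-frequency remainders of the resulting double series vanish in $\cal S'(\Rn)$ uniformly enough; here the explicit structure of Ching's symbol and the spectral separation of the dyadic blocks of $w_\theta$ (and hence the spectral support rule of Section~\ref{srule-ssect}) do the job.
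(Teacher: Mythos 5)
Your calculation is correct throughout: the Plancherel argument for the Sobolev regularity, the one-block computation giving $a_{2\theta}(x,D)[e^{\im 2^k\theta\cdot x}v]=2^{kd}e^{\im 2^k(-\theta)\cdot x}v$, and the appeal to \cite[Prop.~3.3]{JJ08vfm} for $\op{WF}(w_{\vartheta})$ all match what the paper itself relies on (the paper refers the full details out to that reference).

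However, your ``main obstacle'' paragraph rests on a false premise, and this causes you to over-engineer the rigorous step. You write that $a_{2\theta}(x,D)$ is ``genuinely unbounded on $H^s$ for $s\le 0$'', but that is the situation for Ching's $a_{\theta}$ with $A(\theta)\ne 0$ (Lemma~\ref{cex-lem}), not for $a_{2\theta}$ here. Since $|\theta|=1$ and $\supp A\subset\{\tfrac34\le|\eta|\le\tfrac54\}$, the auxiliary function satisfies $A(2\theta)=0$. Concretely, $\hat a_{2\theta}(\xi,\eta)\ne0$ forces $\xi=-2^{j+1}\theta$ and $\tfrac34 2^j\le|\eta|\le\tfrac54 2^j$, whence $|\xi+\eta|\ge 2^{j+1}-\tfrac54 2^j=\tfrac34 2^j$ and therefore $|\eta|\le\tfrac53|\xi+\eta|<2(|\xi+\eta|+1)$; thus $a_{2\theta}$ fulfils the twisted diagonal condition \eqref{tdc-cnd} with $B=2$, exactly as the paper observes immediately after the proposition. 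By Proposition~\ref{GB-prop} (or Theorem~\ref{tdc-thm}) the operator $a_{2\theta}(x,D)$ is therefore an everywhere defined \emph{continuous} map $\cal S'(\Rn)\to\cal S'(\Rn)$, and by the last part of Theorem~\ref{Hsp-thm} it is bounded $H^{s+d}\to H^s$ for \emph{all} $s\in\R$. Consequently there is no delicate interchange of the $k$-sum with the $m\to\infty$ limit to perform: the ``alternative'' you relegate to parentheses (verify the identity on the truncations $w_\theta^{(M)}\in\cal F^{-1}C^\infty_0$ via \eqref{scomp-eq}, then pass to $M\to\infty$) is the natural argument and is immediate once $\cal S'$-continuity is in hand, since $w_\theta^{(M)}\to w_\theta$ in $H^s$ for $s<d$ while $a_{2\theta}(x,D)w_\theta^{(M)}=\sum_{k=1}^M e^{\im 2^k(-\theta)\cdot x}v\to w(-\theta,0;\cdot)$ in $H^s$ for $s<0$. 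Your direct verification from Definition~\ref{vfm-defn} with the threshold $K(m)$ and the oscillatory-tail estimate is not wrong, but it expends effort on a difficulty that this particular symbol does not present.
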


Since the above, as indicated, is a minor improvement of the result that
has been known since \cite{PaRo78},  
it should suffice here to refer to \cite[Prop.~3.3]{JJ08vfm} for details.

But a few remarks should be in order.
As $A$ vanishes around $2\theta$, 
it is easy to see that in this case every $(\xi,\eta)$ in 
$\supp \hat a_{2\theta}$ lies in the cone $|\eta|\le 2|\xi+\eta|$ so that
$a$ fulfils \eqref{tdc-cnd} for $B=2$.
Hence $a_{2\theta}(x,D)$ has a large domain containing $\bigcup H^s$ and fulfils
the twisted diagonal condition\,---\,but then neither of these properties 
can ensure the microlocal property of a type $1,1$-operator, according to
Proposition~\ref{WF-prop}.

There is a clear reason why the counter-example $w_{\theta}$ in
Proposition~\ref{WF-prop} is singular on all of $\Rn$:
the function  $w_{\theta}(x)$ equals $ v(x)f(x\cdot \theta)$ where 
$v\in \cal F^{-1}C^\infty_0$ is analytic whilst
$f(t)=\sum_{j=1}^\infty 2^{-jd}e^{\im 2^jt}$ is not just 
highly oscillating, but for $0<d\le1$ 
equal to Weierstrass's continuous nowhere differentiable function, here in a
complex version with its wavefront set along a half-ray. 
The link to this classical construction 
(that could have substantiated the
argument for formula \mbox{(19)} in \cite{PaRo78})
was first observed in \cite[Rem.~3.5]{JJ08vfm}.

More remarks on the above $f$, including a short, explicit analysis of its
regularity properties, can be found in Remarks~3.6--3.8 in \cite{JJ08vfm}.
In particular the nowhere differentiability was obtained with a short
microlocalisation argument (further explored in \cite{JJ10now}).

\section{The support rule and its spectral version}
\label{supp-ssect}\noindent
This subject has alreay been explained in the context of type
$1,0$-operators in Section~\ref{srule-ssect}. It therefore suffices to
comment on the modifications needed for type $1,1$-operators.

First of all there is a satisfactory result for the extended kernel formula
and the support rule, which for $u\in \cal E'(\Rn)$ is the well-known inclusion
\begin{equation}
  \supp Au\subset {\supp K\circ \supp u}.  
\end{equation}
Here $\supp K\circ \supp u$ stands for the set
$\{\,x\mid \exists y\in \supp u\colon (x,y)\in \supp K \,\}$ as usual.

\begin{thm}   \label{supprule11-thm}
When $a\in S^\infty_{1,1}(\Rn\times\Rn)$ 
and $K$ denotes its kernel, then 
$\dual{a(x,D)u}{v}=\dual{K}{v\otimes u}$ 
whenever $u\in D(a(x,D))$, $v\in C^\infty_0(\Rn)$ fulfil 
\begin{gather}
  \supp K\bigcap \supp  v\otimes u \Subset\Rn\times\Rn,
  \label{suppKuv-eq}
  \\
  \singsupp K\bigcap \singsupp  v\otimes u=\emptyset.
  \label{ssuppKuv-eq}
\end{gather}
And for all $u\in D(a(x,D))$ the support rule holds, ie
$\supp a(x,D)u\subset \overline{\supp K\circ \supp u}$.
\end{thm}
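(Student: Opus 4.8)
The plan is to establish the kernel identity first and to read the support rule off from it as the case where the overlap of the two supports is empty. Throughout, let $A_m=\OP(\psi(2^{-m}D_x)a(x,\eta)\psi(2^{-m}\eta))$ be the $m^{\op{th}}$ approximating operator from Definition~\ref{vfm-defn}; its symbol lies in $S^{-\infty}$, so $A_m$ is continuous on $\cal S'(\Rn)$ and has a smooth kernel $K_m$, and $K_m\to K$ regularly in the sense of Definition~\ref{regconv-defn} by the kernel convergence recorded around Proposition~\ref{KmKS-prop} (here $\singsupp K\subset\{x=y\}$). Fix $u\in D(a(x,D))$ and $v\in C^\infty_0(\Rn)$ satisfying \eqref{suppKuv-eq}--\eqref{ssuppKuv-eq}, and set $L=\supp K\bigcap\supp(v\otimes u)$; by \eqref{ssuppKuv-eq} the product $K\cdot(v\otimes u)$ is defined, by \eqref{suppKuv-eq} it is carried by the compact set $L$, so $\dual{K}{v\otimes u}$ has a meaning via the extension \eqref{fu1-id}. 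Since $u\in D(a(x,D))$, $\dual{a(x,D)u}{v}=\lim_{m}\dual{A_m u}{v}$, and for each $m$ the ordinary kernel relation gives $\dual{A_m u}{v}=\dual{K_m}{v\otimes u}$; everything comes down to passing this identity to the limit.

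I would choose $\chi\in C^\infty_0(\Rn_y)$ equal to $1$ on a ball $\Bbar(0,N)$, with $N$ so large that $\chi\equiv 1$ on a neighbourhood of $\supp v$ together with the $y$-projection of $L$, and split, for each fixed $m$,
\[
  \dual{A_m u}{v}=\dual{A_m(\chi u)}{v}+\dual{A_m((1-\chi)u)}{v};
\]
this splitting is done inside $A_m$, which is linear on all of $\cal S'$, so no claim about $\chi u$ or $(1-\chi)u$ belonging to $D(a(x,D))$ is needed. For the first term, $\chi u\in\cal E'(\Rn)$ has compact support, $\supp(v\otimes\chi u)\bigcap\supp K\subset L$ is compact and $\singsupp(v\otimes\chi u)\bigcap\singsupp K=\emptyset$; as $K_m\to K$ regularly, Theorem~\ref{ext-thm} applies with $v\otimes\chi u$ in the role of the compactly supported factor and gives $\dual{A_m(\chi u)}{v}=\dual{K_m}{v\otimes\chi u}\to\dual{K}{v\otimes\chi u}$. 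For the second term, $f(x,y):=v(x)(1-\chi(y))$ is smooth with bounded derivatives, has support bounded in $x$, and---because $N$ is large---has support disjoint from the diagonal, so Proposition~\ref{KmKS-prop} yields $fK_m\to fK$ in $\cal S(\Rn\times\Rn)$; pairing with $1\otimes u\in\cal S'(\Rn\times\Rn)$ then gives $\dual{A_m((1-\chi)u)}{v}=\dual{fK_m}{1\otimes u}\to\dual{fK}{1\otimes u}$, and this last limit is $0$ because $fK$ vanishes on an open set containing $\Rn\times\supp u$ (at each $(x,y)$ with $y\in\supp u$ one has $x\notin\supp v$, or $y$ lies where $\chi\equiv1$, or $(x,y)\notin\supp K$, since otherwise $(x,y)\in L$ would force $y$ into the region where $\chi\equiv1$). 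Adding the two contributions, and noting $\dual{K}{v\otimes((1-\chi)u)}=0$ because those supports are disjoint (again by the choice of $N$), so that $\dual{K}{v\otimes\chi u}=\dual{K}{v\otimes u}$, I obtain $\dual{a(x,D)u}{v}=\dual{K}{v\otimes u}$.

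The support rule is then immediate. If $v\in C^\infty_0(\Rn)$ has $\supp v\bigcap\overline{\supp K\circ\supp u}=\emptyset$, then $(\supp v\times\supp u)\bigcap\supp K=\emptyset$, ie $L=\emptyset$, so \eqref{suppKuv-eq}--\eqref{ssuppKuv-eq} hold vacuously and $K\cdot(v\otimes u)=0$; the kernel identity gives $\dual{a(x,D)u}{v}=\dual{K}{v\otimes u}=0$. Since $v$ was arbitrary, $\supp a(x,D)u\subset\overline{\supp K\circ\supp u}$.

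The main obstacle I anticipate is precisely the interchange of the limit $m\to\infty$ with the kernel identity. For type $1,0$-operators this was costless because $a(x,D)$ is continuous on $\cal S'$ (Theorem~\ref{AK-thm}); here that continuity fails, so the whole burden is shifted onto two convergence statements for the kernels---stability of the extended scalar product under regular convergence (Theorem~\ref{ext-thm}) for the part of $v\otimes u$ that may meet $\singsupp K$, and Schwartz-space convergence $fK_m\to fK$ (Proposition~\ref{KmKS-prop}) for the off-diagonal part---both of which ultimately rest on the Regular Convergence Lemma. The role of the cutoff $\chi$ and the large radius $N$ is to make the non-compactly supported distribution $v\otimes u$ interact correctly with these statements; once that is arranged, the remaining points (the kernel relation for $A_m$, the Fubini-type identification of $\dual{A_m((1-\chi)u)}{v}$ with $\dual{fK_m}{1\otimes u}$, and the various support inclusions) are routine.
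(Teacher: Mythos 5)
Your proof is correct. It does, however, take a route that is organised differently from the one used in \cite[Thm.~8.1]{JJ08vfm}, which the dissertation summarises as ``approaching $u$ by a regularly converging sequence from $C^\infty_0$ and applying Proposition~\ref{KmKS-prop}.'' You never introduce an approximating sequence for $u$ at all: you keep the single $m$-limit from Definition~\ref{vfm-defn}, split $u=\chi u+(1-\chi)u$ under $A_m$ (which is harmless, since each $A_m\in\OP(S^{-\infty})$ is defined on all of $\cal S'$), and then resolve the two resulting limits by two separate pieces of the regular-convergence machinery: Theorem~\ref{ext-thm}, applied in $\Rn\times\Rn$ with $v\otimes\chi u$ as the compactly supported factor and $K_m\to K$ as the regularly converging sequence, for the part of $v\otimes u$ that may come close to $\singsupp K$; and Proposition~\ref{KmKS-prop} together with pairing against $1\otimes u$ for the off-diagonal part, which is then killed by the choice of $\chi$. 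The paper's argument instead approximates $u$ and lets $v\otimes u_\nu\to v\otimes u$ play the role of the regularly converging sequence, so there the role of \emph{approximand} and \emph{test object} in Theorem~\ref{ext-thm} is the mirror image of yours. Both routes rest on the same ingredients (the Regular Convergence Lemma, Theorem~\ref{ext-thm}, Proposition~\ref{KmKS-prop}); yours has the merit of making the interplay between the two limits explicit via the cut-off decomposition and of avoiding a second approximation altogether, while the paper's version more directly parallels the proof pattern of Theorem~\ref{AK-thm} for $\cal S'$-continuous operators and of Theorem~\ref{psdl-thm}. One detail worth stating explicitly in a final write-up is that $K_m\to K$ in $C^\infty$ away from $\singsupp K$ (not merely away from the diagonal), which is indeed what the Regular Convergence Lemma delivers and what Theorem~\ref{ext-thm} requires, since a priori $\singsupp(v\otimes\chi u)$ may touch the diagonal without touching $\singsupp K$.
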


This was proved in \cite[Thm.~8.1]{JJ08vfm} by approaching $u$ by a
regularly converging sequence from $C^\infty_0$ and applying
Proposition~\ref{KmKS-prop}. 

The rule for spectra was amply described in
Section~\ref{srule-ssect}, so the full statement for type $1,1$-operators
is just given here. Unfortunately it contains an undesirable assumption,
which usually is redundant, cf the last part of The Spectral Support Rule:

\begin{thm}[{\cite[Thm.~8.4]{JJ08vfm}}]
  \label{supp-thm}
Let $a\in  S^\infty_{1,1}(\Rn\times\Rn)$ and let 
$u\in D(a(x,D))$ be such that $a(x,D)u$ is temperate and that, for some
$\psi\in C^\infty_0(\Rn)$ equalling $1$ around the origin,
the convergence of Definition~\ref{vfm-defn} 
holds in the topology of $\cal S'(\Rn)$, ie
\begin{equation}
  a(x,D)u=\lim_{m\to\infty} a^m(x,D)u^m \quad\text{in}\quad \cal S'(\Rn).
  \label{S'lim-cnd}
\end{equation}
Then \eqref{ssKau-eq} holds, that is with $\Xi=\supp\cal K\circ\supp\hat u$
one has
\begin{gather}
   \supp\cal F(a(x,D)u)\subset\overline{\Xi},
 \\
  \Xi=\bigl\{\,\xi+\eta \bigm| (\xi,\eta)\in \supp\hat a,\ 
     \eta\in \supp\hat u \,\bigr\}.
  \label{sFAu-eq}
\end{gather}
When $u\in \cal F^{-1}\cal E'(\Rn)$ then \eqref{S'lim-cnd} 
holds automatically and $\Xi$ is closed for such $u$.
\end{thm}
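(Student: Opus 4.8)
The plan is to reduce the spectral support rule for the type $1,1$-operator $a(x,D)$ to the classical one of Section~\ref{srule-ssect}, applied separately to each approximant $b_m(x,D):=\OP(a^m(x,\eta)\psi(2^{-m}\eta))$, and then to pass to the limit $m\to\infty$ using precisely the extra hypothesis \eqref{S'lim-cnd}. First I would restate the conclusion as the claim that $\dual{\cal F(a(x,D)u)}{v}=0$ for every $v\in C^\infty_0(\Rn)$ with $\supp v\cap\overline\Xi=\emptyset$; since $a(x,D)u$ is assumed temperate, $\cal F(a(x,D)u)\in\cal S'(\Rn)$ and this is equivalent to $\supp\cal F(a(x,D)u)\subset\overline\Xi$.

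Next, fix such a $v$ and $m\in\N$. The symbol $b_m$ lies in $S^{-\infty}$, and its partial Fourier transform $\widehat{b_m}(\xi,\eta)=\psi(2^{-m}\xi)\hat a(\xi,\eta)\psi(2^{-m}\eta)$ has support contained both in $\supp\hat a$ and in the compact set $(2^m\supp\psi)\times(2^m\supp\psi)$; hence the kernel $\cal K_m(\xi,\eta)=(2\pi)^{-n}\widehat{b_m}(\xi-\eta,\eta)$ of $\cal F b_m(x,D)\cal F^{-1}$ is compactly supported. Because $\cal K_m$ has compact support, the mollification argument of Section~\ref{srule-ssect} goes through without requiring $\supp\hat u$ to be compact (a compact set disjoint from the closed set $\supp(v\otimes\hat u)$ has positive distance), and it yields
\[
  \supp\cal F(b_m(x,D)u)\subset
  \overline{\{\,\xi+\eta\mid (\xi,\eta)\in\supp\widehat{b_m},\ \eta\in\supp\hat u\,\}}\subset\overline\Xi,
\]
the last inclusion since $\supp\widehat{b_m}\subset\supp\hat a$. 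In particular $\dual{\cal F(b_m(x,D)u)}{v}=0$ for every $m$.

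Now I would invoke \eqref{S'lim-cnd}: by Definition~\ref{vfm-defn} one has $b_m(x,D)u=a^m(x,D)u^m$, and the hypothesis says these converge to $a(x,D)u$ in $\cal S'(\Rn)$, so continuity of $\cal F$ on $\cal S'$ gives $\dual{\cal F(a(x,D)u)}{v}=\lim_m\dual{\cal F(b_m(x,D)u)}{v}=0$. This proves $\supp\cal F(a(x,D)u)\subset\overline\Xi$, that is \eqref{sFAu-eq}. For the final assertion, let $u\in\cal F^{-1}\cal E'(\Rn)$. Since $\supp\hat u$ is compact, the continuous map $(\xi,\eta)\mapsto\xi+\eta$ is proper on $\Rn\times\supp\hat u$, so $\Xi$, being the image under it of the closed set $\supp\hat a\cap(\Rn\times\supp\hat u)$, is closed and $\overline\Xi=\Xi$. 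Moreover, choosing $\psi=1$ near the origin forces $\psi(2^{-m}\cdot)=1$ on $\supp\hat u$ for $m$ large, whence $u^m=u$ and $a^m(x,D)u^m=\OP(a^m(x,\eta)\chi(\eta))u$ for a fixed $\chi\in C^\infty_0(\Rn)$ equal to $1$ near $\supp\hat u$; as $a^m(x,\eta)\chi(\eta)\to a(x,\eta)\chi(\eta)$ in $S^{-\infty}$ and $\OP(\cdot)u$ depends continuously on smoothing symbols, the strong compatibility \eqref{scomp-eq} gives $a^m(x,D)u^m\to\OP(a(x,\eta)\chi(\eta))u=a(x,D)u$ in $\cal S'$, so \eqref{S'lim-cnd} indeed holds automatically.

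I expect the real obstacle to be the middle step. Since $a(x,D)$ need not be continuous on $\cal S'$ (Ching's example), the limit $m\to\infty$ cannot be taken by soft arguments, and the entire role of hypothesis \eqref{S'lim-cnd}---and the delicate point of the \emph{automatic} validity claim for $u\in\cal F^{-1}\cal E'$---is precisely to secure that the convergence in \eqref{axDu-id} takes place in the strong topology of $\cal S'$ rather than merely in $\cal D'$; everything else is bookkeeping about supports together with the already-established classical spectral support rule.
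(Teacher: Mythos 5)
Your proof is correct, and it takes a genuinely different and more elementary route than the paper's own. The paper derives Theorem~\ref{supp-thm} as a consequence of the extended kernel formula in Theorem~\ref{Kuv-thm}, which it in turn establishes via the regular-convergence machinery of Section~\ref{regular-ssect} (stability under regular convergence, the Regular Convergence Lemma, and the $\cal S$-convergence of cutoff kernels as in Proposition~\ref{KmKS-prop}). You bypass all of that by exploiting the single crucial structural fact that for each fixed $m$ the modulated symbol $b_m=a^m\psi(2^{-m}\cdot)$ has $\widehat{b_m}\in\cal E'(\Rn\times\Rn)$, so the conjugated kernel $\cal K_m$ is compactly supported, and then the bare mollification argument of Section~\ref{srule-ssect} applies even when $\supp\hat u$ is not compact (the positive-distance step working because \emph{one} of the two sets is compact). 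The subsequent passage $m\to\infty$ then uses exactly, and only, hypothesis \eqref{S'lim-cnd}, together with the fact that an $\cal S'$-limit of distributions supported in the fixed closed set $\overline\Xi$ is still supported there. Your handling of the automatic validity of \eqref{S'lim-cnd} for $u\in\cal F^{-1}\cal E'$ via strong compatibility \eqref{scomp-eq} and continuity of $\OP(\cdot)u$ on $S^{-\infty}$, and the properness argument for closedness of $\Xi$, are both sound. What your route buys is economy: you prove only the instance of the kernel formula actually needed (where $\supp v$ is \emph{strictly} disjoint from $\overline\Xi$, so the pairing vanishes outright), whereas the paper establishes the more general Theorem~\ref{Kuv-thm}, which is of independent interest under the weaker compatibility conditions \eqref{suppFKuv-eq}. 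One small point you gloss over: after mollification, $\hat u_\varepsilon\in C^\infty\cap\cal S'$ but not $C^\infty_0$, so the identity $\dual{\cal F b_m(x,D)\cal F^{-1}\hat u_\varepsilon}{v}=\dual{\cal K_m}{v\otimes\hat u_\varepsilon}$ needs a brief extension argument (e.g.\ insert a cutoff equal to $1$ near the second projection of $\supp\cal K_m$, noting that the operator annihilates anything supported outside it); this is routine but should be said.
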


In the theory of type $1,1$-operators it may seem unmotivated that the
partially Fourier transformed symbol $\hat a(\xi,\eta)$
plays such a prominent role (cf the twisted diagonal condition). 
But the spectral support rule \eqref{srule-eq} 
gives an explanation as $\hat a(\xi,\eta)$ appears in $\Xi$ too; thence $\hat
a(\xi,\eta)$ should be a natural object for every pseudo-differential
operator, as in Littlewood--Paley analysis control of $\supp\cal
Fa(x,D)u$ is a central theme.

However, $\hat a(\xi,\eta)$ is particularly important for operators of type
$1,1$, as the spectral support rule \eqref{sFAu-eq}  
clearly shows that the role of the twisted 
diagonal condition \eqref{tdc-cnd} is to ensure that $a(x,D)$ cannot change
(large) frequencies in $\supp\hat u$ to $0$: \eqref{tdc-cnd} means
that $\xi$ cannot be close to $-\eta$ when $(\xi,\eta)\in \supp\hat a$,
which by \eqref{sFAu-eq} means that $\eta\in \supp\hat u$ will be changed
to $\xi+\eta\ne 0$.  

The proof of  Theorem~\ref{supp-thm} was given first in
\cite{JJ04DCR,JJ05DTL} in a special case, and the full result appeared 
in in \cite[Thm.~8.3]{JJ08vfm}. The main ingredient was to
obtain an extended version of the kernel formula for the conjugated operator
$\cal Fa(x,D)\cal F^{-1}$. This is recalled from \cite[Thm.~8.2]{JJ08vfm}
here.

\begin{thm}   \label{Kuv-thm}
Let $a\in S^\infty_{1,1}(\Rn\times\Rn)$ and let 
the distribution kernel of $\cal F a(x,D)\cal F^{-1}$ be
denoted by $\cal K(\xi,\eta)$.
When $u\in D(a(x,D))$ is such that, for some $\psi$
as in Definition~\ref{vfm-defn}, 
\begin{equation}
  a(x,D)u=\lim_{m\to\infty} a^m(x,D)u^m \quad\text{holds in}\quad \cal S'(\Rn),
  \label{S'conv-eq}
\end{equation}
and when $\hat v\in C^\infty_0(\Rn)$ satisfies
\begin{equation}
  \supp\cal K\bigcap \supp \hat v\otimes\hat u \Subset\Rn\times\Rn,
\qquad
  \singsupp\cal K\bigcap \singsupp \hat v\otimes\hat u=\emptyset,
  \label{suppFKuv-eq}
\end{equation}
then it holds
\begin{equation}
  \dual{\cal F a(x,D)\cal F^{-1}(\hat u)}{\hat v}
  =\dual{\cal K}{\hat v\otimes\hat u},
  \label{FaFK-eq}
\end{equation}
with extended action of $\dual{\cdot}{\cdot}$ on the right-hand side.
\end{thm}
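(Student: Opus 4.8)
\emph{Proof strategy.} The plan is to transport the identity down to the approximants $A_m=\OP\big(\psi(2^{-m}D_x)a(x,\eta)\psi(2^{-m}\eta)\big)$, where the classical theory is available, and then to let $m\to\infty$, using that conjugation by $\cal F$ turns the frequency modulation of the symbol into multiplication of the kernel $\cal K$ by a harmless smooth cut-off. First I would invoke the hypothesis \eqref{S'conv-eq}. Since $a^m(x,D)u^m=A_mu$, it says $A_mu\to a(x,D)u$ in $\cal S'(\Rn)$; applying the Fourier transformation, which is continuous on $\cal S'$, gives $\cal FA_m\cal F^{-1}\hat u\to\cal Fa(x,D)\cal F^{-1}\hat u$ in $\cal S'(\Rn)$, so that, after testing against $\hat v\in C^\infty_0(\Rn)$,
\[
  \dual{\cal Fa(x,D)\cal F^{-1}\hat u}{\hat v}=\lim_{m\to\infty}\dual{\cal FA_m\cal F^{-1}\hat u}{\hat v}.
\]

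Secondly I would identify the kernel of $\cal FA_m\cal F^{-1}$. By \eqref{am-id} the partial Fourier transform of the modulated symbol is $\psi(2^{-m}\zeta)\,\hat a(\zeta,\eta)\,\psi(2^{-m}\eta)$, so the relation $\cal K(\xi,\eta)=(2\pi)^{-n}\hat a(\xi-\eta,\eta)$ recalled before the theorem shows that $\cal FA_m\cal F^{-1}$ has kernel
\[
  \cal K_m(\xi,\eta)=\Theta_m(\xi,\eta)\,\cal K(\xi,\eta),\qquad \Theta_m(\xi,\eta)=\psi(2^{-m}(\xi-\eta))\,\psi(2^{-m}\eta).
\]
As $\psi$ has compact support, $\Theta_m$ is supported where $|\xi-\eta|$ and $|\eta|$ are $O(2^m)$, hence $\cal K_m$ has compact support, and $\singsupp\cal K_m\subset\singsupp\cal K$ since a smooth factor cannot enlarge the singular support. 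Therefore the hypotheses \eqref{suppFKuv-eq} imposed on $\cal K$ hold verbatim for the pair $\cal K_m$, $\hat v\otimes\hat u$. Because the modulated symbol lies in $S^{-\infty}$, the operator $\cal FA_m\cal F^{-1}$ is continuous on $\cal S'(\Rn)$, so Theorem~\ref{AK-thm} applies and gives, for every $m$,
\[
  \dual{\cal FA_m\cal F^{-1}\hat u}{\hat v}=\dual{\cal K_m}{\hat v\otimes\hat u},
\]
the right-hand side being the extended scalar product of \eqref{fu1-id}.

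Thirdly I would pass to the limit. Set $g=\cal K\cdot(\hat v\otimes\hat u)$; by \eqref{suppFKuv-eq} this product is well defined in $\cal D'(\Rn\times\Rn)$ and compactly supported, and $\dual{g}{1}$ is, by \eqref{fu1-id}, precisely the right-hand side of \eqref{FaFK-eq}. Since multiplication by the smooth function $\Theta_m$ commutes with the extended product, $\cal K_m\cdot(\hat v\otimes\hat u)=\Theta_m g$, whence $\dual{\cal K_m}{\hat v\otimes\hat u}=\dual{g}{\Theta_m}$. Now $\Theta_m\equiv1$ on balls exhausting $\Rn\times\Rn$, with all derivatives $O(2^{-m})$, so $\Theta_m\to1$ in $C^\infty(\Rn\times\Rn)$; as $g$ has compact support, $\dual{g}{\Theta_m}=\dual{g}{1}$ for all large $m$. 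Chaining the three displayed identities yields $\dual{\cal Fa(x,D)\cal F^{-1}\hat u}{\hat v}=\dual{g}{1}=\dual{\cal K}{\hat v\otimes\hat u}$, i.e.\ \eqref{FaFK-eq}.

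The step I expect to be the real obstacle is the approximant kernel formula: although $\cal FA_m\cal F^{-1}$ is a smoothing operator on $\cal S'$, its kernel $\cal K_m$ is a genuine distribution — for an $x$-independent symbol it is carried by the diagonal $\xi=\eta$ — so the classical Schwartz kernel identity must be extended to the pair $(\hat u,\hat v)$, whose tensor product fails to be compactly supported. This is exactly the content of Theorem~\ref{AK-thm}, and its proof in turn rests on the stability of the extended scalar product under regular convergence (Theorem~\ref{ext-thm} and Lemma~\ref{regconv-lem}). The $\cal S'$-convergence assumption \eqref{S'conv-eq} is just what is needed to interchange $\lim_m$ with the pairing against $\hat v$ in the first step; without it the reduction breaks down.
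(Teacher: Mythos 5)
Your proof is correct, and it takes essentially the same route as the one in the paper: reduce to the regularised operators $\cal F A_m\cal F^{-1}$, apply the extended kernel formula of Theorem~\ref{AK-thm} for each $m$ (which rests on regular convergence via Theorem~\ref{ext-thm} and Lemma~\ref{regconv-lem}), and then pass to the limit using the $\cal S'$-convergence hypothesis \eqref{S'conv-eq}. The observation that $\cal K_m=\Theta_m\cal K$ with a compactly supported smooth cut-off $\Theta_m$ that equals $1$ on balls exhausting $\Rn\times\Rn$ is exactly what makes the $m$-limit unproblematic—indeed it makes the pairings $\dual{\cal K_m}{\hat v\otimes\hat u}$ eventually constant—and you correctly identify that, unlike the physical kernels $K_m$ treated in Proposition~\ref{KmKS-prop}, the Fourier-side kernels $\cal K_m$ are genuine distributions, which is why the reduction must go through Theorem~\ref{AK-thm} rather than a direct Schwartz-space convergence of kernels.
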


This was also obtained using regular convergence of test functions; cf 
\cite[Thm.~8.2]{JJ08vfm}. The convergence in the topology of $\cal S'$ seems
to be necessary for technical reasons in this proof. Anyhow, the assumption
that $a(x,D)u$ be an element of $\cal S'$ of course serves the purpose of
making $\cal Fa(x,D)u$ defined.

The remarks made in Section~\ref{srule-ssect} also apply here:
elementary symbols are redundant because of
Theorem~\ref{supp-thm}, but in the type $1,1$-context this simplification is
particularly  important, for else Definition~\ref{vfm-defn} would contain a
double limit procedure, with severe complications for the entire theory.

\chapter{Continuity results}
\label{resultsII-sect}\noindent
Here the results summarised in
\eqref{LPa-pro}--\eqref{Fspq'-pro} in Section~\ref{dozen-ssect} are
described in detail.

\section{Littlewood--Paley decompositions of 
type $\mathbf{1},\mathbf{1}$-operators}
\label{LPa-ssect}\noindent
Here it is described how the definition by vanishing frequency
modulation leads at once to Littlewood--Paley analysis of
$a(x,D)u$, and in particular 
to the paradifferential decomposition 
mentioned in Section~\ref{uni-ssect}.

\bigskip

\subsection{Dyadic corona decompositions of symbols and operators}
  \label{dyadic-sssect}
The basic step is to obtain a Littlewood--Paley
decomposition from the 
modulation function $\psi$ used in Definition~\ref{vfm-defn}.

As a preparation one may obviously, 
to each $\psi\in C^\infty_0(\Rn)$ with
$\psi=1$ in a neighbourhood of $0$, fix $R>r>0$ satisfying
\begin{equation}
  \psi(\xi)=1\quad\text{for}\quad |\xi|\le r;
  \qquad
  \psi(\xi)=0\quad\text{for}\quad |\xi|\ge R\ge 1.
  \label{Rr-eq}
\end{equation}
It is also convenient to 
take an integer $h\ge 2$ so large that $2R< r2^h$.

As an auxiliary function one has
$\varphi=\psi-\psi(2\cdot )$. Dilations of this function are
supported in coronas, eg
\begin{equation}
  \supp \varphi(2^{-k}\cdot )\subset \bigl\{\,\xi \bigm| 
   r2^{k-1}\le|\xi|\le R2^k\,\bigr\},
  \qquad \text{for }k\ge 1,
  \label{phi-eq}
\end{equation}
and by calculating the telescopic sum
\begin{equation}
  \psi(\xi)+\varphi(\xi/2)+\dots+\varphi(\xi/2^m)=\psi(2^{-m}\xi),
  \label{tele-eq}
\end{equation}
it follows  by letting $m\to\infty $ that one has the Littlewood--Paley
partition of unity
\begin{equation}
  1=\psi(\xi)+\sum_{k=1}^\infty \varphi(2^{-k}\xi), 
  \quad\text{for each $\xi\in \Rn$}.
\end{equation}

Using this, $u(x)$ can now be (micro-)localised eg to
frequencies $|\eta|\approx 2^j$ by setting
\begin{equation}
  u_j=\varphi(2^{-j}D)u, \qquad
  u^j=\psi(2^{-j}D)u.
\end{equation}
Note that localisation to balls given by $|\eta|\le R2^j$ are written with
upper indices.
For symbols $a(x,\eta)$ similar conventions apply to the first variable,
\begin{align}
  a_j(x,\eta)&=\varphi(2^{-j}D_x)a(x,\eta)=
   \cal F^{-1}_{\xi\to x}(\varphi(2^{-j}\xi)\cal F_{x\to\xi}a(x,\eta)).
 \\
  a^j(x,\eta)&=\psi(2^{-j}D_x)a(x,\eta)=
   \cal F^{-1}_{\xi\to x}(\psi(2^{-j}\xi)\cal F_{x\to\xi}a(x,\eta)).
\end{align}
By convention $u^j=u_j$ and $a^j=a_j$ for $j=0$;  they are all
taken to equal $0$ for indices $j<0$. 
(To avoid having several meanings of sub-
and superscripts, dilations $\psi(2^{-j}\cdot  )$ are written as such,
with the corresponding Fourier multiplier as $\psi(2^{-j}D)$, and similarly
for $\varphi$). Note that as a consequence one has for operators that, eg,
$a_j(x,D)=\OP(\varphi(2^{-j}D_x)a(x,\eta))$.

Thus prepared with these classical dyadic corona decompositions, the point
of this section is to
insert the relation \eqref{tele-eq} twice in $a^m(x,D)u^m$, 
cf \eqref{amum-eq}, and apply bilinearity. This gives
\begin{equation}
  \begin{split}
  a^m(x,D)u^m&=
  \OP\big((\psi(D_x)+\varphi(2^{-1}D_x)+\dots+\varphi(2^{-m}D_x))a(x,\eta)\big)
(u_0+u_1+\dots +u_m)
\\
  &= \sum_{j,k=0}^m   a_j(x,D)u_k.
  \end{split}
  \label{bilin-eq}
\end{equation}
Of course this sum may be split in three groups in which $j\le k-h$,
$|j-k|<h$ and $k\le  j-h$, respectively.
Proceeding to the limit of vanishing frequency modulation, ie $m\to\infty $, 
one is therefore lead to consider the three infinite series 
\begin{align}
    a_{\psi}^{(1)}(x,D)u&=\sum_{k=h}^\infty \sum_{j\le k-h} a_j(x,D)u_k
  =\sum_{k=h}^\infty a^{k-h}(x,D)u_k
  \label{a1-eq}\\
  a_{\psi}^{(2)}(x,D)u&= \sum_{k=0}^\infty
               \bigl(a_{k-h+1}(x,D)u_k+\dots+a_{k-1}(x,D)u_k+a_{k}(x,D)u_k
\notag\\[-2\jot]
   &\qquad\qquad
                +a_{k}(x,D)u_{k-1} +\dots+a_k(x,D)u_{k-h+1}\bigr) 
  \label{a2-eq}\\
   a_{\psi}^{(3)}(x,D)u&=\sum_{j=h}^\infty\sum_{k\le j-h}a_j(x,D)u_k
   =\sum_{j=h}^\infty a_j(x,D)u^{j-h}.
  \label{a3-eq}
\end{align}

More precisely one has

\begin{prop}   \label{a123-prop}
If $a(x,\eta)$ is of type $1,1$ and $\psi$ is an arbitrary modulation
function, then the paradifferential decomposition
\begin{equation}
  a_{\psi}(x,D)u=
  a_{\psi}^{(1)}(x,D)u+a_{\psi}^{(2)}(x,D)u+a_{\psi}^{(3)}(x,D)u
  \label{a123-eq}
\end{equation}
holds for all $u\in \cal S'(\Rn)$ fulfilling that 
the three series converge in $\cal D'(\Rn)$.
\end{prop}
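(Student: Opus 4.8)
The plan is to read the decomposition straight off the finite-sum identity \eqref{bilin-eq}. For each $m$ one has, by that identity together with the reformulation noted around \eqref{amum-eq},
\[
  \OP(a^m(x,\eta)\psi(2^{-m}\eta))u = a^m(x,D)u^m = \sum_{j,k=0}^m a_j(x,D)u_k ,
\]
where every term $a_j(x,D)u_k$ is a well-defined distribution, since $u_k=\varphi(2^{-k}D)u$ has compact spectrum and, for fixed $m$, the symbol $a^m(x,\eta)\psi(2^{-m}\eta)$ lies in $S^{-\infty}$, so the left-hand side is nothing but the ordinary Fourier integral of a finite sum of symbols; in particular reordering and regrouping this finite sum changes nothing. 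The idea is then that the three series \eqref{a1-eq}--\eqref{a3-eq} arise precisely by grouping this double sum according to the three regions $j\le k-h$, $|j-k|<h$, $k\le j-h$, and letting $m\to\infty$.

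The one step with any content is the combinatorial identification. I would partition the integer square $\{0,\dots,m\}^2$ into the sets $A_m=\{\,j\le k-h\,\}$, $B_m=\{\,|j-k|\le h-1\,\}$ and $C_m=\{\,k\le j-h\,\}$ (a genuine partition), and check that the partial sum of $a_j(x,D)u_k$ over each of $A_m$, $B_m$, $C_m$ equals the $m$-th partial sum of the corresponding series. For $A_m$ one has $k\ge h$ and $0\le j\le k-h$, so $\sum_{j=0}^{k-h}a_j(x,D)u_k=a^{k-h}(x,D)u_k$ and the $A_m$-sum is $\sum_{k=h}^m a^{k-h}(x,D)u_k$, the $m$-th partial sum of \eqref{a1-eq}; symmetrically the $C_m$-sum is the $m$-th partial sum of \eqref{a3-eq}. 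For the band $B_m$ one groups the pairs $(j,k)$ with $|j-k|\le h-1$ by the value of $\kappa=\max(j,k)$: the pairs with $\max(j,k)=\kappa$ contribute exactly the $\kappa$-th block $a_{\kappa-h+1}(x,D)u_\kappa+\dots+a_\kappa(x,D)u_\kappa+a_\kappa(x,D)u_{\kappa-1}+\dots+a_\kappa(x,D)u_{\kappa-h+1}$ appearing in \eqref{a2-eq}, and $B_m$ is precisely the set of such pairs with $\kappa\le m$, so the $B_m$-sum is the $m$-th partial sum of \eqref{a2-eq}. Hence $\OP(a^m(x,\eta)\psi(2^{-m}\eta))u$ equals the sum of the $m$-th partial sums of the three series.

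To finish I would let $m\to\infty$: by hypothesis each of $a_{\psi}^{(1)}(x,D)u$, $a_{\psi}^{(2)}(x,D)u$, $a_{\psi}^{(3)}(x,D)u$ converges in $\cal D'(\Rn)$, so their partial sums converge there, hence so does their sum; therefore the limit in \eqref{apsi-eq} defining $a_\psi(x,D)u$ exists and equals $a_{\psi}^{(1)}(x,D)u+a_{\psi}^{(2)}(x,D)u+a_{\psi}^{(3)}(x,D)u$, which is \eqref{a123-eq}. I do not anticipate a genuine obstacle here; the only delicate point is keeping the band parameter $h$ straight in the partition of $\{0,\dots,m\}^2$, in particular the grouping by $\max(j,k)$ that matches the middle series \eqref{a2-eq} block by block with the near-diagonal part of $\OP(a^m(x,\eta)\psi(2^{-m}\eta))u$. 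It is perhaps worth recording that the argument incidentally shows convergence of the three series to be sufficient for $u$ to lie in the $\psi$-domain of $a(x,D)$, whereas $\psi$-independence (hence $u\in D(a(x,D))$ in the sense of Definition~\ref{vfm-defn}) is a separate matter the proposition does not address.
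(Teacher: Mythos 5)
Your proposal is correct and follows exactly the approach the paper sketches in the text surrounding \eqref{bilin-eq}--\eqref{a3-eq}: partition the finite index square $\{0,\dots,m\}^2$ by the three regions $j\le k-h$, $|j-k|<h$, $k\le j-h$, identify each region's sum with the $m$-th partial sum of the corresponding series (using the telescopic identity $\sum_{j\le N}a_j=a^N$ and the $\max(j,k)$-grouping for the band), then pass to the limit using the hypothesis of $\cal D'$-convergence of the three series. Your closing remark correctly notes that this argument only produces the $\psi$-dependent limit $a_\psi(x,D)u$, and that $\psi$-independence, hence membership in $D(a(x,D))$, is a further issue the proposition does not claim.
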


One should note the shorthand $a^{k-h}(x,D)$ for 
$\sum_{j\le k-h}a_j(x,D)=\op{OP}(\psi(2^{h-k}D_x)a(x,\eta))$ etc.
In this way \eqref{a2-eq} also has a brief form, namely
\begin{equation}
 a_{\psi}^{(2)}(x,D)u=\sum_{k=0}^\infty
((a^{k}-a^{k-h})(x,D)u_k+a_k(x,D)(u^{k-1}-u^{k-h})). 
  \label{a2two-eq}
\end{equation}

The importance of the decomposition in \eqref{a1-eq}--\eqref{a3-eq} 
lies in the fact that the summands have their spectra in balls and coronas.
This has been anticipated since the 1980's, and verified for elementary
symbols, eg in \cite[Thm.~1]{Bou88}. In general it follows directly from the
spectral support rule in Theorem~\ref{supp-thm}:

\begin{prop}  \label{corona-prop}
When $a\in S^d_{1,1}(\Rn\times \Rn)$ and $u\in \cal S'(\Rn)$, and 
$r$, $R$ are chosen as in \eqref{Rr-eq} for each modulation function $\psi$,
then every $h\in \N$ such that $2R< r2^h$ gives
\begin{align}
  \supp\cal F(a^{k-h}(x,D)u_k)&\subset
  \bigl\{\,\xi\bigm| 
  R_h2^k\le|\xi|\le \frac{5R}{4} 2^k\,\bigr\}
  \label{supp1-eq}  \\
  \supp\cal F(a_k(x,D)u^{k-h})&\subset
  \bigl\{\,\xi \bigm| 
  R_h2^k\le|\xi|\le \frac{5R}{4} 2^k\,\bigr\},
  \label{supp3-eq}
\end{align}
where $R_h=\tfrac{r}{2}-R2^{-h}>0$.
\end{prop}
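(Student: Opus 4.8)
The plan is to reduce everything to the spectral support rule (Theorem~\ref{supp-thm}), applied termwise. Fix a modulation function $\psi$ with associated $r,R$ as in \eqref{Rr-eq} and an integer $h$ with $2R<r2^h$. First I would treat \eqref{supp1-eq}. The key point is that $a^{k-h}(x,D)u_k=\OP(\psi(2^{h-k}D_x)a(x,\eta))\varphi(2^{-k}D)u$, and this is a finite sum $\sum_{j\le k-h}a_j(x,D)u_k$, so it suffices to handle each $a_j(x,D)u_k$ with $j\le k-h$ and then union the spectra. For a single such term, $u_k=\varphi(2^{-k}D)u$ has compact spectrum contained in the corona $r2^{k-1}\le|\eta|\le R2^k$ (cf \eqref{phi-eq}), so Theorem~\ref{supp-thm} applies with $\Xi$ closed and gives
\begin{equation*}
  \supp\cal F(a_j(x,D)u_k)\subset\bigl\{\,\xi+\eta\bigm|\xi\in\supp\varphi(2^{-j}\cdot),\ \eta\in\supp\widehat{u_k}\,\bigr\}.
\end{equation*}
Here $|\xi|\le R2^j\le R2^{k-h}$ on $\supp\varphi(2^{-j}\cdot)$ (and $|\xi|\le R$ when $j=0$, which is even smaller since $R\le R2^{k-h}$ once $k\ge h$), while $r2^{k-1}\le|\eta|\le R2^k$. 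By the triangle inequality $|\xi+\eta|\le R2^j+R2^k\le R2^{-h}2^k+R2^k\le\frac{5R}{4}2^k$ using $2^{-h}\le\frac14$, and $|\xi+\eta|\ge|\eta|-|\xi|\ge r2^{k-1}-R2^{k-h}=(\frac r2-R2^{-h})2^k=R_h2^k$. Since $R_h=\frac r2-R2^{-h}>0$ exactly because $2R<r2^h$, this gives \eqref{supp1-eq} after taking the union over $j\le k-h$ (each contributes a set inside the stated annulus).

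For \eqref{supp3-eq} the argument is symmetric with the roles of the symbol's frequency variable and $u$'s frequency variable interchanged. Now $u^{k-h}=\psi(2^{-(k-h)}D)u$ has spectrum in the ball $|\eta|\le R2^{k-h}$ (not compact-spectrum issues here: $\psi$ has compact support so $u^{k-h}\in\cal F^{-1}\cal E'$, and for such $u$ Theorem~\ref{supp-thm} applies automatically with $\Xi$ closed), while $a_k(x,D)$ contributes $\xi\in\supp\varphi(2^{-k}\cdot)$, i.e. $r2^{k-1}\le|\xi|\le R2^k$. Then $|\xi+\eta|\le R2^k+R2^{k-h}\le\frac{5R}{4}2^k$ and $|\xi+\eta|\ge r2^{k-1}-R2^{k-h}=R_h2^k$, giving \eqref{supp3-eq}.

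The only subtlety — and the step I expect to need the most care — is verifying the hypotheses of Theorem~\ref{supp-thm} in each case, namely that $a^{k-h}(x,D)u_k$ and $a_k(x,D)u^{k-h}$ are genuinely in the relevant domains with $\cal S'$-convergence of the defining limit. For \eqref{supp3-eq} this is immediate since $u^{k-h}\in\cal F^{-1}\cal E'(\Rn)$, for which the theorem's final sentence guarantees both the $\cal S'$-limit and closedness of $\Xi$. For \eqref{supp1-eq}, $u_k=\varphi(2^{-k}D)u$ also has compact spectrum, hence $u_k\in\cal F^{-1}\cal E'$, so again the theorem applies directly to $a_j(x,D)u_k$; alternatively one invokes the strong compatibility \eqref{scomp-eq}, since $\varphi(2^{-j}\xi)a(x,\eta)$ composed with the spectral cutoff from $u_k$ lands in $S^{-\infty}$. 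Either way the termwise application is legitimate, the finitely many terms $j\le k-h$ pose no convergence problem, and taking the union of the annuli — all nested in $\{R_h2^k\le|\xi|\le\frac{5R}{4}2^k\}$ — completes the proof.
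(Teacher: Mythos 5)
Your proof is correct and takes essentially the same route as the paper: both reduce the claim to Theorem~\ref{supp-thm} applied to a type $1,1$-operator acting on an element of $\cal F^{-1}\cal E'(\Rn)$, and then conclude via the triangle inequality on $\xi+\eta$. The only difference is that you decompose $a^{k-h}(x,D)u_k$ into the finite sum $\sum_{j\le k-h}a_j(x,D)u_k$ and apply the spectral support rule termwise, whereas the paper applies it once to $a^{k-h}(x,D)u_k$ directly using that $\supp\hat{a^{k-h}}(\cdot,\eta)\subset\Bbar(0,R2^{k-h})$ — a minor detour that changes nothing in the bounds.
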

\begin{proof}
Since the type $1,1$-operator $a^{k-h}(x,D)$ is defined on
$u_k\in \cal F^{-1}\cal E'$,
the last part of Theorem~\ref{supp-thm} and \eqref{phi-eq} give
\begin{equation}
  \supp\cal F(a^{k-h}(x,D)u_k)\subset
  \bigl\{\,\xi+\eta \bigm| (\xi,\eta)\in \supp
    (\psi_{h-k}\otimes1)
    \hat  a,\quad 
  r2^{k-1}\le |\eta|\le R2^k \,\bigr\}.
\end{equation}
So by the triangle inequality every $\zeta=\xi+\eta$ 
in the support fulfils
\begin{equation}
  r2^{k-1}-R2^{k-h}
 \le|\zeta|\le R2^{k-h}+R2^k\le \tfrac{5}{4}R2^k,
\end{equation}
as $h\ge 2$.
This shows \eqref{supp1-eq} and \eqref{supp3-eq} follows analogously.
\end{proof}

In contrast with this, the terms in $a^{(2)}(x,D)u$ only satisfy a dyadic ball
condition, as was first observed for functions $u$ in \cite{JJ05DTL}.
But when the twisted diagonal condition \eqref{tdc-cnd} holds, 
the situation improves for large $k$:

\begin{prop}   \label{ball-prop}
When $a\in S^d_{1,1}(\Rn\times \Rn)$, $u\in \cal S'(\Rn)$, and 
$r$, $R$ are chosen as in \eqref{Rr-eq} for each auxiliary function $\psi$,
then every $h\in \N$ such that $2R< r2^h$ gives
\begin{equation}
  \supp\cal F\big(a_k(x,D)(u^{k-1}-u^{k-h})+(a^k-a^{k-h})(x,D)u_k\big)\subset
  \bigl\{\,\xi\bigm| |\xi|\le 2R 2^k\,\bigr\}
  \label{supp2-eq}
\end{equation}
If $a(x,\eta)$ satisfies \eqref{tdc-cnd} for some $B\ge 1$, 
the support is eventually contained in the corona
\begin{equation}
  \bigl\{\,\xi \bigm| \frac{r}{2^{h+1}B} 2^k \le |\xi|\le 2R2^k\,\bigr\}.
  \label{supp2'-eq}
\end{equation}
\end{prop}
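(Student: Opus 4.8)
The plan is to establish both inclusions by applying the spectral support rule to each of the two summands separately, in complete analogy with the proof of Proposition~\ref{corona-prop}. Two observations make Theorem~\ref{supp-thm} applicable here: first, the localised symbols $\varphi(2^{-k}D_x)a(x,\eta)$ and $(a^k-a^{k-h})(x,\eta)$ are again of type $1,1$, because convolution in $x$ by a Schwartz function preserves the estimates defining $S^d_{1,1}$; second, the arguments $u^{k-1}-u^{k-h}$ and $u_k$ have compact spectra, so they lie in $\cal F^{-1}\cal E'(\Rn)$ and the $\cal S'$-convergence required in the last part of Theorem~\ref{supp-thm} holds automatically. Hence the spectrum of $a_k(x,D)(u^{k-1}-u^{k-h})$ is contained in the set of sums $\xi+\eta$ where $\varphi(2^{-k}\xi)\hat a(\xi,\eta)\ne 0$ and $\eta\in\supp\cal F(u^{k-1}-u^{k-h})$, and that of $(a^k-a^{k-h})(x,D)u_k$ in the set of sums $\xi+\eta$ where $(\psi(2^{-k}\xi)-\psi(2^{-(k-h)}\xi))\hat a(\xi,\eta)\ne 0$ and $\eta\in\supp\cal F u_k$.

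Next I would read off the relevant frequency windows from \eqref{Rr-eq} and \eqref{phi-eq}. For the symbols one has $r2^{k-1}\le|\xi|\le R2^k$ in the first case and $r2^{k-h}\le|\xi|\le R2^k$ in the second, since $\psi(2^{-k}\cdot)$ and $\psi(2^{-(k-h)}\cdot)$ coincide (both equal to $1$) for $|\xi|\le r2^{k-h}$ and both vanish for $|\xi|\ge R2^k$. For the arguments, the same cancellation gives $\supp\cal F(u^{k-1}-u^{k-h})\subset\{\,\eta\mid r2^{k-h}\le|\eta|\le R2^{k-1}\,\}$, while $\supp\cal F u_k\subset\{\,\eta\mid r2^{k-1}\le|\eta|\le R2^k\,\}$. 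The upper bound \eqref{supp2-eq} is then immediate from the triangle inequality: $|\xi+\eta|\le R2^k+R2^{k-1}\le 2R2^k$ for the first summand and $|\xi+\eta|\le R2^k+R2^k=2R2^k$ for the second, so the union of the two spectra lies in $\{\,\xi\mid|\xi|\le 2R2^k\,\}$.

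For the lower bound I would bring in the twisted diagonal condition \eqref{tdc-cnd}: since the two localised symbols vanish wherever $\hat a$ does, every relevant $(\xi,\eta)$ satisfies $|\eta|\le B(|\xi+\eta|+1)$, hence $|\xi+\eta|\ge|\eta|/B-1$. Inserting the lower bounds $|\eta|\ge r2^{k-h}$ (first summand) and $|\eta|\ge r2^{k-1}$ (second summand) gives $|\xi+\eta|\ge r2^{k-h}/B-1$ and $|\xi+\eta|\ge r2^{k-1}/B-1$; the former is the weaker. A one-line estimate shows $r2^{k-h}/B-1\ge r2^k/(2^{h+1}B)$ as soon as $r2^k\ge 2^{h+1}B$, ie for all sufficiently large $k$. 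For such $k$ both spectra avoid the ball $\{\,\xi\mid|\xi|<r2^k/(2^{h+1}B)\,\}$, which together with the upper bound yields \eqref{supp2'-eq}.

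The proof is routine; the only point demanding attention is that one must apply the spectral support rule to the two summands individually and not to the whole symbol $a$. Indeed it is precisely the Fourier multipliers $\varphi(2^{-k}D_x)$ and $\psi(2^{-k}D_x)-\psi(2^{-(k-h)}D_x)$ acting on $a$ that confine the $x$-spectra to the shells $r2^{k-1}\le|\xi|\le R2^k$ and $r2^{k-h}\le|\xi|\le R2^k$, whereas $\hat a(\xi,\eta)$ itself carries no such restriction; once this localisation and the applicability of Theorem~\ref{supp-thm} are in place, everything reduces to the elementary frequency arithmetic above.
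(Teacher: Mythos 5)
Your proof is correct and takes the paper's intended approach: applying the spectral support rule (via the last part of Theorem~\ref{supp-thm}) to each summand exactly as in the proof of Proposition~\ref{corona-prop}, reading off the frequency windows of the Fourier multipliers, and invoking the twisted diagonal condition for the corona bound. The paper only indicates that the proof goes ``with the same techniques'' and refers to a technical report; your argument supplies the missing details correctly, including the observation that the lower bound $r2^{k-h}/B-1\ge r2^k/(2^{h+1}B)$ holds precisely once $r2^k\ge 2^{h+1}B$.
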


A proof of the general case above can be given with the same
techniques as for 
Proposition~\ref{corona-prop}; cf \cite[Prop.~5.3]{JJ10tmp}. 

The decomposition in Proposition~\ref{a123-prop} is also useful because the
terms of the series can be conveniently estimated at every $x\in \Rn$, using
the pointwise approach of Section~\ref{pe-ssect}. For later reference, these
estimates are collected in the next result.

\begin{prop}
  \label{a123pe-prop}
For every $a(x,\eta)$ in $S^d_{1,1}(\Rn\times \Rn)$ and $u\in \cal S'(\Rn)$
there are pointwise estimates for $x\in \Rn$,
\begin{align}
  |a^{k-h}(x,D)u_k(x)|&\le p(a) (R2^{k})^d u_k^*(N,R2^k;x),
  \label{a1-pe} \\
  |(a^k-a^{k-h})(x,D)u_k(x)|& \le  p(a) (R2^{k})^d u_k^*(N,R2^k;x),
  \label{a2-pe} \\
  |a^k(x,D)(u^{k-1}(x)-u^{k-h}(x))|& \le p(a) (R2^{k})^d 
  \sum_{l=1}^{h-1} 2^{-ld}u_{k-l}^*(N,R2^{k-l};x),
  \label{a2'-pe} \\
  |a_j(x,D)u^{j-h}(x)|&\le c_M 2^{-jM} p(a) 
  \sum_{k=0}^j (R2^{k})^{d+M} u_k^*(N,R2^k;x).
  \label{a3-pe}
\end{align}
Hereby $p(a)$ denotes a continuous seminorm on $S^d_{1,1}$ and $M\in \N$.
\end{prop}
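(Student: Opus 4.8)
The plan is to read off all four estimates from the factorisation inequality \eqref{Fau*-ineq}, combined with the refined symbol-factor bounds of Corollaries~\ref{Fa-cor} and \ref{Fa0-cor}. In every case the term on the left is of the form $b(x,D)w$, where $b$ is a dyadic truncation of $a$ and $w$ is a Littlewood--Paley block $u_k$ of $u$; for \eqref{a2'-pe} one first uses $u_l=u^l-u^{l-1}$ to write $u^{k-1}-u^{k-h}=\sum_{l=1}^{h-1}u_{k-l}$, and similarly $u^{j-h}=\sum_{k=0}^{j-h}u_k$ for \eqref{a3-pe}. The decisive freedom is that, $\supp\hat u_k$ being contained in the corona $\{\,r2^{k-1}\le|\eta|\le R2^k\,\}$, one may take the cut-off $\chi$ in \eqref{Fau*-ineq}--\eqref{Fa-eq} to be a corona bump at scale $2^k$, so that the corona hypotheses of Corollaries~\ref{Fa-cor}--\ref{Fa0-cor} are met with the factorisation parameter $R$ there replaced by $R2^k\ge1$. (The single exception is the low-frequency block $u_0=\psi(D)u$, a ball, which is dealt with by the ball bound \eqref{Rmax-eq}, contributing only fixed powers of $R$ that are absorbed into the constants.) A routine preliminary is that the truncated symbols $\psi(2^{-k}D_x)a$, $\varphi(2^{-k}D_x)a$ and their differences have $S^d_{1,1}$-seminorms bounded by a fixed multiple of those of $a$ uniformly in $k$ --- being convolutions $g_k*_x a$ with $g_k\in\{2^{kn}\check\psi(2^k\cdot),\,2^{kn}\check\varphi(2^k\cdot),\dots\}$ of $k$-independent $L_1$-norm, which commute with $D_x^\beta D_\eta^\alpha$ --- so that a single continuous seminorm $p$ on $S^d_{1,1}$ serves in all four inequalities.

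For \eqref{a1-pe} and \eqref{a2-pe} the symbols $\psi(2^{h-k}D_x)a$ of $a^{k-h}(x,D)$ and $(\psi(2^{-k}D_x)-\psi(2^{h-k}D_x))a$ of $(a^k-a^{k-h})(x,D)$ are of type $1,1$ with seminorms $\le C\,p(a)$, while $u_k$ is a corona block. Applying \eqref{Fau*-ineq} with a corona cut-off at scale $2^k$ together with the corona bound \eqref{Rd-eq} of Corollary~\ref{Fa-cor} yields $F(x)\le c\,p(a)(R2^k)^d$, hence $|a^{k-h}(x,D)u_k(x)|\le p(a)(R2^k)^d u_k^*(N,R2^k;x)$ and similarly for the difference; this gives \eqref{a1-pe}--\eqref{a2-pe}. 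For \eqref{a2'-pe} the same reasoning applied to each $a^k(x,D)u_{k-l}$ at the scale $2^{k-l}$ bounds it by $c\,p(a)(R2^{k-l})^d u_{k-l}^*(N,R2^{k-l};x)$, and summing over $1\le l\le h-1$ with $(R2^{k-l})^d=2^{-ld}(R2^k)^d$ produces \eqref{a2'-pe}.

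The step I expect to be the main obstacle is \eqref{a3-pe}. Here $a_j(x,\eta)=\varphi(2^{-j}D_x)a(x,\eta)$ has $\cal F_{x\to\xi}a_j$ vanishing near $\xi=0$, so $a_j=a_Q$ in the notation of Corollary~\ref{Fa0-cor} with $Q=2^j$ and modulation function $\varphi$. It would be wasteful to estimate $a_j(x,D)u^{j-h}$ by treating $u^{j-h}$ as one block, since its spectral ball sits at scale $2^{j-h}\simeq2^j$ and then the gain $Q^{-M}=2^{-jM}$ from the moment cancellation is cancelled by the factor $(R2^{j-h})^{d+M}$ coming from that ball. Instead one decomposes $u^{j-h}=\sum_{k=0}^{j-h}u_k$ and applies \eqref{Fau*-ineq} to each $a_j(x,D)u_k$ with a corona cut-off at scale $2^k$, keeping the $x$-scale $2^j$ and the frequency scale $2^k$ separate; the corona form of Corollary~\ref{Fa0-cor} (with $d+\delta M$, $\delta=1$, in place of the maximum) then gives $F_{a_j}(N,R2^k;x)\le c_M\,p(a)2^{-jM}(R2^k)^{d+M}$, whence $|a_j(x,D)u_k(x)|\le c_M2^{-jM}p(a)(R2^k)^{d+M}u_k^*(N,R2^k;x)$. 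Summing over $0\le k\le j-h$ and majorising by the sum over $0\le k\le j$ produces \eqref{a3-pe}. The essential point is exactly this separation of scales --- ensuring the decay factor $2^{-jM}$ survives the summation in $k$ --- whereas the low-frequency block $u_0$ is absorbed via the ball case of Corollary~\ref{Fa0-cor} as indicated above.
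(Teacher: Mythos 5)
Your proof takes essentially the same route as the paper's: the factorisation inequality \eqref{Fau*-ineq} combined with the corona bound \eqref{Rd-eq} of Corollary~\ref{Fa-cor} for \eqref{a1-pe}--\eqref{a2'-pe}, the convolution estimate showing $p(a^{k-h})\le c'p(a)$ uniformly in $k$, and for \eqref{a3-pe} the decomposition $u^{j-h}=\sum_{k\le j-h}u_k$ with Corollary~\ref{Fa0-cor} applied term by term (separating the $x$-modulation scale $2^j$ from the frequency scale $2^k$, with $k=0$ handled by absorbing a power of $R$). The argument is correct and essentially identical to the one in the paper.
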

\begin{proof}
From the factorisation inequality in Section~\ref{pe-ssect} it follows that
\begin{equation}
  |a^{k-h}(x,D)u_k(x)|\le F_{a^{k-h}}(N,R2^k;x)u_k^*(N,R2^k;x)
  \le c_1\nrm{\cal F^{-1}\psi}{1}p(a)(R2^k)^d u_k^*(x).
\end{equation}
Indeed, the estimate of $F_{a^{k-h}}$ builds on
Corollary~\ref{Fa-cor}. Since $\hat u_k$ is supported in a corona with outer
radius $R2^k$, this states that
\begin{equation}
  F_{a^{k-h}}(x)\le c p(a^{k-h})(R2^k)^d.
\end{equation}
Here it easy to see from a convolution estimate that
$p(a^{k-h})\le c'p(a)$, since eg
\begin{equation}
  |D^\alpha_\eta D^\beta_x a^{k}(x,\eta)|
\le \int |2^{nk}\check \psi(2^{k}y)
          D^\alpha_\eta D^\beta_x a(x-y,\eta)|\,dy
\le p_{\alpha,\beta}(a)(1+|\eta|)^{d-|\alpha|+|\beta|}\int |\check \psi|\,dy.
\end{equation}
Hence the first claim is obtained.

The estimate of $(a^k-a^{k-h})(x,D)u_k$ is highly similar. In fact the only
change is in the integrals, where by the definition of the symbol
$a^k-a^{k-h}$ the last integrand should be
$|\cal F^{-1}(\psi-\psi(2^h\cdot ))|$ instead of $|\cal F^{-1}\psi|$.

In the third line one has
\begin{equation}
  a^k(x,D)(u^{k-1}(x)-u^{k-h}(x))=a^k(x,D)u_{k-1}(x)+\dots 
   +a^k(x,D)u_{k-h+1}(x).
\end{equation}
Here each term is estimated as above, now with the factor $(R2^{k-l})^d$
as a result for $l=1,\dots ,h-1$; as this is $2^{-ld}$ times 
$(R2^k)^d$, the claim follows.

The last inequality results from the fact that, by \eqref{tele-eq} one has
$u^{j-h}=u_0+\dots +u_{j-h}$, whence a crude estimate yields
\begin{equation}
    |a_j(x,D)u^{j-h}(x)|\le \sum_{k=0}^{j-h} |a_j(x,D)u_k(x)| 
   \le \sum_{k=0}^{j} F_{a_j}(N,R2^k;x)u_k^*(N,R2^k;x).
  \label{aj-ineq}
\end{equation}
Since $\hat a_j(\xi,\eta)=\varphi(2^{-j}\xi)\hat a(\xi,\eta)$ vanishes
around the origin (there is nothing to show if $j<h$), 
Corollary~\ref{Fa0-cor} gives the estimate
$F_{a_j}(x)\le c_M p(a)2^{-jM}(R2^k)^{d+M}$ for $k\ge 1$, as for such $k$ an
auxiliary function supported in a corona may be used in $F_{a_j}$. 
The case $k=0$ is similarly estimated if
$c_M$ is increased by a power of $R$; cf Corollary~\ref{Fa0-cor}. This
completes the proof.
\end{proof}

\subsection{Calculation of symbols and remainder terms}
In connection with  the paradifferential splitting
\eqref{a1-eq}--\eqref{a3-eq} there is an extra task
for type $1,1$-operators, because the operator notation
$a^{(j)}(x,D)$ requires a more explicit justification in this context.

Departing from the right hand sides of \eqref{a1-eq}--\eqref{a3-eq} one
finds the following symbols,
\begin{align}
    a^{(1)}(x,\eta)&=\sum_{k=h}^\infty a^{k-h}(x,\eta)\varphi(2^{-k}\eta)
  \label{a1'-eq}
\\
  a^{(2)}(x,\eta)&=\sum_{k=0}^\infty
        \big((a^{k}(x,\eta)-a^{k-h}(x,\eta))\varphi(2^{-k}\eta)
       +a_k(x,\eta)(\psi(2^{-(k-1)}\eta)-\psi(2^{-(k-h)}\eta))\big)
  \label{a2'-eq}
\\
   a^{(3)}(x,\eta)& =\sum_{j=h}^\infty a_j(x,\eta)\psi(2^{-(j-h)}\eta).
  \label{a3'-eq}
\end{align}
Not surprisingly,
these series converge in the Fr\'echet space $S^{d+1}_{1,1}(\Rn\times
\Rn)$, for the sums are locally finite. 

More intriguingly, an inspection shows that 
both $a^{(1)}(x,\eta)$ and $a^{(3)}(x,\eta)$ fulfil the
twisted diagonal condition \eqref{tdc-cnd}; cf \cite[Prop.~5.5]{JJ10tmp}.

However, before this can be applied,
it is clearly necessary to
verify that the type $1,1$-operators corresponding to
\eqref{a1'-eq}--\eqref{a3'-eq} are in fact given by the
infinite series in \eqref{a1-eq}--\eqref{a3-eq}. 
In particular its is a natural programme to show that the
series for $a^{(j)}(x,D)u$, $j=1,2,3$, 
converges precisely when $u$ belongs to the domain of $a^{(j)}(x,D)$. 

But in view of the definition by
vanishing frequency modulation in \eqref{apsi-eq},
this will necessarily be lengthy because a second modulation function has to
be introduced.

To indicate the details for $a^{(1)}(x,\eta)$, let 
$\psi, \Psi\in C^\infty_0(\Rn)$ be
equal to $1$ around the origin, and let $\Psi$ be used as the fixed
auxiliary function entering the symbol $a^{(j)}_{\Psi}(x,\eta)$;
and set $\Phi=\Psi-\Psi(2\cdot )$. The numbers $r, R$ and $h$ are 
then chosen in relation to $\Psi$ as in \eqref{Rr-eq}; and one can take
$\lambda<\Lambda$ playing similar roles for $\psi$.
Moreover, $\psi$ is used for the frequency modulation entering the
definition of the \emph{operator} $a^{(1)}_{\Psi}(x,D)$.

As shown in \cite{JJ10tmp}, this gives the following identity for $u\in \cal
S'(\Rn)$, where prime indicates a finite sum and $\mu=[\log_2(\lambda/R)]$,
\begin{multline}
  \OP(\psi(2^{-m}D_x)a^{(1)}(x,\eta)\psi(2^{-m}\eta))u=
   \sum_{k=h}^{m+\mu}a^{k-h}(x,D)u_k
\\
   + \sideset{}{'}\sum_{\mu<l<1+\log_2(\Lambda/r)}
     \OP(\psi(2^{-m}D_x)\Psi(2^{h-l-m}D_x)a(x,\eta)
     \Phi(2^{-m-l}\eta)\psi(2^{-m}\eta))u.
  \label{a1u-eq}
\end{multline}

To complete the abovementioned programme,
it remains to let $m\to\infty $ in \eqref{a1u-eq}, whereby the first term
on the right-hand side converges to the limit in \eqref{a1-eq}.
But clearly it should also be shown that the remainder terms in the 
primed sum can be safely ignored.

This turns out to be possible for all $u\in \cal S'(\Rn)$, and as a result
it is true that the type $1,1$-operator $a^{(1)}_{\Psi}(x,D)u$ simply equals
the infinite series $   \sum_{k=h}^{\infty }a^{k-h}(x,D)u_k$, with
convergence for all $u$ in the domain $D(a^{(1)}_{\Psi}(x,D))=\cal S'(\Rn)$.
However, the details of this will be indicated during the disussion of 
Theorem~\ref{tdc-thm} below.

For the other operators $a^{(2)}_{\Psi}(x,D)$ and $a^{(3)}_{\Psi}(x,D)$
there are similar calculations, 
yielding remainder terms analogous to the primed
sum above; cf \cite[Sect.~5]{JJ10tmp}. The outcome is equally satisfying for
$a^{(3)}(x,D)$, which is also defined on $\cal S'(\Rn)$ and 
given by the infinite series. This extends to $a^{(2)}(x,D)$, at least if
\eqref{tdc-cnd} holds (for weaker conditions, cf Theorem~\ref{sigma-thm}). 

As indicated these issues will be taken up again in
Section~\ref{twist-ssect}.
A discussion of the applications of the paradifferential decomposition
\eqref{a123-eq} in Proposition~\ref{a123-prop} was begun already in
Section~\ref{para-sssect}, and it continues in
Section~\ref{twist-ssect}--\ref{Fspq-ssect} below.

\section{The twisted diagonal condition}  
\label{twist-ssect}\noindent
For convenience it is recalled that boundedness
$a(x,D)\colon H^{s+d}\to H^s$ for all real $s$
was proved by L.~H{\"o}rmander \cite{H88,H97}
for every symbol in $S^d_{1,1}$
fulfilling the twisted diagonal condition, mentioned already
in \eqref{tdc-cnd}; namely for some $B\ge1$,
\begin{equation}
  \hat a(\xi,\eta)=0\quad\text{when} \quad
  B(|\xi+\eta|+1)<|\eta|.
  \label{tdc'-cnd}
\end{equation}
It is of course natural to conjecture that \eqref{tdc'-cnd} 
also implies continuity $a(x,D)\colon \cal S'\to\cal S'$.
However, this question has neither been formulated nor treated 
before it was addressed in \cite{JJ10tmp}.

To prove the conjecture, one may argue by duality, which succeeds as
follows. First a lemma on the adjoint symbols is recalled
from \cite{H88} and \cite[Lem.~9.4.1]{H97}.

\begin{lem}
  \label{GB-lem}
When $a(x,\eta)$ is in $S^{d}_{1,1}(\Rn\times \Rn)$ and for some 
$B\ge 1$ satisfies the twisted diagonal condition \eqref{tdc'-cnd},
then the adjoint $a(x,D)^*=b(x,D)$ has the symbol 
\begin{equation}
  b(x,\eta)=e^{\im D_x\cdot  D_\eta}\overline{a(x,\eta)},  
\end{equation}
which is in $S^{d}_{1,1}(\Rn\times \Rn)$ in this case and
$\hat b(\xi,\eta)=0 $ when $|\xi+\eta|>B(|\eta|+1)$.
Moreover,
\begin{equation}
  |D^\alpha_\eta D^\beta_x b(x,\eta)|\le 
  C_{\alpha\beta}(a)B(1+B^{d-|\alpha|+|\beta|})(1+|\eta|)^{d-|\alpha|+|\beta|},
  \label{tdc-ineq}
\end{equation}
for certain continuous seminorms $C_{\alpha\beta}$ on 
$S^{d}_{1,1}(\Rn\times \Rn)$, that do not depend on $B$.
\end{lem}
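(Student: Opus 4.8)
The plan is to reduce every assertion to the partial Fourier transform $\widehat b(\xi,\eta)=\cal F_{x\to\xi}b(x,\eta)$. First I would record the identity
\[
  \widehat b(\xi,\eta)=\overline{\widehat a(-\xi,\,\xi+\eta)},
\]
which holds because conjugation turns $\widehat a(\xi,\eta)$ into $\overline{\widehat a(-\xi,\eta)}$, while $e^{\im D_x\cdot D_\eta}$ becomes, after $\cal F_{x\to\xi}$, the translation $\eta\mapsto\eta+\xi$; equivalently one starts from the oscillatory integral $b(x,\eta)=(2\pi)^{-n}\iint e^{-\im y\cdot\theta}\overline{a(x+y,\eta+\theta)}\,dy\,d\theta$ and evaluates the $y$-integral. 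Inserting the twisted diagonal condition \eqref{tdc'-cnd} with $\xi'=-\xi$, $\eta'=\xi+\eta$, so that $\xi'+\eta'=\eta$, gives $\widehat a(-\xi,\xi+\eta)=0$ unless $B(|\eta|+1)\ge|\xi+\eta|$, which is precisely the stated support property $\widehat b(\xi,\eta)=0$ for $|\xi+\eta|>B(|\eta|+1)$. As a by-product, on $\supp\widehat b$ one has $|\xi|\le|\xi+\eta|+|\eta|\le(B+1)|\eta|+B$, so the $x$-frequencies of $b$ are controlled by $\ang{\eta}$ up to the factor $B$ --- exactly the borderline growth permitted in $S^d_{1,1}$, and the structural reason the estimate can hold at all.

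For the seminorm estimate I would split the bounded-frequency part off $a$, following \cite{H88}. Choose $\rho\in C^\infty_0(\Rn)$ with $\rho\equiv1$ on $|\eta|\le 4B$ and $\supp\rho\subset\{|\eta|\le 8B\}$, and write $a=a'+a''$ with $a'(x,\eta)=\rho(\eta)a(x,\eta)$. Then $a'$ is smooth with $\eta$-support in a ball of radius $8B$, hence $a'\in S^{-\infty}$ with seminorms bounded by $C_{\alpha\beta}(a)$ times powers of $B$; therefore its adjoint symbol $(a')^{*}=e^{\im D_x\cdot D_\eta}\overline{a'}$ also lies in $S^{-\infty}$ with seminorms of the same type, and it is this term that produces the explicit factor $B(1+B^{d-|\alpha|+|\beta|})$ in \eqref{tdc-ineq}. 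For $a''$, the twisted diagonal condition, restricted to $|\eta|\ge 4B$, forces $\supp\widehat{a''}$ into the cone $|\xi+\eta|\ge\tfrac1{2B}|\eta|$: the symbol $a''$ avoids a fixed conic neighbourhood of the twisted diagonal. Consequently, in the oscillatory integral $(a'')^{*}(x,\eta)=(2\pi)^{-n}\iint e^{-\im y\cdot\theta}\overline{a''(x+y,\eta+\theta)}\,dy\,d\theta$, the standard integrations by parts in $y$ and in $\theta$ behave as in the classical case $\delta<1$: the conic separation lets one trade the $\ang{\eta}^{|\beta|}$ cost of each $x$-derivative for genuine decay, so the asymptotic expansion $(a'')^{*}\sim\sum_\gamma\tfrac1{\gamma!}\partial_\eta^\gamma D_x^\gamma\overline{a''}$ functions as a true expansion and $(a'')^{*}\in S^d_{1,1}$ with seminorms dominated by $C_{\alpha\beta}(a)(1+B^{d-|\alpha|+|\beta|})$. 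Differentiating the defining integral reduces $D^\alpha_\eta D^\beta_x(a'')^{*}$ to the same kind of integral with $a''$ replaced by $D^\alpha_\eta D^\beta_x a''\in S^{d-|\alpha|+|\beta|}_{1,1}$, so the derivatives need no new ideas; adding the two contributions and collecting the powers of $B$ yields \eqref{tdc-ineq}.

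The main obstacle is this last quantitative step: one must carry out the oscillatory-integral estimate for $(a'')^{*}$ with constants that are \emph{uniform in the conic aperture $1/(2B)$} and of the precise polynomial shape in $B$ demanded by \eqref{tdc-ineq}, rather than merely establishing $(a'')^{*}\in S^d_{1,1}$ qualitatively. Concretely, the gain extracted from the $y$-integration by parts --- which is where the $x$-frequency localisation of $a''$ enters --- has a rate governed by how far $\supp\widehat{a''}$ sits from $\{\xi+\eta=0\}$, and keeping that rate explicit, together with the Peetre-type comparisons of $\ang{\eta+\theta}$ with $\ang{\eta}$ needed to integrate out $\theta$, is the delicate bookkeeping. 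Everything else --- the Fourier identity, the support statement, and the treatment of the harmless smoothing piece $a'$ --- is routine.
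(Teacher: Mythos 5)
Your Fourier computation and the support statement are exactly right: $\widehat b(\xi,\eta)=\overline{\widehat a(-\xi,\xi+\eta)}$, the twisted diagonal condition transports immediately to $\widehat b(\xi,\eta)=0$ for $|\xi+\eta|>B(|\eta|+1)$, and the consequent bound $|\xi|\le(B+1)|\eta|+B$ on $\supp\widehat b$ is indeed the structural reason the estimate can hold. Note, however, that the paper does \emph{not} reprove this lemma: it is recalled from H{\"o}rmander, citing \cite{H88} and \cite[Lem.~9.4.1]{H97}, with the only addition being the remark that the constants $C_{\alpha\beta}(a)$ can be read off as continuous seminorms on $S^d_{1,1}$ from H{\"o}rmander's argument. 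So there is no in-paper proof to compare against; the relevant comparison is with H{\"o}rmander's.

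The genuine gap is in your treatment of $a''$. The assertion that the conic separation of $\supp\widehat{a''}$ from $\{\xi'+\eta'=0\}$ lets the asymptotic expansion $(a'')^{*}\sim\sum_\gamma\tfrac1{\gamma!}\partial_\eta^\gamma D_x^\gamma\overline{a''}$ ``function as a true expansion'' is false. The twisted diagonal condition is a constraint on the support of the partial Fourier transform; it does not improve the $S^d_{1,1}$-seminorms of the terms $\partial_\eta^\gamma D_x^\gamma\overline{a''}$, which remain of order $d$ uniformly in $\gamma$ with no $\rho-\delta$ gain, so neither the remainder estimates nor the series itself gain anything from it. A single rank-one piece already shows this: take $a(x,\eta)=e^{\im x\cdot\omega}\chi(\eta)$ with $\chi\in C^\infty_0$ supported in a shell $|\eta|\sim R$ and $\omega$ a vector of comparable size pointing \emph{away} from $-\supp\chi$, so that \eqref{tdc'-cnd} holds; then $b(x,\eta)=e^{-\im x\cdot\omega}\bar\chi(\eta-\omega)$ is a perfectly good type $1,1$ symbol, but the proposed expansion is term-by-term the Taylor series of the bump $\bar\chi$ at a point a distance $|\omega|\sim R$ from where it is evaluated, and a compactly supported bump is not real-analytic, so the series does not converge to $b$ (nor do its truncation errors decay). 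Similarly, the suggestion that integration by parts in $y$ ``behaves as in the classical case $\delta<1$'' because of the conic aperture does not hold up: the condition $|\xi'+\eta'|\ge|\eta'|/(2B)$ excludes $\xi'$ near $-\eta'$ but gives no lower bound on $|\xi'|$ itself, so it does not convert the $\ang{\eta+\theta}^{|\beta|}$ loss from $x$-derivatives into decay. What the condition actually buys is the finite range $|\xi|\lesssim B\ang\eta$ on $\supp\widehat b$, which you noted; the correct route (and the one H{\"o}rmander takes) is to work directly on $\widehat b(\xi,\eta)=\overline{\widehat a(-\xi,\xi+\eta)}$ with a dyadic decomposition in the $\xi$-variable, using that finite range together with the standard reverse-Bernstein decay of the Littlewood--Paley pieces of an $S^d_{1,1}$ symbol in high $x$-frequency, rather than via an asymptotic expansion.
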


The fact that $C_{\alpha,\beta}(a)$ is a continuous seminorm on $a(x,\eta)$
is added here (it is seen directly from H{\"o}rmander's proof). To emphasize
its importance, note that if $a_k\to a$ in the topology of $S^d_{1,1}$ and
they all fulfil \eqref{tdc'-cnd} with the same $B\ge 1$, then insertion of
$a_k-a$ in \eqref{tdc-ineq} yields that $b_k\to b$ in $S^d_{1,1}$.

In view of the lemma, it is clear that if $a(x,D)$ fulfils \eqref{tdc'-cnd}, 
it necessarily has continuous linear \emph{extension}
$\cal S'(\Rn)\to\cal S'(\Rn)$, namely $b(x,D)^*$. 
This moreover coincides with Definition~\ref{vfm-defn}
of $a(x,D)$ by vanishing frequency modulation:

\begin{prop}[{\cite[Prop.~4.2]{JJ08vfm}}]
  \label{GB-prop}
When $a(x,\eta)\in S^d_{1,1}(\Rn\times \Rn)$ fulfils 
\eqref{tdc'-cnd}, then $a(x,D)$ is a continuous linear map $\cal S'(\Rn)\to\cal
S'(\Rn)$ and it equals the adjoint of the mapping $b(x,D)\colon \cal S(\Rn)\to\cal
S(\Rn)$, when $b(x,\eta)$ is the adjoint symbol as in Lemma~\ref{GB-lem}.
\end{prop}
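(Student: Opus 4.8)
The plan is to identify the operator $a(x,D)$ of Definition~\ref{vfm-defn} with $b(x,D)^*$, the adjoint of the map $b(x,D)\colon\cal S(\Rn)\to\cal S(\Rn)$ whose existence is guaranteed by Lemma~\ref{GB-lem}. Since $b(x,\eta)\in S^d_{1,1}(\Rn\times\Rn)$ under \eqref{tdc'-cnd}, the map $b(x,D)$ is a continuous self-map of $\cal S(\Rn)$ by \eqref{SdS-eq}, so $B^*:=b(x,D)^*$ is a continuous linear map $\cal S'(\Rn)\to\cal S'(\Rn)$; moreover $B^*$ restricts to $\OP(a)$ on $\cal S(\Rn)$, since $b(x,D)=\OP(a)^*$ by Lemma~\ref{GB-lem} and the double adjoint returns $\OP(a)$ on the Schwartz space. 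It therefore suffices to prove that for every modulation function $\psi$ and every $u\in\cal S'(\Rn)$ the approximants $\OP(a^m(x,\eta)\psi(2^{-m}\eta))u$ converge to $B^*u$ in $\cal S'(\Rn)$ as $m\to\infty$; this yields convergence in $\cal D'(\Rn)$ with a $\psi$-independent limit, so that $u\in D(a(x,D))$ and $a(x,D)u=B^*u$ by Definition~\ref{vfm-defn}.

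First I would record the elementary observation that the approximating symbols $b_m(x,\eta):=a^m(x,\eta)\psi(2^{-m}\eta)$ inherit the twisted diagonal condition with the \emph{same} constant $B$: by \eqref{am-id} their partial Fourier transform is $\psi(2^{-m}\xi)\psi(2^{-m}\eta)\hat a(\xi,\eta)$, so it vanishes wherever $B(|\xi+\eta|+1)<|\eta|$; and the same applies to the difference $b_m-a$, whose partial Fourier transform is $(\psi(2^{-m}\xi)\psi(2^{-m}\eta)-1)\hat a(\xi,\eta)$. Hence Lemma~\ref{GB-lem} applies to $b_m-a$ and identifies the adjoint symbol of $\OP(b_m-a)$ as $b_m^*-b$, with $b_m^*$ the adjoint symbol of $b_m$; the estimate \eqref{tdc-ineq} bounds the $S^d_{1,1}$-seminorms of $b_m^*-b$ by continuous seminorms of $b_m-a$ times a constant depending only on $B$. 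Since $b_m\to a$ in $S^{d+1}_{1,1}(\Rn\times\Rn)$, cf.\ \cite[Lem.~2.1]{JJ08vfm}, this forces $b_m^*\to b$ in $S^{d+1}_{1,1}(\Rn\times\Rn)$.

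With this in hand the limit is immediate. Because $b_m\in S^{-\infty}$, the standard adjoint calculus recalled in Section~\ref{NN-ssect} gives $\OP(b_m)^*=\OP(b_m^*)$ at the level $\cal S'\to\cal S'$, so for $u\in\cal S'(\Rn)$ and $\varphi\in\cal S(\Rn)$ one has $\scal{\OP(b_m)u}{\varphi}=\scal{u}{\OP(b_m^*)\varphi}$. Since $\varphi$ is fixed and the bilinear map $(c,v)\mapsto\OP(c)v$ is continuous $S^{d+1}_{1,1}(\Rn\times\Rn)\times\cal S(\Rn)\to\cal S(\Rn)$ (this is \eqref{SdS-eq} with the order $d$ replaced by $d+1$), the convergence $b_m^*\to b$ in $S^{d+1}_{1,1}$ gives $\OP(b_m^*)\varphi\to b(x,D)\varphi$ in $\cal S(\Rn)$, hence $\scal{\OP(b_m)u}{\varphi}\to\scal{u}{b(x,D)\varphi}=\scal{B^*u}{\varphi}$. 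As this holds for all $\varphi\in\cal S(\Rn)$, we get $\OP(b_m)u\to B^*u$ in $\cal S'(\Rn)$. By Definition~\ref{vfm-defn} this establishes $D(a(x,D))=\cal S'(\Rn)$, $a(x,D)=B^*=b(x,D)^*$, and continuity on $\cal S'(\Rn)$ as the adjoint of a continuous self-map of $\cal S(\Rn)$; the identity $a(x,D)=b(x,D)^*$ with $b$ as in Lemma~\ref{GB-lem} is exactly the asserted description. (One could also derive the inclusion $B^*\subset a(x,D)$ from the maximality Theorem~\ref{max-thm} after fitting $B^*$ into a suitable assignment $\OPT$, but the computation above is self-contained.)

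The main obstacle is the loss of one order in the symbol approximation: $b_m\to a$ holds in $S^{d+1}_{1,1}$, not in $S^d_{1,1}$. The remedy is to pass through the adjoint \emph{quantitatively} — using that the seminorm bound \eqref{tdc-ineq} in Lemma~\ref{GB-lem} has coefficients that are \emph{continuous} seminorms on the source symbol (with a $B$-dependence that is fixed once $B$ is), so that $S^{d+1}_{1,1}$-convergence of $b_m-a$ transfers to $b_m^*-b$ — and then to observe that this coarser convergence is still adequate because $\OP(\cdot)\varphi$ is continuous on $S^{e}_{1,1}$ for \emph{every} order $e$. Checking that \eqref{tdc'-cnd} is preserved under the two spectral cut-offs defining $b_m$ and under forming $b_m-a$ is routine, but it is precisely what makes Lemma~\ref{GB-lem} applicable uniformly in $m$.
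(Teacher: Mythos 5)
Your proof is correct and follows essentially the same route as the paper's own argument: you observe that the frequency-modulated symbols $b_m$ inherit the twisted diagonal condition with the same constant $B$, use the quantitative seminorm bound \eqref{tdc-ineq} from Lemma~\ref{GB-lem} to pass the $S^{d+1}_{1,1}$-convergence $b_m\to a$ over to the adjoint symbols $b_m^*\to b$, and then transfer the limit to $\OP(b_m)u\to b(x,D)^*u$ in $\cal S'$ via the adjoint duality. The sanity check that $B^*$ restricts to $\OP(a)$ on $\cal S(\Rn)$ and the explicit invocation of $\OP(b_m)^*=\OP(b_m^*)$ for $b_m\in S^{-\infty}$ are welcome additional details, but the underlying mechanism is identical to the paper's.
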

\begin{proof}
A simple convolution estimate, cf \cite[Lem.~2.1]{JJ08vfm},
gives that in $S^{d+1}_{1,1}$,
\begin{equation}
  \psi(2^{-m}D_x)a(x,\eta)\psi(2^{-m}\eta)\to a(x,\eta)
  \quad\text{for}\quad m\to\infty .
\end{equation}
As frequency modulation cannot increase supports, this sequence
also fulfils \eqref{tdc'-cnd} for the same $B$. 
So since the passage to adjoint symbols by \eqref{tdc-ineq}
is continuous from the metric subspace of $S^{d}_{1,1}$ 
fulfilling \eqref{tdc'-cnd} to $S^{d+1}_{1,1}$, 
\begin{equation}
  b_m(x,\eta):= e^{\im D_x\cdot D_\eta}
  (\overline{\psi(2^{-m}D_x)a(x,\eta)\psi(2^{-m}\eta)})
  \xrightarrow[m\to\infty ]{~}
     e^{\im D_x\cdot D_\eta}\overline{a(x,\eta)}=: b(x,\eta).
  \label{*conv-eq}
\end{equation}
Moreover, since $b(x,D)$ as an operator on the Schwartz
space depends continuously on the symbol, 
one has for $u\in \cal S'(\Rn)$, $\varphi\in \cal S(\Rn)$,
\begin{equation}
  \begin{split}
    \scal{b(x,D)^*u}{\varphi}&=\scal{u}{b(x,D)\varphi}
\\
  &= \scal{u}{\lim_{m\to\infty }\OP(b_m(x,\eta))\varphi}
\\
  &= \lim_{m\to\infty }
     \Scal{\OP(\psi(2^{-m}D_x)a(x,\eta)\psi(2^{-m}\eta))u}{\varphi}.
  \end{split}
\end{equation}
As the left-hand side is independent of $\psi$ the limit in \eqref{apsi-eq} is so,
hence the definition of $a(x,D)$ gives that every 
$u\in \cal S'(\Rn)$ is in $D(a(x,D))$ and $a(x,D)u=b(x,D)^*u$ as claimed.
\end{proof}

The mere extendability to 
$\cal S'$ under the twisted diagonal condition \eqref{tdc'-cnd} could
have been observed already in \cite{H88,H97}. The above result seems to be
the first giving a sufficient condition for a type $1,1$-operator to
be \emph{defined} on the entire $\cal S'(\Rn)$.

Despite its success, the above is of limited value when it comes to
continuity results, say in the $H^s_p$-scale.
On the positive side it does show that the first part of Theorem~\ref{Hsp-thm} 
implies the last part there; but the former is not in
itself related to duality.

For a direct proof of continuity it is convenient 
to invoke the paradifferential decomposition in 
Proposition~\ref{a123-prop}, and the outcome of this is satisfying 
inasmuch as 
the series in \eqref{a1-eq}--\eqref{a3-eq} can be shown to converge for
all temperate $u$, when $a(x,D)$ fulfils \eqref{tdc'-cnd} as above. 
In fact, by summing up one arrives at a contraction of Theorems~6.3 and 6.5 in
\cite{JJ10tmp} as a main theorem of the analysis:

\begin{thm}[{}]   \label{tdc-thm}
When $a\in S^d_{1,1}(\Rn\times \Rn)$ fulfils  the twisted diagonal condition
\eqref{tdc'-cnd}, then the associated
type $1,1$-operator  
$a(x,D)$ defined by vanishing frequency modulation
is an everywhere defined continuous linear map 
\begin{equation}
 a(x,D)\colon \cal S'(\Rn)\to\cal S'(\Rn),  
\end{equation}
with its adjoint $a(x,D)^*$ also in $\OP(S^d_{1,1}(\Rn\times \Rn))$.
For each modulation function $\psi$, the operator fulfils 
\begin{equation}
  a(x,D)u=a^{(1)}_\psi(x,D)u+a^{(2)}_\psi(x,D)u+a^{(3)}_\psi(x,D)u,
  \label{a123'-eq}
\end{equation}
where the operators on the right-hand side all belong to $\OP(S^d_{1,1})$
and likewise fulfil \eqref{tdc'-cnd} (hence have adjoints in
$\OP(S^d_{1,1})$); they are given by 
the series in \eqref{a1-eq}, \eqref{a2-eq}, \eqref{a3-eq} 
that converge rapidly in $\cal S'(\Rn)$ for every $u\in \cal S'(\Rn)$.
\end{thm}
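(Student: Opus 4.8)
The strategy is to bootstrap everything from the duality statement in Proposition~\ref{GB-prop}. The $\cal S'$-continuity and the claim about the adjoint are then essentially immediate: by Lemma~\ref{GB-lem} the adjoint symbol $b(x,\eta)=e^{\im D_x\cdot D_\eta}\overline{a(x,\eta)}$ again lies in $S^d_{1,1}(\Rn\times\Rn)$, with $\hat b(\xi,\eta)=0$ for $|\xi+\eta|>B(|\eta|+1)$, and Proposition~\ref{GB-prop} identifies the operator defined by vanishing frequency modulation as $a(x,D)=b(x,D)^*$. Hence $a(x,D)\colon\cal S'(\Rn)\to\cal S'(\Rn)$ is everywhere defined and continuous, with adjoint $a(x,D)^*=\OP(a^*(x,\eta))=b(x,D)\in\OP(S^d_{1,1}(\Rn\times\Rn))$. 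It remains to produce the paradifferential decomposition \eqref{a123'-eq}.

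Next I would analyse the three symbols $a^{(1)},a^{(2)},a^{(3)}$ given by \eqref{a1'-eq}--\eqref{a3'-eq}. Each of these series is locally finite in the frequency variable, so it converges at once in $S^{d+1}_{1,1}$; the point is to sharpen this to $S^d_{1,1}$ and to verify \eqref{tdc'-cnd} for each $a^{(j)}$. For $a^{(1)}$ and $a^{(3)}$ the $x$-spectrum of the $k$-th summand sits in a ball of radius $\propto 2^{k-h}$ while its $\eta$-support is a corona $|\eta|\approx 2^k$; since $2R<r2^h$ this forces $|\xi+\eta|\ge c|\eta|$ on the support, which is precisely \eqref{tdc'-cnd}, and it also shows that an $x$-derivative only costs a factor $2^{k|\beta|}\approx(1+|\eta|)^{|\beta|}$ (the honest type $1,1$ budget), so $a^{(1)},a^{(3)}\in S^d_{1,1}$. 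For $a^{(2)}$ one invokes in addition the hypothesis that $a$ itself fulfils \eqref{tdc'-cnd}: this is exactly what upgrades the a-priori ball localisation of its summands to a corona localisation, cf Proposition~\ref{ball-prop}, after which the same seminorm computation yields $a^{(2)}\in S^d_{1,1}$ satisfying \eqref{tdc'-cnd} (the bookkeeping is carried out in \cite[Prop.~5.5]{JJ10tmp}). By Lemma~\ref{GB-lem} the adjoints of all three lie in $\OP(S^d_{1,1})$, and by Proposition~\ref{GB-prop} each $a^{(j)}(x,D)$ is a continuous map $\cal S'\to\cal S'$.

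It then remains to identify $a^{(j)}(x,D)u$ with the series in \eqref{a1-eq}--\eqref{a3-eq} and to prove rapid $\cal S'$-convergence. Fix a modulation function $\psi$. Since $a^{(1)}$ fulfils \eqref{tdc'-cnd}, Proposition~\ref{GB-prop} gives $\OP(\psi(2^{-m}D_x)a^{(1)}(x,\eta)\psi(2^{-m}\eta))u\to a^{(1)}(x,D)u$ in $\cal S'$ for every $u\in\cal S'$, and by \eqref{a1u-eq} the left-hand side equals the partial sum $\sum_{k=h}^{m+\mu}a^{k-h}(x,D)u_k$ plus a finite ``primed'' remainder. The partial sums converge rapidly in $\cal S'$: by Proposition~\ref{corona-prop} the $k$-th term has spectrum in a lacunary corona $|\xi|\approx 2^k$, while Proposition~\ref{a123pe-prop} (resting on the factorisation inequality, Corollary~\ref{Fa-cor} and Proposition~\ref{cutoff-prop}) dominates it by $c\,2^{kL}(1+|x|)^{N_0}$ with $L$ and $N_0$ independent of $k$, $N_0$ the temperate order of $\cal Fu$; pairing with $\varphi\in\cal S$ one inserts a Littlewood--Paley cut-off at frequency $2^k$ onto $\varphi$, whose seminorms are $\cal O(2^{-kM})$ for all $M$ and hence absorb the polynomial growth in $k$, leaving a tail of size $\cal O(2^{-N})$. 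The remainder tends to $0$ in $\cal S'$ by the same device: its finitely many terms carry $\eta$ localised at $|\eta|\approx 2^m$, so — because $a$ fulfils \eqref{tdc'-cnd} — the spectral support rule places their output spectra in $\{\,|\zeta|\ge c\,2^m\,\}$, while Proposition~\ref{cutoff-prop} bounds the outputs by a fixed power of $2^m$ times $(1+|x|)^{N_0}$; testing against $\varphi\in\cal S$ and exploiting its rapid decay kills everything. Thus \eqref{a1-eq} converges rapidly in $\cal S'$ with sum $a^{(1)}(x,D)u=a^{(1)}_\psi(x,D)u$. The analysis of \eqref{a3-eq} and \eqref{a2-eq} is parallel, the only new ingredient being that the corona localisation for the summands of \eqref{a2-eq} (Proposition~\ref{ball-prop}) is available precisely because $a$ satisfies \eqref{tdc'-cnd}. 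Since all three series then converge in $\cal D'(\Rn)$, Proposition~\ref{a123-prop} yields $a(x,D)u=a_\psi(x,D)u=a^{(1)}_\psi(x,D)u+a^{(2)}_\psi(x,D)u+a^{(3)}_\psi(x,D)u$, which is \eqref{a123'-eq}.

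The step I expect to be the main obstacle is the treatment of the remainder terms in \eqref{a1u-eq} and in the analogous formulas for $a^{(2)}$ and $a^{(3)}$ — showing that they vanish in $\cal S'$ as $m\to\infty$, uniformly over the (bounded) range of the summation index — and this is exactly where the twisted diagonal condition does the essential work: it pins down the corona localisation of the $a^{(2)}$-pieces and guarantees that the high-frequency remainders are genuinely pushed out to spatial frequency $\ge c\,2^m$ rather than accumulating near the origin.
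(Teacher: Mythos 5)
Your proof is correct and follows essentially the same route as the paper's: Proposition~\ref{GB-prop} for the $\cal S'$-continuity and the adjoint, the corona localisation of Propositions~\ref{corona-prop} and~\ref{ball-prop}, the polynomial growth bound from Proposition~\ref{cutoff-prop}, and the identification of the series plus disposal of the primed remainders in~\eqref{a1u-eq}. The only organisational difference is that you explicitly verify $a^{(j)}\in S^d_{1,1}$ with the twisted diagonal condition and invoke Proposition~\ref{GB-prop} separately for each $a^{(j)}(x,D)$, whereas the paper instead packages the same corona-plus-growth argument into Lemma~\ref{corona-lem} applied with $\theta_0=\theta_1=1$.
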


The first statement is of course a repetition of Proposition~\ref{GB-prop}.
The rest of the proof relies on 
Proposition~\ref{corona-prop} and the second part of 
Proposition~\ref{ball-prop}. Indeed, these
show that the first assumptions in the
following lemma is fulfilled, when applied in the basic case with 
$\theta_0=1=\theta_1$
to the series in Proposition~\ref{a123-prop}:

\begin{lem}[{\cite[Lem.~A.1]{JJ10tmp}}]
  \label{corona-lem}
$1^\circ$~Let $(u_j)_{j\in \N_0}$ be a sequence in $\cal S'(\Rn)$ fulfilling 
that there exist $A>1$ and $\theta_1>\theta_0>0$ such that 
$\supp \hat u_0\subset\{\,\xi\mid |\xi|\le A\,\}$
while for $j\ge1$
   \begin{equation}
   \supp \hat u_j\subset\{\,\xi\mid 
       \tfrac{1}{A}2^{j\theta_0}\le |\xi|\le A2^{j\theta_1} \,\}, 
  \label{DAC-cnd}
   \end{equation}
and that for suitable constants $C\ge0$, $N\ge0$,
\begin{equation}
  |u_j(x)|\le C 2^{jN\theta_1}(1+|x|)^N \text{ for all $j\ge0$}.
  \label{CM-cnd}  
\end{equation}
Then $\sum_{j=0}^\infty u_j$ converges rapidly 
in $\cal S'(\Rn)$ to a distribution $u$,
for which $\hat u$ is of order $N$.

$2^\circ$~For every $u\in \cal S'(\Rn)$ both \eqref{DAC-cnd} and
\eqref{CM-cnd} are fulfilled for $\theta_0=\theta_1=1$ by the functions  
$u_0=\Phi_0(D)u$ and $u_j=\Phi(2^{-j}D)u$
when $\Phi_0,\Phi\in C^\infty_0(\Rn)$ and $0\notin\supp\Phi$. 
In particular this is the case for a 
Littlewood--Paley decomposition 
$1=\Phi_0+\sum_{j=1}^\infty \Phi(2^{-j}\xi)$.
\end{lem}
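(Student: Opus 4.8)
The plan is to prove the two parts of Lemma~\ref{corona-lem} in the order stated, since part $2^\circ$ is essentially a special case of the hypotheses in part $1^\circ$ applied to a standard dyadic decomposition.

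For part $1^\circ$, the first step is to show the partial sums form a Cauchy sequence in $\cal S'(\Rn)$ with rapid convergence. The key observation is the spectral localisation \eqref{DAC-cnd}: each $u_j$ with $j\ge1$ has $\hat u_j$ supported in the dyadic corona $\frac1A2^{j\theta_0}\le|\xi|\le A2^{j\theta_1}$. Thus for a test function $\varphi\in \cal S(\Rn)$, I would write $\dual{u_j}{\varphi}=\dual{u_j}{\chi_j(D)\varphi}$ for a cutoff $\chi_j$ equal to $1$ on the relevant corona and supported in a slightly larger one. Since $\varphi$ is Schwartz, $\chi_j(D)\varphi$ decays rapidly in every Schwartz seminorm as $j\to\infty$ (because the corona recedes to infinity); combined with the polynomial bound \eqref{CM-cnd} on $u_j$, which controls the temperate order uniformly up to the factor $2^{jN\theta_1}$, one gets $|\dual{u_j}{\varphi}|\le c_M 2^{-jM}q_M(\varphi)$ for every $M$, where $q_M$ is a suitable Schwartz seminorm. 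Summing a geometric-type series then gives convergence of $\sum u_j$ to some $u\in\cal S'$, with the tails $\sum_{j\ge J}u_j$ going to zero rapidly in $j$, which is the asserted rapid convergence.

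The second step of part $1^\circ$ is to identify the order of $\hat u$. Here I would test $\hat u$ against $\psi\in\cal S$; since $\hat u=\lim\sum\widehat{u_j}$ and each $\widehat{u_j}$ has compact support, I would combine the uniform estimate coming from \eqref{CM-cnd} (translated through the Fourier transform, using that $u_j$ of polynomial order $N$ with growth constant $C2^{jN\theta_1}$ has $\widehat{u_j}$ of temperate order $N$ with a comparable constant) together with the disjointness structure of the coronas: at any given frequency $\xi$, only boundedly many $\widehat{u_j}$ are nonzero when $\theta_0=\theta_1$, and in the general case the overlap is still controlled so that the bound $(1+|\xi|)^N$ is reproduced for the sum. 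This yields the estimate \eqref{Sorder-eq}-type bound with exponent $N$, hence $\order_{\cal S'}(\hat u)\le N$.

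For part $2^\circ$, I would simply verify the two hypotheses for $u_0=\Phi_0(D)u$ and $u_j=\Phi(2^{-j}D)u$. Condition \eqref{DAC-cnd} with $\theta_0=\theta_1=1$ is immediate from $0\notin\supp\Phi$ and $\supp\Phi_0,\supp\Phi$ compact, giving the coronas $\frac1A2^j\le|\xi|\le A2^j$ for an appropriate $A>1$. Condition \eqref{CM-cnd} follows from Proposition~\ref{cutoff-prop} (or more elementarily from the pointwise estimate \eqref{u*xN-eq} together with the fact that $\Phi(2^{-j}D)u$ has spectrum in a ball of radius $\sim 2^j$, so the maximal-function bound gives $|\Phi(2^{-j}D)u(x)|\le c\,2^{jN}(1+|x|)^N$ with $N\ge\order_{\cal S'}(\hat u)$). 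The Littlewood--Paley case is then the instance $\Phi_0+\sum_{j\ge1}\Phi(2^{-j}\cdot)\equiv1$.

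The main obstacle I anticipate is the bookkeeping in part $1^\circ$ when $\theta_0<\theta_1$: the coronas are then fat (their width grows with $j$), so controlling the overlap multiplicity at a fixed frequency $\xi$, and thereby reproducing the clean bound $(1+|\xi|)^N$ for $\hat u$ rather than a bound with an extra logarithmic or polynomial loss, requires a careful count of how many indices $j$ satisfy $\frac1A2^{j\theta_0}\le|\xi|\le A2^{j\theta_1}$ simultaneously. One checks this number is $O(1+\log(1+|\xi|))$, and the rapid decay in $j$ from the spectral-cutoff argument absorbs this factor; making that absorption precise, uniformly in $\xi$, is the technical heart of the proof.
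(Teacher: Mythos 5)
Your treatment of the convergence in part $1^\circ$ is sound: inserting a symmetric spectral cut-off $\chi_j(D)$ adapted to the corona, invoking \eqref{CM-cnd} to control the growth of $u_j$, and letting the rapid decay of $\chi_j(D)\varphi$ (which is governed by arbitrarily many Schwartz seminorms of $\varphi$ because the corona recedes) swallow the factor $2^{jN\theta_1}$, is exactly the mechanism that makes $\sum_j u_j$ converge rapidly in $\cal S'(\Rn)$. Likewise part $2^\circ$ is handled correctly, with \eqref{DAC-cnd} immediate from $0\notin\supp\Phi$ and \eqref{CM-cnd} following from the Paley--Wiener--Schwartz bound \eqref{u*xN-eq} or Proposition~\ref{cutoff-prop} with $d=0$.

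The gap is in the argument for the order of $\hat u$. You treat $\hat u$ as if it were a function of $\xi$ and aim to ``reproduce the bound $(1+|\xi|)^N$'' by counting, at a fixed $\xi$, how many coronas overlap there. But $\hat u$ need not be a function at all: each $\hat u_j$ is merely a compactly supported distribution, and under \eqref{CM-cnd} it can for instance be a combination of derivatives $D^\alpha\delta_{\xi_j}$ of order $|\alpha|\le N$ placed on the corona, with coefficients $\sim 2^{jN\theta_1}$. There is no pointwise quantity to bound, and the overlap multiplicity at a single frequency is not the relevant object. The order in the sense of \eqref{Sorder-eq} is read off from the pairing $\dual{\hat u}{\psi}$, and what one must control is how many derivatives of $\psi$ each term $\dual{\hat u_j}{\psi}$ consumes and how fast the resulting constants decay in $j$. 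Your intermediate claim that ``$u_j$ of polynomial order $N$ with growth constant $C2^{jN\theta_1}$ has $\hat u_j$ of temperate order $N$ with a comparable constant'' is not justified: the crude estimate $|\dual{\hat u_j}{\psi}|\le C2^{jN\theta_1}\int(1+|x|)^N|\hat\psi(x)|\,dx$ needs on the order of $N+n+1$ derivatives of $\psi$ to make the integral finite, not $N$, and the corona localisation together with the decay of $\psi$ must be invoked more carefully to trade derivatives against the growing support radius. As it stands, the log-absorption step you flag as the technical heart is pointed at a pointwise overlap count that does not appear in the correct framework, and the proposal does not give a proof of the order assertion.
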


The second assumption of this lemma, cf \eqref{CM-cnd}, is verified for the
terms in \eqref{a123'-eq} by using the pointwise estimates. 
Indeed, for the general term in 
$a^{(1)}_\psi(x,D)u$ it is seen from Proposition~\ref{cutoff-prop} for
$\Phi=\psi(2^{-h}\cdot )$ and $\Psi=\varphi$ that
\begin{equation}
  |a^{k-h}(x,D)u_k(x)|=|\OP(\psi(2^{k-h}D_x)a(x,\eta)\varphi(2^{-k}\eta))u(x)| 
  \le c 2^{k(N+d_+)}(1+|x|)^{N+d_+}.
\end{equation}
Similar estimates can be found for $a^{(2)}_\psi(x,D)$ and
$a^{(3)}_\psi(x,D)$, cf \cite[Prop.~6.1--2]{JJ10tmp}, hence the three
series converge in $\cal S'$ according to the above lemma.

Finally, to conclude that the three terms in \eqref{a123'-eq} are type
$1,1$-operators, one can apply Proposition~\ref{cutoff-prop} once more, this
time with other choices of the two cut-off functions. For example, in the
primed sum of remainders in \eqref{a1u-eq}, they should for each $l$ be taken
as  $\psi\cdot \Psi(2^{h-l}\cdot )$ and $\Phi(2^{-l}\cdot )\psi$, respectively;
then Lemma~\ref{corona-lem} yields that the remainders can be summed over
$m\in\N$, hence that (each term in) 
the primed sum tends to $0$ in $\cal S'$ as $m\to\infty$.
In this way Theorem~\ref{tdc-thm} is proved.

\begin{rem}
Theorem~\ref{tdc-thm} and its proof generalises a result of 
R.~R.~Coifman and Y.~Meyer \cite[Ch.~15]{MeCo97} in three ways.
They stated Lemma~\ref{corona-lem} for $\theta_0=\theta_1=1$ 
and derived a version of Theorem~\ref{tdc-thm} for 
paramultiplication, though only 
with a treatment of the first and third term.
\end{rem}

Without the twisted diagonal condition \eqref{tdc'-cnd}, the techniques
behind Theorem~\ref{tdc-thm} at least show that  
$a^{(1)}(x,D)u$ and $a^{(3)}(x,D)u$ are always defined. Thus one has

\begin{cor}
  \label{a123-cor}
For $a(x,\eta)$ in $S^d_{1,1}(\Rn\times \Rn)$ a distribution $u\in \cal
S'(\Rn)$  belongs to the domain $D(a(x,D))$ if and only if the series for
$a^{(2)}(x,D)u$ in \eqref{a2-eq} converges in $\cal D'(\Rn)$.
\end{cor}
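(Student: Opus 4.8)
The plan is to isolate $a^{(2)}_\psi(x,D)u$ as the only possible obstruction, by showing that the two outer pieces of the paradifferential splitting are always harmless. First I would record the key technical input, which is exactly what comes out of the analysis behind Theorem~\ref{tdc-thm}: for \emph{every} $u\in\cal S'(\Rn)$ and every modulation function $\psi$, the series $a^{(1)}_\psi(x,D)u=\sum_{k\ge h}a^{k-h}(x,D)u_k$ and $a^{(3)}_\psi(x,D)u=\sum_{j\ge h}a_j(x,D)u^{j-h}$ converge rapidly in $\cal S'(\Rn)$. Indeed, by Proposition~\ref{corona-prop} the $k$-th term of the first series has spectrum in the fixed dyadic corona $\{\,R_h2^k\le|\xi|\le\tfrac{5}{4}R2^k\,\}$, while the pointwise estimate \eqref{a1-pe} of Proposition~\ref{a123pe-prop} (or directly Proposition~\ref{cutoff-prop}) dominates it by $c\,2^{k(N+d_+)}(1+|x|)^{N+d_+}$, with $N$ the temperate order of $\hat u$; hence \eqref{DAC-cnd}--\eqref{CM-cnd} hold with $\theta_0=\theta_1=1$ and Lemma~\ref{corona-lem} applies. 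The third series is handled identically using \eqref{a3-pe}. No twisted diagonal condition enters here; that hypothesis is needed only for the middle piece, to upgrade its dyadic ball support to the corona \eqref{supp2'-eq}.

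Next I would decompose the regularised operator exactly. By \eqref{bilin-eq} one has $a^m(x,D)u^m=\sum_{j,k=0}^m a_j(x,D)u_k$, and splitting the square $\{0,\dots,m\}^2$ into the three regions $j\le k-h$, $|j-k|<h$ and $k\le j-h$ gives $a^m(x,D)u^m=S^{(1)}_m+S^{(2)}_m+S^{(3)}_m$, where $S^{(i)}_m$ is precisely the $m$-th partial sum of the series \eqref{a1-eq}, \eqref{a2-eq}, \eqref{a3-eq}. One only needs to check the index bookkeeping: the constraints $\max(j,k)\le m$ together with each region condition reproduce these partial sums with no leftover terms (e.g.\ $j\le k-h$ already forces $j\le m-h$ and $k\ge h$). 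Since $S^{(1)}_m$ and $S^{(3)}_m$ converge in $\cal S'(\Rn)$, hence in $\cal D'(\Rn)$, by the previous paragraph, for each fixed $\psi$ the limit $a_\psi(x,D)u=\lim_m a^m(x,D)u^m$ exists in $\cal D'(\Rn)$ if and only if $\lim_m S^{(2)}_m$ does, i.e.\ if and only if the series for $a^{(2)}_\psi(x,D)u$ converges in $\cal D'(\Rn)$; and in that case $a_\psi(x,D)u=a^{(1)}_\psi(x,D)u+a^{(2)}_\psi(x,D)u+a^{(3)}_\psi(x,D)u$, which is Proposition~\ref{a123-prop} in this generality. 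Quantifying over all $\psi$ then shows that $a_\psi(x,D)u$ exists for every modulation function $\psi$ exactly when $a^{(2)}_\psi(x,D)u$ converges for every $\psi$.

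The step requiring the most care, and where I expect the main obstacle, is the last one: once the limit exists for every $\psi$, Definition~\ref{vfm-defn} also demands that it be \emph{independent} of $\psi$, and this cannot be settled purely formally from the above. I would dispose of it by returning to the structure of the pieces rather than arguing abstractly: $a^{(1)}_\psi(x,D)$ and $a^{(3)}_\psi(x,D)$ are themselves type $1,1$-operators fulfilling the twisted diagonal condition \eqref{tdc-cnd} (cf.\ \cite[Prop.~5.5]{JJ10tmp} and Theorem~\ref{tdc-thm}), so that the only $\psi$-sensitive part of $a_\psi(x,D)u$ is $a^{(2)}_\psi(x,D)u$; and then one invokes, exactly as in \cite[Sect.~5]{JJ10tmp} around \eqref{a1u-eq}, that the remainder terms created when a second modulation function is introduced tend to $0$ in $\cal S'(\Rn)$, which forces the value of the combined series to be the same for all $\psi$. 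With that, the equivalence of the first two paragraphs promotes to the assertion ``$u\in D(a(x,D))$ if and only if the series for $a^{(2)}(x,D)u$ in \eqref{a2-eq} converges in $\cal D'(\Rn)$'', and all remaining points are routine.
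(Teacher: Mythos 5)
Your proposal follows the same route as the paper's own one-sentence justification, and the first two paragraphs are exactly right: the outer pieces $a^{(1)}_\psi(x,D)u$ and $a^{(3)}_\psi(x,D)u$ always converge in $\cal S'(\Rn)$ (corona support from Proposition~\ref{corona-prop}, polynomial bounds from Proposition~\ref{a123pe-prop} or Proposition~\ref{cutoff-prop}, then Lemma~\ref{corona-lem}), and the index bookkeeping correctly identifies $a^m(x,D)u^m$ from \eqref{bilin-eq} with the sum of the $m$-th partial sums of \eqref{a1-eq}--\eqref{a3-eq}, so for each fixed $\psi$ the limit $a_\psi(x,D)u$ exists in $\cal D'$ exactly when the middle series converges.

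The third paragraph, however, does not close the gap it correctly flags. That $a^{(1)}_\psi(x,D)$ and $a^{(3)}_\psi(x,D)$ fulfil \eqref{tdc-cnd} makes each a well-defined type $1,1$-operator on all of $\cal S'(\Rn)$ \emph{for each fixed} $\psi$, i.e.\ its value does not depend on the second modulation function used to compute it by vanishing frequency modulation; but the \emph{symbols} $a^{(1)}_\psi(x,\eta)$ and $a^{(3)}_\psi(x,\eta)$ are built from $\psi$, so all three summands in $a^{(1)}_\psi(x,D)u+a^{(2)}_\psi(x,D)u+a^{(3)}_\psi(x,D)u$ change with $\psi$, and ``the only $\psi$-sensitive part is $a^{(2)}_\psi$'' is not a valid inference. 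Likewise, the vanishing of the primed remainders in \eqref{a1u-eq} concerns the dependence on the operator-level modulation, not on the $\Psi$ used to form the splitting; it does not show that the total is the same for different splitting functions. So your two invocations do not establish the $\psi$-independence demanded by Definition~\ref{vfm-defn} in the ``if'' direction (it is automatic in the ``only if'' direction, where $u\in D(a(x,D))$ is assumed). To be fair, the paper's own justification is equally silent on this point --- it only notes that the outer series always converge --- so you were right to single it out as the delicate step, even though the proposed resolution misfires.
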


As a follow-up to this corollary, it is natural to ask whether less strong
assumptions on $a(x,D)$ along $\cal T$   
improves the picture. That is the topic of the next section.

\section{The twisted diagonal condition of order $\sigma$}  
\label{wtdc-ssect}\noindent
When the strict vanishing of $\hat a(\xi,\eta)$ in a conical neighbourhood
of $\cal T$ is replaced by vanishing to infinite order (in some sense)
\emph{at} $\cal T$, then
one arrives at a characterisation of the self-adjoint subclass 
$\tilde S^d_{1,1}$. This result of L.~H{\"o}rmander is recalled here and
shown to imply continuity on $\cal S'$.

\subsection{Localisation along the twisted diagonal}
   \label{micro-sssect}
As a weakening of the twisted diagonal condition \eqref{tdc'-cnd},
L.~H{\"o}rmander \cite{H88,H89,H97} introduced 
certain localisations of the symbol to conical neighbourhoods 
of $\cal T$. 
Specifically this was achieved by passing from $a(x,\eta)$ to
another symbol, that was denoted by 
$a_{\chi,\varepsilon}(x,\eta)$. This is defined by
\begin{equation}
  \hat a_{\chi,\varepsilon}(\xi,\eta)
  =\hat a(\xi,\eta)\chi(\xi+\eta,\varepsilon\eta),
  \label{axe-eq}
\end{equation}
whereby $\chi\in C^\infty (\Rn\times \Rn)$ is chosen so that
\begin{gather}
  \chi(t\xi,t\eta)= \chi(\xi,\eta)\quad\text{for}\quad t\ge 1,\ |\eta|\ge 2
\label{chi1-eq}   \\
  \supp \chi\subset \{\,(\xi,\eta)\mid 1\le |\eta|,\ |\xi|\le
|\eta|\,\}
\label{chi2-eq} \\
  \chi=1 \quad\text{in}\quad 
  \{\,(\xi,\eta)\mid 2\le |\eta|,\ 2|\xi|\le |\eta|\,\}.
  \label{chi3-eq}
\end{gather}
Here it should first of all be noted that by the choice of $\chi$,
\begin{equation}
  \supp \hat a_{\chi,\varepsilon}\subset 
  \{\,(\xi,\eta)\mid 1+|\xi+\eta|\le 2\varepsilon|\eta| \,\}.
  \label{axesupp-eq}
\end{equation}
So when $a(x,\eta)$ fulfils the
strict condition \eqref{tdc'-cnd}, then 
clearly $a_{\chi,\varepsilon}\equiv 0$ for $\varepsilon>0$ so small that
$B\le 1/(2\varepsilon)$. Therefore milder conditions will result by imposing
smallness requirements on $a_{\chi,\varepsilon}$ in the limit
$\varepsilon\to0$. 

As a novelty in the analysis, L.~H{\"o}rmander linked the above to
well-known 
multiplier conditions (of Mihlin--H{\"o}rmander type) by introducing the
condition that for some $\sigma\in\R$, 
it holds for all multiindices $\alpha$ and $0<\varepsilon<1$ that
\begin{equation}
  \sup_{R>0,\; x\in \Rn}R^{-d}\big(
  \int_{R\le |\eta|\le 2R} |R^{|\alpha|}D^\alpha_{\eta}a_{\chi,\varepsilon}
  (x,\eta)|^2\,\frac{d\eta}{R^n}
  \big)^{1/2}
  \le c_{\alpha,\sigma} \varepsilon^{\sigma+n/2-|\alpha|}.
  \label{Hsigma-eq}
\end{equation}
This is referred to as the twisted diagonal condition of order $\sigma\in
\R$. 

The above asymptotics for $\varepsilon\to0$ always holds for $\sigma=0$,
regardless of $a\in S^d_{1,1}$,  
as was proved in \cite[Prop.~3.2]{H89}, and given as a part of
\cite[Lem.~9.3.2]{H97}: 

\begin{lem}   \label{Heps-lem}
When $a\in S^d_{1,1}(\Rn\times \Rn)$ and $0<\varepsilon\le 1$, then
$a_{\chi,\varepsilon}\in C^\infty $ and 
\begin{gather}
  |D^\alpha_{\eta}D^\beta_x a_{\chi,\varepsilon}(x,\eta)|\le 
   C_{\alpha,\beta}(a)\varepsilon^{-|\alpha|}
  (1+|\eta|)^{d-|\alpha|+|\beta|}
  \\
  \big(
  \int_{R\le |\eta|\le 2R} |D^\alpha_{\eta}a_{\chi,\varepsilon}
  (x,\eta)|^2\,d\eta
  \big)^{1/2}
  \le C_{\alpha} R^d (\varepsilon R)^{n/2-|\alpha|}.
\end{gather} 
The map $a\mapsto a_{\chi,\varepsilon}$ is continuous in $S^d_{1,1}$.
\end{lem}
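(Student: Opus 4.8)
The plan is to work from the explicit kernel form of the localisation. Since multiplying $\hat a(\cdot,\eta)$ by the $\xi$-function $\chi(\xi+\eta,\varepsilon\eta)$, which is smooth and, by \eqref{chi2-eq}, supported in the $\xi$-ball $\{\,|\xi+\eta|\le\varepsilon|\eta|\,\}$, amounts to convolving in $x$ against a Schwartz function, and since $a(\cdot,\eta)$ is slowly increasing, I would first record the identity
\[
  a_{\chi,\varepsilon}(x,\eta)=\int_{\Rn}a(x-z,\eta)\,e^{-\im z\cdot\eta}\,K_\varepsilon(z,\eta)\,dz,
  \qquad K_\varepsilon(z,\eta)=\cal F^{-1}_{\zeta\to z}\bigl[\chi(\zeta,\varepsilon\eta)\bigr].
\]
The first real step is to list the properties of $K_\varepsilon$: it vanishes for $\varepsilon|\eta|<1$ by \eqref{chi2-eq}; for $\varepsilon|\eta|\ge2$ the homogeneity \eqref{chi1-eq} exhibits $\chi(\cdot,\varepsilon\eta)$ as the dilation by $(\varepsilon|\eta|)^{-1}$ of a fixed bump on $\{\,|w|\le1\,\}$ depending smoothly on $\eta/|\eta|$, the range $1\le\varepsilon|\eta|\le2$ being a harmless compact transition. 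Hence $K_\varepsilon(\cdot,\eta)$ is an $L_1$-normalised bump of width $(\varepsilon|\eta|)^{-1}$, and for all $k$, $\gamma$ one gets $\nrm{K_\varepsilon(\cdot,\eta)}{L_1}\le C$, $\int|z|^{k}|D^\gamma_\eta K_\varepsilon(z,\eta)|\,dz\le C_{k,\gamma}(\varepsilon|\eta|)^{-k}|\eta|^{-|\gamma|}$, and, by Plancherel and $0\le\chi\le1$, $\nrm{K_\varepsilon(\cdot,\eta)}{L_2}^2=(2\pi)^{-n}\nrm{\chi(\cdot,\varepsilon\eta)}{L_2}^2\le C(\varepsilon|\eta|)^n$.

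Parts (1), (2) and (4) would then follow by differentiating under the integral. Smoothness in $(x,\eta)$ holds because the integrand is $C^\infty$ with $\eta$-locally dominated derivatives (the kernel bounds together with the polynomial bounds on derivatives of $a\in C^\infty$). For the size estimate I would apply $D^\beta_xD^\alpha_\eta$: the $D^\beta_x$ lands on $a$, while $D^\alpha_\eta$ splits by Leibniz over $e^{-\im z\cdot\eta}$ (giving a factor $z^{\alpha_1}$), over $D^{\alpha_2}_\eta D^\beta_x a$ (of order $d-|\alpha_2|+|\beta|$), and over $D^{\alpha_3}_\eta K_\varepsilon$; estimating with absolute values and the second kernel bound gives
\[
  |D^\beta_xD^\alpha_\eta a_{\chi,\varepsilon}(x,\eta)|\le\sum_{\alpha_1+\alpha_2+\alpha_3=\alpha}c_{\alpha,\beta}\,p_{\alpha_2,\beta}(a)\,(1+|\eta|)^{d-|\alpha_2|+|\beta|}(\varepsilon|\eta|)^{-|\alpha_1|}|\eta|^{-|\alpha_3|}\le C_{\alpha,\beta}(a)\,\varepsilon^{-|\alpha|}(1+|\eta|)^{d-|\alpha|+|\beta|},
\]
using $|\alpha_1|\le|\alpha|$, $\varepsilon\le1$, and that $|\eta|\ge1$ on the support. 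Since $C_{\alpha,\beta}(a)$ is a finite combination of the seminorms $p_{\alpha_2,\beta}(a)$, it is a continuous seminorm on $S^d_{1,1}$, which simultaneously yields the asserted continuity of $a\mapsto a_{\chi,\varepsilon}$.

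For part (3) the trivial consequence of part (2) over the shell $\{\,R\le|\eta|\le2R\,\}$ is a factor $\varepsilon^{n/2}$ too large, so one more gain is needed. The plan is to interpolate: part (2) with $\beta=0$ gives the $L_\infty$-in-$\eta$ bound $\nrm{D^\alpha_\eta a_{\chi,\varepsilon}(x,\cdot)}{L_\infty(\{R\le|\eta|\le2R\})}\le C_\alpha(a)R^d(\varepsilon R)^{-|\alpha|}$, and it would suffice to establish the companion $L_1$-in-$\eta$ bound $\int_{R\le|\eta|\le2R}|D^\alpha_\eta a_{\chi,\varepsilon}(x,\eta)|\,d\eta\le C_\alpha(a)R^d(\varepsilon R)^{n-|\alpha|}$, after which $\nrm{\cdot}{L_2}\le\nrm{\cdot}{L_1}^{1/2}\nrm{\cdot}{L_\infty}^{1/2}$ produces precisely $C_\alpha(a)R^d(\varepsilon R)^{n/2-|\alpha|}$. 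This $L_1$-bound over the shell is where I expect the real work to be: bounding $|a_{\chi,\varepsilon}|$ pointwise throws away the needed $\varepsilon^n$, so the oscillatory factor $e^{-\im z\cdot\eta}$ has to be retained, equivalently one exploits that $\supp_\xi\hat a_{\chi,\varepsilon}(\cdot,\eta)\subset\{\,|\xi+\eta|\le\varepsilon|\eta|\,\}$ by \eqref{chi2-eq}, together with the $\eta$-derivative bounds on $a$ that prevent $\hat a$ from concentrating on the twisted diagonal, and only then integrates over the shell. The bookkeeping in parts (1), (2) and (4) is routine; this interpolation-plus-oscillation step is the main obstacle.
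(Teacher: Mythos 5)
Your convolution representation $a_{\chi,\varepsilon}(x,\eta)=\int a(x-z,\eta)\,e^{-\im z\cdot\eta}K_\varepsilon(z,\eta)\,dz$ is correct, and your kernel bounds for $K_\varepsilon$ (including the $|\eta|^{-|\gamma|}$ gain per $\eta$-derivative from homogeneity and the Plancherel bound $\nrm{K_\varepsilon(\cdot,\eta)}{L_2}^2\lesssim(\varepsilon|\eta|)^n$) are all right. From there your treatment of the smoothness, the pointwise estimate $|D^\alpha_\eta D^\beta_x a_{\chi,\varepsilon}|\le C_{\alpha,\beta}(a)\varepsilon^{-|\alpha|}(1+|\eta|)^{d-|\alpha|+|\beta|}$, and the continuity of $a\mapsto a_{\chi,\varepsilon}$ (via $C_{\alpha,\beta}(a)$ being a finite combination of seminorms $p_{\alpha_2,\beta}(a)$) is complete and correct; this is also the route indicated in the paper, which obtains the continuity remark precisely by tracking the seminorm dependence through H{\"o}rmander's proof.

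The genuine gap is the dyadic-shell $L_2$ estimate, and you acknowledge it yourself. You correctly observe that the pointwise bound integrated over the shell loses a factor $\varepsilon^{n/2}$ relative to $C_\alpha R^d(\varepsilon R)^{n/2-|\alpha|}$, and you propose recovering it by interpolating an $L_\infty$-in-$\eta$ bound against a conjectured $L_1$-in-$\eta$ bound $\int_{R\le|\eta|\le 2R}|D^\alpha_\eta a_{\chi,\varepsilon}(x,\eta)|\,d\eta\lesssim R^d(\varepsilon R)^{n-|\alpha|}$. But you never establish that $L_1$ bound — you only say where you \emph{expect} the gain to come from (the oscillation $e^{-\im z\cdot\eta}$, equivalently the smallness of the $\xi$-support of $\hat a_{\chi,\varepsilon}(\cdot,\eta)$) and explicitly call it ``the main obstacle.'' Note that this $L_1$ bound is strictly \emph{stronger} than the $L_2$ bound you want (Cauchy--Schwarz over the shell would only give $L_1\lesssim R^{n/2}\cdot L_2$, which is weaker than your conjectured $L_1$ bound by a factor $\varepsilon^{n/2}$), so the interpolation plan does not reduce the problem to something easier; if anything it raises the bar. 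Since this $L_2$-over-the-shell estimate is exactly the statement that H{\"o}rmander's twisted diagonal condition \eqref{Hsigma-eq} holds automatically with $\sigma=0$ — which is the substantive content that feeds Theorem~\ref{a*-thm} and the whole analysis of the self-adjoint subclass — the lemma cannot be considered proved. A workable route would be, for instance, to expand $|D^\alpha_\eta a_{\chi,\varepsilon}(x,\eta)|^2$ via the convolution form as a double $z,w$ integral and integrate over the shell in $\eta$ first, exploiting the linear phase $e^{-\im(z-w)\cdot\eta}$ by stationary phase/repeated integration by parts in $\eta$ (the integrand is $\eta$-smooth at scale $R^{-1}$ by the symbol and kernel bounds), which produces $(1+R|z-w|)^{-M}$ decay; combined with the concentration $|z|,|w|\lesssim(\varepsilon R)^{-1}$ of $K_\varepsilon$ this yields the missing $(\varepsilon R)^n$. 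But that is a different argument from the one you outlined and still needs to be carried out.
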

The last remark on continuity was added in \cite{JJ10tmp} 
because it plays a role in connection with the definition by
vanishing frequency modulation.
It is easily verified by deducing from the proof of \cite[Lem.~9.3.2]{H97}
that the constant $C_{\alpha,\beta}(a)$ is
a continuous seminorm in $S^d_{1,1}$.

In case $a(x,\eta)$ fulfils \eqref{Hsigma-eq} for some $\sigma>0$, 
the localised symbol  $a_{\chi,\varepsilon}$ tends faster to $0$, and this
was proved in \cite{H89,H97} to imply boundedness 
\begin{equation}
  a(x,D)\colon H^{s+d}(\Rn)\to H^s(\Rn) \quad\text{for}\quad s>-\sigma.
  \label{Hssigma-eq}
\end{equation}
The reader could consult \cite[Thm.~9.3.5]{H97} for this (and
\cite[Thm.~9.3.7]{H97} for four pages of proof of the sharpness of the
condition $s>-\sigma$).
Consequently, when $\hat a(\xi,\eta)$ satisfies
\eqref{Hsigma-eq} for all $\sigma\in \R$, 
then there is boundedness $H^{s+d}\to H^s$ for
all $s\in \R$. 

As accounted for below, $a(x,D)\colon \cal S'\to\cal S'$ is furthermore
everywhere defined and continuous when $a(x,\eta)$ fulfils 
\eqref{Hsigma-eq} for every $\sigma\in \R$. However, this relies on
L.~H{\"o}rmander's characterisation of such symbols as those having adjoints of
type $1,1$.

\subsection{The self-adjoint subclass $\tilde S^d_{1,1}$}
  \label{saSd-sssect}
The next result characterises the $a\in S^d_{1,1}$ for which the adjoint
symbol $a^*$ is again in $S^d_{1,1}$; cf the below condition \eqref{a*-cnd}. 
Since passage to the adjoint is an involution, such symbols constitute the
self-adjoint subclass 
\begin{equation}
  \tilde S^d_{1,1}:= S^d_{1,1}\cap (S^d_{1,1})^* .
\end{equation}

\begin{thm}   \label{a*-thm}
For every symbol $a(x,\eta)$ in $S^d_{1,1}(\Rn\times \Rn)$ the following
properties are equivalent:
\begin{rmlist}
    \item   \label{a*-cnd}
    The adjoint symbol $a^*(x,\eta)$ also belongs to 
    $S^d_{1,1}(\Rn\times \Rn)$.
  \item   \label{orderN-cnd}
  For arbitrary $N>0$ and $\alpha$, $\beta$ there is a constant
$C_{\alpha,\beta,N}$ such that 
\begin{equation}
  |D^\alpha_\eta D^\beta_x a_{\chi,\varepsilon}(x,\eta)|\le C_{\alpha,\beta,N}
  \varepsilon^{N}(1+|\eta|)^{d-|\alpha|+|\beta|}
  \quad\text{for}\quad 0<\varepsilon<1.
\end{equation} 
  \item   \label{sigma-cnd}
  For all $\sigma\in\R$ there is a constant $c_{\alpha,\sigma}$ such that
\eqref{Hsigma-eq} holds for $0<\varepsilon<1$, ie
\begin{equation}
  \sup_{R>0,\ x\in\Rn} R^{|\alpha|-d}
  \big( \int_{R\le |\eta|\le 2R}
   |D^\alpha_{\eta} a_{\chi,\varepsilon}(x,\eta)|^2\,\frac{d\eta}{R^n}
  \big)^{1/2}
  \le c_{\alpha,\sigma} \varepsilon^{\sigma+\tfrac{n}{2}-|\alpha|}.
\end{equation}
\end{rmlist}
In the affirmative case $a\in \tilde S^{d}_{1,1}$, and there is an estimate
\begin{equation}
  |D^\alpha_\eta D^\beta_x a^*(x,\eta)| \le 
  (C_{\alpha,\beta}(a)+C'_{\alpha,\beta,N})(1+|\eta|)^{d-|\alpha|+|\beta|}
  \label{CC'-eq}
\end{equation}
for a certain continuous seminorm 
$C_{\alpha,\beta}$ on $S^d_{1,1}(\Rn\times \Rn)$ and a finite sum
$C'_{\alpha,\beta,N}$ of constants fulfilling 
the inequalities in \eqref{orderN-cnd}.
\end{thm}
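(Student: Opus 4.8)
The plan is to follow L.~Hörmander's treatment, establishing the equivalences through the localised symbols $a_{\chi,\varepsilon}$ of \eqref{axe-eq}, and to carry enough book-keeping to see that the constant $C_{\alpha,\beta}$ in \eqref{CC'-eq} may be taken as a \emph{continuous} seminorm on $S^d_{1,1}$. I would split the argument into the routine equivalence of the two estimates on $a_{\chi,\varepsilon}$ itself\,---\,the pointwise $\varepsilon^N$-bound (ii) and the Mihlin--Hörmander type condition \eqref{Hsigma-eq} of every order $\sigma$ (iii)\,---\,and the substantial equivalence of these with (i), membership $a^*\in S^d_{1,1}$.

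The equivalence (ii)$\Leftrightarrow$(iii) is the easy one. For (iii)$\Rightarrow$(ii): on a dyadic corona $R\le|\eta|\le 2R$, rescaled to unit size, a Sobolev embedding $H^k\hookrightarrow L_\infty$ with $k>n/2$ bounds the pointwise $\eta$-derivatives $\sup_x|D^\alpha_\eta a_{\chi,\varepsilon}(x,\eta)|$ by $L_2$-norms of $\eta$-derivatives of order $\le|\alpha|+k$ over a slightly larger corona; inserting \eqref{Hsigma-eq} and tracking the powers of $R$ that match the class $S^d_{1,1}$, each term carries a factor $\varepsilon^{\sigma+n/2-|\alpha|-k}$, and since $\sigma$ is arbitrary this is $\varepsilon^N$ for any $N$. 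The $x$-derivatives are then included by Bernstein's inequality, since by \eqref{axesupp-eq} the $x$-spectrum of $a_{\chi,\varepsilon}(\cdot,\eta)$ lies in a ball of radius $O(|\eta|)$, so each $D^\beta_x$ costs only $(1+|\eta|)^{|\beta|}$. The converse (ii)$\Rightarrow$(iii) is immediate: integrate the pointwise bound over the corona, divide by $R^n$, and use again that $N$ is free.

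The content is in (i)$\Leftrightarrow$(ii). For (ii)$\Rightarrow$(i) I would decompose $a$ by its distance to the twisted diagonal $\cal T$: with a Littlewood--Paley partition in the angular variable, $\hat a=\sum_{j\ge 0}\hat a^{[j]}$, where $\hat a^{[0]}$ is supported where $|\xi+\eta|\gtrsim\langle\eta\rangle$ or $|\eta|$ is bounded, and, for $j\ge 1$, $\hat a^{[j]}$ is supported in the shell $2^{-j}\langle\eta\rangle\lesssim|\xi+\eta|\lesssim 2^{-j+1}\langle\eta\rangle$ (the bounded-$\eta$ part of $a^{[0]}$ lies in $S^{-\infty}$, whose adjoint is again in $S^{-\infty}$). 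Each $a^{[j]}$ satisfies the twisted diagonal condition \eqref{tdc'-cnd} with $B\approx 2^{j}$, so Lemma~\ref{GB-lem} gives $(a^{[j]})^*\in S^d_{1,1}$, and \eqref{tdc-ineq} bounds its $(\alpha,\beta)$-seminorm by $C\,2^{jM_0}\,p(a^{[j]})$ for an exponent $M_0=M_0(\alpha,\beta,d,n)$ and a fixed continuous seminorm $p$ on $S^d_{1,1}$. On the other hand $a^{[j]}$ is assembled from differences of the localised symbols $a_{\chi,2^{-j}}$, so (ii) yields $p(a^{[j]})\le C_N\,2^{-jN}$ for \emph{every} $N$\,---\,the angular cut-offs cost powers of $2^j$ under $\eta$-differentiation, but these are absorbed since $N$ is arbitrary. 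Choosing $N>M_0+1$ makes $\sum_{j\ge 1}(a^{[j]})^*$ converge in $S^d_{1,1}$; adding $(a^{[0]})^*$, which by Lemma~\ref{GB-lem} with $B\approx 1$ is controlled by a genuine continuous seminorm of $a$, gives $a^*=\sum_{j\ge 0}(a^{[j]})^*\in S^d_{1,1}$, that is (i); and splitting the resulting bound into the $j=0$ term and the summable tail reproduces exactly $C_{\alpha,\beta}(a)+C'_{\alpha,\beta,N}$ as in \eqref{CC'-eq}, with $C'_{\alpha,\beta,N}$ a finite combination of the constants of (ii). Then $a\in\tilde S^d_{1,1}$ follows at once, since adjunction is an involution.

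For (i)$\Rightarrow$(ii) one runs the same analysis backwards, using $a=(a^*)^*$ and the Fourier-side identity $\hat a(\xi,\eta)=\overline{\widehat{a^*}(-\xi,\xi+\eta)}$: the localisation $a_{\chi,\varepsilon}$ isolates the region $|\xi+\eta|\le 2\varepsilon|\eta|$ with $|\eta|$ large, which under this substitution is precisely the regime in which the $x$-frequency variable of $a^*$ has size $\approx|\eta|$ while its $\eta$-variable has size $\le 2\varepsilon|\eta|$\,---\,a regime where $a^*\in S^d_{1,1}$ forces rapid decay. Quantifying this (after a harmless mollification of $\widehat{a^*}(\cdot,\eta)$, and keeping the $\varepsilon$-powers exactly as in Hörmander's oscillatory-integral estimate) yields (ii) for all $N$. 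I expect this last direction to be the main obstacle: it cannot be settled by soft arguments, since Ching's example shows adjunction is unbounded on $S^d_{1,1}$, so everything hinges on the twisted-diagonal decay of $a$ being what tames $a^*$, and extracting that decay from $a^*\in S^d_{1,1}$ requires the careful $\varepsilon$-book-keeping of Hörmander's proof. A secondary point\,---\,but the one that matters for the added precision \eqref{CC'-eq}\,---\,is to verify in the (ii)$\Rightarrow$(i) shell sum that only the $j=0$ contribution must be estimated by a seminorm of $a$, the remaining shells being controlled solely by the constants of (ii); this rests on the fact, stressed after Lemma~\ref{GB-lem}, that the $C_{\alpha,\beta}$ there is a continuous seminorm, so that it composes well with the telescoping decomposition.
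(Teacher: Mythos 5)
Your overall architecture follows the same route as the paper, which for the equivalence of \pbref{a*-cnd}--\pbref{sigma-cnd} simply refers to H{\"o}rmander \cite[Thm.~9.4.2]{H97} with a full verification written out in \cite[Sec.~4.2]{JJ10tmp}; the dyadic-shell decomposition in the distance to $\cal T$, combined with Lemma~\ref{GB-lem} and the $\varepsilon^N$-decay of \pbref{orderN-cnd}, is indeed the natural way to run \pbref{orderN-cnd}$\Rightarrow$\pbref{a*-cnd}, and you correctly locate the source of the new estimate \eqref{CC'-eq} there: the $j=0$ shell (bounded $B$ in Lemma~\ref{GB-lem}) produces the continuous seminorm $C_{\alpha,\beta}(a)$, while the $j\ge 1$ shells contribute $C'_{\alpha,\beta,N}$. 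The (ii)$\Leftrightarrow$(iii) equivalence via Sobolev embedding on rescaled coronas plus Bernstein for the $x$-derivatives, using \eqref{axesupp-eq}, is also fine.

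The genuine gap is \pbref{a*-cnd}$\Rightarrow$\pbref{orderN-cnd}. What you write there is a plan, not an argument: ``a regime where $a^*\in S^d_{1,1}$ forces rapid decay'' and ``keeping the $\varepsilon$-powers exactly as in H{\"o}rmander's oscillatory-integral estimate'' defer the whole substance of this implication to a computation you do not carry out, and your own hedge (``requires the careful $\varepsilon$-book-keeping of H{\"o}rmander's proof'') concedes this. The difficulty is not cosmetic. The identity $\hat a(\xi,\eta)=\overline{\widehat{a^*}(-\xi,\xi+\eta)}$ only puts $\hat a_{\chi,\varepsilon}$ in contact with $\widehat{a^*}$ in a region where $\widehat{a^*}(\cdot,\eta')$ is a \emph{tempered distribution}, not a function with pointwise decay, and the fact that $a^*\in S^d_{1,1}$ gives no a~priori smallness of $\widehat{a^*}(\xi',\eta')$ at $|\xi'|\gg\langle\eta'\rangle$ without the integration-by-parts mechanism you are alluding to but not providing; ``a harmless mollification'' does not bridge that. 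Until this step is estimated, you have only shown \pbref{orderN-cnd}$\Leftrightarrow$\pbref{sigma-cnd} and \pbref{orderN-cnd}$\Rightarrow$\pbref{a*-cnd}, so a further idea is needed to close the cycle. One secondary point worth flagging in your \pbref{orderN-cnd}$\Rightarrow$\pbref{a*-cnd} sum: the geometric tail $\sum_{j\ge1}2^{jM_0}p(a^{[j]})$ must in the end be dominated by a \emph{finite} combination of the constants $C_{\alpha',\beta',N'}$ from \pbref{orderN-cnd}, as claimed in \eqref{CC'-eq}; your write-up asserts this but does not pick the finitely many $(\alpha',\beta',N')$ that suffice, which is exactly the kind of book-keeping the paper singles out as the added value beyond \cite{H97}.
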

It deserves to be mentioned that
\eqref{a*-cnd} holds for $a(x,\eta)$ if and only if it holds
for $a^*(x,\eta)$ (neither \eqref{orderN-cnd} nor
\eqref{sigma-cnd} make this obvious).
But \eqref{orderN-cnd} immediately gives that the inclusion 
$\tilde S^d_{1,1}\subset \tilde S^{d'}_{1,1}$ holds for $d'>d$, as one would
expect. Condition \eqref{sigma-cnd} involves $L_2$-norms analogous to those
in the Mihlin--H{\"o}rmander multiplier theorem
and is useful for estimates.

As a small remark it is noted that \eqref{orderN-cnd}, \eqref{sigma-cnd} 
both hold either for all $\chi$ satisfying \eqref{Hsigma-eq} or for none, 
since \eqref{a*-cnd} does not depend on $\chi$.
It moreover suffices to verify \eqref{orderN-cnd} or \eqref{sigma-cnd} for
$0<\varepsilon<\varepsilon_0$ for some convenient 
$\varepsilon_0\in \,]0,1[\,$, as can be seen from the second inequality in  
Lemma~\ref{Heps-lem}, since every power $\varepsilon^p$ is bounded on the
interval $[\varepsilon_0,1]$. 

The theorem is essentially due to
L.~H{\"o}rmander, who stated the equivalence of \eqref{a*-cnd} 
and \eqref{orderN-cnd} explicitly in \cite[Thm.~4.2]{H88} and
\cite[Thm.~9.4.2]{H97}, in the latter 
with brief remarks on \eqref{sigma-cnd}. 
Equivalence with continuous extensions
$H^{s+d}\to H^s$ for all $s\in \R$ was also shown, whereas the estimate
\eqref{CC'-eq} was not mentioned.

However, a full proof of Theorem~\ref{a*-thm}
was given in \cite[Sec.~4.2]{JJ10tmp}, not only because
a heavy burden of verification was left to the reader in \cite{H97},
but more importantly because some consequences for operators defined by
vanishing frequency modulation were derived as corollaries to the proof.

It would lead too far here to give the details (that follow the line of
thought in \cite[Thm.~9.4.2]{H97}, and are available in
\cite[Sec.~4.2]{JJ10tmp});
it should suffice to mention that the final estimate \eqref{CC'-eq}
in the theorem
was derived from the proof that \eqref{orderN-cnd} implies \eqref{a*-cnd}.

In its turn, this estimate was shown to imply
that the self-adjoint class $\tilde S^d_{1,1}$, as envisaged, 
behaves nicely under full frequency modulation:

\begin{cor}[{\cite[Cor.~4.5]{JJ10tmp}}]   \label{a*-cor}
Whenever $a(x,\eta)$ belongs to $\tilde S^d_{1,1}(\Rn\times \Rn)$ and $\psi$
is a modulation function, then it holds for the adjoint symbols that
\begin{equation}
  \big(\psi(2^{-m}D_x)a(x,\eta)\psi(2^{-m}\eta) \big)^*
  \xrightarrow[m\to\infty ]{~}
  a(x,\eta)^*
\end{equation}
in the topology of $S^{d+1}_{1,1}(\Rn\times \Rn)$.
\end{cor}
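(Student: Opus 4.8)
The plan is to deduce the corollary from the self-adjointness estimate \eqref{CC'-eq} of Theorem~\ref{a*-thm} together with a soft compactness argument. For a symbol $c$ write $c_m(x,\eta):=\psi(2^{-m}D_x)c(x,\eta)\psi(2^{-m}\eta)$ for its full frequency modulation, and put $b_m:=a-a_m$; since $c\mapsto c^*$ is additive, the claim $(a_m)^*\to a^*$ in $S^{d+1}_{1,1}(\Rn\times\Rn)$ amounts to $b_m^*\to0$ there. I would establish three facts: (a) $(b_m)$ is bounded in $S^d_{1,1}$ uniformly in $m$ and $b_m\to0$ in $S^{d+1}_{1,1}$ — both are immediate from the convolution estimate behind \cite[Lem.~2.1]{JJ08vfm}, the only care being that Leibniz' rule on the factor $\psi(2^{-m}\eta)$ produces, on its support $|\eta|\approx2^m$, a gain $2^{-m|\gamma|}\le C(1+|\eta|)^{-|\gamma|}$ per $\eta$-derivative; (b) the adjoint symbols $b_m^*$ are bounded in $S^d_{1,1}$ uniformly in $m$, say $|D^\alpha_\eta D^\beta_x b_m^*|\le M_{\alpha,\beta}(1+|\eta|)^{d-|\alpha|+|\beta|}$; (c) $b_m^*\to0$ in $C^\infty(\Rn\times\Rn)$. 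Granting (b) and (c), the convergence in $S^{d+1}_{1,1}$ follows by splitting each of its seminorms over $|\eta|\ge K$ and $|\eta|\le K$: on the outer region (b) bounds the weighted supremum by $M_{\alpha,\beta}(1+K)^{-1}$, which is as small as one wishes uniformly in $m$ once $K$ is large, whereas for fixed $K$ item (c) drives the remaining contribution to $0$ as $m\to\infty$.

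For (b) I would apply \eqref{CC'-eq} with $b_m$ in place of $a$. This is legitimate since $b_m\in\tilde S^d_{1,1}$: indeed $a^*\in S^d_{1,1}$ because $a\in\tilde S^d_{1,1}$, while $a_m\in S^{-\infty }$ forces $a_m^*\in S^{-\infty }$, so $b_m^*=a^*-a_m^*\in S^d_{1,1}$, ie \eqref{a*-cnd} holds for $b_m$. Hence $|D^\alpha_\eta D^\beta_x b_m^*|\le\bigl(C_{\alpha,\beta}(b_m)+C'_{\alpha,\beta,N}(b_m)\bigr)(1+|\eta|)^{d-|\alpha|+|\beta|}$, and both factors must be controlled uniformly in $m$. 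The continuous seminorm $C_{\alpha,\beta}(b_m)$ is bounded by (a). The quantity $C'_{\alpha,\beta,N}(b_m)$ — a finite sum of constants realising the inequalities of \eqref{orderN-cnd} for $b_m$ — I would control through the observation that \emph{localisation along $\cal T$ commutes with full frequency modulation}: from \eqref{axe-eq} and $\widehat{c_m}(\xi,\eta)=\psi(2^{-m}\xi)\psi(2^{-m}\eta)\hat c(\xi,\eta)$ one reads off $(c_m)_{\chi,\varepsilon}=(c_{\chi,\varepsilon})_m$, so that $(b_m)_{\chi,\varepsilon}=a_{\chi,\varepsilon}-(a_{\chi,\varepsilon})_m$. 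Now \eqref{orderN-cnd} for $a$ bounds $|D^\alpha_\eta D^\beta_x a_{\chi,\varepsilon}|$ by $C_{\alpha,\beta,N}(a)\varepsilon^N(1+|\eta|)^{d-|\alpha|+|\beta|}$, and the convolution estimate of (a) applied with $c=a_{\chi,\varepsilon}$ (a process that leaves $\varepsilon$ untouched) bounds $|D^\alpha_\eta D^\beta_x(a_{\chi,\varepsilon})_m|$ by $C(n,\psi)\sum_{\gamma\le\alpha}C_{\alpha-\gamma,\beta,N}(a)\varepsilon^N(1+|\eta|)^{d-|\alpha|+|\beta|}$, uniformly in $m$. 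Thus $b_m$ obeys \eqref{orderN-cnd} with constants at most $C(n,\psi)$ times those of $a$, so $C'_{\alpha,\beta,N}(b_m)$ is bounded independently of $m$, and (b) follows.

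For (c) I would argue by duality and precompactness. The Fourier multiplier $e^{\im D_x\cdot D_\eta}$ on $\Rn\times\Rn$ — the one defining the passage to adjoint symbols — is continuous on $\cal S'(\Rn\times\Rn)$, and $b_m\to0$ in $S^{d+1}_{1,1}$, which embeds continuously into $\cal S'(\Rn\times\Rn)$; hence $b_m^*=e^{\im D_x\cdot D_\eta}\overline{b_m}\to0$ in $\cal S'(\Rn\times\Rn)$, in particular in $\cal D'(\Rn\times\Rn)$. By (b) the family $(b_m^*)$ is bounded in $S^d_{1,1}$, so all of its derivatives are locally equibounded and equicontinuous; by the Arzel\`a--Ascoli theorem $(b_m^*)$ is precompact in $C^\infty(\Rn\times\Rn)$, and since $0$ is then its only possible limit point there, $b_m^*\to0$ in $C^\infty(\Rn\times\Rn)$.

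I expect the genuine difficulty to be conceptual rather than computational. The assignment $c\mapsto c^*$ is \emph{not} continuous on $S^{d+1}_{1,1}$ — this discontinuity being essentially the pathology behind Ching's counter-example — so one cannot merely pass to adjoints in $a_m\to a$. The point of the self-adjointness hypothesis, used via \eqref{CC'-eq}, is that it buys \emph{one order of uniform control}: the $b_m^*$ remain bounded in $S^d_{1,1}$, not just in $S^{d+1}_{1,1}$, and this surplus order is exactly what lets the compactness argument upgrade $\cal D'$-convergence first to $C^\infty$-convergence and then, through the weight $(1+|\eta|)^{-1}$, to convergence in $S^{d+1}_{1,1}$. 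Accordingly, the technical crux is the commutation identity $(c_m)_{\chi,\varepsilon}=(c_{\chi,\varepsilon})_m$ and the resulting stability of \eqref{orderN-cnd} under frequency modulation, which transports the uniform bounds from $a$ to every $b_m$.
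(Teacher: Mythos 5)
The overall plan is close to the paper's: you correctly identify \eqref{CC'-eq} as the engine, the decomposition $b_m=a-a_m$ as the right object, the commutation identity $(b_m)_{\chi,\varepsilon}=a_{\chi,\varepsilon}-(a_{\chi,\varepsilon})_m$ as a key structural fact, and the uniform transport of \eqref{orderN-cnd} to $b_m$ in step (b). But step (c) and the final splitting contain a genuine gap: Arzel\`a--Ascoli gives $b_m^*\to 0$ in $C^\infty(\Rn\times\Rn)$, which means \emph{locally} uniform convergence of all derivatives, whereas the seminorms of $S^{d+1}_{1,1}$ require a supremum over $x\in\Rn$ with $\eta$ in a bounded set. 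That set $\{(x,\eta):|\eta|\le K\}$ is unbounded in $x$, so $C^\infty$-convergence does not control it. A translated bump $g(x-m)$ goes to $0$ in $C^\infty(\Rn\times\Rn)$ while its $S^0_{1,1}$-seminorm is constant, and nothing in your (b)+(c) excludes such behaviour for $b_m^*$; the boundedness of $(b_m^*)$ in $S^d_{1,1}$ does not decay in $x$, so it cannot be used to truncate in $x$.

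The repair is quantitative and uses the support structure you already uncovered, making the soft compactness argument unnecessary. Since $\hat b_m(\xi,\eta)=(1-\psi(2^{-m}\xi)\psi(2^{-m}\eta))\hat a(\xi,\eta)$ vanishes where $|\xi|\le r2^m$ and $|\eta|\le r2^m$, while $\chi(\xi+\eta,\varepsilon\eta)\ne 0$ forces $|\xi+\eta|\le\varepsilon|\eta|\le|\eta|$, hence $|\xi|\le 2|\eta|$, the non-vanishing of $\widehat{(b_m)_{\chi,\varepsilon}}$ requires $|\eta|\ge r2^{m-1}$, uniformly in $\varepsilon\in\left]0,1\right[$. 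Thus $(b_m)_{\chi,\varepsilon}(\cdot,\eta)\equiv 0$ for $|\eta|<r2^{m-1}$, so when the uniform bound from (b) for \eqref{orderN-cnd} at order $d$ is divided by $(1+|\eta|)$ to read it at order $d+1$, the supremum over the remaining region $|\eta|\ge r2^{m-1}$ picks up the explicit factor $(1+r2^{m-1})^{-1}\to 0$. In other words, the constants $C'_{\alpha,\beta,N}(b_m)$ realising \eqref{orderN-cnd} at order $d+1$ tend to $0$. Now apply \eqref{CC'-eq} to $b_m$ \emph{at order $d+1$}: the first term $C_{\alpha,\beta}(b_m)$ is a continuous seminorm on $S^{d+1}_{1,1}$ and hence tends to $0$ by your (a), and the second term has just been shown to tend to $0$ as well. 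Therefore $b_m^*\to 0$ in $S^{d+1}_{1,1}$ directly, with no appeal to precompactness. Your final paragraph correctly identifies that the self-adjointness hypothesis buys one uniform order of control; the quantitative argument above shows how that extra order is spent, and it is the line the paper's estimate \eqref{CC'-eq} is designed to carry.
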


Returning to the proof of Proposition~\ref{GB-prop}, it is clear that the
above establishes \eqref{*conv-eq} under the weaker assumption that $a\in
\tilde S^d_{1,1}$. So by repeating the rest of the proof there, one obtains
the first main result on $\tilde S^d_{1,1}$:

\begin{thm}[{\cite[Thm.~4.6]{JJ10tmp}}]   \label{tildeS-thm}
For every $a(x,\eta)$ of type $1,1$ belonging to the self-adjoint
subclass $\tilde S^d_{1,1}(\Rn\times \Rn)$, that is characterised in
Theorem~\ref{a*-thm}, the operator 
\begin{equation}
  a(x,D)\colon \cal S'(\Rn)\to \cal S'(\Rn)
\end{equation}
is everywhere defined and continuous, and it equals the adjoint of 
$\OP(e^{\im D_x\cdot D_\eta}\bar a(x,\eta))$.
\end{thm}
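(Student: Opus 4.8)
The plan is to identify the operator $a(x,D)$ defined by vanishing frequency modulation with the adjoint of $b(x,D):=\OP(e^{\im D_x\cdot D_\eta}\bar a(x,\eta))$, carrying over the argument of Proposition~\ref{GB-prop} but with the twisted diagonal condition \eqref{tdc'-cnd} replaced throughout by the weaker hypothesis $a\in\tilde S^d_{1,1}$. First I would invoke Theorem~\ref{a*-thm}, which by assumption gives the adjoint symbol $b=a^*=e^{\im D_x\cdot D_\eta}\bar a$ as an element of $S^d_{1,1}(\Rn\times\Rn)$; hence $b(x,D)\colon\cal S(\Rn)\to\cal S(\Rn)$ is continuous and its adjoint $b(x,D)^*\colon\cal S'(\Rn)\to\cal S'(\Rn)$ is an everywhere defined, continuous linear map. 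The whole theorem then amounts to showing $a(x,D)=b(x,D)^*$ in the sense of Definition~\ref{vfm-defn}.

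Next I would set $a^m_\psi(x,\eta)=\psi(2^{-m}D_x)a(x,\eta)\psi(2^{-m}\eta)$; this lies in $S^{-\infty}$, so $\OP(a^m_\psi)$ is defined on all of $\cal S'(\Rn)$ and coincides with its own double adjoint. By the convolution estimate \cite[Lem.~2.1]{JJ08vfm} one has $a^m_\psi\to a$ in $S^{d+1}_{1,1}$. The decisive point\,---\,and the only place where the membership $a\in\tilde S^d_{1,1}$ is genuinely used\,---\,is the convergence of the associated adjoint symbols
\[
  (a^m_\psi)^*=e^{\im D_x\cdot D_\eta}\overline{a^m_\psi(x,\eta)}
  \xrightarrow[m\to\infty]{~}e^{\im D_x\cdot D_\eta}\overline{a(x,\eta)}=b
  \quad\text{in}\quad S^{d+1}_{1,1}(\Rn\times\Rn),
\]
which is exactly Corollary~\ref{a*-cor} and thus rests on the uniform seminorm bound \eqref{CC'-eq} of Theorem~\ref{a*-thm}. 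This is the precise analogue of \eqref{*conv-eq} used in the proof of Proposition~\ref{GB-prop}.

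The argument then closes by duality just as in Proposition~\ref{GB-prop}. The bilinear continuity of $\OP$ in \eqref{SdS-eq} gives $\OP((a^m_\psi)^*)\varphi\to b(x,D)\varphi$ in $\cal S(\Rn)$ for every $\varphi\in\cal S(\Rn)$, so that for $u\in\cal S'(\Rn)$
\[
  \scal{b(x,D)^*u}{\varphi}=\scal{u}{b(x,D)\varphi}
  =\lim_{m\to\infty}\scal{u}{\OP((a^m_\psi)^*)\varphi}
  =\lim_{m\to\infty}\scal{\OP(a^m_\psi)u}{\varphi},
\]
using $\OP((a^m_\psi)^*)^*=\OP(a^m_\psi)$ on $\cal S'(\Rn)$. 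Since the left-hand side does not depend on $\psi$, the limit $a_\psi(x,D)u$ in \eqref{apsi-eq} exists in $\cal S'(\Rn)$, hence in $\cal D'(\Rn)$, for every $u\in\cal S'(\Rn)$ and is independent of the modulation function; by Definition~\ref{vfm-defn} this gives $\cal S'(\Rn)\subset D(a(x,D))$ with $a(x,D)u=b(x,D)^*u$, and continuity of $a(x,D)\colon\cal S'\to\cal S'$ is inherited from $b(x,D)^*$.

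I expect the genuine content to be entirely concentrated in Corollary~\ref{a*-cor} and its underlying estimate \eqref{CC'-eq}: absent the self-adjoint hypothesis there is no reason for the modulated adjoint symbols $(a^m_\psi)^*$ to remain in a bounded subset of any fixed $S^{d'}_{1,1}$, and controlling them uniformly in $m$ is the whole difficulty. Once that convergence is granted, the rest is the same clean duality argument that already worked under \eqref{tdc'-cnd}, so no new obstacle arises in passing from that condition to membership in $\tilde S^d_{1,1}$.
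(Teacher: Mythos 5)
Your proof is correct and follows essentially the same route as the paper: the paper likewise observes that Corollary~\ref{a*-cor} (resting on the seminorm estimate \eqref{CC'-eq} of Theorem~\ref{a*-thm}) supplies the convergence of adjoint symbols \eqref{*conv-eq} under the weaker hypothesis $a\in\tilde S^d_{1,1}$, and then simply repeats the duality argument from Proposition~\ref{GB-prop} verbatim. Your identification of where the self-adjointness hypothesis actually enters\,---\,namely only in securing that the modulated adjoint symbols converge in some fixed $S^{d'}_{1,1}$\,---\,matches the paper's presentation precisely.
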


\subsection{Paradifferential decompositions for the self-adjoint subclass}
  \label{saSd123-sssect}

The result below gives a generalisation of Theorem~\ref{tdc-thm} to the
operators $a(x,D)$ that merely fulfil the twisted diagonal condition
\eqref{Hsigma-eq} for every real $\sigma$ instead of the strict condition
\eqref{tdc'-cnd}. 

\begin{thm}[{\cite[Thm.~6.7]{JJ10tmp}}]   \label{sigma-thm}
When $a(x,\eta)$ belongs to 
$\tilde S^d_{1,1}(\Rn\times \Rn)$, cf Theorem~\ref{a*-thm},
then $a(x,D)$
is an everywhere defined continuous linear map 
\begin{equation}
 a(x,D)\colon \cal S'(\Rn)\to\cal S'(\Rn),  
\end{equation}
with its adjoint $a(x,D)^*$ also in $\OP(\tilde S^d_{1,1}(\Rn\times \Rn))$.
For each modulation function $\psi$, the operator fulfils 
\begin{equation}
  a(x,D)u=a^{(1)}_\psi(x,D)u+a^{(2)}_\psi(x,D)u+a^{(3)}_\psi(x,D)u,
\end{equation}
where the operators on the right-hand side all belong to 
$\OP(\tilde S^d_{1,1})$; 
they are given by 
the series in \eqref{a1-eq}, \eqref{a2-eq}, \eqref{a3-eq} 
that converge rapidly in $\cal S'(\Rn)$ for every $u\in \cal S'(\Rn)$.
\end{thm}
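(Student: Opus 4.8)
The plan is to deduce Theorem~\ref{sigma-thm} by combining the characterisation in Theorem~\ref{a*-thm} with the already-established mechanism behind Theorem~\ref{tdc-thm}, replacing the strict twisted diagonal condition \eqref{tdc'-cnd} by the weaker assumption $a\in\tilde S^d_{1,1}$ throughout. The first part---everywhere-definedness and continuity of $a(x,D)\colon\cal S'\to\cal S'$, together with the fact that $a(x,D)^*=\OP(e^{\im D_x\cdot D_\eta}\bar a(x,\eta))\in\OP(\tilde S^d_{1,1})$---is exactly Theorem~\ref{tildeS-thm}, so that step is cited rather than reproved; the content to be added is the paradifferential decomposition and the $\cal S'$-convergence of its three series for \emph{every} $u\in\cal S'$.

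First I would set up, for a fixed modulation function $\psi$, the dyadic corona decomposition of Section~\ref{dyadic-sssect} and form the three operators $a^{(1)}_\psi(x,D)$, $a^{(2)}_\psi(x,D)$, $a^{(3)}_\psi(x,D)$ defined by the series \eqref{a1-eq}, \eqref{a2-eq}, \eqref{a3-eq}, with associated symbols \eqref{a1'-eq}--\eqref{a3'-eq} in $S^{d+1}_{1,1}$. As noted after \eqref{a3'-eq}, $a^{(1)}(x,\eta)$ and $a^{(3)}(x,\eta)$ satisfy the strict condition \eqref{tdc'-cnd}, so Theorem~\ref{tdc-thm} (or rather Corollary~\ref{a123-cor} and the discussion around \eqref{a1u-eq}) already gives that the series for $a^{(1)}_\psi(x,D)u$ and $a^{(3)}_\psi(x,D)u$ converge rapidly in $\cal S'$ for all $u$, with $a^{(1)},a^{(3)}\in\OP(S^d_{1,1})$ and adjoints of type $1,1$; the same convergence is obtained via Proposition~\ref{corona-prop} (spectra in coronas $R_h2^k\le|\xi|\le\tfrac54R2^k$) and Proposition~\ref{cutoff-prop} (polynomial pointwise bounds $|a^{k-h}(x,D)u_k(x)|\le c2^{k(N+d_+)}(1+|x|)^N$), which feed into Lemma~\ref{corona-lem}. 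By Corollary~\ref{a123-cor}, then, only the middle series requires an argument, and since Theorem~\ref{tildeS-thm} already places $u$ in $D(a(x,D))$, the identity \eqref{a123-eq} of Proposition~\ref{a123-prop} forces $a^{(2)}_\psi(x,D)u=a(x,D)u-a^{(1)}_\psi(x,D)u-a^{(3)}_\psi(x,D)u$ to converge in $\cal D'$; the real work is upgrading this to rapid $\cal S'$-convergence and to $a^{(2)}\in\OP(\tilde S^d_{1,1})$.

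The main obstacle is precisely the middle term $a^{(2)}_\psi(x,D)u$, because without \eqref{tdc'-cnd} its summands a priori only satisfy the dyadic \emph{ball} condition $|\xi|\le 2R2^k$ of Proposition~\ref{ball-prop}, not a corona condition, so Lemma~\ref{corona-lem} does not apply directly. Here is where $a\in\tilde S^d_{1,1}$ must be exploited through the localisation $a_{\chi,\varepsilon}$ and condition \eqref{sigma-cnd}/\eqref{Hsigma-eq}: splitting $a_k$ into the part localised near the twisted diagonal and its complement, the complement contributes corona-supported terms amenable to Lemma~\ref{corona-lem}, while for the near-diagonal part the Mihlin--H{\"o}rmander-type bound \eqref{FaMH-eq} of Theorem~\ref{Fa-thm} combines with the $L_2$-estimate \eqref{Hsigma-eq} (for $\sigma$ chosen larger than any prescribed threshold, using that $a$ meets it for \emph{every} $\sigma$) to produce symbol-factor bounds $F\le c_\sigma\,2^{kd}\varepsilon^{\sigma+n/2-|\alpha|}$; choosing $\varepsilon\sim 2^{-k}$ then yields arbitrarily rapid decay in $k$, which dominates the polynomial growth from Proposition~\ref{cutoff-prop} and gives rapid $\cal S'$-convergence. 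This is exactly the argument underlying \cite[Thm.~6.7]{JJ10tmp}, and it also shows $a^{(2)}(x,\eta)\in S^d_{1,1}$; that $a^{(2)}\in\tilde S^d_{1,1}$ follows since $a^{(1)},a^{(3)}\in\tilde S^d_{1,1}$ and $a^{(2)}=a-a^{(1)}-a^{(3)}$ (up to symbols in $S^{-\infty}$), the class $\tilde S^d_{1,1}$ being a vector space. Finally, independence of $\psi$ in \eqref{a123'-eq} is inherited from the $\psi$-independence of $a(x,D)u$ (Theorem~\ref{tildeS-thm}) together with the $\psi$-independent right-hand sides after passing to the limit, completing the proof.
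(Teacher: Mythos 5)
Your proposal follows essentially the same route as the paper: cite Theorem~\ref{tildeS-thm} for $\cal S'$-continuity and the adjoint property, dispatch $a^{(1)}$ and $a^{(3)}$ via the strict condition \eqref{tdc'-cnd} and Theorem~\ref{tdc-thm}, then attack $a^{(2)}$ by localising each summand near $\cal T$ with a $k$-dependent $\varepsilon$, bounding the symbol factor of the near-diagonal part through Theorem~\ref{Fa-thm} and the $L_2$-conditions \eqref{sigma-cnd} for arbitrarily large $\sigma$, and handling the far-from-diagonal remainder with Lemma~\ref{corona-lem}. That is the paper's mechanism (there the argument is organised slightly differently by first splitting $a=(a-a_{\chi,1})+a_{\chi,1}$ to simplify supports, giving a total of 9 series, but this is only a notational convenience). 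Your observation that $a^{(2)}_\psi=a-a^{(1)}_\psi-a^{(3)}_\psi\in\tilde S^d_{1,1}$ because $\tilde S^d_{1,1}$ is a vector space and $a^{(1)}_\psi,a^{(3)}_\psi\in\tilde S^d_{1,1}$ by Theorem~\ref{tdc-thm} is a clean shortcut that the paper's sketch does not explicitly state.

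Two points should be flagged. First, the choice ``$\varepsilon\sim 2^{-k}$'' would actually break the corona half of the argument: the remainder $b_k(x,D)v_k$ has spectral support with inner radius of size $\varepsilon|\eta|\sim\varepsilon 2^k$, so $\varepsilon=2^{-k}$ produces inner radii of order $1$ that do \emph{not} grow with $k$, and the hypothesis \eqref{DAC-cnd} of Lemma~\ref{corona-lem} (inner radius $\gtrsim 2^{j\theta_0}$, $\theta_0>0$) fails. One must take $\varepsilon=2^{-k\theta}$ with a fixed $\theta\in\,]0,1[$ (the paper uses $\theta=\tfrac12$), which yields $\theta_0=1-\theta>0$; since \eqref{Hsigma-eq} holds for all $\sigma$, the factor $\varepsilon^{\sigma+n/2-|\alpha|}=2^{-k\theta(\sigma+n/2-|\alpha|)}$ still supplies arbitrarily rapid decay for the near-diagonal part once $\sigma$ is chosen large. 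Second, your closing sentence handles $\psi$-independence of the right-hand side of \eqref{a123'-eq} by appeal to Theorem~\ref{tildeS-thm}, but that only gives independence of the \emph{sum}; the additional point the paper makes is that the remainders appearing in the $m\to\infty$ limit (analogous to the primed sum in \eqref{a1u-eq}, for the $a^{(2)}$-symbol) themselves tend to $0$ in $\cal S'$, which is what identifies each series with the corresponding type $1,1$-operator. This requires the full generality of Proposition~\ref{cutoff-prop} and is not merely a consequence of $\psi$-independence of $a(x,D)u$.
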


The proof of this theorem is quite lengthy, and therefore only sketched
here. First of all the continuity on $\cal S'$ is obtained from
Theorem~\ref{tildeS-thm}, of course. The other statements involve a
treatment of 9 infinite series, 
departing from the splitting obtained from \eqref{axe-eq} with
$\varepsilon=1$, 
\begin{equation}
  a(x,\eta)=(a(x,\eta)-a_{\chi,1}(x,\eta))+ a_{\chi,1}(x,\eta).
\end{equation}
Here the difference $a-a_{\chi,1}$ 
fulfils the twisted diagonal condition \eqref{tdc'-cnd}
with $B=1$, so this term is covered by Theorem~\ref{tdc-thm} 
(that involves three series). 

The remainder $a_{\chi,1}(x,\eta)$ is treated through the
paradifferential decomposition \eqref{a123-eq}. However, to simplify
notation it is useful to note that $\hat a_{\chi,1}$ is supported by the set
\begin{equation}
  \bigl\{\,(\xi,\eta) \bigm| \max(1,|\xi+\eta|)\le |\eta|\,\bigr\}.
\end{equation}
It therefore suffices to prove the theorem for such symbols $a(x,\eta)$.
Here $a^{(1)}_\psi(x,D)$ and $a^{(3)}_\psi(x,D)$ are again covered by
Theorem~\ref{tdc-thm}; but the term $a^{(2)}_\psi(x,D)$ is first split into
two using \eqref{a2'-eq}, where each term is subjected to
H{\"o}rmander's localisation to $\cal T$ (now applied for the second time).

More specifically, \eqref{a2'-eq} gives rise to the series
\begin{equation}
  \sum_{k=0}^\infty (a^k-a^{k-h})(x,D)u_k,
\qquad
\sum_{k=1}^\infty a_k(x,D)(u^{k-1}-u^{k-h});
  \label{2a2-eq}
\end{equation}
they can be treated in much the same way, so only the latter will be
discussed here. Set 
\begin{equation}
  v_k=u^{k-1}-u^{k-h}
  =\cal F^{-1}((\Phi(2^{1-k}\cdot )-\Phi(2^{h-k}\cdot))\hat u).
\end{equation}
Using the cut-off function $\chi$ from \eqref{chi1-eq}--\eqref{chi3-eq} to set
\begin{equation}
  \hat a_{k,\chi,\varepsilon}(\xi,\eta)=
  \hat a(\xi,\eta)\Phi(2^{-k}\xi)\chi(\xi+\eta,\varepsilon\eta),
  \label{akke-id}
\end{equation}
the last of the above series is split into two sums by writing
\begin{equation}
  a_k(x,D)v_k=a_{k,\chi,\varepsilon}(x,D)v_k+b_k(x,D)v_k .
\end{equation}
Of course the remainder $b_k(x,D)v_k$ also depends on $\varepsilon$, which
with $\varepsilon=2^{-k\theta}$, say 
for $\theta=1/2$ is taken to vary with the summation index (as also done in
some proofs by L.~H{\"o}rmander). 

Convergence of $\sum b_k(x,D)v_k$ is obtained from Lemma~\ref{corona-lem},
that 
this time applies with $\theta_0=1-\theta=1/2$ and $\theta_1=1$, as can be
seen with a minor change of the argument for Proposition~\ref{corona-prop}.
The polynomial growth of its terms follows easily from similar
control of $a_k$ and of $a_{k,\chi,\varepsilon}$.

To control the $a_{k,\chi,\varepsilon}(x,D)v_k$ it is advantageous to adopt
the factorisation inequality,
\begin{equation}
  a_{k,\chi,\varepsilon}(x,D)v_k\le F_{a_{k,\chi,\varepsilon}}(N,R2^k;x)
  v_k^*(N,R2^k;x),
\end{equation}
where the symbol factor is controlled by certain $L_2$-norms according to 
Theorem~\ref{Fa-thm}.

Indeed, a direct comparison of the $L_2$-conditions 
in Theorem~\ref{Fa-thm} with those in Theorem~\ref{a*-thm} reveal that they
only differ in the domain of integration. But the
integration area in Theorem~\ref{Fa-thm} can be covered by a fixed finite
number of the annuli appearing in the $L_2$-conditions for 
$a\in \tilde S^d_{1,1}$\,---\,and these furthermore yield 
control by powers of $\varepsilon$; cf Theorem~\ref{a*-thm}.
It is therefore not surprising that the above gives an estimate of the form
\begin{equation}
  |a_{k,\chi,\varepsilon}(x,D)v_k(x)|\le  cv^*_k(N,R2^k;x)
  \big(\sum_{|\alpha|\le N+[n/2]+1}c_{\alpha,\sigma}
  \varepsilon^{\sigma+n/2-|\alpha|}\big)
  (R2^k)^{d}.
  \label{akkev-eq}
\end{equation}
Here $N=\order_{\cal S'} (\hat u)$, cf \eqref{Sorder-eq}, so for $\theta=1/2$ the polynomial growth of $v_k^*$,
cf \eqref{u*xN-eq}, entails
\begin{equation}
  |a_{k,\chi,2^{-k\theta}}(x,D)v_k(x)| 
  \le c(1+|x|)^N 2^{-k(\sigma-1-2d-3N)/2}.
  \label{akkev'-eq}
\end{equation}
Now since $a\in \tilde S^d_{1,1}$ one can take $\sigma$ such that
$\sigma>3N+2d+1$, whence
$\sum_k\dual{a_{k,\chi,\varepsilon}(x,D)v_k}{\varphi}$ 
converges rapidly for $\varphi\in \cal S$.
Thereby all series in the theorem converge as claimed.

Finally, it only remains to note that the remainder terms appearing in
connection with the operator $a^{(2)}_\psi(x,D)$ tend to $0$ for
$m\to\infty $, also under the weak assumption that $a\in \tilde
S^d_{1,1}$. This 
can be seen with an argument analogous to the one for Theorem~\ref{tdc-thm}, 
by using the full generality of Proposition~\ref{cutoff-prop}.

\bigskip

The paradifferential decomposition, that was analysed above, is applied in
the $L_p$-estimates in the following sections.

\section{Domains of type $\mathbf{1},\mathbf{1}$-operators} 
\label{dom-ssect}\noindent
For the possible domains of type $1,1$-operators, the scale $F^{s}_{p,q}(\Rn)$
of Lizorkin--Triebel
spaces  was recently shown to play a role, for
it was proved in \cite{JJ04DCR,JJ05DTL} that for all $p\in
[1,\infty[\,$, \emph{every} $a\in S^d_{1,1}$ gives a bounded linear map 
\begin{equation}
  F^d_{p,1}(\Rn)\xrightarrow[]{a(x,D)} L_p(\Rn).
  \label{Fdp1-eq}
\end{equation}
The reader may refer to Section~\ref{NN-ssect} for a review of the definition
and basic properties of the $F^{s}_{p,q}$ spaces and of the related Besov
spaces $B^{s}_{p,q}$.

The result in \eqref{Fdp1-eq} is a substitute of boundedness 
$H^d_p\to L_p$, or of $L_p$-boundedness for $d=0$
(neither of which can hold in general because of Ching's counter-example
\cite{Chi72}, recalled in Lemma~\ref{cex-lem}). Indeed, 
$H^s_p=F^s_{p,2}$ for $1<p<\infty$ whereas $F^{s}_{p,1}\subsetneq F^s_{p,q}$
for $q>1$, so \eqref{Fdp1-eq} means that $a(x,D)$ is bounded from a
sufficiently small subspace of $H^d_p$.

Moreover, inside the $F^{s}_{p,q}$ and $B^{s}_{p,q}$ scales,
\eqref{Fdp1-eq} gives \emph{maximal} domains for
$a(x,D)$ in $L_p$, for it was noted in \cite[Lem.~2.3]{JJ05DTL} 
that Ching's 
operator is discontinuous $F^d_{p,q}\to\cal D'$ 
and $B^{d}_{p,q}\to\cal D'$ as soon as $q>1$. This follows as in  
Lemma~\ref{cex-lem} above by simply calculating the norms of $v_N$ in these
spaces.

In comparison G.~Bourdaud \cite{Bou83,Bou88}
showed the borderline result that every $a(x,D)$ in $\OP(S^0_{1,1})$ 
has a bounded extension 
\begin{equation}
  A\colon B^0_{p,1}\to L_p \quad\text{for}\quad1\le p\le \infty .
\end{equation}
In view of the embedding $B^s_{p,1}\hookrightarrow F^{s}_{p,1}$ valid for
all finite $p\ge 1$, the result in \eqref{Fdp1-eq} is sharper already for
$d=0$ (unless $p=\infty $)\,---\,and for $1\le p<\infty $ 
the best possible within the scales $B^{s}_{p,q}$, $F^{s}_{p,q}$ 
for the full class $\OP(S^d_{1,1})$; cf the above.
Thus the $F^{s}_{p,q}$-spaces with $q=1$ are indispensable for a sharp
description of the borderline $s=0$ for type $1,1$-operators; cf 
\cite[Rem.~1.1]{JJ05DTL}.

The above considerations can be summed up thus:

\begin{thm}[{\cite{JJ05DTL}}]
  \label{Fdp1-thm}
Every $a\in S^{d}_{1,1}(\Rn\times\Rn)$, $d\in\R$, 
yields a bounded operator 
\begin{align}
  a(x,D)&\colon F^{d}_{p,1}(\Rn)\to L_p(\Rn)
         \quad\text{for}\quad p\in[1,\infty[,
  \label{Fp1-eq}\\
  a(x,D)&\colon B^{d}_{\infty,1}(\Rn)\to L_\infty(\Rn).
  \label{Bi-eq}
\end{align}
The class $\op{OP}(S^d_{1,1})$ contains operators 
$a(x,D)\colon \cal S(\Rn)\to\cal
D'(\Rn)$, that are discontinuous when $\cal S(\Rn)$ is given the induced
topology from any of the Triebel--Lizorkin spaces $F^d_{p,q}(\Rn)$
or Besov spaces $B^d_{p,q}(\Rn)$ with $p\in [1,\infty]$ and
$q>1$ (while $\cal D'$ has the usual topology).
\end{thm}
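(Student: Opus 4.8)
The plan is to establish the boundedness in \eqref{Fp1-eq}--\eqref{Bi-eq} by feeding the pointwise estimates of Section~\ref{pe-ssect} into the paradifferential decomposition of Section~\ref{dyadic-sssect}, and to deduce the optimality claim from Ching's counterexample recalled in Lemma~\ref{cex-lem}. Throughout, $\psi$ is a fixed modulation function with the induced Littlewood--Paley partition $1=\psi+\sum_{k\ge1}\varphi(2^{-k}\cdot)$, and $u_k=\varphi(2^{-k}D)u$.

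First I would show that for $u\in F^d_{p,1}(\Rn)$ (resp.\ $u\in B^d_{\infty,1}(\Rn)$ when $p=\infty$) the three series defining $a^{(1)}_\psi(x,D)u$, $a^{(2)}_\psi(x,D)u$, $a^{(3)}_\psi(x,D)u$ are dominated pointwise by one and the same expression. Applying Proposition~\ref{a123pe-prop} term by term and summing over the dyadic index — using $\sum_{j\ge k}2^{-jM}\le C_M2^{-kM}$ for a fixed $M\ge1$ on the $a^{(3)}$-part, and an elementary reindexing on the $a^{(2)}$-part, under which the factors $2^{\pm ld}$ cancel so that no sign condition on $d$ intervenes — one obtains
\[
 |a^{(i)}_\psi(x,D)u(x)|\le C_a\sum_{k=0}^\infty 2^{kd}\,u_k^*(N,R2^k;x),\qquad i=1,2,3,
\]
where $C_a$ is a continuous seminorm of $a$ in $S^d_{1,1}$, $R$ is as in \eqref{Rr-eq}, and $N>0$ is at our disposal. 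Hence, by Proposition~\ref{a123-prop}, once the right-hand side is known finite in $L_p$ it follows that $u\in D(a(x,D))$, that $a(x,D)u=\sum_i a^{(i)}_\psi(x,D)u$ with the series converging in $L_p$ by dominated convergence, and that $|a(x,D)u(x)|\le 3C_a\sum_k2^{kd}u_k^*(N,R2^k;x)$ a.e. (The $\psi$-independence, hence genuine membership of $D(a(x,D))$, and the identification of the resulting bounded operator with the one of Definition~\ref{vfm-defn} are a consistency matter handled by the results of Section~\ref{uni-ssect}; the spectral localisations of Propositions~\ref{corona-prop}--\ref{ball-prop} and Proposition~\ref{cutoff-prop} are what make these steps legitimate, e.g.\ that the partial sums lie in $\cal S'$ and converge in $\cal D'$.)

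It then remains to estimate the majorant. For $p=\infty$ this is immediate, since $u_k^*(N,R2^k;x)\le\nrm{u_k}{\infty}$, so $\sum_k2^{kd}u_k^*\le\sum_k2^{kd}\nrm{u_k}{\infty}=\nrm{u}{B^d_{\infty,1}}$, which is \eqref{Bi-eq}. For $1\le p<\infty$ one chooses $N>n/p$ and invokes the maximal inequality
\[
 \Big\|\sum_{k=0}^\infty 2^{kd}\,u_k^*(N,R2^k;\cdot)\Big\|_p\le C\,\Big\|\sum_{k=0}^\infty 2^{kd}|u_k|\Big\|_p=C\,\nrm{u}{F^d_{p,1}}\,,
\]
obtained by dominating $u_k^*(N,R2^k;\cdot)$ by $c\,M_tu_k$, the $t$-maximal function with $0<t<\min(1,p)$ and $Nt>n$ (legitimate because $\cal F u_k$ sits in a ball of radius $\sim 2^k$), and then applying the Fefferman--Stein vector-valued maximal inequality with exponents $(p/t,1/t)$ — or, to stay within the elementary framework of \eqref{u*Lp-ineq}, its substitute from the cited works. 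This gives \eqref{Fp1-eq}. I expect this maximal inequality to be the one genuinely analytic point of the argument, the rest being bookkeeping of the paradifferential series; it is also precisely where the exponent $q=1$ is decisive, since the left side of the last display is the $F^d_{p,1}$-quasinorm and admits no replacement by an $\ell_q$-sum with $q>1$.

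For the optimality statement, take Ching's symbol $a_\theta\in S^d_{1,1}$ of \eqref{Ching-eq} with the auxiliary function $A$ as in Lemma~\ref{cex-lem}; that lemma supplies Schwartz functions $v_N$ with $a_\theta(x,D)v_N\to v$ in $\cal S(\Rn)$, where $v\ne0$, hence $a_\theta(x,D)v_N\to v\ne0$ in $\cal D'(\Rn)$. It therefore suffices to check that $\nrm{v_N}{F^d_{p,q}}\to0$ and $\nrm{v_N}{B^d_{p,q}}\to0$ for every $p\in[1,\infty]$ and every $q>1$: then $v_N\to0$ in those spaces while $a_\theta(x,D)v_N\not\to0$ in $\cal D'$, so $a_\theta(x,D)\colon\cal S\to\cal D'$ is discontinuous for the induced topologies. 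For the norm computation one notes that the $j$-th summand of $v_N$ has spectrum in $B(2^j\theta,\tfrac1{20})$, i.e.\ in the $j$-th dyadic corona, so that $2^{jd}|\Phi_j(D)v_N(x)|\le C(j\log N)^{-1}g(x)$ for $N\le j\le N^2$, with $g\in\cal S$ (essentially $|v|$ up to bounded Littlewood--Paley factors) and with negligible contribution from the finitely many $j$ near the endpoints; consequently both quasinorms of $v_N$ are dominated by $\frac{C\nrm{g}{p}}{\log N}\bigl(\sum_{j=N}^{N^2}j^{-q}\bigr)^{1/q}$, which tends to $0$ exactly because $q>1$ forces $\sum_{j\ge N}j^{-q}=O(N^{1-q})$. (For $q=1$ this quantity is instead comparable to $1$, which is why \eqref{Fp1-eq}--\eqref{Bi-eq} cannot be improved.) This part is routine once the reduction to Lemma~\ref{cex-lem} is made.
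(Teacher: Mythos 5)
Your proof follows the same architecture as the paper's own (Section~\ref{dom-ssect}): the paradifferential splitting of Proposition~\ref{a123-prop} combined with the pointwise estimates of Proposition~\ref{a123pe-prop}, the Peetre--Fefferman--Stein maximal inequality (Lemma~\ref{u*M-lem}) for $1\le p<\infty$ with the direct $L_\infty$-bound for $p=\infty$, density to remove the $\psi$-dependence, and Ching's symbol from Lemma~\ref{cex-lem} for sharpness. The only differences are presentational: you consolidate the three pointwise bounds into a single majorant $\sum_k 2^{kd}u_k^*(N,R2^k;\cdot)$ before integrating rather than treating the series term by term, and you carry out the $F^d_{p,q}/B^d_{p,q}$ norm computation for $v_N$ explicitly where the paper simply cites the analogous calculation from \cite{JJ05DTL}.
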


By the remarks prior to the statements, it remains to discuss the
boundedness. Unlike \cite{JJ05DTL} that was based on Marschall's inequality
mentioned in Remark~\ref{Marschall-rem},
the present exposition of the proof will be indicated with as much use of
the pointwise estimates in Section~\ref{pe-ssect} as possible. 

However, it is still necessary to invoke the Hardy--Littlewood maximal
function, which is given on locally integrable functions $f(x)$ by
\begin{equation}
  Mf(x)=\sup_{R>0}\frac{1}{\op{meas}(B(x,R))} \int_{B(x,R)} |f(y)| \,dy.
\end{equation}
This is needed for the following well-known inequality:

\begin{lem}   \label{u*M-lem}
When $0<p<\infty $, $0<q\le \infty $ and $N>n/\min(p,q)$ are
given, there is a constant $c$ such that
\begin{equation}
  \int_{\Rn} \nrm{u_k^*(N,R2^k;x )}{\ell_{q}}^p\,dx
  \le 
  c \int_{\Rn} \nrm{u_k(x)}{\ell_{q}}^p\,dx
  \label{u*M-eq}
\end{equation}
for every sequence $(u_k)_{k\in \N_0}$ in $L_p(\Rn)$ satisfying
$\supp\hat u_k\subset \Bbar(0,R2^k)$, $k\ge 0$.
\end{lem}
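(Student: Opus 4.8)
\smallskip
The plan is to reduce \eqref{u*M-eq} to the vector-valued Fefferman--Stein maximal inequality, after first dominating each $u_k^*$ pointwise by a power of the Hardy--Littlewood maximal function of $u_k$, in the spirit of J.~Peetre. To this end, fix once and for all an exponent $t$ with
$$ \tfrac nN < t < \min(p,q); $$
such $t$ exists precisely because the hypothesis $N>n/\min(p,q)$ renders this interval non-empty. (When $q=\infty$ the condition reads $n/N<t<p$, and the $\ell_\infty$-norms below are interpreted as $\sup_k$.) Note that the $u_k$ are bounded continuous functions, being band-limited, so $u_k^*(N,R2^k;x)$ is finite for every $x$.

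The core step, and the only one that is not routine, is the \emph{Peetre inequality} in dilation-invariant form: for each $k$,
$$ u_k^*(N,R2^k;x)\le c\,\bigl(M(|u_k|^t)(x)\bigr)^{1/t}, \qquad x\in\Rn, $$
with $c$ independent of $k$, of $R\ge 1$, and of the function $u_k$ itself. Here one uses that $\supp\hat u_k\subset\Bbar(0,R2^k)$: choosing $\Phi\in C^\infty_0(\Rn)$ with $\Phi=1$ on $\Bbar(0,1)$, one has $u_k=u_k*g$ with $g(z)=(R2^k)^n\check\Phi(R2^k z)$, whence for every $y$
$$ \frac{|u_k(x-y)|}{(1+R2^k|y|)^N}\le c\int_{\Rn}\frac{(R2^k)^n\,|\check\Phi(R2^k z)|}{(1+R2^k|y-z|)^N}\,|u_k(x-y-z)|\,dz. $$
Splitting this integral over the dyadic shells $|z|\approx 2^{-\ell}(R2^k)^{-1}$, estimating one factor $|u_k|$ on a shell crudely by $u_k^*$ times the corresponding weight and the remaining mass by an $L_t$-average over a ball centred at $x$ (legitimate since $N>n/t$ makes the relevant weighted integral converge), and finally using the rapid decay of $\check\Phi$ to sum the geometric series in $\ell$, one reaches the displayed bound; the scaling reduction to the case $R2^k=1$ is what keeps $c$ uniform. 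Alternatively, this inequality may simply be quoted (it underlies the scalar estimate \eqref{u*Lp-ineq} of \cite{JJ10pe} and is standard, cf \cite{T2}), the present lemma being stated as well known.

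Granting the Peetre inequality, I would pass to $\ell_q$-norms and integrate. Since $q/t>1$, taking $\ell_q$-norms and raising to the power $p$ gives, with $f_k:=|u_k|^t$,
$$ \nrm{u_k^*(N,R2^k;x)}{\ell_q}^{p}\le c\,\nrm{(Mf_k(x))^{1/t}}{\ell_q}^{p}=c\,\nrm{Mf_k(x)}{\ell_{q/t}}^{p/t}. $$
Integrating in $x$, the right-hand side equals $c\int_{\Rn}\nrm{Mf_k(x)}{\ell_{q/t}}^{p/t}\,dx$, to which the Fefferman--Stein vector-valued maximal inequality in $L_{p/t}(\ell_{q/t})$ applies --- available since $p/t>1$ and $q/t>1$ --- bounding it by $c\int_{\Rn}\nrm{f_k(x)}{\ell_{q/t}}^{p/t}\,dx=c\int_{\Rn}\nrm{u_k(x)}{\ell_q}^{p}\,dx$, where the last equality merely undoes the substitution $f_k=|u_k|^t$. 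This is \eqref{u*M-eq}.

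The main obstacle is thus the uniform-in-$k$ Peetre inequality: once the exponents are arranged so that $p/t>1$ and $q/t>1$ the Fefferman--Stein step is entirely standard, whereas the pointwise passage from $u_k^*$ to $(M|u_k|^t)^{1/t}$ is where the band-limitedness $\supp\hat u_k\subset\Bbar(0,R2^k)$ and the hypothesis $N>n/t$ are genuinely used, and where one must ensure that the constant is independent of the dilation scale $R2^k$.
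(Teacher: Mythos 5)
Your proof is correct and follows essentially the same route as the paper: reduce to the pointwise Peetre bound $u_k^*(N,R2^k;x)\le c\,(M|u_k|^t(x))^{1/t}$ for an exponent $t$ strictly between $n/N$ and $\min(p,q)$, then invoke the Fefferman--Stein vector-valued maximal inequality in $L_{p/t}(\ell_{q/t})$. The paper cites Yamazaki for the Peetre step rather than sketching its proof, but otherwise the argument is identical.
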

This inequality and its proof are due to J.~Peetre. It might be illuminating
to give an outline: taking $r\in \,]0,\min(p,q)[\,$ so that
$N\ge n/r$ it can be shown 
(eg as in the paper of M.~Yamazaki \cite[Thm.~2.10]{Y1}) that
\begin{equation}
  u_k^*(N,R2^k;x)\le   u_k^*(\tfrac{n}{r},R2^k;x)\le c (Mu_k^r(x))^{1/r};
\end{equation}
so it suffices to estimate $\int \nrm{(Mu_k^r(x))^{1/r}}{\ell_q}^p\,dx$, 
but since $r<\min(p,q)$ the fundamental inequality of C.~Fefferman and
E.~M.~Stein \cite{FeSt72} (cf also
\cite[Thm.~2.2]{Y1}) yields an estimate by the right-hand side of
\eqref{u*M-eq}. 

\subsection{Proof of Theorem~\ref{Fdp1-thm}}
To obtain the boundedness one may simply take a finite sum of 
the inequalities in Proposition~\ref{a123pe-prop} and integrate. Indeed, for 
\eqref{a1-pe} this gives according to Lemma~\ref{u*M-lem} that
\begin{equation}
  \int |\sum_k a^{k-h}(x,D)u_k(x)|^p\,dx
 \le c \int(\sum_k (R2^k)^{d} |u^*_k(N,R2^k;x)|)^p\,dx
  \le c'\int (\sum_k 2^{kd} |u_k(x)|)^p\,dx.
\end{equation}
In particular this holds if the modulation function used in the splitting of
Proposition~\ref{a123-prop} coincides with the function generating the
Littlewood--Paley decomposition in the definition of $F^{s}_{p,q}$, which
can be arranged by Remark~\ref{BF-rem}. 
Then the integral on the right-hand side above is less than
$\nrm{u}{F^{d}_{p,1}}^p$, so that the terms $2^{kd}| u_k(x)|$ tend to $0$
a.e.\ on $\Rn$ for $k\to\infty$.

So by taking $k$ in a set of the form $\{K+1,\dots ,K+K'\}$, it follows by
majorised convergence for $K\to\infty $
that $\sum a^{k-h}(x,D)u_k$ is a Cauchy series, hence converges
in $L_p(\Rn)$. The above inequalities therefore hold verbatim for the sum
over all $k\ge h$, that is, $\nrm{\sum a^{k-h}(x,D)u_k}{L_p}
\le c\nrm{u}{F^d_{p,1}}$.
This means that $a^{(1)}(x,D)$ is bounded $F^{d}_{p,1}\to L_p$.

The boundedness of $a^{(2)}(x,D)$ is analogous, for in view of
\eqref{a2two-eq} the above procedure need only be applied to each of the
inequalities \eqref{a2-pe} and \eqref{a2'-pe}. For the latter, the fixed sum
over $l=1,\dots ,h-1$ is easily treated via the triangle inequality.

In case of $a^{(3)}(x,D)$ the approach needs a minor modification since the
sum in 
\eqref{a3-pe} has its number of terms increasing with $j$. But this is
easily handled by taking $M>0$ there and using a
well-known summation lemma (that goes back at least to \cite{Y1}, 
where it was used for similar purposes): 
for $s<0$, $0<q\le \infty $ and $b_j\in \C$,
\begin{equation}
  \sum_{j=0}^\infty 2^{sjq}
  (\sum_{k=0}^j |b_k|)^q\le c \sum_{j=0}^\infty 2^{sjq}|b_j|^q.  
  \label{Ylem-eq}
\end{equation}
Indeed, from this and
Lemma~\ref{u*M-lem} it follows that
\begin{equation}
  \begin{split}
 \int|\sum_j a_j(x,D)u^{j-h}(x)|^p\,dx
&\le c\int(\sum_j 2^{-jM} (\sum_{k=0}^{j} 
      (R2^k)^{d+M} u_k^*(N,R2^k;x))^p\,dx
\\
&\le  c'\int (\sum_j 2^{jd} |u_j(x)|)^p\,dx.
  \end{split}
\end{equation}
Again this yields the convergence in $L_p$ of the series, hence 
boundedness of $a^{(3)}(x,D)$. By Proposition~\ref{a123-prop} the operator
$a_\psi(x,D)$ is therefore continuous $F^d_{p,1}\to L_p$, but since it
coincides with $a(x,D)$ on the dense subset $\cal S\subset F^d_{p,1}$, it
does not depend on $\psi$; therefore $a(x,D)\colon F^d_{p,1}\to L_p$ is
everywhere defined and bounded.

The Besov case with $p=\infty $ is analogous, although the boundedness of
$Mf$ on $L_\infty $ suffices (instead of Lemma~\ref{u*M-lem}) due to the
fact that the norms of $\ell_q$ and $L_p$ are interchanged for
$B^{s}_{p,q}$. Thus $a_{\psi}(x,D)\colon B^d_{\infty ,1}\to L_\infty $ is
bounded for each modulation function $\psi$. It is well known that 
$B^d_{p,1}$ has $\cal F^{-1}\cal E'$ as a dense subset, 
and since $a(x,D)$ by its extension to $\cal F^{-1}\cal E'$ coincides with
each $a_\psi(x,D)$ there, again there is no dependence on $\psi$; whence
$a(x,D)\colon B^d_{\infty ,1}\to L_\infty $ is everywhere defined and bounded.
This yields the proof.

\begin{rem}  \label{LpFE-rem}
  It should be mentioned that $L_p$-boundedness on $L_p\bigcap \cal F^{-1}\cal
  E'$ is much easier to establish, as indicated in \eqref{LpFE-eq}. In view of
  Ching's counter-example, this result is somewhat striking, hence was
  formulated as Theorem~6.1 in \cite{JJ10pe}.
\end{rem}

\section{General Continuity Results}
\label{Fspq-ssect}\noindent
In addition to the borderline case $s=0$, that was treated above, it is
natural to expect that the $F^{s}_{p,q}$ scale is invariant under $a(x,D)$
as soon as $s>0$. More precisely, the programme is to show that if
$a(x,\eta)$ is of order, or rather degree $d\in\R$, then $a(x,D)$ is
bounded from $F^{s+d}_{p,q}$ to $F^{s}_{p,q}$.

This is true to a wide extent, and in the verification one may by and large
use the procedure from Section~\ref{dom-ssect}. However, for $q\ne2$ 
one cannot just appeal to the completeness of $L_p$, but the spectral
properties in Proposition~\ref{corona-prop} and \ref{ball-prop} 
make it possible to apply the following lemma. 

\begin{lem}
  \label{Fspq-lem}
Let $s>\max(0,\fracc np-n)$ for $0<p<\infty$ and $0< q\le \infty$ and suppose
$u_j\in \cal S'(\Rn)$ such that, for some $A>0$,
\begin{equation}
  \supp\cal F u_j\subset B(0,A2^j),\qquad
  F(q):=
  \Nrm{(\sum_{j=0}^\infty 2^{sjq}|u_j(\cdot)|^q)^{\fracci1q}}{p}<\infty.
  \label{ballFq-eq}
\end{equation}
Then $\sum_{j=0}^\infty u_j$ converges in $\cal S'(\Rn)$ to some 
$u$ in the space $F^s_{p,r}(\Rn)$ for $ r\ge q$, $ r>\tfrac{n}{n+s}$,
fulfilling $\nrm{u}{F^s_{p,r}}\le cF(r)$ for some $c>0$ depending on
$n$, $s$, $p$ and $r$.

When moreover $\supp\cal Fu_j\subset \{\,\xi\mid A^{-1}2^j\le |\xi|\le
A2^j\,\}$ for $j\ge J$ for some $J\ge 1$, 
then the conclusions are valid for all $s\in \R$ and $r=q$.
\end{lem}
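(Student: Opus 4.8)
\emph{Proof strategy.}
The plan is to reduce the whole statement to a single a priori inequality for \emph{finite} partial sums, from which convergence in $\cal S'$, membership in $F^s_{p,r}$, and the stated norm bound all follow at once. The core assertion to establish is that for every sequence $(v_j)_{j\ge0}$ in $\cal S'(\Rn)$ with $\supp\cal F v_j\subset B(0,A2^j)$ (respectively with the sharper corona localisation for $j\ge J$), and all $M\le N$,
$$\Nrm{\sum_{j=M}^{N}v_j}{F^s_{p,r}}\le c\,\Nrm{\bigl(\sum_{j=M}^{N}2^{sjr}|v_j(\cdot)|^r\bigr)^{1/r}}{p},$$
with $c$ independent of $M,N$ and under the indicated restrictions on $s$ and $r$. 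Granting this and applying it with $v_j=u_j$: since $\bigl(\sum_k2^{skr}|u_k(\cdot)|^r\bigr)^{1/r}\le\bigl(\sum_k2^{skq}|u_k(\cdot)|^q\bigr)^{1/q}\in L_p$ for $r\ge q$ by hypothesis, the right-hand side over the indices $M<j\le N$ tends to $0$ as $M,N\to\infty$ by dominated convergence; hence the partial sums of $\sum u_j$ form a Cauchy sequence in the (quasi-)Banach space $F^s_{p,r}$, so they converge there to some $u$ with $\nrm{u}{F^s_{p,r}}\le cF(r)\le cF(q)$, and since $F^s_{p,r}\hookrightarrow\cal S'(\Rn)$ continuously, the series also converges to $u$ in $\cal S'(\Rn)$. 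For the second half of the lemma one simply takes $r=q$ everywhere.

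To prove the core inequality, fix a Littlewood--Paley partition $1=\sum_{\nu\ge0}\Phi_\nu$, which by Remark~\ref{BF-rem} may be taken to be the one computing the $F^s_{p,r}$-(quasi-)norm, and write $\Phi_\nu(D)\bigl(\sum_jv_j\bigr)=\sum_{j\ge\nu-L}\Phi_\nu(D)v_j$, where $L=1+\lceil\log_2A\rceil$ and all other terms vanish because $\supp\cal F v_j\subset B(0,A2^j)$. In the corona situation the sharper support condition kills every term with $|j-\nu|>L$ once $j\ge J$, so only a bounded range of shifts contributes, the finitely many low modes $\nu\le J+L$ being treated crudely from $\nrm{\Phi_\nu(D)v_j}{p}\le c\nrm{v_j}{p}\le c2^{-sj}F(q)$. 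For the generic terms, where $|j-\nu|\le L$ so that the scales $A2^j$ and $2^\nu$ are comparable, I would apply the factorisation inequality \eqref{Fau*-ineq}--\eqref{Fa-eq} to $\OP(\Phi_\nu)$, getting $|\Phi_\nu(D)v_j(x)|\le F_{\Phi_\nu}(N,A2^j;x)\,v_j^*(N,A2^j;x)\le c\,v_j^*(N,A2^j;x)$, the symbol factor being uniformly bounded because $A2^j/2^\nu$ stays in a fixed compact set. Summing over the bounded shift range, taking the $\ell_q$-(quasi-)norm in $\nu$ (harmless, being a finite shift) and then the $L_p$-norm, Lemma~\ref{u*M-lem} with $N>n/\min(p,q)$ converts $\{v_j^*(N,A2^j;x)\}$ back into $\{v_j(x)\}$ and yields the claim with $r=q$ and arbitrary $s\in\R$.

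The ball case follows the same scheme, but now the sum over $j\ge\nu-L$ is infinite and includes terms $\Phi_\nu(D)v_j$ with $j\gg\nu$, that is, low-frequency remnants of high-frequency pieces, for which $F_{\Phi_\nu}(N,A2^j;\cdot)$ is no longer uniformly bounded. The gain is now provided by the weight, $2^{\nu s}=2^{-(j-\nu)s}2^{js}$ with $s>0$: after inserting the pointwise bounds, taking the $\ell_r$-(quasi-)norm in $\nu$ via the $r$-triangle inequality, one sums the geometric series $\sum_{\delta\ge-L}2^{-\delta s}<\infty$ and invokes Lemma~\ref{u*M-lem}, obtaining $\nrm{\sum v_j}{F^s_{p,r}}\le cF(r)$. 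This is exactly where the hypotheses $s>\max(0,n/p-n)$ and $r>n/(n+s)$ are needed: they guarantee an auxiliary exponent $\lambda$ with $n/(n+s)<\lambda<\min(p,r)$, for which Peetre's maximal inequality (with $N\ge n/\lambda$) and the vector-valued Fefferman--Stein inequality on $L_{p/\lambda}(\ell_{r/\lambda})$, i.e.\ the ingredients of Lemma~\ref{u*M-lem}, are simultaneously available, so the far terms can be dominated by a maximal function of $v_j$ \emph{without} losing a power of $2^{j-\nu}$ and thus be summed against $2^{-(j-\nu)s}$ alone. Making this last point precise — the loss-free maximal-function bookkeeping for the infinitely many frequency-overlapping terms $\Phi_\nu(D)v_j$ with $j\gg\nu$ — is, I expect, the main obstacle; it is precisely the Fourier-analytic ball criterion for Triebel--Lizorkin spaces, and one may carry it out along well-known lines.
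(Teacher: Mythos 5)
The paper gives no detailed proof of this lemma: the core estimate for $r=q$ under the stronger hypothesis $s>\max(0,n/p-n,n/q-n)$ is attributed to Yamazaki \cite{Y1,Y2}, and the stated sharpening (admitting $r\ge q$ with merely $r>n/(n+s)$) is presented as an immediate consequence of $F(r)\le F(q)$; the $J>1$ variant is called an addendum used tacitly there. Your self-contained argument is therefore a genuinely different route, and the reduction to $r=q$ via $F(r)\le F(q)$ together with the Cauchy-sequence mechanism in the quasi-Banach space $F^s_{p,r}$ is sound. Your corona case also goes through: for $|j-\nu|\le L$ the ratio $A2^j/2^\nu$ stays in a compact set, so $F_{\Phi_\nu}(N,A2^j;\cdot)$ is uniformly bounded, and \eqref{Fau*-ineq} together with Lemma~\ref{u*M-lem} gives $r=q$ for all $s\in\R$. (Your low-mode estimate $\nrm{\Phi_\nu(D)v_j}{p}\le c\nrm{v_j}{p}$ is literally valid only for $p\ge1$; for $p<1$ band-limitedness is needed, but since only finitely many pairs $(\nu,j)$ occur in that range the constant remains harmless.)

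The ball case is where the gap lies, as you yourself flag. Running the factorisation inequality at radius $R=A2^j$ with $N>n/\lambda>n/\min(p,r)$ gives, from \eqref{Fa-eq}, a symbol factor $F_{\Phi_\nu}(N,A2^j;x)\lesssim(A2^{j-\nu})^N$ for $j>\nu$; after inserting the weight $2^{\nu s}=2^{-(j-\nu)s}\,2^{js}$, the geometric sum over $\delta=j-\nu\ge-L$ therefore requires $s>N>n/\min(p,r)$, which exceeds the lemma's hypothesis $s>\max(0,n/p-n,n/r-n)$ by a full $n$. Your concluding claim that the far terms can be dominated by a maximal function of $v_j$ \emph{without} losing a power of $2^{j-\nu}$ is also not right: were it so, $s>0$ would suffice regardless of $p$, $q$, contradicting the known necessity of the restriction when $\min(p,q)<1$. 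What holds is that the loss is only $\approx(A2^{j-\nu})^{n/\lambda-n}$, and with your choice $n/(n+s)<\lambda<\min(1,p,r)$ this puts the geometric exponent exactly at the threshold. The extra gain of $(A2^{j-\nu})^{-n}$ over the factorisation-inequality estimate requires splitting the convolution into dyadic annuli and, on each annulus, interpolating the average of $|v_j|$ between its supremum (controlled by the Peetre bound at radius $A2^j$) and the average of $|v_j|^\lambda$ (controlled by the Hardy--Littlewood maximal function). This refinement is the Fourier-analytic core of the ball criterion and cannot be extracted from the general-purpose inequality \eqref{Fau*-ineq}--\eqref{Fa-eq}, which has no way to exploit that $\Phi_\nu$ is a band at scale $2^\nu$; it is exactly what the paper delegates to the Yamazaki reference.
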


This has been known for $r=q$ at
least since the pseudo-differential $L_p$-estimates of M.~Yamazaki
\cite{Y1,Y2}, who proved it under the stronger assumption that
\begin{equation}
  s>\max(0,\fracnp-n,\fracc nq-n).
  \label{spq-eq}
\end{equation}
However, the above sharpening was derived as an addendum in 
\cite{JJ05DTL} by
noting that $F(r)\le F(q)<\infty $ for $r\ge q$. (The case $J>1$ is also an
addendum, which was used tacitly in \cite{Y1,Y2}.)

The following theorem is taken from \cite{JJ05DTL}, where the methods were
essentially the same as here, except that there was no explicit reference to
the definition by vanishing frequency modulation (this definition was first
crystallised in \cite{JJ08vfm}, inspired by the proof of the theorem in
\cite{JJ05DTL}). However, the proof was only sketched in \cite{JJ05DTL}, so
full explanations were given in \cite[Thm.~7.4]{JJ10tmp}.
 
Previous works on such $L_p$-results include those of G.~Bourdaud
\cite{Bou83,Bou88}, T.~Runst \cite{Run85} and R.~Torres \cite{Tor90}.
Besides the remarks in Section~\ref{review-ssect} it should be noted here,
that they worked under the assumption \eqref{spq-eq}, 
which has the disadvantage of excluding arbitrarily large values of $s$ 
(even for $p=2$) in the limit $q\to 0^+$. As indicated this is unnecessary:

\begin{thm}[{\cite[Cor.~6.2]{JJ05DTL},\cite[Thm.~7.4]{JJ10tmp}}]
  \label{FBspq-thm}
If $a\in S^d_{1,1}(\Rn\times\Rn)$ the corresponding operator $a(x,D)$ 
is a bounded map 
for $s>\max(0,\fracc np-n)$, $0<p,q\le\infty$,
\begin{align}
   a(x,D)&\colon F^{s+d}_{p,q}(\Rn)\to F^s_{p,r}(\Rn)
   \quad (p<\infty);
  \label{Fspr-eq} \\
   a(x,D)&\colon B^{s+d}_{p,q}(\Rn)\to B^s_{p,q}(\Rn).
  \label{Bspq-eq}
\end{align}
Hereby $r\ge q$ and $r>n/(n+s)$.
If \eqref{tdc'-cnd} holds, then \eqref{Fspr-eq} and \eqref{Bspq-eq} do so for
all $s\in \R$ and $r=q$.
\end{thm}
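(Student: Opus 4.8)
The plan is to run the paradifferential scheme from the proof of Theorem~\ref{Fdp1-thm} in Section~\ref{dom-ssect}, but now tracking the full $F^s_{p,q}$- (resp.\ $B^s_{p,q}$-) norm rather than just $\nrm{\cdot}{L_p}$. First I would fix a modulation function $\psi$ and, invoking Remark~\ref{BF-rem}, take the Littlewood--Paley decomposition entering the definition of $F^s_{p,q}$ and $B^s_{p,q}$ to be the one generated by $\psi$ as in \eqref{Rr-eq}--\eqref{phi-eq}. By Proposition~\ref{a123-prop} it then suffices to prove, for $j=1,2,3$, that the series defining $a^{(j)}_\psi(x,D)u$ converges in $\cal S'(\Rn)$ for every $u\in F^{s+d}_{p,q}(\Rn)$ and that $\nrm{a^{(j)}_\psi(x,D)u}{F^s_{p,r}}\le c\,\nrm{u}{F^{s+d}_{p,q}}$, and similarly for the Besov scale. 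At the end the independence of $\psi$ — and hence $F^{s+d}_{p,q}(\Rn)\subset D(a(x,D))$ with the asserted bound — follows since $a_\psi(x,D)$ agrees with $a(x,D)$ on $\cal S(\Rn)$ (dense when $p,q<\infty$) and, in the limiting cases, on the dense subset $\cal F^{-1}\cal E'(\Rn)$, using the consistency established in Section~\ref{uni-ssect}.

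For the outer term $a^{(1)}_\psi(x,D)u=\sum_{k\ge h}a^{k-h}(x,D)u_k$ the summands have their spectra in the coronas of Proposition~\ref{corona-prop}, and the pointwise bound \eqref{a1-pe} gives $|a^{k-h}(x,D)u_k(x)|\le p(a)(R2^k)^d u_k^*(N,R2^k;x)$ with $N>n/\min(p,q)$. Multiplying by $2^{sk}$, taking $\ell_q$ in $k$ and then $L_p$ in $x$, Lemma~\ref{u*M-lem} bounds this by $c\,p(a)R^d\nrm{u}{F^{s+d}_{p,q}}$; since the spectra lie in coronas, the last part of Lemma~\ref{Fspq-lem} (valid for all $s$ with $r=q$) then supplies $\cal S'$-convergence and the estimate, which embeds into $F^s_{p,r}$ via $F^s_{p,q}\hookrightarrow F^s_{p,r}$. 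The inner term $a^{(3)}_\psi(x,D)u=\sum_{j\ge h}a_j(x,D)u^{j-h}$ is treated identically from \eqref{a3-pe} with $M>\max(0,s)$: after inserting the pointwise estimate one absorbs the inner sum $\sum_{k\le j}$ by the summation inequality \eqref{Ylem-eq} (whose exponent $s-M$ is negative), reducing again to $\sum_j 2^{(s+d)jq}|u_j^*|^q$ and hence to $c\,\nrm{u}{F^{s+d}_{p,q}}$. The middle term $a^{(2)}_\psi(x,D)u$, in the form \eqref{a2two-eq}, is handled by applying the same scheme to each of \eqref{a2-pe} and \eqref{a2'-pe}, the fixed sum over $l=1,\dots,h-1$ costing only a triangle inequality and a factor $\sum_l 2^{-ld}$. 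The Besov case \eqref{Bspq-eq} runs in parallel with the $\ell_q$- and $L_p$-norms interchanged, so that $L_p$- (resp.\ $L_\infty$-) boundedness of the Hardy--Littlewood maximal operator replaces Lemma~\ref{u*M-lem}, and the Besov analogue of Lemma~\ref{Fspq-lem} gives convergence with $r=q$ throughout.

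The one delicate point — and the source both of the restriction $s>\max(\fracc np-n,0)$ and of the loss from $q$ to $r$ — is the convergence of the $a^{(2)}$-series: by Proposition~\ref{ball-prop} its summands are only known to have spectra in dyadic balls, so one can invoke only the first part of Lemma~\ref{Fspq-lem}, which demands $s>\max(\fracc np-n,0)$ and outputs an index $r\ge q$ with $r>n/(n+s)$. This is exactly where the twisted diagonal condition \eqref{tdc'-cnd} enters: by the second assertion of Proposition~\ref{ball-prop} it forces the $a^{(2)}$-spectra into coronas as well, whereupon the corona part of Lemma~\ref{Fspq-lem} applies and yields the conclusion for all $s\in\R$ with $r=q$; the terms $a^{(1)}_\psi(x,D)$ and $a^{(3)}_\psi(x,D)$ are corona-supported in any case and impose no constraint. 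That the three operators $a^{(j)}_\psi(x,D)$ really are type $1,1$-operators and that the remainder terms generated on letting $m\to\infty$ in the frequency modulation vanish is taken from Theorems~\ref{tdc-thm} and \ref{sigma-thm}; here this is needed only to identify $\sum_j a^{(j)}_\psi(x,D)u$ with $a(x,D)u$. I expect the main obstacle to be not any single inequality but the correct coordination of this ball-versus-corona dichotomy with the pointwise estimates of Proposition~\ref{a123pe-prop} and, beneath them, the spectral support rule of Theorem~\ref{supp-thm} on which Propositions~\ref{corona-prop} and \ref{ball-prop} rest; once the spectra are correctly located, the maximal-function machinery of Lemmas~\ref{u*M-lem} and \ref{Fspq-lem} finishes the argument.
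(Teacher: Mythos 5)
Your proposal reproduces the paper's own argument in essentially all respects: the same appeal to Remark~\ref{BF-rem} to align the Littlewood--Paley partition with the modulation function, the same paradifferential splitting via Proposition~\ref{a123-prop}, the same pointwise estimates of Proposition~\ref{a123pe-prop} fed through Lemma~\ref{u*M-lem}, the same ball-versus-corona dichotomy of Propositions~\ref{corona-prop}/\ref{ball-prop} combined with the two parts of Lemma~\ref{Fspq-lem}, the same use of \eqref{Ylem-eq} for $a^{(3)}$, and the same density argument to eliminate dependence on $\psi$. The approach is correct and matches the paper's proof.
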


\begin{rem}
It should be noted that in the Banach space case (ie when $p\ge 1$ and $q\ge
1$), one can always take $r=q$ in \eqref{Fspr-eq}, since $q\ge 1>n/(n+s)$
then. 
\end{rem}

\begin{proof}
If $\psi$ is an arbitrary modulation function, then
Remark~\ref{BF-rem} shows that $\nrm{u}{F^{s}_{p,q}}$ can 
be calculated in terms of the Littlewood--Paley partition associated with
$\psi$; cf Section~\ref{LPa-ssect}. 

For the treatment of 
$a^{(1)}(x,D)u=\sum_{k=h}^\infty a^{k-h}(x,D)u_k$ with $u\in F^{s}_{p,q}$
note that an application of
the norms of $\ell_q$ and $L_p$ on both sides of the pointwise estimate
\eqref{a1-pe} gives, if $q<\infty $ for
simplicity's sake,
\begin{equation}
 \int_{\Rn}(\sum_{k=0}^\infty 2^{skq}
|a^{k-h}(x,D)u_k(x)|^q)^{\frac{p}{q}}\,dx 
\le 
  c_2\Nrm{(\sum_{k=0}^\infty 2^{(s+d)kq}
   u_k^*(N,R2^k;x)^q)^\frac1q}{p}^p.
  \label{a1Lplq-eq}
\end{equation}
If $N>n/\min(p,q)$ here, \eqref{u*M-eq} gives an estimate from above by
$\nrm{u}{F^{s+d}_{p,q}}$, so that one has the bound in 
Lemma~\ref{Fspq-lem}  for all $s\in \R$, whilst the corona condition holds
by Proposition~\ref{corona-prop}. Thus the lemma gives
\begin{equation}
  \nrm{a^{(1)}(x,D)u}{F^{s}_{p,q}}\le c 
  (\int_{\Rn}(\sum_{k=0}^\infty 2^{skq}
|a^{k-h}(x,D)u_k(x)|^q)^{\frac{p}{q}}\,dx)^{\fracpi}
   \le c'\nrm{u}{F^{s+d}_{p,q}}.
  \label{a1Fspq-eq}
\end{equation}

In the contribution $a^{(3)}(x,D)u=\sum_{j=h}^\infty a_j(x,D)u^{j-h}$ one
may apply \eqref{Ylem-eq} to the estimate \eqref{a3-pe} 
for $M>s$ to get that
\begin{equation}
  \begin{split}
 \sum_{j=0}^\infty 2^{sjq}   |a_j(x,D)u^{j-h}(x)|^q
&\le \sum_{j=0}^\infty 2^{(s-M)jq} (\sum_{k=0}^{j} 
      c_M(R2^k)^{d+M} u_k^*(N,R2^k;x))^q
\\
&\le  c \sum_{j=0}^\infty 2^{(s+d)jq} u_j^*(N,R2^j;x)^q,
  \end{split}
\end{equation}
which by integration entails
\begin{equation}
 (\int_{\Rn}(\sum_{j=0}^\infty 2^{sjq}
|a_j(x,D)u^{j-h}(x)|^q)^{\frac{p}{q}}\,dx )^{\fracpi}
\le 
  c_3\Nrm{(\sum_{j=0}^\infty 2^{(s+d)jq}u_j^*(x)^q)^\frac1q}{p}.
  \label{a3Lplq-eq}
\end{equation}
Repeating the argument for \eqref{a1Fspq-eq} 
one arrives at 
$\nrm{a^{(3)}(x,D)u}{F^{s}_{p,q}}\le c\nrm{u}{F^{s+d}_{p,q}}$.

In estimates of $a^{(2)}(x,D)u$ the various terms 
can be treated similarly, now departing from
\eqref{a2-pe} and \eqref{a2'-pe}. This gives
\begin{multline}
  \big( \int_{\Rn}(\sum_{k=0}^\infty 2^{skq}
|(a^k-a^{k-h})(x,D)u_k(x)+a_k(x,D)(u^{k-1}-u^{k-h})|^q
  )^{\frac{p}{q}}\,dx \big)^{\fracpi}
  \\
\le 
  c'_2\Nrm{(\sum_{k=0}^\infty 2^{(s+d)kq}u_k^*(x)^q)^\frac1q}{p}.
  \label{a2Lplq-eq}
\end{multline}
If moreover the twisted diagonal condition \eqref{tdc'-cnd} holds, 
the last part of Proposition~\ref{ball-prop} 
and \eqref{u*M-eq} show that the argument for \eqref{a1Fspq-eq},
mutatis mutandis, gives 
$\nrm{a^{(2)}(x,D)u}{F^{s}_{p,q}}\le c\nrm{u}{F^{s+d}_{p,q}}$.
Altogether one has  then, for all $s\in \R$,
\begin{equation}
  \nrm{a_{\psi}(x,D)u}{F^{s}_{p,q}}
\le  \sum_{j=1,2,3} \nrm{a^{(j)}(x,D)u}{F^{s}_{p,q}}
\le c p(a)\nrm{u}{F^{s+d}_{p,q}}.
  \label{apsi123-eq}
\end{equation}
Without \eqref{tdc'-cnd}, the spectra are by Proposition~\ref{ball-prop} only
contained in balls, but the condition $s>\max(0,\fracnp-n)$ and those on 
$r$ imply that 
$\nrm{a^{(2)}(x,D)u}{F^{s}_{p,r}}\le c \nrm{u}{F^{s+d}_{p,q}}$;
cf Lemma~\ref{Fspq-lem}.
This gives the above inequality with $q$ replaced by $r$ on 
the left-hand side.

Thus $a_{\psi}(x,D)\colon F^{s+d}_{p,q}\to F^{s}_{p,r}$ is continuous, but
since $\cal S$ is dense in $F^{s+d}_{p,q}$ for $q<\infty $ (and
$F^{s+d}_{p,\infty }\hookrightarrow F^{s'}_{p,1}$ for $s'<s+d$), there is no
dependence on $\psi$. Hence $u\in D(a(x,D))$ and the above 
inequalities hold for $a(x,D)u$, which proves \eqref{Fspr-eq} in all cases.

The Besov case is analogous; one can interchange the order of $L_p$ and
$\ell_q$ and refer to the maximal inequality for scalar functions:
Lemma~\ref{Fspq-lem} carries over to $B^{s}_{p,q}$ in a natural way for
$0<p\le \infty $ with $r=q$ in all cases; this is well known, cf
\cite{Y1,JJ05DTL,JoSi08}. 
\end{proof}

In the above proof, the Besov result \eqref{Bspq-eq} can also be
obtained by real interpolation of \eqref{Fspr-eq}, since the sum exponent is
inherited from the interpolation method; cf \cite[2.4.2]{T2}. However, this
will not cover $p=\infty $ in \eqref{Bspq-eq}, as this is excluded
in \eqref{Fspr-eq}.

By duality, Theorem~\ref{FBspq-thm} implies an extension to
operators that fulfil the twisted diagonal condition of arbitrary
real order. This is a main result.

\begin{thm}[{\cite[Thm.~7.5]{JJ10tmp}}]
  \label{FB8-thm}
Let $a(x,\eta)$ belong to the self-adjoint subclass
$\tilde S^d_{1,1}$, as characterised in 
Theorem~\ref{a*-thm}. Then $a(x,D)$ 
is a bounded map for all $s\in\R$,
\begin{align}
   a(x,D)&\colon F^{s+d}_{p,q}(\Rn)\to F^{s}_{p,q}(\Rn),
   \quad 1<p<\infty,\ 1<q\le \infty, 
  \label{F8'-eq} \\
   a(x,D)&\colon B^{s+d}_{p,q}(\Rn)\to B^{s}_{p,q}(\Rn),
   \quad 1<p\le \infty,\ 1<q\le \infty.
  \label{B8'-eq}
\end{align}
\end{thm}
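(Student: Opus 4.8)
The key observation is that for $a(x,\eta)\in\tilde S^d_{1,1}$ we have, by Theorem~\ref{tildeS-thm}, an everywhere defined continuous linear map $a(x,D)\colon\cal S'\to\cal S'$ which coincides with the adjoint of $b(x,D)=\OP(e^{\im D_x\cdot D_\eta}\bar a(x,\eta))$, where crucially $b(x,\eta)$ again lies in $\tilde S^d_{1,1}$ (the defining condition \eqref{a*-cnd} of Theorem~\ref{a*-thm} being an involution). First I would invoke Theorem~\ref{FBspq-thm} applied to the symbol $b(x,\eta)$: since $b\in S^d_{1,1}$, the operator $b(x,D)$ is bounded $F^{s'+d}_{p',q'}(\Rn)\to F^{s'}_{p',q'}(\Rn)$ for every $s'>0$ (and, as noted in the Remark following that theorem, with $r=q'$ automatically in the Banach range $p',q'\ge1$). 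Taking $s'=-s>0$, i.e.\ restricting first to $s<0$, and choosing the conjugate exponents $p'=p/(p-1)$, $q'=q/(q-1)$ (both in $\,]1,\infty[\,$ when $1<p,q<\infty$), boundedness of $b(x,D)$ on the scale of $F$-spaces translates, via the duality $(F^{\sigma}_{p,q})'=F^{-\sigma}_{p',q'}$ valid for $1<p<\infty$, $1\le q<\infty$ (cf.\ \cite[2.11.2]{T2}), into boundedness of the adjoint $b(x,D)^*=a(x,D)$ from $F^{s+d}_{p,q}\to F^{s}_{p,q}$. This handles $s<0$; the case $s>0$ is already contained in Theorem~\ref{FBspq-thm} directly (with $r=q$, by the Remark there), and the single remaining value $s=0$ follows either by the same duality argument applied with $s'$ slightly positive together with an embedding, or by real interpolation between a negative and a positive value of $s$, the sum-exponent $q$ being preserved. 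Concatenating the three ranges yields \eqref{F8'-eq} for all $s\in\R$.

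The Besov statement \eqref{B8'-eq} is obtained in the same fashion, now using $(B^{\sigma}_{p,q})'=B^{-\sigma}_{p',q'}$ for $1\le p<\infty$, $1\le q<\infty$, together with the $B$-part of Theorem~\ref{FBspq-thm}; alternatively, once \eqref{F8'-eq} is established, the Besov result for $1<p<\infty$ comes by real interpolation $(F^{s_0+d}_{p,p},F^{s_1+d}_{p,p})_{\theta,q}=B^{s+d}_{p,q}$ since the sum-exponent is inherited from the interpolation method (cf.\ \cite[2.4.2]{T2}). The endpoint $p=\infty$ in \eqref{B8'-eq}, which the $F$-scale cannot reach, must be treated separately: here one observes that $B^{s+d}_{\infty,q}=(B^{-s-d}_{1,q'})'$ and applies the $B$-case of Theorem~\ref{FBspq-thm} to $b(x,D)$ on $B^{-s-d}_{1,q'}\to B^{-s}_{1,q'}$ for $q'<\infty$, which holds for every $s\in\R$ under the twisted diagonal condition — but since $b\in\tilde S^d_{1,1}$ only guarantees \eqref{Hsigma-eq} for all $\sigma$, not the strict \eqref{tdc'-cnd}, one should instead run the duality through the bound $b(x,D)\colon B^{-s}_{1,q'}\to B^{-s}_{1,q'}$ valid for $-s>0$, i.e.\ handle $s<0$ first and then interpolate/embed for $s\ge0$ as before.

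\textbf{The main obstacle} I anticipate is the bookkeeping at the endpoints and at $s=0$: the duality $(F^s_{p,q})'=F^{-s}_{p',q'}$ requires $q<\infty$, the real-interpolation route requires care that the target sum-exponent really is $q$ and not some $r\neq q$, and the $p=\infty$ Besov case genuinely falls outside the $F$-scale so it needs its own duality with $B^{\cdot}_{1,q'}$. One must also be careful that the operator obtained by duality from $b(x,D)^*$ agrees, as an unbounded operator on $\cal S'$, with the $a(x,D)$ defined by vanishing frequency modulation — but this is exactly the content of Theorem~\ref{tildeS-thm}, so no extra work is needed there. Apart from this endpoint juggling, every step is a routine invocation of the already-established Theorems~\ref{a*-thm}, \ref{tildeS-thm} and \ref{FBspq-thm} together with standard duality and interpolation of Besov--Lizorkin--Triebel spaces.
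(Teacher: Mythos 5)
Your overall route---duality from Theorem~\ref{FBspq-thm} applied to the adjoint symbol, with Theorem~\ref{tildeS-thm} identifying $b(x,D)^*$ with the frequency-modulation operator $a(x,D)$---is the same strategy the paper uses, and it is sound in principle. But there is a concrete bookkeeping slip that leaves a real gap. Tracking the duality carefully: from $b(x,D)\colon F^{\sigma+d}_{p',q'}\to F^{\sigma}_{p',q'}$ for $\sigma>0$ one obtains $a(x,D)=b(x,D)^*\colon F^{-\sigma}_{p,q}\to F^{-\sigma-d}_{p,q}$; writing $s=-\sigma-d$ so this becomes $F^{s+d}_{p,q}\to F^{s}_{p,q}$, the constraint $\sigma>0$ reads $s<-d$, not $s<0$. (Your substitution $s'=-s$ dualizes to $a(x,D)\colon F^{s}_{p,q}\to F^{s-d}_{p,q}$, which for $d\ne0$ is not the target estimate.) Consequently the duality range $s<-d$ together with the direct range $s>0$ leaves a gap on the whole interval $[-d,0]$ when $d\ge0$, not just at the single point $s=0$.

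Your proposed fix for that gap---real interpolation between two $F$-bounds---does not work in the Lizorkin--Triebel scale: $(F^{s_0}_{p,q},F^{s_1}_{p,q})_{\theta,q}$ with $s_0\ne s_1$ is the Besov space $B^{s}_{p,q}$, not $F^{s}_{p,q}$, so this route loses the $F$-statement. The paper instead closes the gap by reducing to order $-1$: the symbol $a(x,\eta)\ang{\eta}^{-d-1}$ is again in the self-adjoint subclass $\tilde S^{-1}_{1,1}$, for which the duality range $s<1$ and the direct range $s>0$ already overlap, and precomposing with the elliptic multiplier $\ang{D}^{d+1}$ recovers the estimate for $a(x,D)$ at all $s\in\R$. (Complex interpolation $[F^{s_0}_{p,q},F^{s_1}_{p,q}]_\theta=F^{s}_{p,q}$ would also suffice in the Banach range considered here, but that is not the tool you invoked.) The Besov case, including $p=\infty$, is rescuable along your lines once the exponent bookkeeping is corrected, because there real interpolation does inherit the sum-exponent $q$.
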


\begin{proof}
When $p'+p=p'p$ and $q'+q=q'q$, then $F^{s}_{p,q}$ is the dual of 
$F^{-s}_{p',q'}$ since $1<p'<\infty $ and $1\le q'<\infty $; 
cf \cite[2.11]{T2}. The adjoint symbol
$a^*(x,\eta)$ is in $S^{d}_{1,1}$ by assumption, so
\begin{equation}
  a^*(x,D)\colon F^{-s}_{p',q'}(\Rn)\to F^{-s-d}_{p',q'}(\Rn)  
\end{equation}
is continuous whenever $-s-d>\max(0,\fracc n{p'}-n)=0$, ie
for $s<-d$; this follows from
Theorem~\ref{FBspq-thm} since $p'\ge 1$ and $q'\ge 1$. The adjoint 
$a^*(x,D)^*$ is therefore bounded $F^{s+d}_{p,q}\to F^{s}_{p,q}$, and it
equals $a(x,D)$ according to Theorem~\ref{tildeS-thm}.

For $s>0$ the property \eqref{F8'-eq} holds by Theorem~\ref{FBspq-thm}.
If $d\ge 0$ the gap with $s\in [-d,0]$ 
can be closed by a reduction to order $-1$; cf \cite{JJ10tmp}.

For the $B^{s}_{p,q}$ scale similar arguments apply, also for $p=\infty $.
\end{proof}

For symbols $a(x,\eta)$ in $\tilde S^d_{1,1}$
the special case $p=2=q$ of the above corollary, ie continuity 
$a(x,D)\colon H^{s+d}\to H^s$ for all $s\in \R$,
was obtained by H{\"o}rmander as an immediate consequence of
\cite[Thm.~4.1]{H89}, but first formulated in
\cite[Thm.~9.4.2]{H97}. 

In comparison Theorem~\ref{FB8-thm} may appear as a 
rather wide generalisation to the $L_p$-setting.
However, a specialisation of the two above results
to Sobolev and H{\"o}lder--Zygmund spaces gives the following result 
directly from the standard identifications \eqref{Hsp-id}, \eqref{Cs*-id}. 

\begin{cor}[{\cite[Cor.~7.6]{JJ10tmp}}]
  \label{HC-cor}
Every $a(x,D)\in \OP(S^d_{1,1}(\Rn\times\Rn))$ is bounded 
\begin{align}
   a(x,D)&\colon H^{s+d}_{p}(\Rn)\to H^{s}_{p}(\Rn),
   \quad s>0,\ 1<p<\infty,
  \label{Hsp-eq} \\
   a(x,D)&\colon C_*^{s+d}(\Rn)\to C_*^{s}(\Rn),
   \quad s>0.
  \label{Cs-eq}
\end{align}
This holds for all real $s$ whenever $a(x,\eta)$ belongs to the
self-adjoint subclass $\tilde S^d_{1,1}(\Rn\times \Rn)$. 
\end{cor}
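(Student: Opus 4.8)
The plan is to deduce both assertions verbatim from the general Lizorkin--Triebel and Besov continuity results of Theorems~\ref{FBspq-thm} and \ref{FB8-thm}, using nothing but the classical identifications \eqref{Hsp-id} and \eqref{Cs*-id}; no new estimate is required.

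First I would handle the Sobolev case. Since $H^\sigma_p=F^\sigma_{p,2}$ for every $\sigma\in\R$ and $1<p<\infty$, it suffices to specialise Theorem~\ref{FBspq-thm} to $q=2$. For $1<p<\infty$ one has $\fracnp-n<0$, so the hypothesis $s>\max(0,\fracnp-n)$ collapses to $s>0$; and since $2>n/(n+s)$ holds trivially, the choice $r=q=2$ is admissible in \eqref{Fspr-eq}. This yields the bound $a(x,D)\colon F^{s+d}_{p,2}\to F^s_{p,2}$, which is exactly \eqref{Hsp-eq}. When moreover $a(x,\eta)\in\tilde S^d_{1,1}$, the same boundedness for \emph{all} $s\in\R$ comes instead from \eqref{F8'-eq} in Theorem~\ref{FB8-thm} with $q=2$, which is permitted since $1<p<\infty$ and $1<2\le\infty$.

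The H\"older--Zygmund case is strictly parallel once one invokes $C^\sigma_*=B^\sigma_{\infty,\infty}$ for all $\sigma\in\R$. Here I would use the Besov part \eqref{Bspq-eq} of Theorem~\ref{FBspq-thm} with $p=q=\infty$: as $\fracc n\infty-n=-n<0$, the smoothness condition is again only $s>0$, and one takes $r=q=\infty$. This gives $a(x,D)\colon B^{s+d}_{\infty,\infty}\to B^s_{\infty,\infty}$, i.e.\ \eqref{Cs-eq}. For $a(x,\eta)$ in the self-adjoint subclass, the estimate \eqref{B8'-eq} of Theorem~\ref{FB8-thm} is stated for $1<p\le\infty$, $1<q\le\infty$, hence applies with $p=q=\infty$ and delivers $a(x,D)\colon C^{s+d}_*\to C^s_*$ for every real $s$.

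There is no genuine obstacle: the only things to check are that the sum-exponent $r$ may be taken equal to $q$ in the relevant ranges---automatic because $q\in\{2,\infty\}$ while $n/(n+s)<1$---and that the admissibility ranges of the two self-adjoint theorems ($1<p<\infty$, $1<q\le\infty$ for the $F$-scale; $1<p\le\infty$, $1<q\le\infty$ for the $B$-scale) do contain $q=2$ with $p\in\,]1,\infty[\,$, respectively $p=q=\infty$. With these trivial verifications, \eqref{Hsp-eq}--\eqref{Cs-eq} and their extensions to all $s\in\R$ under $a\in\tilde S^d_{1,1}$ follow at once.
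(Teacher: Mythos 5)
Your proof is correct and is precisely the argument the paper uses: the corollary is obtained by specialising Theorem~\ref{FBspq-thm} (for $s>0$) and Theorem~\ref{FB8-thm} (for the self-adjoint subclass and all $s\in\R$) to $q=2$, $1<p<\infty$ via $H^\sigma_p=F^\sigma_{p,2}$, and to $p=q=\infty$ via $C^\sigma_*=B^\sigma_{\infty,\infty}$. The verifications that $\max(0,\fracnp-n)=0$ and that $r=q$ is admissible are exactly the trivial checks you note.
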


The last extension to $p\ne 2$ of the results on $\tilde S^d_{1,1}$
in \cite{H89,H97} is also new.

\section{Direct estimates for the self-adjoint subclass}
\label{Fspq'-ssect}\noindent
To complement Theorem~\ref{FB8-thm} with similar
results valid for $p$, $q$ in $\,]0,1]$, it is natural to exploit 
the paradifferential decomposition \eqref{a123-eq} and the pointwise
estimates used above. 

However, in the results to follow below there will be an
arbitrarily small loss of smoothness. The reason is that the estimates of
$a^{(2)}_{\psi}(x,D)$ will be based on a corona condition, which is now
\emph{asymmetric} in the sense that the outer radii grow faster than the
inner ones.
That is, the last part of Lemma~\ref{Fspq-lem} will now be extended to
series $\sum u_j$ fulfilling 
the more general condition, where $0<\theta\le 1$ and $A>1$,
\begin{equation}
  \begin{aligned}
  \supp \cal F u_0&\subset \{\,\xi\mid |\xi|\le A2^j\,\},
\quad\text{for}\quad j\ge 0,
\\
    \supp \cal F u_j&\subset \{\,\xi\mid  \tfrac{1}{A}2^{\theta j}
   \le |\xi|\le A 2^{j} \,\}
 \quad\text{for $j\ge J\ge 1$}.
\end{aligned}
  \label{theta01-eq}
\end{equation}
This situation is probably known to experts in function spaces, 
but in lack of a reference it was analysed with
standard techniques from harmonic analysis in \cite{JJ10tmp}.

The main point of \eqref{theta01-eq} is that $\sum u_j$
still converges for $s\le 0$, albeit with a loss of smoothness
that arises in the cases below with $s'<s$.  
Actually the loss is proportional to $(1-\theta)/\theta$, 
hence tends to $\infty $ for $\theta\to0$, which reflects that convergence 
in some cases fails for $\theta=0$ 
(as can be easily seen for $\hat u_j=\tfrac{1}{j}\psi\in C^\infty_0$, 
$s=0$, $1<q\le \infty $).

\begin{prop}[{\cite[Prop.~7.7]{JJ10tmp}}] 
  \label{Fcor'-prop}
Let $s\in \R$, $0< p<\infty$, $0<q\le\infty$, $J\in\N$ 
and $0<\theta<1$ be given; with $q>n/(n+s)$ if $s>0$.
For each sequence
$(u_j)_{j\in \N_0}$ in $\cal S'(\Rn)$ fulfilling the corona
condition \eqref{theta01-eq}
together with the bound (usual modification for $q=\infty $)
\begin{equation}
  F:=
  \Nrm{(\sum_{j=0}^\infty |2^{sj}u_j(\cdot )|^q)^{\frac1q}}
       {L_p}<\infty,
  \label{F-id}
\end{equation}
the series $\sum_{j=0}^\infty u_j$ converges in $\cal S'(\Rn)$ to some
$u\in F^{s'}_{p,q}(\Rn)$ with
\begin{equation}
  \Nrm{u}{F^{s'}_{p,q}}\le cF,
\end{equation}
whereby the constant $c$ also depends on $s'$, that one can take to fulfil
\begin{align}
  s'&=s \quad\text{for}\quad s>\max(0,\fracnp-n),   
  \label{s's-eq}
\\
  s'&<s/\theta \quad\text{for}\quad s\le 0,\; p\ge 1,\; q\ge 1,
  \label{s's-ineq}\\
\intertext{or in general}
  s'&<s-\tfrac{1-\theta}\theta(\max(0,\fracnp-n)-s)_{+}.
  \label{gs's-ineq}
\end{align}
(Note that $s'=s$ is possible if the positive part 
$(\dots )_+$ has strictly negative argument.)

The conclusions carry over to $B^{s'}_{p,q}$ for any $q\in ]0,\infty ]$ when
$B:=(\sum_{j=0}^\infty 2^{sjq}\nrm{u_j}{p}^{q})^{\fracci 1q}<\infty $.
\end{prop}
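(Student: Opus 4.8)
The plan is to imitate the template used for Lemma~\ref{Fspq-lem} and Theorem~\ref{FBspq-thm}: combine the maximal-function machinery of Section~\ref{pe-ssect} with a dyadic regrouping adapted to the \emph{asymmetric} corona condition \eqref{theta01-eq}, and reduce the two-sided summation to an $\ell_q$ matrix estimate.

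First I would settle $\cal S'$-convergence of $\sum_j u_j$ by invoking Lemma~\ref{corona-lem}\,$1^\circ$ with $\theta_0=\theta$, $\theta_1=1$: the spectral localisation is \eqref{theta01-eq}, and the polynomial bound \eqref{CM-cnd} follows from \eqref{F-id}, since $\nrm{2^{sj}u_j}{p}\le F$ for each $j$ together with $\supp\cal Fu_j\subset\Bbar(0,A2^j)$ gives, by a Nikol'skij inequality, $\nrm{u_j}{\infty}\le c(A2^j)^{n/p}2^{-sj}F$, hence $|u_j(x)|\le c\,2^{jN}F(1+|x|)^N$ once $N\ge(\fracnp-s)_+$. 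This also yields that $\cal Fu$ has finite order. For the no-loss case \eqref{s's-eq} there is nothing more to do: when $s>\max(0,\fracnp-n)$ (and $q>n/(n+s)$ as assumed) the corona condition is irrelevant and the conclusion with $s'=s$, $r=q$ is exactly the first part of Lemma~\ref{Fspq-lem}.

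The substance is the lossy regime. I would split each term as $u_j=\sum_\ell\Phi_\ell(D)u_j$; by \eqref{theta01-eq} the block $\Phi_\ell(D)u_j$ vanishes unless $\ell\in[\,\theta j-c_0,\ j+c_0\,]$, so after regrouping $u=\sum_\ell w_\ell$ with $w_\ell=\sum_{j\in J_\ell}\Phi_\ell(D)u_j$, $J_\ell=\{\,j\mid \ell-c_0\le j\le(\ell+c_0)/\theta\,\}$, and each $w_\ell$ has spectrum in a fixed annulus around $|\xi|\approx2^\ell$. Hence the second part of Lemma~\ref{Fspq-lem} applies, and it suffices to bound $\Nrm{(2^{s'\ell}|w_\ell|)_\ell}{L_p(\ell_q)}$ by $cF$. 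Here the point is that for $\ell\le j$ the block $\Phi_\ell(D)u_j$ already has spectrum at the coarse scale $2^\ell$, so $|\Phi_\ell(D)u_j(x)|\le c\,(\Phi_\ell(D)u_j)^*(N,2^{\ell+1};x)$ with a constant uniform in $\ell,j$; keeping the maximal-function scale matched to $2^\ell$ (rather than to $A2^j$) is what prevents a symbol-factor blow-up. Summing over $j\in J_\ell$, inserting $2^{s'\ell}=2^{s'\ell-sj}\cdot2^{sj}$, and then applying the vector-valued maximal inequality \eqref{u*M-eq} for $N>n/\min(p,q)$ (respectively boundedness of $\Phi_\ell(D)$ on $L_p$ when $p\ge1$, which is the simpler Banach route giving \eqref{s's-ineq}), the estimate collapses to showing that the kernel $K(\ell,j)=2^{s'\ell-sj}\mathbf 1_{\{j\in J_\ell\}}$ is bounded on $\ell_q$. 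A Schur-type test — or, for $q<1$, its $\ell_q$-analogue, using the polynomial-in-$j$ control of $|J_\ell|$ against the geometric weights — shows that $\sup_\ell\sum_jK$ and $\sup_j\sum_\ell K$ are finite precisely when $s'\le s-\tfrac{1-\theta}\theta(\max(0,\fracnp-n)-s)_+$, with strict inequality absorbing the factor $|J_\ell|\asymp\tfrac{1-\theta}\theta\,\ell$; a residual summation lemma of the type \eqref{Ylem-eq} handles the "fine-into-coarse" tail $j\gg\ell$. This gives \eqref{gs's-ineq}, and \eqref{s's-ineq} is its $p,q\ge1$ specialisation ($\max(0,\fracnp-n)=0$, so $s-\tfrac{1-\theta}\theta(-s)=s/\theta$). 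The Besov statement is obtained verbatim, interchanging $L_p$ and $\ell_q$ and replacing \eqref{u*M-eq} by the scalar maximal inequality (boundedness of $M$ on $L_p$, $p>1$, or trivially on $L_\infty$).

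The main obstacle I anticipate is the third step: pinning down the \emph{exact} threshold for $s'$ requires a careful bilateral bookkeeping of the summation over the $\asymp(1-\theta)\ell/\theta$ scales $j$ that interact with a fixed Littlewood--Paley level $\ell$, in such a way that the maximal-function scales stay matched so that \eqref{u*M-eq} is applicable — i.e. one must not over-spend in the fine-scale-into-coarse-level interactions. Everything else (the $\cal S'$-convergence, the reduction via Lemma~\ref{Fspq-lem}, the Besov case) is routine once this estimate is in place.
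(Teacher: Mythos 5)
The paper itself does not supply a proof of this proposition --- right after the statement it says the proof ``is omitted here, since it is lengthy and only has little in common with the treatment of type $1,1$-operators'' and defers to \cite[Prop.~7.7]{JJ10tmp} --- so there is no in-text argument to compare against. Your overall strategy (settle $\cal S'$-convergence via Lemma~\ref{corona-lem} and Nikol'skij, dispose of the no-loss case by Lemma~\ref{Fspq-lem}, then regroup by Littlewood--Paley level $\ell$ and control a kernel $K(\ell,j)$ on $\ell_q$) is the natural one and the first two steps are fine.

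However, there is a genuine gap in the last step. You attribute the general loss \eqref{gs's-ineq} to the Schur test on $K(\ell,j)=2^{s'\ell-sj}\mathbf 1_{\{j\in J_\ell\}}$ with the size $|J_\ell|\asymp\tfrac{1-\theta}{\theta}\ell$ only ``absorbed'' by the strict inequality. But $|J_\ell|$ is merely polynomial in $\ell$; it can only force a loss of $\varepsilon$, never a term proportional to $\max(0,\fracnp-n)$. In fact the Schur test on your $K$ gives, for $s\le 0$, exactly $s'\le s/\theta$ --- i.e.\ the Banach threshold \eqref{s's-ineq} --- for \emph{all} $p,q$, which is false for $p<1$. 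The missing ingredient is the $L_p$-cost of the Fourier multiplier $\Phi_\ell(D)$ acting on a function $u_j$ whose spectrum lives at the much larger scale $A2^j\gg 2^\ell$: for $0<t<\min(p,1)$ one has a Marschall-type bound $|\Phi_\ell(D)u_j(x)|\le c\,\nrm{\Phi(A2^{j-\ell}\cdot)}{\dot B^{n/t}_{1,t}}\,M_t u_j(x)\approx c\,(2^{j-\ell})^{n/t-n}M_tu_j(x)$, and with $t\uparrow\min(p,1)$ this factor tends to $(2^{j-\ell})^{\max(0,\fracnp-n)}$. It is precisely this geometric factor (not the count $|J_\ell|$) that must go into the kernel --- replacing your $K$ by $K(\ell,j)=2^{s'\ell-sj}(2^{j-\ell})^{(n/p-n)_+}\mathbf 1_{\{j\in J_\ell\}}$ --- and only then does the Schur test on $\ell\in[\theta j,j]$, $j\in[\ell,\ell/\theta]$, produce the threshold $s'< s/\theta-\tfrac{1-\theta}{\theta}\max(0,\fracnp-n)$ of \eqref{gs's-ineq}. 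Relatedly, your remark that matching the maximal-function scale to $2^\ell$ ``prevents a symbol-factor blow-up'' is misleading: $(\Phi_\ell(D)u_j)^*(N,2^\ell;\cdot)$ is a maximal function of $\Phi_\ell(D)u_j$, and the vector-valued inequality \eqref{u*M-eq} would then only return $\nrm{\Phi_\ell(D)u_j}{p}$; the transfer from $\Phi_\ell(D)u_j$ back to $u_j$ is exactly where the scale-mismatch cost $(2^{j-\ell})^{\max(0,\fracnp-n)}$ is paid, and skipping it is what leaves the $p$-dependence of \eqref{gs's-ineq} unaccounted for.
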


\begin{rem}
The restriction above that $q> n/(n+s)$ for $s>0$ is not
severe, for if \eqref{F-id} holds for a sum-exponent in $\,]0,n/(n+s)]$,
then the constant $F$ is also finite for any $q>n/(n+s)$, which yields 
convergence and an estimate in a slightly larger space (but for
the same $s$ and $p$). 
\end{rem}

The proof of the proposition is omitted here, since it is lengthy and only has
little in common with the treatment of type $1,1$-operators.

However, thus prepared one can obtain the next result, which provides 
a general result for $0<p\le 1$.

\begin{thm}[{\cite[Thm.~7.9]{JJ10tmp}}]
  \label{FB8'-thm}
For every symbol $a(x,\eta)$ belonging to the self-adjoint subclass
$\tilde S^d_{1,1}(\Rn\times\Rn)$ the operator $a(x,D)$ 
is bounded for $0< p\le1$ and $0<q\le\infty$, 
\begin{align}
   a(x,D)&\colon F^{s+d}_{p,q}(\Rn)\to F^{s'}_{p,q}(\Rn),
   \quad \text{for } s'<s\le \fracnp-n,
  \label{F8-eq} \\
   a(x,D)&\colon B^{s+d}_{p,q}(\Rn)\to B^{s'}_{p,q}(\Rn)
   \quad\text{for } s'<s\le \fracnp-n.
  \label{B8-eq}
\end{align}
\end{thm}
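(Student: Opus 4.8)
The plan is to follow the same blueprint used for Theorem~\ref{FBspq-thm}, namely to insert the modulation function's Littlewood--Paley partition twice, obtain the paradifferential splitting $a_\psi(x,D)u=a^{(1)}_\psi(x,D)u+a^{(2)}_\psi(x,D)u+a^{(3)}_\psi(x,D)u$ from Proposition~\ref{a123-prop}, and estimate each of the three contributions in $F^{s'}_{p,q}$ (respectively $B^{s'}_{p,q}$) by $\nrm{u}{F^{s+d}_{p,q}}$ via the pointwise estimates of Proposition~\ref{a123pe-prop}. For $a^{(1)}_\psi(x,D)u=\sum_{k\ge h}a^{k-h}(x,D)u_k$ and $a^{(3)}_\psi(x,D)u=\sum_{j\ge h}a_j(x,D)u^{j-h}$ the spectra sit in coronas $\{R_h2^k\le|\xi|\le\tfrac54R2^k\}$ by Proposition~\ref{corona-prop}, so these two terms are handled exactly as in the proof of Theorem~\ref{FBspq-thm}: apply the $\ell_q(L_p)$-norms to \eqref{a1-pe} and \eqref{a3-pe}, use the summation lemma \eqref{Ylem-eq} for $a^{(3)}$ with $M>s$, invoke the maximal inequality (Lemma~\ref{u*M-lem}) with $N>n/\min(p,q)$, and then quote the last part of Lemma~\ref{Fspq-lem} (valid for all $s\in\R$, $r=q$), which legitimately applies because $0<p\le1$, $0<q\le\infty$ only require $q>n/(n+s)$ in the genuine corona case — and here we may allow an $s'<s$ so that this holds. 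Since $\cal F^{-1}\cal E'$ is dense and $a(x,D)$ agrees with each $a_\psi(x,D)$ there, independence of $\psi$ follows as before.

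The genuinely new ingredient, and the \emph{main obstacle}, is the middle term $a^{(2)}_\psi(x,D)u$, whose summands by Proposition~\ref{ball-prop} only satisfy a dyadic \emph{ball} condition in general — and for $s\le\tfrac np-n$ this is too weak to conclude via Lemma~\ref{Fspq-lem}. The remedy is to exploit that $a\in\tilde S^d_{1,1}$: split $a_k(x,D)v_k=a_{k,\chi,\varepsilon}(x,D)v_k+b_k(x,D)v_k$ (and likewise for the other half of $a^{(2)}$ coming from \eqref{a2'-eq}), using H\"ormander's localisation \eqref{akke-id} with $\varepsilon=2^{-k\theta}$, $0<\theta<1$. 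The remainder $\sum b_k(x,D)v_k$ acquires, by a minor modification of the argument for Proposition~\ref{corona-prop}, an \emph{asymmetric corona} condition of the type \eqref{theta01-eq} with inner exponent $\theta_0=1-\theta$ and outer exponent $\theta_1=1$; this is precisely the situation covered by Proposition~\ref{Fcor'-prop}, at the cost of the announced loss $s'<s$ proportional to $(1-\theta)/\theta$. The localised piece $a_{k,\chi,\varepsilon}(x,D)v_k$ is controlled by the factorisation inequality, with the symbol factor $F_{a_{k,\chi,\varepsilon}}$ estimated by the Mihlin--H\"ormander-type $L_2$-norms of Theorem~\ref{Fa-thm}; comparing the integration domain there with the annuli in condition~\eqref{sigma-cnd} of Theorem~\ref{a*-thm} gives, as in \eqref{akkev-eq}, a bound $\lesssim v_k^*(N,R2^k;x)\,\big(\sum_{|\alpha|\le N+[n/2]+1}c_{\alpha,\sigma}\varepsilon^{\sigma+n/2-|\alpha|}\big)(R2^k)^d$ for every $\sigma\in\R$. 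Since $a\in\tilde S^d_{1,1}$ permits $\sigma$ as large as we please, this term contributes $2^{-k}$-summably with room to spare, so $\sum_k a_{k,\chi,2^{-k\theta}}(x,D)v_k$ converges in $F^{s'}_{p,q}$ with the desired estimate (here one should pick $\theta$ close to $1$ to keep $s'$ as close to $s$ as the statement allows, then let $\theta\uparrow1$ to obtain all $s'<s$).

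Putting the three pieces together yields $\nrm{a_\psi(x,D)u}{F^{s'}_{p,q}}\le c\,p(a)\nrm{u}{F^{s+d}_{p,q}}$ for every $s'<s\le\tfrac np-n$, hence $u\in D(a(x,D))$ with $a(x,D)$ bounded $F^{s+d}_{p,q}\to F^{s'}_{p,q}$ after the usual density/independence-of-$\psi$ argument (using $\cal F^{-1}\cal E'$ dense in the quasi-Banach spaces involved, together with the convergence facts from Theorem~\ref{sigma-thm} which already guarantee that all three series converge in $\cal S'$ for every temperate $u$). The Besov case \eqref{B8-eq} is entirely parallel: interchange the roles of $L_p$ and $\ell_q$ throughout, use the scalar maximal inequality, and appeal to the $B^{s'}_{p,q}$-versions of Lemma~\ref{Fspq-lem} and Proposition~\ref{Fcor'-prop}, which hold for all $0<q\le\infty$ with $r=q$. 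The only subtlety worth flagging is bookkeeping: one must verify that the finite sums over the paradifferential ``overlap'' index $l=1,\dots,h-1$ in \eqref{a2'-pe}, and the two parallel series in \eqref{2a2-eq}, are each dispatched by the triangle inequality without disturbing the corona/ball exponents — this is routine but must be done for each of the roughly nine series the decomposition produces, exactly as in the proof sketch of Theorem~\ref{sigma-thm}.
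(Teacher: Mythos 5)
Your proposal tracks the paper's actual proof very closely — paradifferential splitting, corona estimates for $a^{(1)}_\psi$ and $a^{(3)}_\psi$, H\"ormander's localisation $a_k=a_{k,\chi,\varepsilon}+b_k$ with $\varepsilon=2^{-k\theta}$ for the middle term, the factorisation inequality with Theorem~\ref{Fa-thm} and condition \eqref{sigma-cnd} controlling the localised piece, and Proposition~\ref{Fcor'-prop} for the remainder series with its asymmetric corona condition. The only route difference from the paper is that you omit the initial reduction $a=(a-a_{\chi,1})+a_{\chi,1}$, where the difference is dispatched by the last part of Theorem~\ref{FBspq-thm} and one then works with the support restriction $\max(1,|\xi+\eta|)\le|\eta|$; this is purely a notational simplification and not a logical requirement.

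However, there is one concrete error in your handling of the parameter $\theta$. You write that the remainder $\sum_k b_k(x,D)v_k$ satisfies the asymmetric corona condition \eqref{theta01-eq} with inner exponent $\theta_0=1-\theta$ (correct), and then claim the loss is ``proportional to $(1-\theta)/\theta$'' and that one should take $\theta\uparrow 1$. That formula from the discussion preceding Proposition~\ref{Fcor'-prop} is expressed in terms of the proposition's own parameter, namely the inner corona exponent; after substituting $\theta_0=1-\theta$ the loss becomes proportional to $\theta/(1-\theta)$, which \emph{blows up} as $\theta\uparrow 1$. The correct move, as in the paper, is to take the localisation parameter $\theta$ \emph{small}, so $\theta_0=1-\theta$ is close to $1$ and the loss $\theta/(1-\theta)\to 0$; this is harmless for the localised piece $a_{k,\chi,2^{-k\theta}}(x,D)v_k$, because the weaker decay $\varepsilon^{\sigma}=2^{-k\theta\sigma}$ can be compensated by taking $\sigma$ as large as needed, which is precisely what membership in $\tilde S^d_{1,1}$ permits via \eqref{sigma-cnd}. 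So the limit should be $\theta\downarrow 0$, not $\theta\uparrow 1$; as written your argument would produce an unbounded loss of smoothness rather than the stated $s'<s$.
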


\begin{proof}
The theorem follows by elaboration of the proof of Theorem~\ref{sigma-thm}.
By applying the last part of Theorem~\ref{FBspq-thm} to the difference
$a-a_{\chi,1}$, the question is reduced to the case of $a_{\chi,1}$.
Again this will be covered by treating general $a\in \tilde S^d_{1,1}$ for
which $\hat a(\xi,\eta)\ne0$
only holds for $\max(1,|\xi+\eta|)\le |\eta|$.

Under this assumption, $a^{(1)}(x,D)u$ and $a^{(3)}(x,D)u$ are covered by 
Theorem~\ref{FBspq-thm} for all
$s\in \R$; cf \eqref{apsi123-eq}. 
Thus it suffices to estimates the
series in \eqref{2a2-eq} for fixed $s'<s\le 0$.

Taking in \eqref{akkev-eq} the parameter $\varepsilon=2^{-k\theta}$ 
for $\theta\in \,]0,1[\,$ so small that $s'$ fulfils the last condition in
Proposition~\ref{Fcor'-prop} with $1-\theta$ instead of $\theta$
(cf remarks prior to \eqref{a22-eq}),
clearly
\begin{equation}
  2^{k(s+M)}|a_{k,\chi,\varepsilon}(x,D)v_k(x)| 
\le cv_k^*(N,R2^k;x)2^{k(s+d)} 2^{-k\theta(\sigma-1-N-M/\theta)}.
  \label{akkev''-eq}
\end{equation}
Here one may first of all take $N>n/\min(p,q)$ so that \eqref{u*M-eq}
applies.
Secondly, since by assumption $a(x,\eta)$ fulfils the twisted
diagonal condition \eqref{Hsigma-eq} of any real order, 
$\sigma$ can for any $M$ (with $\theta$ fixed as above) 
be chosen so that $2^{-k\theta(\sigma-1-N-M/\theta)}\le 1$.
This gives
\begin{equation}
  \begin{split}
  (\int\Nrm{2^{k(s+M)}a_{k,\chi,\varepsilon}(x,D)v_k(\cdot)}{\ell_q}^p
    \,dx)^{\fracpi}
  &\le c (\int\Nrm{2^{k(s+d)}v_k^*(N,R2^k;\cdot )}{\ell_q}^p\,dx)^{\fracpi}
\\
  &\le c' (\int\Nrm{2^{k(s+d)}v_k(\cdot )}{\ell_q}^p\,dx)^{\fracpi}
\le c''\Nrm{u}{F^{s+d}_{p,q}}.
  \label{akkeLp-eq}
  \end{split}
\end{equation}
Here the last inequality follows from the (quasi-)triangle inequality in
$\ell_q$ and $L_p$.

Since the spectral support rule and Proposition~\ref{ball-prop} imply
that $a_{k,\chi,\varepsilon}(x,D)v_k$ also has its spectrum in the ball
$B(0,2R2^k)$, the above estimate allows application of
Lemma~\ref{Fspq-lem}, if $M$ is so large that 
\begin{equation}
  M>0,\quad M+s>0,\quad M+s>\fracnp-n.
  \label{M-ineq}
\end{equation}
This gives convergence of 
$\sum a_{k,\chi,2^{-k\theta}}(x,D)v_k$ to a function in
$F^{s+M}_{p,\infty }$ fulfilling
\begin{equation}
  \Nrm{\sum_{k=1}^\infty a_{k,\chi,2^{-k\theta}}(x,D)v_k}{F^{s+M}_{p,\infty }}
  \le c\nrm{u}{F^{s+d}_{p,q}}.
  \label{a21-eq}
\end{equation}
On the left-hand side the embedding $F^{s+M}_{p,\infty }\hookrightarrow
F^{s}_{p,q}$ applies, of course.

For the remainder $\sum_{k=1}^\infty b_k(x,D)v_k$, cf \eqref{akke-id}
ff, note that \eqref{akkeLp-eq} holds for $M=0$ with the same $\sigma$. 
If combined with \eqref{a2Lplq-eq}, it follows by the (quasi-)triangle
inequality that 
\begin{equation}
  \int\Nrm{2^{ks}b_k(x,D)v_k(\cdot)}{\ell_q}^p
    \,dx
\le \int\Nrm{2^{ks}(a_k(x,D)-a_{k,\chi,2^{-k\theta}}(x,D))
        v_k(\cdot)}{\ell_q}^p \,dx
 \le c \nrm{u}{F^{s+d}_{p,q}}^p.
\end{equation} 
In addition the series can be shown (cf the proof of 
Theorem~\ref{sigma-thm}) to fulfil a corona condition
with inner radius $2^{(1-\theta)k}$ for all sufficiently large $k$,
so that Proposition~\ref{Fcor'-prop} applies. 
By the choice of $\theta$, this gives
\begin{equation}
  \Nrm{\sum_{k=1}^\infty b_k(x,D)v_k}{F^{s'}_{p,q }}
  \le c\nrm{u}{F^{s+d}_{p,q}}.  
  \label{a22-eq}
\end{equation}

In $a^{(2)}_{\psi}(x,D)u$ the other contribution $\sum
(a^k(x,D)-a^{k-h}(x,D))u_k$, cf \eqref{2a2-eq}, can be treated similarly.
This was also done in the proof of
Theorem~\ref{sigma-thm}, where in particular \eqref{akkev-eq} was shown to
hold for $(a^k-a^{k-h})_{\chi,\varepsilon}(x,D)u_k$, 
with just a change of the constant. Consequently \eqref{akkev''-eq} carries
over, and with \eqref{M-ineq} the same arguments as for \eqref{a21-eq},
\eqref{a22-eq} give 
\begin{equation}
  \Nrm{\sum_{k=h}^\infty 
        (a^k-a^{k-h})_{\chi,\varepsilon}(x,D)u_k}{F^{s+M}_{p,\infty }}
+  \Nrm{\sum_{k=h}^\infty \tilde b_k(x,D)u_k}{F^{s'}_{p,q }}
  \le c\nrm{u}{F^{s+d}_{p,q}}.  
  \label{a23-eq}
\end{equation}
Altogether the estimates \eqref{a21-eq}, \eqref{a22-eq}, \eqref{a23-eq}
show that
\begin{equation}
  \Nrm{a^{(2)}_{\psi}(x,D)u}{F^{s'}_{p,q}}\le c\Nrm{u}{F^{s+d}_{p,q}}.
\end{equation}
Via the paradifferential decomposition \eqref{a123-eq}, the operator
$a_{\psi}(x,D)$ is therefore a bounded linear map $F^{s+d}_{p,q}\to
F^{s'}_{p,q}$. Since $\cal S$ is dense for $q<\infty $ (a case one
can reduce to), there is no dependence on the 
modulation function $\psi$, so the type $1,1$-operator
$a(x,D)$ is defined and continuous on $F^{s+d}_{p,q}$ as stated.

The arguments are similar for the Besov spaces: it suffices to interchange
the order of the norms in $\ell_q$ and $L_p$, and to use the estimate in 
\eqref{u*M-eq} for each single $k$. 
\end{proof}

The proof above extends to cases with $0<p\le \infty $ when $s'<s\le
\max(0,\frac np-n)$.
But even though it like the proof of
Theorem~\ref{sigma-thm} uses a delicate splitting of $a(x,D)$ into 
9 infinite series, 
it barely fails to reprove Theorem~\ref{FB8-thm} due to the loss of
smoothness. Therefore only $p\le 1$ is
included in Theorem~\ref{FB8'-thm}.

When taken together, Theorems~\ref{FBspq-thm}, \ref{FB8-thm}
and \ref{FB8'-thm} give a satisfactory $L_p$-theory of operators
$a(x,D)$ in $\OP(\tilde S^{d}_{1,1})$, inasmuch as for the domain
$D(a(x,D))$ they cover all possible $s$, $p$. 
Only a few of the codomains seem barely
unoptimal, and these all concern cases with $0<q<1$ or $0<p\le 1$;
cf the parameters $r$ in Theorem~\ref{FBspq-thm} and $s'$ in Theorem~\ref{FB8'-thm}.

One particular interest of Theorem~\ref{FB8'-thm} concerns the well-known
identification of $F^0_{p,2}(\Rn)$ with the
so-called local Hardy space $h_p(\Rn)$ for $0<p\le 1$, which is described in
\cite{T2} and especially \cite[Ch.~1.4]{T3}. 
Here Theorem~\ref{FB8'-thm} gives
boundedness $a(x,D)\colon h_p(\Rn)\to F^{s'}_{p,2}(\Rn)$ for every $s'<0$,
but this can probably be improved in view of recent results:

\begin{rem}
Extensions to  $h_p(\Rn)$ of operators in the self-adjoint subclass 
$\OP(\tilde S^0_{1,1})$
were treated by J.~Hounie and R.~A.~dos~Santos Kapp \cite{HoSK09}, who
used atomic estimates to carry over the
$L_2$-boundedness of H{\"o}rmander \cite{H89,H97} to $h_p$, ie to obtain
estimates with $s'=s=0$. However, they
worked without a precise definition of type $1,1$-operators. 
\end{rem}

\begin{rem}
As an additional merit, the proof above gives that when $a(x,D)$ fulfils the
twisted diagonal condition of a specific order $\sigma>0$, then for $1\le
p\le \infty $ 
\begin{equation}
 B^{s}_{p,q}\cup F^{s}_{p,q}\subset D(a(x,D))  
\quad\text{for}\quad s>-\sigma+[n/2]+2.
\end{equation}
While this does provide a result
in the $L_p$ set-up, it is hardly optimal in view of  
L.~H{\"o}rmander's condition $s>-\sigma$ for $p=2$, that was recalled 
in \eqref{Hssigma-eq}.
\end{rem}

\chapter{Final remarks}
\label{final-sect}\noindent
In view of the satisfying results on type $1,1$-operators in 
Chapter~\ref{results-sect} and the continuity results in $\cal S'(\Rn)$ and in
the scales 
$H^s_p$, $C^s_*$, $F^{s}_{p,q}$ and $B^{s}_{p,q}$ presented in
Chapter~\ref{resultsII-sect},
their somewhat unusual definition by vanishing frequency modulation in
Definition~\ref{vfm-defn} should be well motivated.

As an open problem, it remains to characterise the type $1,1$-operators 
$a(x,D)$ that are
everywhere defined and continuous on $\cal S'(\Rn)$. For this it was
shown above to be sufficient that $a(x,\eta)$ is in $\tilde
S^d_{1,1}(\Rn\times \Rn)$, and it could of course be conjectured 
that this is necessary as well.

Similarly, since the works of G.~Bourdaud and L.~H{\"o}rmander, 
cf \cite[Ch.~IV]{Bou83}, \cite{Bou88}, \cite{H88,H89} and also \cite{H97}, 
it has remained an open problem to determine the operator class
\begin{equation}
  \BBb B(L_2(\Rn))\cap \OP(S^0_{1,1}).  
  \label{BO-eq}
\end{equation} 
Indeed, it was shown by G.~Bourdaud that this contains the self-adjoint subclass
$\OP(\tilde S^0_{1,1})$, and this sufficient condition has  
led some authors to somewhat misleading statements, eg that lack
of $L_2$-boundedness for $\OP(S^0_{1,1})$ is ``attributable to the
lack of self adjointness''. But self-adjointness is not necessary, since
already G.~Bourdaud, by modification of Ching's symbol \eqref{Ching-eq},
gave an example \cite[p.~1069]{Bou88} of an operator 
$\sigma(x,D)$ in  the subset
$ {\BBb B}(L_2)\bigcap\OP(S^0_{1,1}\setminus \tilde S^0_{1,1})$,
for which $\sigma(x,D)^*$ is not of type $1,1$.

\bigskip

As a summary, it is noted that the work grew out of the analysis of
semi-linear boundary value problems, which was reviewed in
Section~\ref{bvp-ssect}. In particular the need for a proof of
pseudo-locality of type $1,1$-operators was identified, as was the question
of how they can be defined at all. Subsequently Definition~\ref{vfm-defn} by
vanishing frequency modulation was
crystallised from the borderline analysis in Section~\ref{dom-ssect}.
And during investigation of the definition's consequences, the general
techniques of pointwise estimates, the spectral support rule and stability
under regular convergence was developed. With these tools, the properties 
\eqref{uni-pro}--\eqref{Fspq'-pro} in Chapter~\ref{vfm-sect} were derived for
pseudo-differential operators of type $1,1$.
%

%\bibliography{referencer}

\begin{thebibliography}{HdSK09}

\bibitem[AT95]{AuTa95}
P.~Auscher and M.~E. Taylor, \emph{Paradifferential operators and commutator
  estimates}, Comm. Partial Differential Equations \textbf{20} (1995),
  no.~9-10, 1743--1775.

\bibitem[BdM71]{BM71}
L.~Boutet~de Monvel, \emph{{Boundary problems for pseudo-differential
  operators}}, Acta Math. \textbf{126} (1971), 11--51.

\bibitem[Bon81]{Bon}
J.-M. Bony, \emph{{Calcul symbolique et propagations des singularit{\'e}s pour
  les \'eq\-ua\-tions aux d\'eriv\'ees partielles non lin\'eaires}}, Ann.
  scient.~\'Ec. Norm. Sup. \textbf{14} (1981), 209--246.

\bibitem[Bou83]{Bou83}
G.~Bourdaud, \emph{Sur les op\'erateurs pseudo-diff\'erentiels \`a coefficients
  peu reguliers}, Th\`ese, Univ.\ de Paris-Sud, 1983.

\bibitem[Bou88a]{Bou88}
\bysame, \emph{Une alg\`ebre maximale d'op\'erateurs pseudo-diff\'erentiels},
  Comm. Partial Differential Equations \textbf{13} (1988), no.~9, 1059--1083.

\bibitem[Bou88b]{Bou87}
\bysame, \emph{Une alg\`ebre maximale d'op\'erateurs pseudo-diff\'erentiels de
  type {$1,1$}}, S\'eminaire 1987--1988, \'{E}quations aux {D}\'eriv\'ees
  {P}artielles, \'Ecole Polytech., Palaiseau, 1988, pp.~VII1{--}VII17.

\bibitem[Bou95]{Blk95}
A.~Boulkhemair, \emph{Remarque sur la quantification de {W}eyl pour la classe
  de symboles {$S^0_{1,1}$}}, C. R. Acad. Sci. Paris S\'er. I Math.
  \textbf{321} (1995), no.~8, 1017--1022.

\bibitem[Bou99]{Blk99}
\bysame, \emph{{$L^2$} estimates for {W}eyl quantization}, J. Funct. Anal.
  \textbf{165} (1999), no.~1, 173--204.

\bibitem[Chi72]{Chi72}
Chin~Hung Ching, \emph{Pseudo-differential operators with nonregular symbols},
  J. Differential Equations \textbf{11} (1972), 436--447.

\bibitem[CM78]{CoMe78}
R.~R. Coifman and Y.~Meyer, \emph{Au del\`a des op\'erateurs
  pseudo-diff\'erentiels}, Ast\'erisque, vol.~57, Soci\'et\'e Math\'ematique de
  France, Paris, 1978.

\bibitem[DJ84]{DaJo84}
G.~David and J.-L. Journ{\'e}, \emph{A boundedness criterion for generalized
  {C}alder\'on-{Z}ygmund operators}, Ann. of Math. (2) \textbf{120} (1984),
  371--397.

\bibitem[FJ85]{FJ1}
M.~Frazier and B.~Jawerth, \emph{{Decomposition of Besov spaces}}, Indiana
  Univ. Math. J. \textbf{34} (1985), 777--799.

\bibitem[FJ90]{FJ2}
M.~Frazier and B.~Jawerth, \emph{{A discrete transform and decomposition of
  distribution spaces}}, J. Func. Anal. \textbf{93} (1990), 34--170.

\bibitem[FS72]{FeSt72}
C.~Fefferman and E.~M. Stein, \emph{{$H\sp{p}$} spaces of several variables},
  Acta Math. \textbf{129} (1972), 137--193.

\bibitem[Gar94]{Gar94}
G.~Garello, \emph{Microlocal properties for pseudodifferential operators of
  type {$1,1$}}, Comm. Partial Differential Equations \textbf{19} (1994),
  791--801.

\bibitem[Gar98]{Gar98}
\bysame, \emph{Anisotropic pseudodifferential operators of type {$1,1$}}, Ann.
  Mat. Pura Appl. (4) \textbf{174} (1998), 135--160.

\bibitem[Gru91]{G2}
G.~Grubb, \emph{{Parabolic pseudo-differential boundary problems and
  applications}}, {Microlocal analysis and applications, Montecatini Terme,
  Italy, July 3--11, 1989} (Berlin) (L.~Cattabriga and L.~Rodino, eds.),
  Lecture Notes in Mathematics, vol. 1495, Springer, 1991.

\bibitem[Gru96]{G1}
\bysame, \emph{{Functional calculus of pseudo-differential boundary problems}},
  second ed., Pro\-gress in Mathematics, vol.~65, Birkh{\"a}user, Boston, 1996.

\bibitem[Gru97]{G97}
\bysame, \emph{Pseudodifferential boundary problems and applications},
  Jahresber. Deutsch. Math.-Verein. \textbf{99} (1997), no.~3, 110--121.

\bibitem[Gru09]{G09}
\bysame, \emph{{Distributions and operators}}, {Graduate Texts in Mathematics},
  vol. 252, {Springer}, New York, 2009.

\bibitem[GS94]{GrSj94}
A.~Grigis and J.~Sj{\"o}strand, \emph{Microlocal analysis for differential
  operators}, London Mathematical Society Lecture Note Series, vol. 196,
  Cambridge University Press, Cambridge, 1994, An introduction.

\bibitem[GT99]{GrTo99}
L.~Grafakos and R.~H. Torres, \emph{Pseudodifferential operators with
  homogeneous symbols}, Michigan Math. J. \textbf{46} (1999), 261--269.

\bibitem[HdSK09]{HoSK09}
J.~Hounie and R.~A. dos Santos~Kapp, \emph{Pseudodifferential operators on
  local {H}ardy spaces}, J. Fourier Anal. Appl. \textbf{15} (2009), 153--178.

\bibitem[H{\"o}r]{H87}
L.~H{\"o}rmander, \emph{{Non-linear Hyperbolic Differential Equations}},
  Lecture notes 1986-87, University of Lund, Sweden.

\bibitem[H{\"o}r67]{Hrd}
\bysame, \emph{Pseudo-differential operators and hypoelliptic equations},
  Singular integrals ({P}roc. {S}ympos. {P}ure {M}ath., {V}ol. {X}, {C}hicago,
  {I}ll., 1966), Amer. Math. Soc., Providence, R.I., 1967, pp.~138--183.

\bibitem[H{\"o}r85]{H}
\bysame, \emph{{The analysis of linear partial differential operators}},
  Grundlehren der mathematischen Wissenschaften, Springer Verlag, Berlin, 1983,
  1985.

\bibitem[H{\"o}r88]{H88}
\bysame, \emph{Pseudo-differential operators of type {$1,1$}}, Comm. Partial
  Differential Equations \textbf{13} (1988), no.~9, 1085--1111.

\bibitem[H{\"o}r89]{H89}
\bysame, \emph{Continuity of pseudo-differential operators of type {$1,1$}},
  Comm. Partial Differential Equations \textbf{14} (1989), no.~2, 231--243.

\bibitem[H{\"o}r97]{H97}
\bysame, \emph{{Lectures on nonlinear hyperbolic differential equations}},
  {Math\'ematiques \&\ Applications}, vol.~26, Springer Verlag, Berlin, 1997.

\bibitem[Joh95]{JJ94mlt}
J.~Johnsen, \emph{{Pointwise multiplication of Besov and Triebel--Lizorkin
  spaces}}, Math. Nachr. \textbf{175} (1995), 85--133.

\bibitem[Joh04]{JJ04DCR}
\bysame, \emph{{Domains of type $1,1$ operators: a case for Triebel--Lizorkin
  spaces}}, {C. R. Acad. Sci. Paris S\'er. I Math.} \textbf{339} (2004), no.~2,
  115--118.

\bibitem[Joh05]{JJ05DTL}
\bysame, \emph{{Domains of pseudo-differential operators: a case for the
  Triebel--Lizorkin spaces}}, {J. Function Spaces Appl.} \textbf{3} (2005),
  263--286.

\bibitem[Joh08a]{JJ08par}
\bysame, \emph{{Parametrices and exact paralinearisation of semi-linear
  boundary problems}}, Comm. Part. Diff. Eqs. \textbf{33} (2008), 1729--1787.

\bibitem[Joh08b]{JJ08vfm}
\bysame, \emph{{Type {$1,1$}-operators defined by vanishing frequency
  modulation}}, New Developments in Pseudo-Differential Operators (L.~Rodino
  and M.~W. Wong, eds.), Operator Theory: Advances and Applications, vol. 189,
  Birkh{\"a}user, 2008, pp.~201--246.

\bibitem[Joh10a]{JJ10pe}
\bysame, \emph{Pointwise estimates of pseudo-differential operators}, Tech.
  Report R-2010-12, Aalborg University, 2010, to appear in J. Pseudo-Diff. Ops.
  Appl. \newline (Available at {\tt http://vbn.aau.dk/files/38938995/R-2010-12.pdf}).

\bibitem[Joh10b]{JJ10now}
\bysame, \emph{Simple proofs of nowhere-differentiability for {Weierstrass's}
  function and cases of slow growth}, J. Fourier Anal. Appl. \textbf{16}
  (2010), 17--33.

\bibitem[Joh10c]{JJ10tmp}
\bysame, \emph{Type $1,1$-operators on spaces of temperate distributions},
  Tech. Report R-2010-13, Aalborg University, 2010, (Available at {\tt
  http://vbn.aau.dk/files/38938995/R-2010-13.pdf}).

\bibitem[JS08]{JoSi08}
J.~Johnsen and W.~Sickel, \emph{On the trace problem for {Lizorkin--Triebel}
  spaces with mixed norms}, Math. Nachr. \textbf{281} (2008), 1--28.

\bibitem[Mar85]{Mar85}
J.~Marschall, \emph{Pseudo-differential operators with non-regular symbols},
  Ph.D. thesis, Free University of Berlin, 1985.

\bibitem[Mar91]{Mar91}
\bysame, \emph{Weighted parabolic {T}riebel spaces of product type. {F}ourier
  multipliers and pseudo-differential operators}, Forum Math. \textbf{3}
  (1991), no.~5, 479--511.

\bibitem[Mar95]{Mar95}
\bysame, \emph{On the boundedness and compactness of nonregular
  pseudo-differential operators}, Math. Nachr. \textbf{175} (1995), 231--262.

\bibitem[Mar96]{Mar96}
\bysame, \emph{Nonregular pseudo-differential operators}, Z. Anal. Anwendungen
  \textbf{15} (1996), no.~1, 109--148.

\bibitem[MC97]{MeCo97}
Y.~Meyer and R.~R. Coifman, \emph{Wavelets}, Cambridge University Press,
  Cambridge, 1997.

\bibitem[Mey81a]{Mey80}
Y.~Meyer, \emph{R\'egularit\'e des solutions des \'equations aux d\'eriv\'ees
  partielles non lin\'eaires (d'apr\`es {J}.-{M}. {B}ony)}, Bourbaki Seminar,
  Vol. 1979/80, Lecture Notes in Math., vol. 842, Springer, Berlin, 1981,
  pp.~293--302.

\bibitem[Mey81b]{Mey81}
\bysame, \emph{Remarques sur un th\'eor\`eme de {J}.-{M}. {B}ony}, Proceedings
  of the Seminar on Harmonic Analysis (Pisa, 1980), 1981, pp.~1--20.

\bibitem[PR78]{PaRo78}
C.~Parenti and L.~Rodino, \emph{A pseudo differential operator which shifts the
  wave front set}, Proc. Amer. Math. Soc. \textbf{72} (1978), 251--257.

\bibitem[RS96]{RuSi96}
T.~Runst and W.~Sickel, \emph{{Sobolev spaces of fractional order,
  Nemytski\u\i\ operators and non-linear partial differential equations}},
  Nonlinear analysis and applications, vol.~3, de Gruyter, Berlin, 1996.

\bibitem[Run85a]{Run85}
T.~Runst, \emph{{Para-differential operators in spaces of Triebel--Lizorkin and
  Besov type}}, Z. Anal. Anwendungen \textbf{4} (1985), 557--573.

\bibitem[Run85b]{Run85ex}
\bysame, \emph{Pseudodifferential operators of the ``exotic'' class {$L\sp 0\sb
  {1,1}$} in spaces of {B}esov and {T}riebel-{L}izorkin type}, Ann. Global
  Anal. Geom. \textbf{3} (1985), no.~1, 13--28.

\bibitem[SR91]{SRay91}
X.~Saint~Raymond, \emph{Elementary introduction to the theory of
  pseudodifferential operators}, Studies in Advanced Mathematics, CRC Press,
  Boca Raton, FL, 1991.

\bibitem[Ste93]{Ste93}
E.~M. Stein, \emph{Harmonic analysis: real-variable methods, orthogonality, and
  oscillatory integrals}, Princeton Mathematical Series, vol.~43, Princeton
  University Press, Princeton, NJ, 1993.

\bibitem[Tay91]{Tay91}
M.~E. Taylor, \emph{Pseudodifferential operators and nonlinear {PDE}}, Progress
  in Mathematics, vol. 100, Birkh\"auser Boston Inc., Boston, MA, 1991.

\bibitem[Tor90]{Tor90}
R.H. Torres, \emph{Continuity properties of pseudodifferential operators of
  type {$1,1$}}, Comm. Partial Differential Equations \textbf{15} (1990),
  1313--1328.

\bibitem[Tri83]{T2}
H.~Triebel, \emph{{Theory of function spaces}}, Monographs in mathematics,
  vol.~78, Birkh{\"a}user Verlag, Basel, 1983.

\bibitem[Tri92]{T3}
\bysame, \emph{{Theory of function spaces II}}, Monographs in mathematics,
  vol.~84, Birkh{\"a}user Verlag, Basel, 1992.

\bibitem[Yam86a]{Y1}
M.~Yamazaki, \emph{{A quasi-homogeneous version of paradifferential operators,
  I. Boundedness on spaces of Besov type}}, J. Fac. Sci. Univ. Tokyo Sect. IA,
  Math. \textbf{33} (1986), 131--174.

\bibitem[Yam86b]{Y2}
\bysame, \emph{{A quasi-homogeneous version of paradifferential operators, II.
  A symbol calculus}}, J. Fac. Sci. Univ. Tokyo Sect. IA, Math. \textbf{33}
  (1986), 311--345.

\end{thebibliography}
%\bibliographystyle{amsalpha}

\providecommand{\bysame}{\leavevmode\hbox to3em{\hrulefill}\thinspace}
\providecommand{\MR}{\relax\ifhmode\unskip\space\fi MR }
% \MRhref is called by the amsart/book/proc definition of \MR.
\providecommand{\MRhref}[2]{%
  \href{http://www.ams.org/mathscinet-getitem?mr=#1}{#2}
}
\providecommand{\href}[2]{#2}

%%%%%%%%%%%%%%%%%%%%%%%%%%%%%%%%%%%%%%%%%%%%%%%%%%%%%%%%
%% resume på dansk
%%%%%%%%%%%%%%%%%%%%%%%%%%%%%%%%%%%%%%%%%%%%%%%%%%%%%%%%
%\chapter*{Dansk Resum\'e}
\cleardoublepage
\begin{center}{\bf\large Resum\'e}\end{center}
\addcontentsline{toc}{chapter}{Resum\'e (Danish summary)}
\thispagestyle{plain}

\bigskip\noindent
Denne afhandling vedr{\o}rer den type af pseudodifferential operatorer,
der er kendt i litteraturen som type $1,1$-operatorer.  
Disse har v{\ae}ret kendt i moderne matematisk analyse is{\ae}r siden 1980, da det
blev vist, at de spiller en v{\ae}sentlig rolle for behandlingen af fuldt
ikke-line{\ae}re partielle differentialligninger.

Bidragene i denne afhandling skal ses p{\aa} baggrund af de fundamentale
resultater, der blev opn{\aa}et i 1988--89 af G.~Bourdaud og
L.~H{\"o}rmander. Disse viste at type $1,1$-operatorer har en r{\ae}kke
egenskaber, der afviger v{\ae}sentligt fra andre pseudodifferential
operatorers.

I afhandlingens arbejder fremf{\o}res for f{\o}rste gang en pr{\ae}cis
definition, af hvad 
en type $1,1$-operator er i almindelighed. Dette f{\o}lges op af en
redeg{\o}relse for, at definitionen indeholder flere af de tidligere 
udvidelser af begrebet.

Med udgangspunkt deri
bevises en tredive {\aa}r gammel formodning om, at type $1,1$-operatorer
er pseudolokale; det vil sige, at de ikke kan skabe nye singulariter i
de funktioner, de virker p{\aa}. Desuden almindeligg{\o}res et tidligere
resultat om, at disse operatorer kan {\ae}ndre eksisterende singulariteter;
dette g{\o}res ved at inddrage Weierstrass' intetsteds differentiable
funktion.

Det udledes ogs{\aa}, hvorledes type $1,1$-operatorer {\ae}ndrer st{\o}tten og
spektret af den funktion, der virkes p{\aa}. Sp{\o}rgsm{\aa}let, om hvilke
funktioner en given operator \emph{kan} virke p{\aa}, er ogs{\aa} diskuteret
indg{\aa}ende. Som et nyt resultat er det vist, at enhver type
$1,1$-operator kan anvendes p{\aa} alle glatte funktioner, der er
tempererede i L.~Schwartz' forstand.

For generelle tempererede distributioner er det vist, at type
$1,1$-operatorer kan virke p{\aa} dem, hvis deres symboler efter delvis
Fouriertransformering forsvinder i en kegleformet omegn af
sk{\ae}vdiagonalen i det fulde frekvensrum. Mere generelt er dette bevist
for de operatorer, der tilh{\o}rer den selvadjungerede delklasse.

Ydermere er tilsvarende egenskaber blevet givet en omfattende behandling 
i flere skalaer af
funktionsrum, s{\aa} som H{\"o}lderrum og Sobolevrum samt de mere
generelle Besov og Lizorkin--Triebelrum. Disse beskriver 
en r{\ae}kke forskellige differentiabilitets- og
integrabilitetsegenskaber.

Som en vigtig metode til opn{\aa}else af disse resultater er der blevet
indf{\o}rt en almen ramme for punktvise vurderinger af
pseudodifferential operatorer. Et generelt resultat er den s{\aa}kaldte
faktoriseringsulighed, som udsiger at virkningen af en operator p{\aa} en
funktion, med kompakt spektrum for eksempel, altid er mindre end en
vis symbolfaktor multipliceret med en maksimalfunktion af
Peetre--Fefferman--Stein type. Via en analyse af symbolfaktorens
asymptotiske egenskaber er uligheden vist at have et frugtbart
samspil med de dyadiske dekompositioner, der indg{\aa}r som en
hovedingrediens i kontinuitetsanalysen af type $1,1$-operatorer.

\end{document}